\documentclass[10pt]{article}
\usepackage[margin = 1in]{geometry}
% \addtolength{\oddsidemargin}{-0.625in}
% \addtolength{\evensidemargin}{-0.625in}
% \addtolength{\textwidth}{1.25in}
% \addtolength{\topmargin}{-.5in}
% \addtolength{\textheight}{1in}

%Anonymized submission
%\documentclass[final,12pt]{colt2021} % Include author names

% The following packages will be automatically loaded:
% amsmath, amssymb, natbib, graphicx, url, algorithm2e

\usepackage{times}

\usepackage[utf8]{inputenc}
\usepackage{amsmath, amssymb}
\usepackage{amsthm}
\usepackage{bbm}
\usepackage{mathbbol}
\usepackage{float}
\usepackage{dirtytalk}
\usepackage{graphicx}
\DeclareMathOperator*{\argminA}{arg\,min}
\usepackage[english]{babel}

\usepackage{booktabs}
\usepackage{empheq}
\usepackage{mdframed}
\usepackage{pifont}
\usepackage{enumitem}
\usepackage{wrapfig}
\usepackage{empheq}
\usepackage{bm}

%%%%% Kiwon added these packages %%%%
\usepackage{cancel}
\usepackage{array}
\usepackage{comment}
\newcommand*{\vertbar}{\rule[-1ex]{0.5pt}{2.5ex}}
\newcommand*{\horzbar}{\rule[.5ex]{2.5ex}{0.5pt}}
%%%%%%%%%%%%%%%%%%%%%%%%%%%%%%%%%%%%

% For tikz
\usepackage{tikz}
\usetikzlibrary{shapes}
\usetikzlibrary{decorations.markings}
\usetikzlibrary{plotmarks}
\usetikzlibrary{patterns}

\usepackage{color, colortbl}
\definecolor{mydarkblue}{rgb}{0,0.08,0.45}
\usepackage[colorlinks=true,
    linkcolor=mydarkblue,
    citecolor=mydarkblue,
    filecolor=mydarkblue,
    urlcolor=mydarkblue,
    pdfview=FitH]{hyperref}
    
% \usepackage{empheq}
% \newcommand{\green}{\color{green}}
% \newcommand{\red}{\color{red}}
% % \usepackage[dvipsnames]{xcolor}
% % \definecolor{mydarkblue}{rgb}{0,0.08,0.45}
% % \usepackage[colorlinks=true,
% %     linkcolor=mydarkblue,
% %     citecolor=mydarkblue,
% %     filecolor=mydarkblue,
% %     urlcolor=mydarkblue,
% %     pdfview=FitH]{hyperref}

%Colors for the paper
\definecolor{myteal}{RGB}{27,158,119}
\definecolor{myorange}{RGB}{217,95,2}
%\definecolor{mypurple}{RGB}{117,112,179}
\definecolor{myred}{RGB}{231,41,138}
\definecolor{mypurple}{RGB}{152,78,163}
\definecolor{myblue}{RGB}{55,126,184}
\definecolor{mygreen}{RGB}{0,100,0}

\newtheorem{proposition}{Proposition}[section]
\newtheorem{lemma}{Lemma}[section]
\newtheorem{theorem}{Theorem}[section]

\newtheorem{corollary}{Corollary}[section]

\newtheorem{assumption}{Assumption}[section]

\DeclareMathOperator{\E}{\mathbb{E}}

\usepackage{hyperref}
\usepackage[sort]{natbib}

\title{SGD in the Large:\\ Average-case Analysis, Asymptotics, and Stepsize Criticality} 

 \author{Courtney Paquette\thanks{Google Research, Brain Team} \footnotemark[2]
 \and Kiwon Lee\footnotemark[2]
 \and Fabian Pedregosa\footnotemark[1]
  \and Elliot Paquette\thanks{Department of Mathematics and Statistics, McGill University, Montreal, QC, Canada, H3A 0B9; CP is a CIFAR AI chair; \url{https://cypaquette.github.io/}. Research by EP was supported by a Discovery Grant from the
Natural Science and Engineering Research Council (NSERC) of Canada.; \url{https://elliotpaquette.github.io/}.}
}
\date{\today}

% Math macros to be imported into main.tex

%ELLIOT
\def\N{\mathbb{N}}

%KIWON
\def\aa{{\boldsymbol a}}
\def\bb{{\boldsymbol b}}

%COURTNEY
\def\xx{{\boldsymbol x}}

\def\XX{{\boldsymbol X}}
\def\YY{{\boldsymbol Y}}

\def\aa{{\boldsymbol a}}
\def\bb{{\boldsymbol b}}

\def\WW{{\boldsymbol W}}

\def\II{{\boldsymbol I}}
\def\yy{{\boldsymbol y}}
\def\vv{{\boldsymbol v}}
\def\uu{{\boldsymbol u}}

\def\AA{{\boldsymbol A}}

\def\MM{{\boldsymbol M}}
\def\DD{{\boldsymbol D}}
\def\OO{{\boldsymbol O}}
\def\PP{{\boldsymbol P}}

\def\UU{{\boldsymbol U}}
\def\VV{{\boldsymbol V}}
\def\HH{{\boldsymbol H}}

\def\SSigma{{\boldsymbol \Sigma}}

\def\nnu{{\boldsymbol \nu}}

\def\eeta{{\boldsymbol \eta}}

\def\g{{g}}

\def\dif{\mathop{}\!\mathrm{d}}

\def\MP{\mu_{\mathrm{MP}}}
\def\RR{{\mathbb R}}
\def\EE{{\mathbb E}\,}

\def\defas{\stackrel{\text{def}}{=}}

\DeclareMathOperator*{\Var}{Var}

\DeclareMathOperator*{\diag}{\mathbf{diag}}

\DeclareDocumentCommand{\Prto} {o} {
  \IfNoValueTF {#1}
  {\overset{\Pr}{\longrightarrow}}
  { \xrightarrow[ #1 \to \infty]{\Pr }}
}

\DeclareDocumentCommand{\Asto} {o} {
  \IfNoValueTF {#1}
  {\overset{\text{\rm a.s.}}{\longrightarrow}}
  { \xrightarrow[ #1 \to \infty]{\text{\rm a.s.} }}
}

\DeclareDocumentCommand{\law} {o} {
  \IfNoValueTF {#1}
  {\overset{\text{law}}{=}}
  { \xrightarrow[ #1 \to \infty]{\Pr }}
}

\begin{document}

\maketitle
\begin{abstract}
   We propose a new framework, inspired by random matrix theory, for analyzing the dynamics of stochastic gradient descent (SGD) when both number of samples and dimensions are large. This framework applies to any fixed stepsize and the finite sum setting. Using this new framework, we show that the dynamics of SGD on a least squares problem with random data become deterministic in the large sample and dimensional limit. 
   Furthermore, the limiting dynamics are governed by a Volterra integral equation. This model predicts that SGD undergoes a phase transition at an explicitly given critical stepsize that ultimately affects its convergence rate, which we also verify experimentally. Finally, when input data is isotropic, we provide explicit expressions for the dynamics and average-case convergence rates (\textit{i.e.,} the complexity of an algorithm averaged over all possible inputs). These rates show significant improvement over the worst-case complexities.
\end{abstract}

\section{Introduction}

Stochastic gradient descent (SGD) \cite{robbins1951} is one of the most popular and important stochastic optimization methods for use in large-scale problems. There are well-established worst-case convergence rates, but SGD lacks a detailed theory that encompasses both its successes and its empirically observed peculiarities. For example, the solutions to which SGD converges have qualitative differences that seemingly depend on how SGD is tuned \citep{jastrzkebski2017three,keskar2016on}.  Furthermore, the dependence of the runtime of SGD on its stepsize is complicated, and stepsize selection is an active area of research \citep{schaul2013no,vaswani2019painless,mahsereci2017probablistic,bollapragada2018adaptive,friedlander2012hybrid}. Beyond the confines of SGD, the behavior of other stochastic optimization algorithms is even more poorly understood \citep{sutskever2013on}. Because of these challenges, \emph{making good quantitative predictions for the dynamics of stochastic algorithms remains a difficult, broad and deep problem}. 

A prolific technique for analyzing optimizations methods, both stochastic and deterministic, is the stochastic differential equations (SDE) paradigm \citep{li2017stochastic, mandt2016variational,jastrzkebski2017three,su2016differential,Kushner,ljung1977,hu2017on, chaudhari2018stochastic}. These SDEs relate to the dynamics of the optimization method by taking the limit when the stepsize goes to zero, so that the trajectory of the objective function over the lifetime of the algorithm converges to the solution of an SDE. Naturally, in practice, the stepsize is taken as large as possible, which limits the predictive power of the SDE method.

A related popular paradigm for analyzing the behavior of SGD is the noisy gradient model.  Often used in conjunction with the SDE approach, in this model, one supposes that the stochastic gradient estimators in SGD are the true gradient plus some independent noise (typically assumed to be Gaussian with some covariance structure) \citep{li2017stochastic, mandt2016variational,jastrzkebski2017three, mert2019tail} or more generally the gradient estimators
are independent with a common distribution, see for \textit{e.g.}
\cite{ward2020}. The latter is equivalent to the ``streaming setting''
\cite{jain2017} or the ``one-pass'' assumption on the data
\citep{mert2020heavy-tail}.

\begin{wrapfigure}[18]{r}{0.45\textwidth}
     \includegraphics[width = 1.0\linewidth]{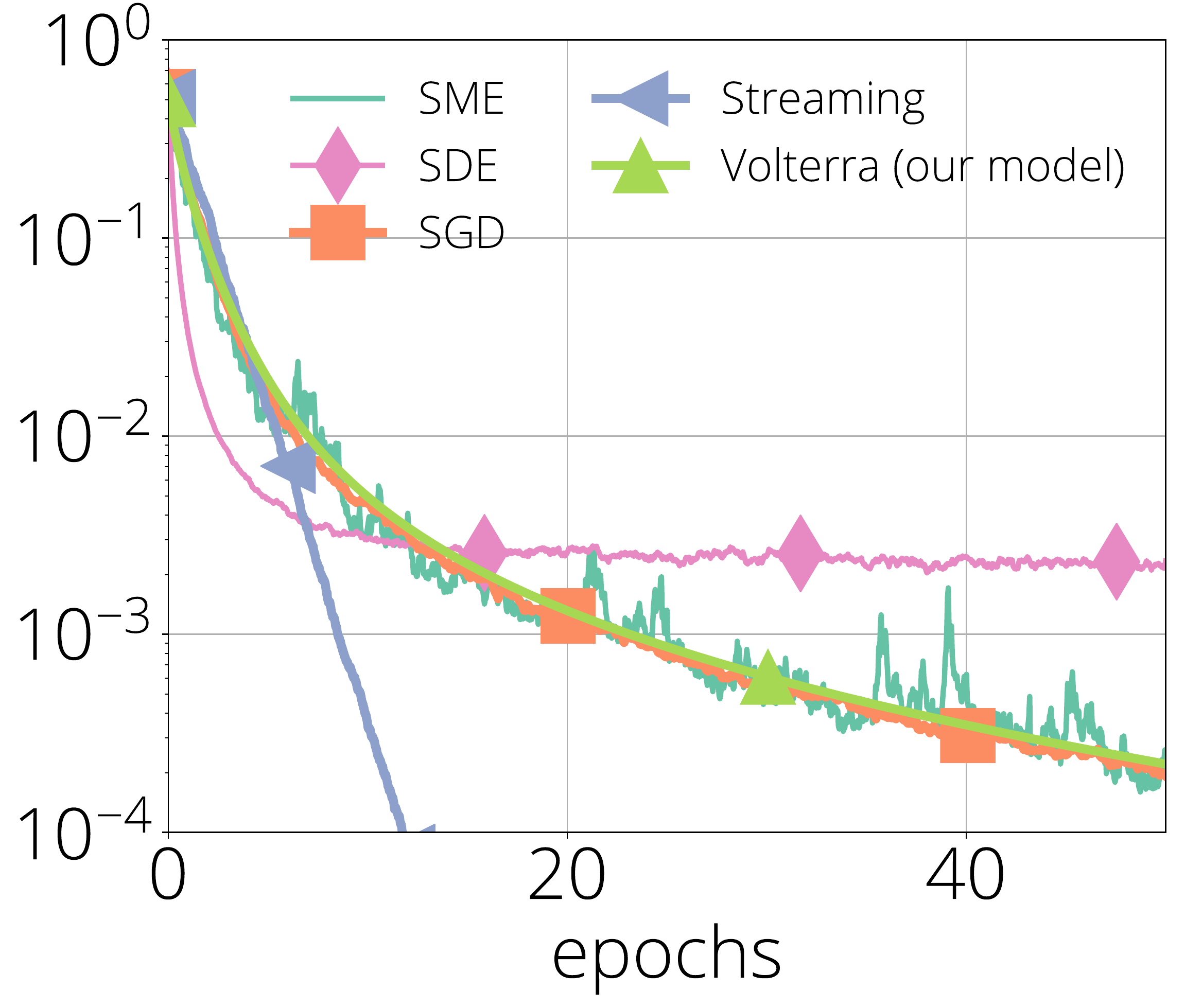}
     \vspace{-0.5cm}
     \caption{The proposed Volterra equation model \textbf{accurately} tracks SGD on a random least-squares problem for any choice of stepsize. Other models introduce biases that substantially impact model fidelity.}
    \label{fig:volterra_sde_comparison}
  \end{wrapfigure}
Here one generates a new sample at each iteration and does not reuse any past data. In practice, SGD is typically implemented on a finite dataset with multiple passes over the data. Such modeling assumptions on the stochastic gradient estimators can not capture the full dynamics of SGD (see Figure \ref{fig:volterra_sde_comparison}). 

We offer a new alternative, inspired by the phenomenology of random matrix theory.
We prove that SGD with a \emph{fixed} stepsize $\gamma$ has deterministic dynamics, when run on the least squares problem with \emph{high--dimensional} random data, and, we analyze these dynamics to provide stepsize selection and convergence properties (see Figure~\ref{fig:volterra_sde_comparison} for a comparison). We neither impose assumptions on the gradient estimators nor take the stepsize to 0 and we work in the \textit{non-streaming} or \textit{finite sum} setting (a.k.a. incremental gradient). These deterministic dynamics are governed by a Volterra integral equation, that is, the function values converge to the solution $\psi_0$ of
\begin{equation} \begin{gathered} 
\psi_0(t) = z(t)  + r \gamma^2 \int_0^t \!  h_2(t-s) \psi_0(s)\,\dif s,\\
\quad
\text{where}
\quad
h_2(t) = \int_0^\infty  \! \! x^2 e^{-2\gamma tx} \,\dif \mu(x).
\label{eq:main_volterra_initial}
\end{gathered} \end{equation}

Here, $r$ is the ratio of the number of parameters to sample size, and $\mu$ is the distribution of the eigenvalues of the Hessian's objective. The function $z$ is an explicit forcing function, which has dependence on all parts of the problem, including the initialization $\xx_0$ and the target $\bb.$ See Theorem \ref{thm: concentration_main} for the precise statement. The value of the stepsize $\gamma$ can be as large as the convergence threshold which we explicitly provide. This Volterra equation \eqref{eq:main_volterra_initial} has rich behavior; the asymptotic suboptimality of $\psi_0$ has a discontinuity in $\gamma$ at a critical stepsize (see Theorem~\ref{thm:main_critical_stepsize}).

% Note the Volterra equation depends on the data matrix solely through its eigenvalues, and hence we have described the dynamics of SGD as a spectral function of the data matrix. 
% \begin{figure}
%     \centering
%     \includegraphics[width=\linewidth]{}
%     \caption{Comparison of our model (Volterra equation) vs SDE and the empirically observed average (gray)}
%     \label{fig:comparision_SDE_volterra}
% \end{figure}

\paragraph{Notation.} We denote vectors in lowercase boldface ($\xx$) and matrices in uppercase boldface ($\HH$). 
% We consider the Euclidean space, denoted by $\RR^d$, with an inner product $\xx^T\yy = \sum_{i=1}^d x_iy_i$ for any $\xx, \yy \in \RR^d$ and the induced norm $\| \cdot \|_2$. 
%Given a matrix $\HH \in \RR^{d \times d}$, the largest eigenvalue of $\HH$ is $\lambda_{\HH}^+$. 
%  For any random variable $x$, we denote the conditional probability given $x$ by $\Pr( \cdot \, | \, x)$. 
 A sequence of random variables $\{y_d\}_{d =0}^\infty$ converges in probability to $y$, indicated by $y_d \Prto[d] y$, if for any $\varepsilon > 0$, $\displaystyle \lim_{d \to \infty} \Pr(|y_d-y| > \varepsilon) = 0$. 
%  In other words, the probability that $y_d$ is far from $y$ goes to $0$ as $d$ increases. 
 Probability measures are denoted by $\mu$ and their densities by $\dif \mu$. 
We say a sequence of random measures $\mu_d$ converges to $\mu$ weakly in probability if for any bounded continuous function $f$, we have $\int f \dif \mu_d \to \int f \dif \mu$ in probability. 
For two random variables $x$ and $y$ we write $x \law y$ to mean they have the same distribution.

\subsection{Problem setting.} \label{sec:main_problem_setting} We consider the least--squares problem when the number of samples ($n$) and features ($d$) are large:
 \begin{equation}\label{eq:lsq}
    \argminA_{\xx\in\mathbb{R}^d} \Big\{ f(\xx) \overset{\mathrm{def}}{=} \frac{1}{2n}\sum_{i=1}^n (\aa_i \xx - b_i)^2\Big\}, \quad \text{with $\bb \defas \AA \widetilde{\xx} + \sqrt{n} \, \eeta$,}
\end{equation}
where $\AA \in \mathbb{R}^{n \times d}$ is a random data matrix whose $i$-th row is denoted by $\aa_i \in \mathbb{R}^{d \times 1}$, $\widetilde{\xx} \in \mathbb{R}^d$ is the signal vector, and $\eeta \in \mathbb{R}^n$ is a source of noise.  The target $\bb = \AA \widetilde{\xx} + \sqrt{n} \, \eeta$ comes from a generative model corrupted by noise. 

We apply SGD (incremental gradient) to the finite sum, quadratic problem above.  On the $k$-th iteration it selects a uniformly random subset
$B_k \subset \{1,2,\cdots,n\},$ of batch-size $\beta$ and makes the updates
\begin{equation}\label{eq:oursgd}
    \xx_{k+1} 
    = \xx_k - \frac{\gamma}{n}\sum_{i \in B_k}\nabla f_{i}(\xx_k) 
    = \xx_k - \frac{\gamma}{n}\AA^T \PP_k (\AA\xx_k - \bb), \quad \text{where} \quad \PP_k \defas \sum_{i \in B_k} \mathbb{e}_{i}\mathbb{e}_{i}^T.
\end{equation}
 Here $\PP_k$ is a random orthogonal projection matrix with $\mathbb{e}_i$ the $i$-th standard basis vector, $\beta \in \N$ is a batch-size parameter, which we will allow to depend on $n$, $\gamma > 0$ is a stepsize parameter, and the function $f_i$ is the $i$-th element of the sum in \eqref{eq:lsq}. Typical stepsizes for SGD (see \textit{e.g} \cite[Thm 4.6]{bottou2018optimization}) include the second moment of the stochastic gradients, which under our problem setting grows like $n$. This explains the dependency on $n$ in \eqref{eq:oursgd}. We remark that $\beta$ can equal $1$ in which case \eqref{eq:oursgd} reduces to the simple SGD setting.

%This quadratic model is receiving a renewed interest in the machine learning community as recent advances have shown that over-parametrized models, including neural networks, have training dynamics similar to those of quadratic problems~\citep{jacot2018neural, novak2018bayesian, arora2019exact, chizat2019lazy}. Throughout the paper we make the following two assumptions.
To perform our analysis we make the following explicit assumptions on the signal $\widetilde{\xx}$, the noise $\eeta,$ and the data matrix $\AA.$
\begin{assumption}[Initialization, signal, and noise] \label{assumption: Vector} The initial vector $\xx_0 \in \RR^d$, the signal $\widetilde{\xx} \in \RR^d$, and noise vector $\eeta \in \RR^n$ satisfy the following conditions:
\begin{enumerate}[leftmargin=*]
    \item The difference $\xx_0-\widetilde{\xx}$ is any deterministic vector such that $\|\xx_0 - \widetilde{\xx}\|^2_2 = R.$
    \item The entries of the noise vector $\eeta$ are i.i.d.\ random variables that verify for some constant $\widetilde{R} > 0$
    \begin{equation}\label{eq:eeeeta}
    \EE[\eeta] = \bm{0}, \quad \EE[\|\eeta\|_2^2] = \widetilde{R}, \quad \text{and} \quad \EE[\|\eeta\|_\infty^p] = \mathcal{O}( n^{\epsilon-p/2}) \quad \text{for any } \epsilon, p >0.
    \end{equation}
\end{enumerate}
\end{assumption}
\noindent Any subexponential law for the entries of $\eeta$ (say, uniform or Gaussian with variance $\widetilde{R}/n$) will satisfy \eqref{eq:eeeeta}.
The scalings of the vectors $\xx_0-\widetilde{\xx}$ and $\eeta$ arise as a result of preserving a constant signal-to-noise ratio in the generative model. Such generative models with this scaling have been used in numerous works \citep{mei2019generalization,hastie2019surprises,gerbelot2020asymptotic}. 

Next we state an assumption on the eigenvalue and eigenvector distribution of the data matrix $\AA$. We then review practical scenarios in which this is verified. 

\begin{assumption}[Data matrix] \label{assumption: spectral_density}
Let $\AA$ be a random $n \times d$ matrix such that the number of features, $d$, tends to infinity proportionally to the size of the data set, $n$, so that $\tfrac{d}{n} \to r \in (0, \infty)$. Let $\HH \defas \tfrac{1}{n} \AA^T \AA$ with eigenvalues $\lambda_1 \leq \ldots \leq \lambda_d$ and let $\delta_{\lambda_i}$ denote the Dirac delta with mass at $\lambda_i$. We make the following assumptions on the eigenvalue distribution of this matrix:
\begin{enumerate}[leftmargin=*]
    \item The eigenvalue distribution of $\HH$ converges to a deterministic limit $\mu$ with compact support.  Formally, the empirical spectral measure (ESM) satisfies
    \begin{equation} \label{eq:ESM_convergence}
    \mu_{\HH} = \frac{1}{d}\sum_{i=1}^d \delta_{\lambda_i} \to \mu \quad \text{weakly in probability\,.}
    \end{equation}
    \item The largest eigenvalue $\lambda_{\HH}^+$ of $\HH$ converges in probability to the largest element $\lambda^+$ in the support of $\mu$, i.e.
    \begin{equation} \label{eq:max_eigenvalue} \lambda_{\HH}^+ \Prto[d] \lambda^+. \,\end{equation}
    %  \item (Required provided the algorithm uses the smallest eigenvalue) The smallest eigenvalue of $\HH$ converges in probability to the smallest, non-zero element in the support of $\mu$. In particular, if $\lambda^-$ denotes the bottom edge of the support of $\mu$ then
    % \begin{equation} \label{eq:min_eigenvalue} \lambda_{\HH}^- \Prto[d] \lambda^-. \,\end{equation}
    \item (Orthogonal invariance) Let $\UU \in \mathbb{R}^{d \times d}$ and $\OO \in \mathbb{R}^{n \times n}$ be orthogonal matrices. The matrix $\AA$ is orthogonally invariant in the sense that 
    \begin{equation} \label{eq:ortho_invariance}
    \AA \UU \law \AA \quad \text{and} \quad \OO \AA \law \AA 
    \end{equation}
\end{enumerate}
\end{assumption}

\begin{wrapfigure}[15]{r}{0.45\textwidth}
\vspace{-0.45cm}
 \centering
     \includegraphics[width = 0.9\linewidth]{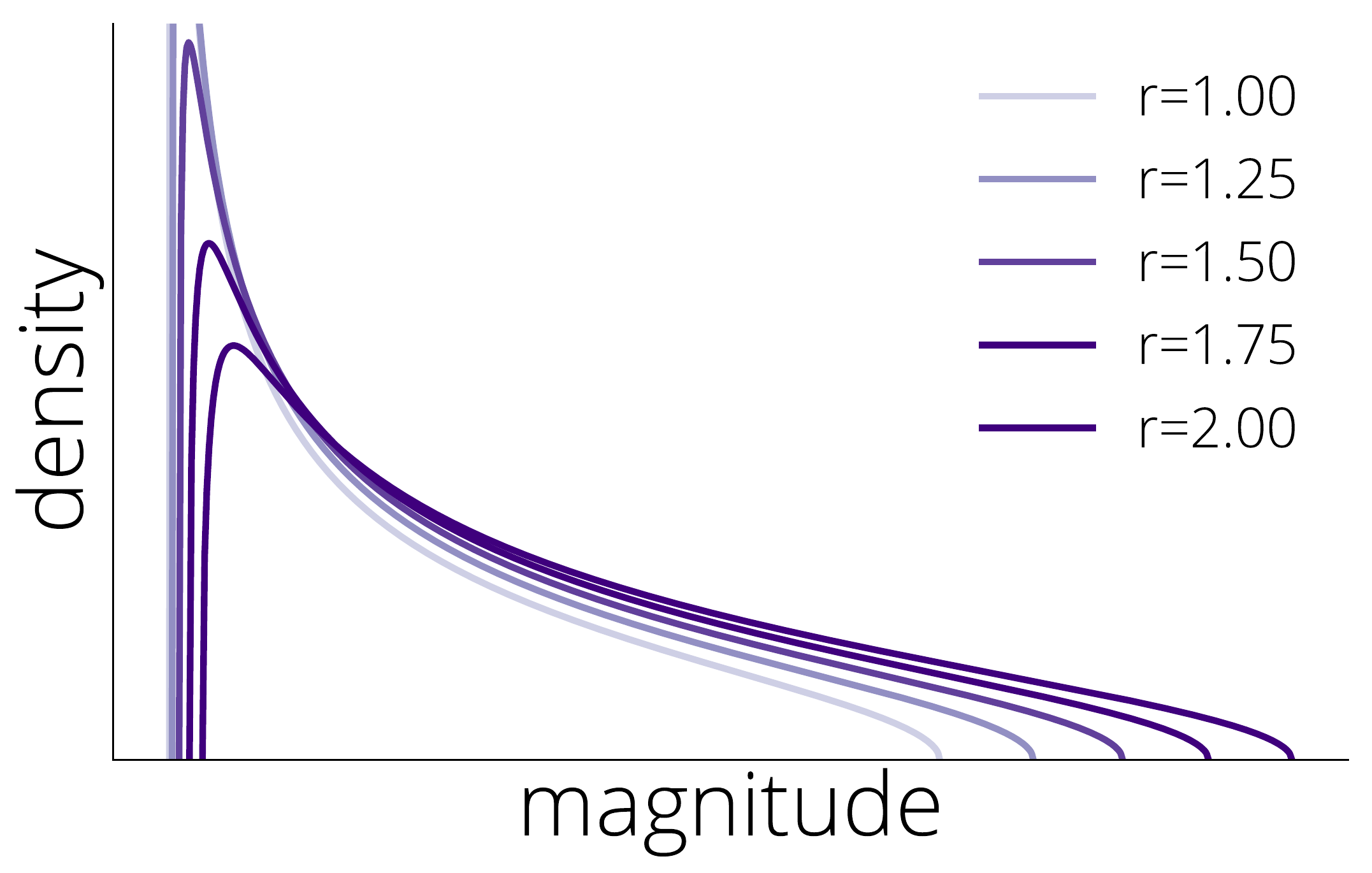} 
     
     \vspace{-0.3cm}
     
     \caption{The ESM of matrices $\tfrac{1}{n} \AA^T \AA$ with i.i.d. entries converges as $n, d \to \infty$ to the \emph{Marchenko-Pastur} distribution, shown here for different values of $r = d/n$.}
    %  As the ratio $r$ approaches $1$, the gap between $0$ and the smallest eigenvalue disappears. The smallest and largest eigenvalues are finite depending only on $r$.}
    \label{fig:MP}
\end{wrapfigure}

Assumption~\ref{assumption: spectral_density} characterizes the distribution of eigenvalues for the random matrix $\HH$ which approximately equals the distribution $\mu$. The ESM and its convergence to the limiting spectral distribution $\mu$ are well studied in random matrix theory, and for many random matrix ensembles the limiting spectral distribution is known. In machine learning literature, it has been shown that the spectrum of the Hessians of neural networks share many characteristics with the limiting spectral distributions found in classical random matrix theory \citep{dauphin2014identifying, papyan2018the, sagun2016eigenvalues, behrooz2019investigation,martin2018implicit}.

The last assumption, orthogonal invariance, is a rather strong condition as it implies that the singular vectors of $\AA$ are uniformly distributed on the sphere.  The classic example of a matrix which satisfies this property are matrices whose entries are generated from standard Gaussians. There is however, a large body of literature \citep{KnowlesYin,CES2020} showing that other classes of large dimensional random matrices behave like orthogonally invariant ensembles; weakening the orthogonal invariance assumption is an interesting future direction of research which is beyond the scope of this paper. Moreover our numerical simulations suggest that \eqref{eq:ortho_invariance} is unnecessary as our Volterra equation holds for ensembles without this orthogonal invariance property (see one-hidden layer networks in Section~\ref{sec: numerical_simulation}). For a more thorough review of random matrix theory see \citep{bai2010spectral, tao2012topics}.

\paragraph{Examples of data distributions.} \label{sec: data_generate}
In this section we review examples of data-generating distributions that verify Assumption~\ref{assumption: spectral_density}.

\paragraph{Example 1: Isotropic features with Gaussian entries.}  The first model we consider has entries of $\AA$ which are i.i.d. standard Gaussian random variables, that is, $A_{ij} \sim N(0, 1)$ for all $i,j$. This ensemble has a rich history in random matrix theory. When the number of features $d$ tends to infinity proportionally to the size of the data set $n$, $\tfrac{d}{n} \to r \in (0, \infty)$, the seminal work of \citet{marvcenko1967distribution} showed that the spectrum of $\HH = \tfrac{1}{n} \AA^T \AA$ asymptotically approaches a deterministic measure $\MP$, verifying Assumption~\ref{assumption: spectral_density}. This measure, $\MP$, is given by the Marchenko-Pastur law:
\begin{equation} \begin{gathered} \label{eq:MP} \dif \MP(\lambda) \defas \delta_0(\lambda) \max\{1-\tfrac{1}{r}, 0\} + \frac{\sqrt{(\lambda-\lambda^-)(\lambda^+-\lambda)}}{2 \pi \lambda r} 1_{[\lambda^-, \lambda^+]}\,,\\
\text{where} \qquad \lambda^- \defas (1 - \sqrt{r})^2 \quad \text{and} \quad \lambda^+ \defas (1+ \sqrt{r})^2\,.
\end{gathered} \end{equation}
\paragraph{Example 2: Planted spectrum}
One may wonder if there are limits to what singular value distributions can appear for orthogonally invariant random matrices, but as it turns out, any singular value distribution is attainable.  Suppose that 
\begin{equation} \label{eq:general_isotropic}
    \AA = \UU \SSigma \VV^T,  
\end{equation}
where $\VV \in \RR^{d \times d}$ and $\UU \in \RR^{n \times n}$ are random matrices, uniformly chosen from the orthogonal group and $\SSigma \in \mathbb{R}^{n \times d}$ is any deterministic matrix such that the squared singular values of $\SSigma$ have an empirical distribution that converges to a desired limit $\mu$. Then $\AA$ is orthogonally invariant.  As in the previous case, we assume that the dimensions of the matrix $\AA$ grow at a comparable rate given by $\tfrac{d}{n} \to r \in (0, \infty)$. 
% Therefore, the resulting matrix $\HH  = \frac{1}{n} \AA^T \AA$ satisfies Assumption~\ref{assumption: spectral_density}.  
Constructions like this appear in neural networks initialized with random orthogonal weight matrices, and they produce exotic singular value distributions \cite[Figure 7]{saxe2013exact}.

\paragraph{Example 3: Linear neural networks.} 
This model encompasses linear neural networks with a squared loss, where the $m$ layers have random weights ($\WW_i$ with $i = 1, \hdots, m$) and the final layer's weights are given by the regression coefficient $\xx$. The entries of these random weight matrices are sampled i.i.d.\ from a standard Gaussian. The optimization problem in \eqref{eq:lsq} becomes

\begin{equation}
    \min_{\xx} \bigg \{ f(\xx) = \frac{1}{2n} \| \AA \xx - \bb \|_2^2 \bigg \}, \quad \text{where $\AA = \WW_1 \WW_2 \WW_3 \cdots \WW_m$.}
\end{equation}
It is known that products of Gaussian matrices satisfy \eqref{eq:ortho_invariance} with a limiting spectral measure in the large $n$ limit and fixed depth given by the Fuss-Catalan law~\cite{Alexeev, LiuSongWang}.

\subsection{Main contributions}

\paragraph{A new paradigm for analyzing the dynamics of SGD.}  %In this paper we develop a novel framework for understanding the behavior of SGD applied to the least-squares problem on high-dimensional data. 
% Unlike prior work which supposes distributional assumptions on the stochastic gradients (\textit{e.g.} steaming or one-pass setting) or uses that the stepsize go to $0$ (\textit{e.g.} ODE perspective), our approach requires neither assumption. 
%Motivated by the applications of SGD on large-scale finite sum data problems, we use tools from random matrix and work in the large-dimensional regime.
We propose a framework for the analysis of SGD that exploits the fact that when increasing the problem size (\textit{i.e.} $n$ and $d$ large), statistics that are driven by the full population converge to deterministic processes; the spirit of which is behind law of large numbers and concentration of measure. A practical outcome of this framework is a new expression for the function values of SGD as a Volterra equation:

% At a high-level, we reduce the iterates of SGD to a spectral question on the data matrix. This reduction constitutes a majority of the technical work in this paper. Once reduced to the eigenvalues, we apply concentration results for the spectrum of large random matrices. Using some new concentration of measure results (see Section~\ref{sec:dynamics_of_SGD}), we prove that the function values converge to a deterministic quantity in high-dimensions. 
% This perspective of looking at large $n$ enables the stepsize to be bounded away from $0$ while also broad enough to encompass the finite-sum setting. 

% We embed the iterates of SGD into a continuous-time process by normalizing a unit of time $t$ to always equal one pass over the data-set (size $n$) and then send $n \to \infty$. Since the number of samples $n$ and parameters $d$ are large, the "jump" behavior in the function values produced from the iterates of SGD (i.e $f(x_k)-f(x_{k+1})$) gets smoothed out as $n \to \infty$. This perspective enables the stepsize to be bounded away from $0$ while also broad enough to encompass the finite-sum setting. With this continuous-time process and our new concentration of measure results (see Appendix \ref{}), we prove that the function values converge to a deterministic quantity in high-dimensions under only Assumption~\ref{assumption: spectral_density} on the data matrix $\AA$.
\begin{theorem}[Concentration of SGD] \label{thm: concentration_main} Suppose the stepsize satisfies $\gamma < \frac{2}{r} \big ( \int_0^\infty x \dif \mu(x) \big )^{-1}$ and the batchsize satisfies $\beta(n) \leq n^{1/5-\delta}$ for some $\delta >0$. Under Assumptions~\ref{assumption: Vector} and \ref{assumption: spectral_density}, 
\begin{equation}
  \sup_{0 \le t \le T}  \big | f \big ( \xx_{ \lfloor \tfrac{n}{\beta} t \rfloor } \big ) -  \psi_0(t) \big | \Prto[n] 0\,,
\end{equation}
where the function $\psi_0$ is the solution to the Volterra equation
\begin{equation} \begin{gathered} \label{eq:Volterra_eq_main}
    \psi_0(t) = \tfrac{R}{2} h_1(t) + \tfrac{\widetilde{R}}{2} \big ( rh_0(t)+ (1-r) \big ) + \int_0^t \gamma^2 r h_2(t-s)\psi_0(s)\, \dif s,\\
    \quad \text{and} \quad h_k(t) = \int_0^\infty x^k e^{-2\gamma tx} \, \dif \mu(x)\,
    \quad \text{for all}\quad k \geq 0. 
\end{gathered}
\end{equation}
\end{theorem}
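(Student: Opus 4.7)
The plan is to derive a deterministic integral equation for the macroscopic evolution of $f(\xx_k)$ by (i) computing the one-step conditional expectation exactly in terms of spectral quadratic forms in the Gram matrix, (ii) identifying the limit of those forms using orthogonal invariance together with the ESM convergence, and (iii) upgrading the pointwise limit to uniform-in-$t$ convergence via a martingale-maximal inequality applied to the telescoping remainder.

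First I would pass to the residual $\rr_k \defas \AA\xx_k - \bb$, which satisfies $\rr_{k+1} = \rr_k - (\gamma/n)\AA\AA^T\PP_k\rr_k$ and hence
\begin{equation*}
2n\, f(\xx_{k+1}) = \|\rr_k\|^2 - \tfrac{2\gamma}{n}\langle \rr_k,\, \AA\AA^T\PP_k\rr_k\rangle + \tfrac{\gamma^2}{n^2}\|\AA\AA^T\PP_k\rr_k\|^2.
\end{equation*}
Averaging over the batch with $\mathbb{E}[\PP_k] = (\beta/n)\II$ and the exchangeability identity $\mathbb{E}[\PP_k\MM\PP_k] = \tfrac{\beta(\beta-1)}{n(n-1)}\MM + \tfrac{\beta(n-\beta)}{n(n-1)}\diag(\MM)$ produces
\begin{equation*}
\mathbb{E}_k[f(\xx_{k+1})] - f(\xx_k) = -\tfrac{\gamma \beta}{n^2}\,\rr_k^T\widetilde{\HH}\rr_k + \tfrac{\gamma^2 \beta}{2n^2}\, \rr_k^T \diag(\widetilde{\HH}^2)\rr_k + O(\beta^2/n^2),
\end{equation*}
where $\widetilde{\HH} \defas \AA\AA^T/n$; the subleading pieces are genuinely negligible once $\beta \le n^{1/5-\delta}$.

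Next I would invoke the orthogonal-invariance Assumption~\ref{assumption: spectral_density}.3. Writing $\AA = \UU\SSigma\VV^T$, the coefficients of $\xx_k - \widetilde{\xx}$ along the columns of $\VV$ are, conditionally on $\SSigma$, rotated by a Haar-uniform orthogonal matrix, and $\UU^T\eeta$ is an independent, effectively isotropic vector whose coordinate-wise squares have mean $\widetilde{R}/n$ (using the $\ell_\infty$ moment bound from Assumption~\ref{assumption: Vector}.2). This lets me average the spectral quadratic forms against $\mu_{\widetilde{\HH}}$, whose limit is $r\mu + (1-r)\delta_0$ (the mass at zero reflects the kernel of $\AA^T$ when $r<1$). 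Applied to the initial residual, transport under the semigroup $e^{-2\gamma t \widetilde{\HH}}$ yields exactly $\tfrac{R}{2}h_1(t) + \tfrac{\widetilde{R}}{2}(r h_0(t) + (1-r))$; applied to the diagonal variance $\rr_k^T \diag(\widetilde{\HH}^2)\rr_k$, it yields $2r\int x^2\, d\mu(x) \cdot f(\xx_k)$, which after combination with the heat-kernel decay of the noise injected at time $s$ gives the Volterra kernel $\gamma^2 r\, h_2(t-s)\,\psi_0(s)$. Passing to continuous time $t = k\beta/n$ and applying Duhamel's formula reassembles these pieces into \eqref{eq:Volterra_eq_main}. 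Uniform convergence on $[0,T]$ then follows from Doob's maximal inequality for the martingale $M_k \defas f(\xx_k) - \mathbb{E}[f(\xx_k)\mid \mathcal{F}_{k-1}]$, whose aggregated quadratic variation is controlled exactly by the batch-size bound $\beta\le n^{1/5-\delta}$.

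The main obstacle is that the recursion does not literally close on $f(\xx_k)$ at finite $n$: a whole family of spectral observables $\rr_k^T \phi(\widetilde{\HH}) \rr_k$ must be controlled simultaneously, each concentrating around its own functional of $\psi_0$. A bootstrap argument is needed in which concentration of $f$ on $[0,T]$ implies concentration of the driving spectral integrals, which in turn closes the recursion. Compounding this, the stepsize threshold $\gamma < (2/r)\big(\int x\,d\mu(x)\big)^{-1}$ is precisely the value at which the $L^\infty([0,T])$-operator norm of the Volterra kernel $\gamma^2 r h_2$ reaches one, so the Gr\"onwall-type stability of $\psi_0$ degenerates as one approaches criticality and must be coupled carefully with the martingale fluctuation estimate to retain nontriviality all the way up to this threshold.
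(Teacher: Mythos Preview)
Your outline hits the right heuristic---one-step conditional expectation, self-averaging of $(\mathbb{e}_j^T\UU^T\mathbb{e}_i)^2\approx 1/n$ by orthogonal invariance, then Duhamel---and this is essentially the paper's strategy. But the proof collapses at the martingale step. Doob's maximal inequality applied to $M_k=f(\xx_k)-\mathbb{E}_{k-1}f(\xx_k)$ is far too coarse: the quadratic variation of the relevant martingales is governed by quantities like $\sum_{i\in B_k}(\mathbb{e}_i^T\rr_k)^2(\mathbb{e}_j^T\UU^T\mathbb{e}_i)^2$, and to make these small you need \emph{coordinate-wise delocalization} of the residual, namely $\max_i (\mathbb{e}_i^T\rr_k)^2 \lesssim \beta n^{\epsilon-1}$ uniformly over $k\le nT/\beta$. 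Controlling only $\|\rr_k\|^2=2nf(\xx_k)$ is not enough. The paper establishes this delocalization by a genuine bootstrap: assume $\max_i|\mathbb{e}_i^T\rr_k|\le n^{-\aleph}$, track each coordinate $\mathbb{e}_i^T(\UU\SSigma\nnu_t-\eeta)$ as a martingale whose jumps split into ``typical'' ones (index $i\notin B_k$, small jumps) and ``rare'' ones ($i\in B_k$, large contractive jumps with expected count $O(T)$), apply a bespoke concentration lemma for martingales with two jump types, and iterate the improvement $n^{-\aleph}\mapsto \max\{\sqrt{\beta}n^{3\theta-1/2},n^{-\aleph-\delta/2}\}$ until it stabilizes. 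This is precisely where the restriction $\beta\le n^{1/5-\delta}$ enters, and it is the technical heart of the argument---your proposal gestures at a bootstrap but does not identify what must be bootstrapped or how.

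Two smaller points. First, you correctly note the recursion does not close on $f$ alone; the paper resolves this by tracking each spectral weight $\nu_{t,j}^2$ individually, integrating the approximate ODE $\dif\nu_{t,j}^2\approx(-2\gamma\sigma_j^2\nu_{t,j}^2 + \tfrac{2\gamma^2\sigma_j^2}{n}\psi_\varepsilon(t))\dif t$ via its variation-of-constants formula, and only then summing $\tfrac12\sum_j\sigma_j^2\nu_{t,j}^2$ to recover $\psi_\varepsilon$---this is how the convolution kernel $h_2(t-s)$ actually appears, and your ``heat-kernel decay of injected noise'' description would benefit from being made explicit. Second, the stepsize threshold $\gamma<\tfrac{2}{r}(\int x\,\dif\mu)^{-1}$ corresponds to the $L^1$ norm $\int_0^\infty\gamma^2 r h_2(t)\,\dif t=\tfrac{\gamma r}{2}\int x\,\dif\mu$ reaching $1$, not an $L^\infty$ norm; the paper then invokes standard continuity of the resolvent in $L^1_{\mathrm{loc}}$ to propagate the error bounds, so no delicate coupling near criticality is needed once the errors are shown to vanish.
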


\begin{wrapfigure}[38]{r}{0.45\textwidth}
\vspace{-0.7cm}
 \includegraphics[width = 1\linewidth]{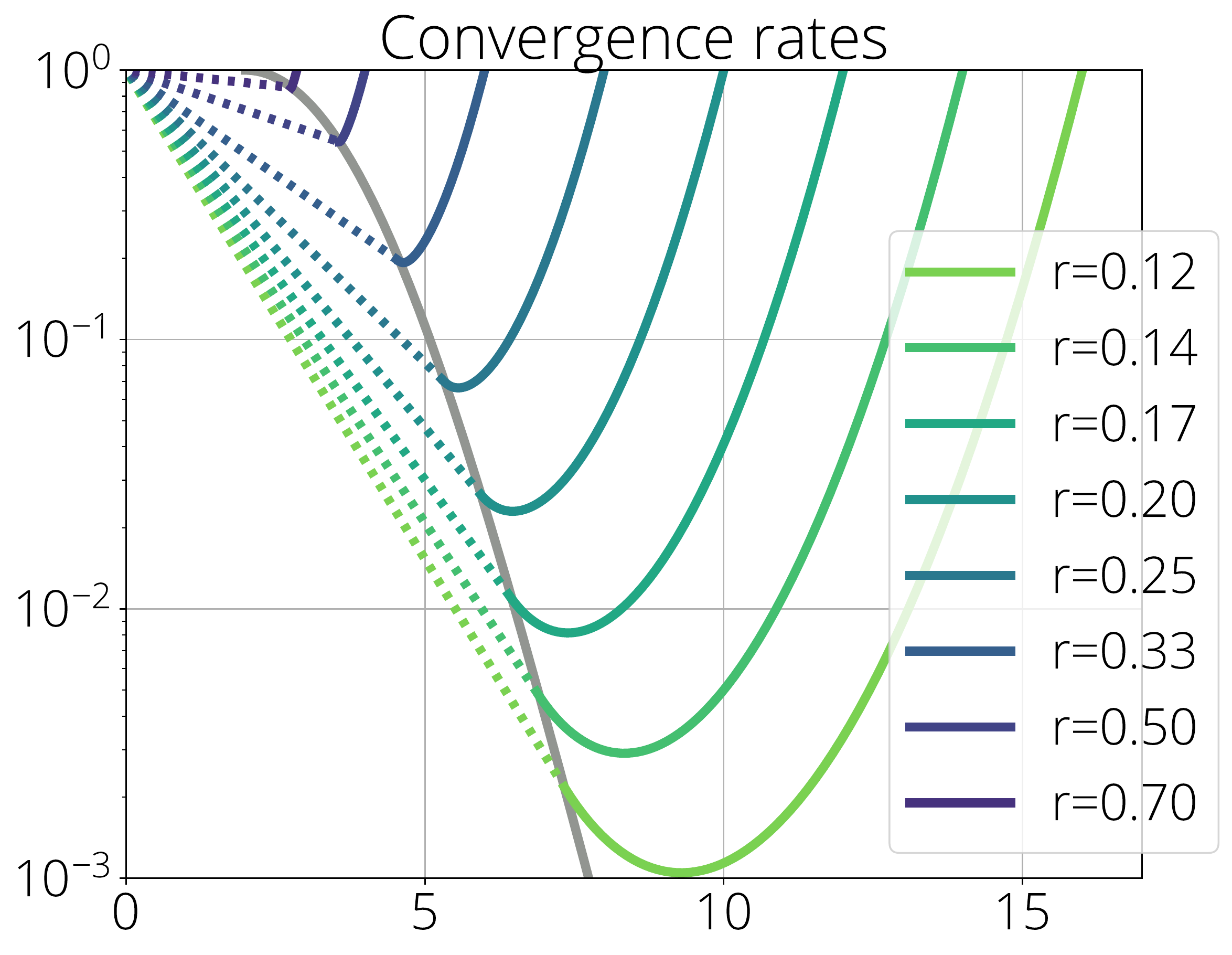} 
  \includegraphics[width = 1\linewidth]{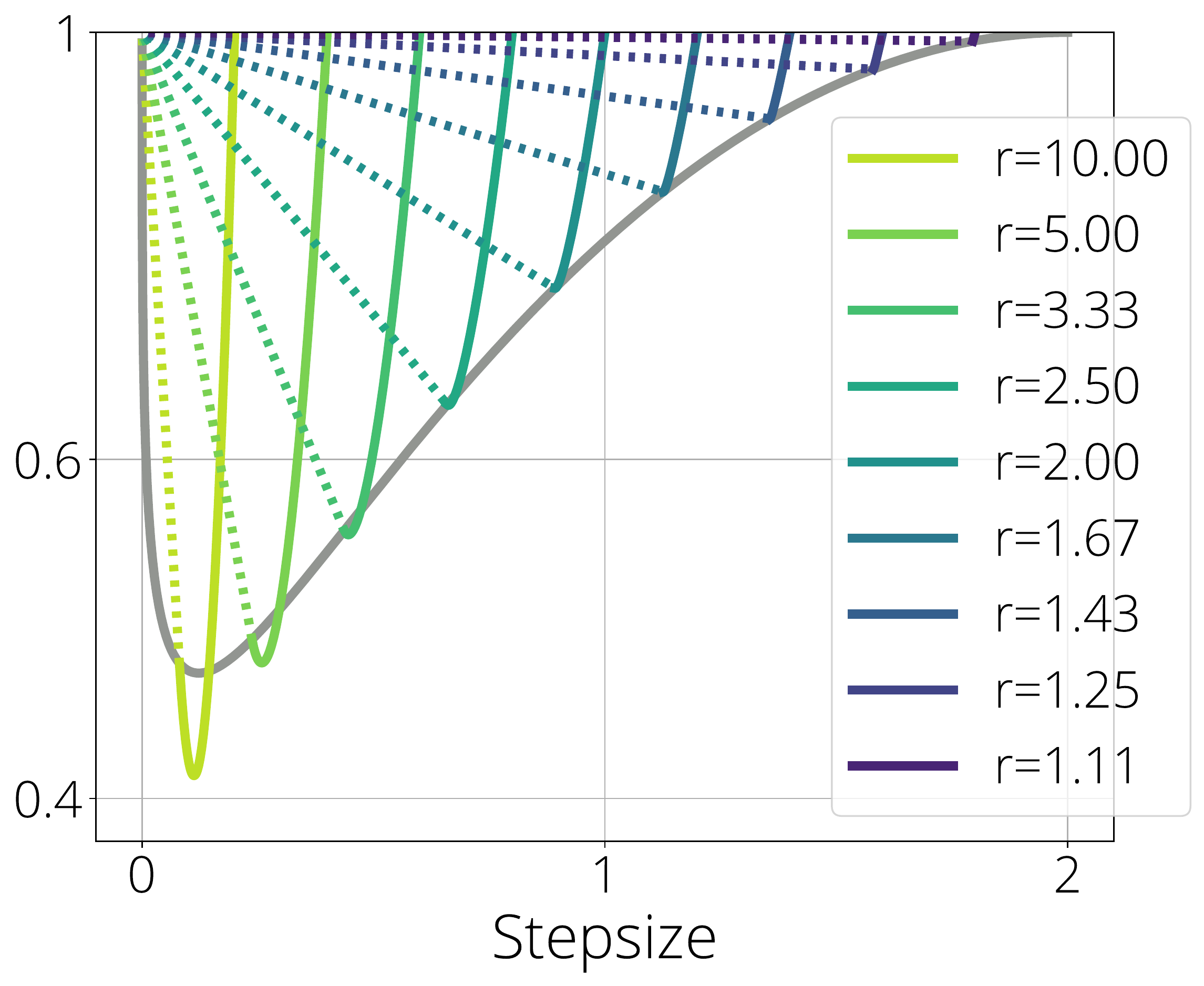} \vspace{-0.8cm}
     \caption{\textbf{Phase transition of the convergence rate} (y-axis) as a function of the stepsize (x-axis, $\gamma$) for the isotropic features model. Smaller stepsizes (dotted) yield convergence rates which depend linearly on $\gamma$ with a slope that is always frozen on $\lambda^-$. The convergence rate abruptly changes behavior once it hits the \textit{critical stepsize} (solid gray, $\gamma_*$), becoming a non-linear function of $\gamma$. The critical stepsize appears to be a good predictor for the optimal stepsize. In addition, the more over-parameterized the data matrix ($r \to 0$) is, the smaller the window of convergent stepsizes and as $\HH$ becomes ill-conditioned ($r \to 1)$, the linear rate degenerates and the high temperature phase disappears. }
    \label{fig:stepsizes}
\end{wrapfigure} 

 The expression highlights how the algorithm, stepsize, signal and noise levels interact with each other to produce different dynamics. For instance, our framework allows one to see the effect of the \textit{entire} spectrum of the data matrix on the dynamics. Also we note that the batch-size $\beta$ does not appear in the limiting Volterra equation. Numerical simulations in Section \ref{sec: numerical_simulation} confirm that $\psi$ accurately predicts the behavior of SGD. 

%\clearpage
% \begin{figure}
%     \centering
%     \includegraphics[width=\linewidth]{}
%     \caption{Caption}
%     \label{fig:stepsizes}
% \end{figure}

\paragraph{Phase transition of SGD dynamics and critical stepsize.} We prove a surprising dichotomy in the dynamics of SGD for a general measure: SGD undergoes a phase transition at a critical stepsize which we denote by $\gamma_*$ 
\begin{equation} \label{eq:main_critical_stepsize}
\gamma_* \defas \frac{1}{\tfrac{r}{2} \int_0^\infty \frac{x^2}{x-\lambda^-} \, \dif \mu(x)}\,.
\end{equation}
Starting at small stepsizes, we see that the linear rate of convergence for SGD \textit{freezes} on the smallest eigenvalue of $\HH,$
%\frac{1}{n} \AA^T\AA$
that is $f($\raisebox{0.1em}{$\xx_{ \lfloor \tfrac{n}{\beta} t \rfloor }$}$)$\vspace{-0.3em}
%$f({\scriptstyle \xx_{ \lfloor \tfrac{n}{\beta} t \rfloor }})$
decreases like $e^{-2\gamma \lambda^- t}$. However when $\gamma$ passes the transition point $\gamma_*$, the dynamics of SGD have a more complicated dependency on the stepsize (in particular it is no longer log-linear in $\gamma$). This is strongly reminiscent of a \textit{freezing transition}, often seen in the free energies of random energy models (see \cite{Derrida}), with $\gamma$ playing the role of temperature. This is summarized in our second main contribution -- the asymptotic rates for SGD under a general spectral measure $\mu$ (see Appendix \ref{app: general_analysis_volterra}).
\begin{theorem}[Critical stepsize, asymptotic
  rates] \label{thm:main_critical_stepsize} Suppose $r \neq 1$ (i.e. strongly convex regime). 
% Define the limit of the process $\psi_0(t)$ by
% \begin{equation} \psi_0(\infty) \defas \lim_{t \to \infty} \psi_0(t) = \frac{\widetilde{R}}{2} \cdot \frac{r\mu( \{0\} ) + (1-r)}{1-\tfrac{\gamma r}{2} \big ( \int_0^\infty x \, d\mu(x) \big ) }. \end{equation}
For $\gamma_* < \gamma < \frac{2}{r} \big ( \int_0^\infty x \dif \mu(x) \big )^{-1}$, the value of $\lambda^*(\gamma)$ is given as the unique solution to
\begin{equation} \label{eq:lambda_*}
r \gamma^2 \int_0^\infty e^{2\gamma \lambda^* t}h_2(t) \, \dif t = 1
\,.\end{equation}
The function $\psi_0(t)$ satisfies that for some explicit constant $c(R, \widetilde{R}, \mu) > 0,$ 
\begin{equation}
    \psi_0(t) - \frac{\widetilde{R}}{2} \cdot \frac{r\mu( \{0\} ) + (1-r)}{1-\tfrac{\gamma r}{2} \big ( \int_0^\infty x \, \dif \mu(x) \big ) } \sim \frac{c}{\gamma} e^{-2\gamma t\lambda^*(\gamma) }.
\end{equation}
If in addition $\gamma_* > 0,$ and $\mu([\lambda^{-},\lambda^{-}+t])\sim c_\mu t^{\alpha}$%=\Theta(t^\alpha)$ 
as $t \to 0$ %(see App. Thm~\ref{thm:VolterraTrivialities}), 
then there is a constant $c(R,\widetilde{R},\gamma,\mu)>0$ so that
for\\
$0 < \gamma < \gamma_*$, 
%there exists a function $g(t)$ such that
\begin{equation}
    \psi_0(t) - \frac{\widetilde{R}}{2} \cdot \frac{r\mu( \{0\} ) + (1-r)}{1-\tfrac{\gamma r}{2} \big ( \int_0^\infty x \, \dif \mu(x) \big ) } 
    \sim \frac{c}{t^\alpha}e^{-2 \gamma t \lambda^-}.
    %\sim e^{-2 \gamma t \lambda^-} g(t).
\end{equation}
\end{theorem}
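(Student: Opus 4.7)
The natural approach is to take the Laplace transform of \eqref{eq:Volterra_eq_main}, since it is a convolution Volterra equation. Writing $\widehat{\psi}(s) \defas \int_0^\infty e^{-st}\psi_0(t)\,\dif t$ and similarly for the forcing $F(t) \defas \tfrac{R}{2}h_1(t) + \tfrac{\widetilde R}{2}(rh_0(t) + 1 - r)$ and the kernel $h_2$, the equation becomes the algebraic identity $\widehat{\psi}(s) = \widehat F(s)/D(s)$ with $D(s) \defas 1 - r\gamma^2 \widehat{h_2}(s)$. A Fubini calculation gives $\widehat{h_k}(s) = \int_0^\infty x^k/(s + 2\gamma x)\,\dif\mu(x)$, so $D$ is analytic on $\mathbb{C}$ outside the branch segment $[-2\gamma\lambda^+, -2\gamma\lambda^-]$ and real-analytic on $(-2\gamma\lambda^-, \infty)$. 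The equilibrium $\psi_\infty$ claimed in the theorem is obtained by the final-value computation $\psi_\infty = F_\infty/D(0)$, which matches the stated expression.

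To locate the critical stepsize, I would exploit that $D$ is strictly decreasing on $(-2\gamma\lambda^-, 0]$: at the right endpoint $D(0) = 1 - \tfrac{r\gamma}{2}\int x\,\dif\mu(x) > 0$ by the stepsize hypothesis, while at the left endpoint $D(-2\gamma\lambda^-) = 1 - \tfrac{r\gamma}{2}\int x^2/(x - \lambda^-)\,\dif\mu(x) = 1 - \gamma/\gamma_*$, which is finite precisely under the assumption $\gamma_* > 0$. Hence $D$ admits a unique real root $s^\star \in (-2\gamma\lambda^-, 0)$ if and only if $\gamma > \gamma_*$, and setting $s^\star = -2\gamma\lambda^\star$ recovers the defining equation \eqref{eq:lambda_*} after the change of variable $s = -2\gamma\lambda$ in the Laplace integral.

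The asymptotic rate then follows from Bromwich inversion, $\psi_0(t) = \tfrac{1}{2\pi i}\int_{c - i\infty}^{c + i\infty} e^{st}\widehat{\psi}(s)\,\dif s$, by deforming the contour as far left as the singularities of $\widehat{\psi}$ allow. In Case 1 ($\gamma > \gamma_*$), the simple pole at $s^\star$ is the first obstruction and the residue theorem gives $\psi_0(t) - \psi_\infty \sim [\widehat F(s^\star)/D'(s^\star)]\,e^{-2\gamma\lambda^\star t}$; the prefactor reduces to the stated $c/\gamma$ form after substituting $D'(s^\star) = \tfrac{r}{4}\int x^2/(x-\lambda^\star)^2\,\dif\mu(x)$ and collecting powers of $\gamma$. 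In Case 2 ($\gamma < \gamma_*$), there is no pole in the half-plane $\Re s > -2\gamma\lambda^-$, so the leading asymptotics come from the branch singularity at $s = -2\gamma\lambda^-$. Using the edge law $\mu([\lambda^-, \lambda^- + u]) \sim c_\mu u^\alpha$, a local analysis produces a leading singular contribution of the form $C_\alpha(s + 2\gamma\lambda^-)^{\alpha - 1}$ to $\widehat{h_2}$ (with logarithmic corrections at integer $\alpha$), which an Abelian/Tauberian theorem translates into the $t^{-\alpha}e^{-2\gamma\lambda^- t}$ tail.

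The main obstacle is Case 2: extracting the correct power-law prefactor requires both a delicate local expansion of $\widehat{h_2}$ near the hard edge of $\mu$ (whose form depends on whether $\alpha \in (1,2)$, $\alpha = 2$, $\alpha \in (2,3)$, etc.) and uniform control of the contour integral along the remainder of the branch cut $[-2\gamma\lambda^+, -2\gamma\lambda^- - \varepsilon]$, so that no bulk contribution competes with the edge term. A secondary technical point is verifying that $\widehat F$ does not vanish at the leading singularity, which is what guarantees strict positivity of the stated constant $c(R, \widetilde R, \gamma, \mu)$.
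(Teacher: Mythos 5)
You take the Laplace transform of the Volterra equation and locate singularities of $\widehat\psi = \widehat F/D$, where $D(s) = 1 - r\gamma^2\widehat{h_2}(s)$; the paper instead casts \eqref{eq:Volterra_eq_main} as a renewal equation and invokes classical renewal theory (Asmussen; Asmussen–Foss–Korshunov). These are dual routes to the same conclusion, and your Case~1 is essentially correct: $\widehat F$ and $\widehat{h_k}$ admit the integral representation $\int x^k/(s+2\gamma x)\,\dif\mu(x)$, hence analytic continuation across $\{\Re s > -2\gamma\lambda^-\}\setminus\{0\}$; $D$ is strictly decreasing there with $D(0)>0$ and $D(-2\gamma\lambda^-)=1-\gamma/\gamma_*<0$, so a unique zero $s^\star=-2\gamma\lambda^*$ exists precisely when $\gamma>\gamma_*$, and the residue $\widehat F(s^\star)/D'(s^\star)$ at this simple pole reproduces the paper's quotient $\bigl(\int e^{2\gamma\lambda^*t}(z(t)-z(\infty))\,\dif t - z(\infty)/(2\gamma\lambda^*)\bigr)/\bigl(\int te^{2\gamma\lambda^*t}\gamma^2 r h_2(t)\,\dif t\bigr)$ exactly, since $D'(s^\star)=\tfrac{r}{4}\int x^2/(x-\lambda^*)^2\,\dif\mu(x)$. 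You should be a little careful stating this: the integral defining $\widehat F(s^\star)$ is not absolutely convergent (it is $\int e^{2\gamma\lambda^*t}z(t)\,\dif t$ with $z(\infty)>0$), so one must either subtract $z(\infty)$ and track the $z(\infty)/s$ pole separately, or work directly with the continued integral representation. You do need to also justify that the shifted Bromwich contour is admissible (horizontal segments vanish; $\widehat\psi(s)=O(1/|s|)$, so this requires a Jordan-type estimate rather than being automatic), but this is routine.

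The genuine gap is in Case~2, and it is the one you already half-identify. Passing from a singularity expansion of $\widehat\psi$ near $s=-2\gamma\lambda^-$ to the asymptotics of $\psi_0(t)-\psi_\infty$ is a \emph{Tauberian}, not an \emph{Abelian}, step: Abelian theorems translate known function asymptotics into transform asymptotics, which is the wrong direction here. A Karamata/Hardy–Littlewood Tauberian theorem gives you $t^{-\alpha}e^{-2\gamma\lambda^-t}$ from the $(s+2\gamma\lambda^-)^{\alpha-1}$ singularity of $\widehat\psi$ only under an additional regularity hypothesis on $\psi_0$ (e.g.\ ultimate monotonicity of $e^{2\gamma\lambda^- t}(\psi_0(t)-\psi_\infty)$, or a one-sided bound), and you have not argued that this holds; it is not automatic, since $\psi_0$ is defined by the renewal equation and could a priori oscillate. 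The paper avoids this issue entirely by rewriting \eqref{eq:Volterra_eq_main} for $\psi_0-\psi_\infty$ as a \emph{defective} renewal equation $\widehat Z = \widehat z + \widehat Z * \widehat F'$ with total mass $\theta<1$ and then applying the local-subexponentiality theorem of Asmussen–Foss–Korshunov (their Theorem~5); the subexponential framework is exactly designed to propagate a regularly varying tail through a defective renewal convolution with no Tauberian hypothesis to check. If you want to finish your route, you would either need to verify a Tauberian-type hypothesis on $\psi_0$ directly (nontrivial), or replace the Tauberian step by a Hankel-contour estimate with explicit error bounds that show the residual branch-cut integral over $[-2\gamma\lambda^+,-2\gamma\lambda^--\varepsilon]$ is $o(t^{-\alpha}e^{-2\gamma\lambda^-t})$ uniformly — again not automatic, because the density of $\mu$ in the bulk is not controlled. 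You also need $\gamma_*>0$, i.e.\ $\int x^2/(x-\lambda^-)\,\dif\mu<\infty$, which forces $\alpha>1$; this is compatible with your assumed edge exponent but worth making explicit, since for $\alpha\le1$ the whole picture (position of the rightmost singularity, form of the expansion) changes.
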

\noindent We also give rates for the case of $r=1$ in Thm \ref{thm:GVolterra_convex}.  See App.~\ref{app: general_analysis_volterra} for further discussion and the derivation.

\begin{table}[t!]
\renewcommand{\arraystretch}{2}
    \centering
    \begin{tabular}[]{cccc}
    \hline
         &\begin{minipage}{0.27\textwidth} \begin{center} \textbf{Strongly convex, $\bm{\gamma < \gamma_*}$} \end{center} \end{minipage} & \begin{minipage}{0.27\textwidth} \begin{center} \textbf{Strongly convex, $\bm{\gamma = \gamma_*}$} \end{center} \end{minipage} \\
   \hline
         Worst & \textcolor{purple}{$\text{exp}(-\gamma t \lambda^- + \tfrac{\gamma^2}{2} (\lambda^+)^2 t)$ }& \textcolor{purple}{$\text{exp}(-\gamma t \lambda^- + \tfrac{\gamma^2}{2} (\lambda^+)^2 t)$}\\
         \hline
         \textbf{Average} & \textcolor{purple}{$\text{\rm exp}(-2\gamma\lambda^- t)$} $\cdot$ \textcolor{teal}{$\frac{1}{t^{3/2}}$} &  \textcolor{purple}{$\text{\rm exp}(-2\gamma\lambda^- t)$} $\cdot$ \textcolor{teal}{$\frac{1}{t^{1/2}}$} \\ 
         \hline
         &  \begin{minipage}{0.27\textwidth} \begin{center} \textbf{Strongly convex, $\bm{\gamma >  \gamma_*}$} \end{center} \end{minipage}  & \begin{minipage}{0.27\textwidth} \begin{center} \textbf{Non-strongly convex\\ w/ noise, $\bm{r = 1, \widetilde{R} > 0}$} \end{center} \end{minipage} \\
         \hline
         Worst & 
          \textcolor{purple}{$\text{exp}(-\gamma t \lambda^- + \tfrac{\gamma^2}{2} (\lambda^+)^2 t)$} & $(R + \widetilde{R} \cdot \textcolor{purple}{d}) \cdot \textcolor{teal}{\frac{1}{t}}$\\
          \hline
          \textbf{Average} &  \textcolor{purple}{$ \text{exp}[ -\gamma t \big (1-\frac{r\gamma}{2} \big ) \big ( 1+r + \sqrt{(1+r)^2 - \frac{8}{\gamma}} \big ) \big ]$} & $R \cdot$ \textcolor{teal}{$\frac{1}{t^{1/2}}$} + $\widetilde{R} \cdot$ \textcolor{teal}{$\frac{1}{t^{1/2}} $} \\
          \hline
    \end{tabular}
    
    %\vspace{-0.25cm}
    
       \caption{\textbf{Asymptotic convergence guarantees} for $f($\raisebox{0.1em}{$\xx_{ \lfloor \tfrac{n}{\beta} t \rfloor }$}$)\vspace{-0.3em} - \tfrac{\widetilde{R}}{2} \big (1-\tfrac{r\gamma}{2} \big )^{-1} \max\{0,1-r\}$ on the isotropic features model.
        Stepsizes smaller than $\gamma_*$  have \textcolor{purple}{linear rates} based only on $\lambda^-$ multiplied by a \textcolor{teal}{polynomial} term (\textcolor{teal}{$t^{\alpha}$} in Theorem~\ref{thm:main_critical_stepsize}). Larger stepsizes have linear rates with factor $\lambda^*$ made explicit here.
        Average-case complexity are strictly better than the worst-case complexity, in some cases by a factor $\gamma$ vs $\gamma^2$. 
        Note also how the rates highlight the freezing transition in the strongly convex regime.
        For worst-case rates, see \cite[Theorem 4.6]{ bottou2018optimization} \cite[Theorem 2.1]{ghadimi2013stochastic}; $\lambda^{+}$ can be replaced by the max-$\ell^2$-row-norm in the worst-case bounds below. 
    % Average rate matches the worst rate when $r \neq 1$ and achieves a faster rate in the noiseless, $r=1$ setting, $\mathcal{O}(\varepsilon^{-2/5})$ versus $\mathcal{O}(\varepsilon^{-1/2})$. In the noisy, $r=1$ regime, the worst rate grows like $\sqrt{d}$ so for sufficiently large $d$ is slow. 
    % In strongly convex setting, the $\mathcal{O}$ hides logarithmic dependencies in $r$, $\sigma$, $R$, and $\widetilde{R}$ and for the other cases, the $\mathcal{O}$ only hides universal constants. 
    % The noisy non-strongly convex worst-case complexity only holds with high probability.  
    } 
    \label{tab:complexity_main}
\end{table}

\paragraph{Average-case complexity for SGD.} Our last contribution is one of the first average-case complexity results for any stochastic optimization algorithm. The value $\psi_0(t)$ is the average function value at iteration $t$ after first taking the model size to infinity. Consequently, this yields a notion of average complexity for SGD to a neighborhood. When the data matrix $\AA$ satisfies the isotropic features model $\AA$, we give an explicit formula for the expected function values $\psi_0(t)$, the critical stepsize $\gamma_*$, and the corresponding $\lambda^*$ (Appendix~\ref{app: explicit_sol_for_volterra}, Theorem~\ref{thm:isotropic_noisy} and Section~\ref{sec:explicit}, Theorem~\ref{thm:main_noiseless_SGD_dynamics}). Table~\ref{tab:complexity_main} summarizes our average rates.

The average-case complexity in the strongly convex case has significantly better linear rates than the worst-case guarantees and, in particular, there is no dependence on $\lambda^+$. We additionally capture a second-order behavior, the \textit{polynomial correction term} (\textcolor{teal}{green} in Table~\ref{tab:complexity_main}). This polynomial term has little effect on the complexity compared to the linear rate. However as the matrix $\HH$ becomes ill-conditioned $(r\to1)$, the polynomial correction starts to dominate the average-case complexity. The sublinear rates in Table~\ref{tab:complexity_main} for $r=1$ show this effect and it accounts for the improved average-case rates in the convex setting. This improvement in the average rate indeed highlights that \textit{the support of the spectrum does not fully determine the rate.} Many eigenvalues contribute meaningfully to the average rate. Hence, our results are not and cannot be purely explained by the support of the spectrum. As noted in \cite{paquette2020}, the worst-case rates when $r =1$ have dimension-dependent constants due to the distance to the optimum $\|\xx_0-\xx^\star\|^2 \approx d,$ which appears in the bounds. 

% It may appear at first glance (Table~\ref{tab:complexity_main}) that there is a discrepancy between the average-case and exact worst-case rate in the noisy, $r=1$.

% Our average-case rates accurately predict the empirical convergence observed in simulations (Figure~\ref{fig:comparision_SDE_volterra}). Although our rates only hold when the number of samples $n \to \infty$, surprisingly, our Volterra equation matches SGD for relatively small dimensions $n$.
% Our main contribution is the convergence of the dynamics of SGD .
% \textcolor{red}{Main Contributions}
% \begin{itemize}
%     \item Different paradigm for analyzing stochastic algorithms, random high dimensional data with constant stepsize (continuous time martingale, not ODE etc...)
%     \item Volterra equation dynamics, solved explicitly 
%     \item Large versus small stepsizes give rise to different solutions to the Volterra equation
%     \item Average-case complexity for SGD
% \end{itemize}

\paragraph{Related work.} %To place our work in perspective, we review related results in the literature for average-case complexity of algorithms, and the analysis of SGD through continuous-time processes. 

\textit{Average-case versus worst-case complexity.} Traditional worst-case analysis of optimization algorithms provide complexity bounds no matter how unlikely \citep{nemirovski1995information, nesterov2004introductory}. There are a plethora of results on the worst-case analysis of SGD \citep{robbins1951, bertsekas2000gradient,ghadimi2013stochastic,bottou2018optimization,gower2019sgd} and in particular, specific results for SGD applied to the least squares problem (see \textit{e.g.} \cite{jain2017,bertsekas1997new}). Worst-case analysis gives convergence guarantees, but the bounds are not always representative of typical runtime. 
% Many of these works compute bounds on the expected function values, gradients, and/or iterates where the expectation is taken over the stochasticity in the SGD algorithm but not the data. Unlike these works, we consider average-case analysis and give a \textit{tight} convergence rate for SGD, that is, we do not bound the expected convergence rate, but instead produce an exact value for the complexity. 

Average-case analysis, in contrast, gives sharper runtime estimates when some or all of its inputs are random. This type of analysis has a long history in computer-science and numerical analysis and it is often used to justify the superior performances of algorithms such as QuickSort \citep{Hoare1962Quicksort} and the simplex method, see for \textit{e.g.}, \citep{Spielman2004Smooth, smale1983on, borgwardt1986probabilistic}. Despite its rich history, average-case is rarely used in optimization due to the ill-defined notion of a typical objective function. Recently \citet{pedregosa2020average, lacotte2020optimal} derived a framework for average-case analysis of gradient-based methods on the least-squares problem with vanishing noise and it was later extended by \cite{paquette2020}. Similar results for the conjugate gradient method were derived in \citet{paquette2020universality,deift2019conjugate}. Our work is in the same line of research--providing the first average-case complexity for SGD.

For stochastic algorithms, \citet{Sagun2017Universal} showed empirical evidence that SGD on neural networks exhibits concentration of the function values. Other works \cite{mei2019mean,ward2020,sirignano2020mean,mert2020heavy-tail,mei2018mean} have used random matrix theory to analyze stochastic algorithms, but only in online or one-pass settings ($n \gg d$). We emphasize that our work applies to the finite sum setting; as we allow for multiple passes over the data. 

\textit{Continuous time processes.} A popular approach \citep{li2017stochastic,mandt2016variational, jastrzkebski2017three, nguyen2019first,zhu2019anisotropic,an2018stochastic} is to model the dynamics of SGD by imposing some structure on the
noise and, by sending stepsize to $0$, relate the iterates of SGD to the stochastic differential equation (SDE):
\begin{equation} \label{eq:SDE}
    \dif \XX_t = - \nabla f(\XX_t)\dif t + (\gamma \SSigma(\XX_t))^{1/2} \dif \boldsymbol{B}_t.
\end{equation}
Here one typically assumes the stochastic gradient noise $\nabla f_i(\xx) - \nabla f(\xx)$ is normally distributed (but not necessarily \citep{mert2019tail}) with some specific covariance structure $\SSigma(\XX)$. A common choice, called the stochastic modified equation (SME) \citep{li2017stochastic,mandt2016variational}, matches the covariance matrix $\SSigma(\XX)$ of the Gaussian noise with the actual covariance of the stochastic gradients at $\xx$ (\textit{i.e.} $\SSigma(\XX) = \frac{1}{n} \sum_{i=1}^n (\nabla f(\xx)-\nabla f_i(\xx))(\nabla f(\xx)-\nabla f_i(\xx))^T$ ). This covariance makes SME have correct mean behavior so the expected function values of the SME model are good approximations for the expected function values of SGD. \cite{li2017stochastic} show that by taking the stepsize $\gamma$ small, the behavior of SGD and SME align. They and \cite{mandt2016variational} also give a modified SME which gives even higher order accuracy of SGD as stepsize goes to $0$. 

% Recently, a modification of this equation has shown to be much more accurate \citep{li2017stochastic,mandt2016variational}. By using a gradient correction term, this \textit{modified SDE}:
% \begin{equation} \label{eq:modified_SDE}
%  \dif \xx_t = - \nabla \big ( f(\xx_t) + \tfrac{\gamma}{4} \| \nabla f(\xx_t)\|^2 \big ) \dif t + (\gamma \SSigma(\xx_t) )^{1/2} \dif \BB_t,
% \end{equation}
% where the covariance matrix $\SSigma(\xx_t)$ is same as in \eqref{eq:SDE}.  In fact, our Volterra equation has the same mean properties as \eqref{eq:SDE} which suggests that this is good model for small stepsizes.  
 
These SDEs have been used to study numerous properties of SGD including the dynamics of regularized loss functions \citep{kunin2020neural} and generalization \citep{pflug1986stochastic,jastrzkebski2017three,zhu2019anisotropic,mert2019tail}. Despite their wide use, it has been observed that there is no small stepsize limit SGD that converges to an SDE \citep{yaida2018fluctuationdissipation}. Our approach, instead, looks at the large-$n$ limit and shows, in fact, that SGD concentrates while maintaining fixed stepsize. Moreover, our Volterra equation is relatively easy to analyze. We note that the SME has the same mean behavior as SGD so when $n \to \infty$, the mean behavior of SME and our Volterra equation match. \textit{However the SME does not capture this concentration effect and greatly overestimates the fluctuations of the sub-optimality.} 

\section{Dynamics of SGD: reduction to the Volterra equation} \label{sec:dynamics_of_SGD}

In this section, we develop the framework for the dynamics of SGD and sketch the argument of our main result (Theorem~\ref{thm: concentration_main}). Full proofs can be found in Appendix~\ref{apx:proof_main_result}. 

\paragraph{Step 1: Change of basis.} 
% Recall the SGD algorithm applied to \eqref{eq:lsq} follows the update rule
% \[ \xx_{k+1} = \xx_k - \frac{\gamma}{n} \AA^T \PP_k (\AA (\xx_k-\widetilde{\xx}) + \sqrt{n} \eeta).\]
A key feature of the SGD least squares iteration \eqref{eq:oursgd} is that the projection of $\xx_k$ onto a singular vector $\vv_j$ of $\AA$ with singular value $\sigma_j$ decreases in expectation exponentially in the number of iterations at a rate proportionally to the squared singular value $\sigma_j^2$ \citep{strohmer2009randomized,steinerberger2020randomized}. This observation suggests the following change of basis.  Consider the singular value decomposition of $\frac{1}{\sqrt{n}}\AA = \UU \SSigma \VV^T$, where $\UU$ and $\VV$ are orthogonal matrices, i.e.\ $\VV \VV^T = \VV^T \VV = \II$ and $\SSigma$ is the $n \times d$ singular value matrix with diagonal entries $\diag(\sigma_j), j=1,\ldots,d$. We define the \emph{spectral weight} vector $\widehat{\nnu}_k \defas \VV^T (\xx_k-\widetilde{\xx}),$ which therefore evolves like
\begin{equation}\label{eq:main_nuk}
    \widehat{\nnu}_{k+1} = \widehat{\nnu}_k 
    - \gamma \SSigma^T
    \UU^T \PP_k
    (\UU\SSigma \widehat{\nnu}_k - \eeta).
\end{equation}
For this point on, we consider the evolution of $\widehat{\nnu}$. We note our above observation on the singular vectors only holds on average for individual coordinates of $\xx_k,$ and it does not alone explain the emergence of the Volterra equation dynamics.
It also guarantees nothing about the concentration of the suboptimality. 
% so the random process of $f(\xx_k)$ may still fail to concentrate.
% (\textit{i.e.} the square of the coordinates does not concentration around the expection of the coordinates squared). 
%For concentration, we use the Gaussian assumption on the signal and noise (Assumption~\ref{assumption: Vector}). 
%Intuitively, the delocalization of the singular vectors guarantees that the update in $\xx_{k+1}$ makes small changes in all directions uniformly and thus $\widehat{\nnu}_k$ stays close to their mean value. 

\paragraph{Step 2: Embedding into continuous time.} We next consider an embedding of the $\widehat{\nnu}$ into continuous time.  This is done to simplify the analysis, and it does not change the underlying behavior of SGD.  We let $N_t$ be a standard univariate Poisson process with rate $\tfrac{n}{\beta},$ so that for any $t > 0,$ $\mathbb{E}(N_t) = \tfrac{nt}{\beta}$. We embed the spectral weights $\widehat{\nnu}$ into continuous time, by taking $\nnu_t = \widehat{\nnu}_{\tau_{N_t}}$.  We note that we have scaled time (by choosing the rate of the Poisson process) so that in a single unit of time $t,$ the algorithm has done one complete pass (in expectation) over the data set.

We then show that $f(\xx_{N_t})$ is well approximated by $\psi_0(t).$  As the mean of $N_t$ is large for any fixed $t > 0,$ the Poisson process concentrates around $\tfrac{nt}{\beta},$ and it follows as an immediate corollary that $f(\xx_{\lfloor \tfrac{nt}{\beta}\rfloor})$ is also well approximated by $\psi_0(t)$. 

%Letting $\mathbb{N}_0 \defas \mathbb{N} \cup \{0\}$, we define an infinite random sequence $\{ \tau_k : k \in \N_0\} \subset [0,\infty)$ with $0 = \tau_0 < \tau_1 < \tau_2 < \cdots,$ which will record the time at which the $k$-th update of SGD occurs.  The distribution of these $\{ \tau_k : k \in \N_0\}$ will follow a standard rate-$(\tfrac{n}{\beta})$ Poisson process.  This means that the family of interarrival times $\{ \tau_k-\tau_{k-1} : k \in \N\}$ are i.i.d. $\text{Exp}(\tfrac{n}{\beta})$ random variables, i.e.\ those with mean $\tfrac{\beta}{n}$, and this randomization is independent of both the SGD, $\AA$ and $\bb.$  The function $N_t$ will count the number of arrivals of this Poisson process before time $t$, that is
%$N_t = \sup \{ k \in \mathbb{N}_0 ~:~ \tau_k \leq t \}.$
%Then for any $t > 0,$ $N_t$ has the distribution of $\text{Poisson}(\tfrac{n}{\beta}t)$.

%We embed the process $\widehat{\nnu}$ into continuous time, by taking $\nnu_t = \widehat{\nnu}_{\tau_{N_t}}$.  We note that we have scaled time (by choosing the rate of the Poisson process) so that in a single unit of time $t,$ the algorithm has done one complete pass (in expectation) over the data set.

\paragraph{Step 3: Doob--Meyer decomposition \& the approximate Volterra equation.} Under this continuous-time scaling, we can write the function values at $\xx_t$ in terms of $\nnu_t$ as
\begin{equation}\label{eq:main_psidef}
    \psi_{\varepsilon}(t) \defas f(\xx_{N_t})
    =
    \frac{1}{2}\| \SSigma \nnu_t - \UU^T\eeta\|^2 =
\frac{1}{2}
\sum_{j=1}^d \sigma_j^{2}\nu_{t,j}^2
-\sum_{j=1}^{n \wedge d} \sigma_j\nu_{t,j}\, (\UU^T\eeta)_j 
+\frac{1}{2}\|\eeta\|^2,
\end{equation}
where $\nu_{t,j}$ is the $j$-th coordinate of the vector $\nnu_t$. Hence the dynamics of $\psi_{\varepsilon}(t)$ are governed by the behaviors of $\nu_{t,j}$ and $\nu_{t,j}^2$ processes. Using \eqref{eq:main_nuk} and Doob decomposition for quasi-martingales \citep[Thm 18, Chpt. 3]{protter2005stochastic}, we have an expression for the $\nu_{t,j}$ and $\nu_{t,j}^2$, that is, if we let $\mathcal{F}_t$ be the $\sigma$-algebra of the information available to the process at time $t \ge 0$, we get
\begin{gather}
\nu_{t,j} = \nu_{0,j} + \int_0^t \mathcal{B}_{s,j}\,\dif s + \widetilde{M}_{t,j} \quad \text{and} \quad 
\nu_{t,j}^2 = \nu_{0,j}^2 + \int_0^t \mathcal{A}_{s,j}\, \dif s + M_{t,j}, \nonumber \\
\text{where} \quad \mathcal{B}_{t,j} \defas
\partial_t \E[\nu_{t,j}~|~\mathcal{F}_t]  = -\gamma \sigma_j^2 \nu_{t,j} + \gamma \sigma_j(\UU^T\eeta)_j,  \label{eq:main_doob} \\
\mathcal{A}_{t,j} \! \defas \! 
\partial_t \E[\nu_{t,j}^2~|~\mathcal{F}_t] = 2\nu_{t,j} \mathcal{B}_{t,j} \!
    +\tfrac{\beta-1}{n-1}
    \bigl(
    \mathcal{B}_{t,j}
    \bigr)^2\! \!+ \!\gamma^2 \sigma_j^2\big( 1-\tfrac{\beta-1}{n-1}\big)
    \sum_{i=1}^n
    \bigl(\mathbb{e}_j^T
    \UU^T 
    \mathbb{e}_{i}
    \bigr)^2
    \bigl(
    \mathbb{e}_{i}^T
    (\UU\SSigma \nnu_t - \eeta)\bigr)^2, \nonumber
    % & \qquad +\gamma^2 \sigma_j^2\big( 1-\tfrac{\beta-1}{n-1}\big)
    % \sum_{i=1}^n
    % \bigl(\mathbb{e}_j^T
    % \UU^T 
    % \mathbb{e}_{i}
    % \bigr)^2
    % \bigl(
    % \mathbb{e}_{i}^T
    % (\UU\SSigma \nnu_t - \eeta)\bigr)^2,
\end{gather}
and $(M_{t,j}, \widetilde{M}_{t,j}: t \geq 0)$ are $\mathcal{F}_t$--adapted martingales. The last identities for $\mathcal{B}_{t,j}$ and $\mathcal{A}_{t,j}$ are derived in Lemma~\ref{lem:subsetsum} in Appendix \ref{apx:proof_main_result}). 
% But this is not surprising. These terms come from the conditional expectation of $\nnu_t$ which by construction of the SGD update is proportional to the gradient of the objective function; therefore they have a relatively simple structure. 

We will now see how the terms $\mathcal{A}_{t,j}$ and $\mathcal{B}_{t,j}$ can be simplified in the large-$n$ limit. In this regime, sums of spectral quantities converge to integrals against the limiting spectral measure $\mu$ as a direct consequence of Assumptions \ref{assumption: Vector} and \ref{assumption: spectral_density}. Since we are working in the regime where $\beta = o(n)$, the terms with $\tfrac{\beta-1}{n-1}$ vanish in the large-$n$ limit, disappearing entirely when $\beta = 1$, and explaining why $\beta=o(n)$ does not affect the limiting dynamics of SGD. Our \textit{key lemma}, which explains the Volterra dynamics of the mean of $f(\xx_{N_T})$ (Lemma~\ref{prop:errorKH}, App.~\ref{app:error_vanish_key_heuristic}), is that 
$\bigl(\mathbb{e}_j^T\UU^T  \mathbb{e}_{i} \bigr)^2$ self averages to $\tfrac{1}{n}$, and this is the point where we leverage orthogonal invariance of $\AA$ most heavily.  

These simplifications can be summarized as
\begin{equation} \begin{gathered} \label{eq: main_1}
    \mathcal{A}_{t,j} \approx \widehat{\mathcal{A}}_{t,j} \defas 
    \textcolor{teal}{-\gamma 2\sigma_j^2\nu_{t,j}^2}
    +\textcolor{purple}{\gamma 2\sigma_j\nu_{t,j}(\UU^T\eeta)_j}
    +\textcolor{mypurple}{\gamma^2\tfrac{2\sigma_j^2\psi_{\varepsilon}(t)}{n}}.
\end{gathered}
\end{equation}
The expression $\widehat{\mathcal{A}}_{t,j}$ explains the limiting Volterra dynamics for $\psi_{\epsilon}(t)$, and why the mean \textcolor{teal}{``gradient flow''} term does not correctly describe the dynamics of SGD.  Due to the \textcolor{teal}{gradient flow} term, the squared spectral weights $\nu_{t,j}^2$ tend to decay linearly with rate $2\gamma \sigma_j^2$. On the other hand, coordinates can not decay too quickly, as there is a \textcolor{mypurple}{mass redistribution} term, which explains the rate at which mass from other spectral weights is added to $\nu_{t,j}^2$ and which is due to SGD updates being noisy analogues for gradient flow.  Finally, there is a \textcolor{purple}{noise} term which in principle depends on $\nu_{t,j}$ which would greatly complicate the limiting dynamics.  However, when averaged in $j$ the independence of the noise $\eeta$ leads to a concentration effect, due to which only the mean behavior of $\nu_{t,j}$ survives.  As this mean is just gradient flow, this leads to a simple deterministic forcing term in the Volterra equation. 

%We now write \eqref{eq:main_psivolterra} as a
Plugging \eqref{eq: main_1} into \eqref{eq:main_psidef} and \eqref{eq:main_doob}, we can produce a perturbed Volterra equation for $\psi_\varepsilon(t)$.
For any $t > 0$ we have
     \begin{gather}
\psi_\varepsilon(t) = 
{\frac{Rh_1(t)}{2}} + \textcolor{purple}{\frac{\widetilde{R}(rh_0(t) +(1-r))}{2}}
+ \varepsilon_1^{(n)}(t) + \!\int_0^t \!\bigl( \textcolor{mypurple}{\gamma^2 r h_2(s)} + \varepsilon_2^{(n)}(s)\bigr)\psi_\varepsilon(t-s)\,\dif s, 
%\text{where} \qquad \sup_{0 \leq t \leq T} |\varepsilon_1^{(n)}(t)| \Prto[n] 0
%\quad
%\text{and}
%\quad
%\sup_{0 \leq t \leq T} |\varepsilon_2^{(n)}(t)| \Prto[n] 0, 
\label{eq:main_approxvolterra}
    \end{gather}
for error terms $\varepsilon_i^{(n)}$  (see Appendix~\ref{app: bounding_the_error_terms} for a precise definition of the errors). The $h_k(t)$ are defined in Theorem~\ref{thm: concentration_main} as the Laplace transforms of the measure $\mu$, and arise naturally due to the presence of the \textcolor{teal}{gradient flow generator}.

\paragraph{Step 4: Control of the errors and stability of the Volterra equation.} 
The expression \eqref{eq:main_approxvolterra} is a Volterra equation of convolution type --- a well-studied equation, see \textit{e.g.}, \cite{gripenberg1990volterra}, with established stability and existence/uniqueness theorems. In particular, we can summarily conclude that (see Proposition \ref{prop:volterrastability} in Appendix~\ref{App: stability_volterra})
\[
\max_{i=1,2}
\sup_{0 \leq t \leq T} |\varepsilon_i^{(n)}(t)| \Prto[n] 0
\quad
\implies
\quad
\sup_{0 \le t \le T} |\psi_\varepsilon(t)-\psi_0(t) | \Prto[n] 0.
\]
Thus, Theorem~\ref{thm: concentration_main}, the dynamics for SGD immediately follows provided control of the errors in \eqref{eq:main_approxvolterra}. 

Beyond controlling the error of $\mathcal{A}_{t,j}-\widehat{\mathcal{A}}_{t,j},$ we must separately control the fluctuations of the martingale terms in \eqref{eq:main_doob}, which represent the randomness of SGD.  A central challenge here is to show in a suitable sense that the entries of $\sqrt{n}\nnu_t$ remain bounded on compact sets of time (see the discussion in Appendix \ref{app: bounding_the_error_terms} for a detailed overview), which in turn can be seen as a consequence of the updates of SGD being very nearly orthogonal to any fixed row of $\UU$.  Here again we use the orthogonal invariance of $\AA$, but in a weaker way, in that we only need that the maximum of the entries of $\UU$ are in control.  Such results are well-developed for other random matrix ensembles.

\section{Explicit formulas for isotropic features} \label{sec:explicit}

%% In this section, we analyze the Volterra equation \eqref{eq:Volterra_eq_main} which governs the dynamics of SGD. 
%We begin by showing the existence of two different dynamical behaviors for SGD based on the stepsize. These results hold for a general compactly supported spectral measure $\mu$ (Thm~\ref{thm:main_critical_stepsize}). We then restrict our attention to the isotropic features model and give explicit average-case rates.

%\paragraph{Phase transition of SGD.}

%Volterra equations of convolution type, as ours, appear quite frequently in the probability literature as the \textit{renewal equation} (\cite[(3.5.1)]{Resnick}, \cite[(2.1)]{Asmussen}). From renewal theory, the solution of our Volterra equation is determined by properties of the function $\gamma^2 r h_2(t)$. Moreover this theory gives general asymptotic values for Volterra equations in the cases where both $\lambda^*(\gamma)$ exists (here $\lambda^*$ is called the \textit{Malthusian exponent}) and when there does not exist a $\lambda^*(\gamma)$ satisfying \eqref{eq:lambda_*}. The value of $\gamma$ which transitions from the existence of the Malthusian exponent to non-existence gives the value the critical stepsize. As a result the proof of Theorem~\ref{thm:main_critical_stepsize} is essentially an automatic consequence of established theory for Volterra equations;  general asymptotic rates in the non-strongly convex regime and full proofs are deferred to App~\ref{app: general_analysis_volterra}.

We solve the Volterra equation and derive exact expressions for the average-case analysis, the critical stepsize $\gamma_*$, and the rate $\lambda^*$ (Thm~\ref{thm:main_critical_stepsize}) under the isotropic features model. In this case, the empirical spectral measure converges to the Marchenko-Pastur measure $\MP$ \eqref{eq:MP}. Volterra equations of convolution type can be solved using Laplace transforms, which conveniently, for Marchenko-Pastur, are explicit due to a connection with the Stieltjes transform. This leads us to our next main result.

\begin{theorem}[Dynamics of SGD in noiseless setting] \label{thm:main_noiseless_SGD_dynamics} Suppose $\widetilde{R} = 0$ and the stepsize $\gamma < \tfrac{2}{r}$. Define the constants $\varrho$ and $\omega$ and critical stepsize
\begin{equation}
    \varrho = \frac{1+r}{2} \left (1-\frac{r\gamma}{2} \right ),  \, \,  \omega = \frac{1}{4} \left (1-\frac{r\gamma}{2} \right )^2 \left (\frac{8}{\gamma} - (1+r)^2 \right ), \, \, \text{and} \, \, \gamma_* = \frac{2}{\sqrt{r} (r-\sqrt{r} + 1)}.
\end{equation}
The iterates of SGD satisfy if $\gamma \le \gamma_*$,
\[f( \xx_{ \lfloor \tfrac{n}{\beta} t \rfloor } \big ) \Prto[n] R \cdot \frac{1}{\gamma} \left (1-\frac{r\gamma}{2} \right ) \int_0^\infty \frac{xe^{-2\gamma xt}}{(x-\varrho)^2 + \omega} \, \dif \MP(x)\]
and if $\gamma > \gamma_*$, for some explicit constant $c(\gamma, r)$, the iterates of SGD follow 
\begin{align*}
    f( &\xx_{ \lfloor \tfrac{n}{\beta} t \rfloor } \big ) \Prto[n]  R \cdot \frac{1}{\gamma} \bigg (1 - \frac{r \gamma}{2} \bigg )\int_0^\infty \frac{x e^{-2\gamma xt}}{(x-\varrho)^2 + \omega} \, \dif \MP(x) + R \cdot c(\gamma, r) \cdot  e^{-2\gamma (\varrho + \sqrt{|\omega|}) t}.
% & \qquad + R \cdot \frac{1}{4 \sqrt{|\omega|}} \cdot \bigg [\varrho +  \sqrt{|\omega|} - \bigg(\frac{2}{\gamma}\bigg)^2 \left (1- \frac{r\gamma}{2} \right )^2 \frac{\varrho - \sqrt{|\omega|}}{r (\varrho^2 - |\omega|)} \bigg ] e^{-2\gamma (\varrho + \sqrt{|\omega|}) t}.
 \end{align*}
\end{theorem}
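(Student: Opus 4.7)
The plan is to solve the convolution Volterra equation of Theorem~\ref{thm: concentration_main} by the Laplace transform method, exploiting the fact that for the Marchenko--Pastur measure the Stieltjes transform is explicit and algebraic. In the noiseless case $\widetilde{R}=0$, the equation \eqref{eq:Volterra_eq_main} simplifies to
\[
\psi_0(t) = \tfrac{R}{2}\, h_1(t) + \gamma^2 r \int_0^t h_2(t-s)\,\psi_0(s)\,ds.
\]
Taking the one-sided Laplace transform $\hat f(z) = \int_0^\infty e^{-zt} f(t)\,dt$ and invoking the convolution theorem yields the algebraic identity
\[
\hat{\psi}_0(z) \;=\; \frac{(R/2)\,\hat{h}_1(z)}{1 - \gamma^2 r\,\hat{h}_2(z)}.
\]

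The next step is to evaluate $\hat{h}_k(z) = \int_0^\infty x^k/(z+2\gamma x)\,d\MP(x)$ explicitly. Setting $\zeta = -z/(2\gamma)$ and using that the zeroth and first moments of $\MP$ both equal $1$, a short computation gives
\[
\hat{h}_1(z) = \tfrac{1}{2\gamma}\bigl(1 + \zeta\, m_{\MP}(\zeta)\bigr), \qquad
\hat{h}_2(z) = \tfrac{1}{2\gamma}\bigl(1 + \zeta + \zeta^2 m_{\MP}(\zeta)\bigr),
\]
where $m_{\MP}(\zeta)=\int(x-\zeta)^{-1}\,d\MP(x)$ is the Stieltjes transform. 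Substituting the known closed form of $m_\MP$ (root of the MP self-consistent quadratic) renders $\hat{\psi}_0(z)$ an explicit algebraic function of $z$ whose only singularities are (i) the branch cut of $m_\MP$, which in $z$-space is the segment $[-2\gamma\lambda^+,-2\gamma\lambda^-]$, and (ii) possible isolated real zeros of $1-\gamma^2 r\,\hat{h}_2(z)$.

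Inversion via the Bromwich contour is the crux of the proof. Collapsing the contour onto these singularities, the branch cut yields a spectral integral by taking the jump $m_\MP(x-i0)-m_\MP(x+i0)$, which produces $d\MP(x)$; collecting real and imaginary parts of the denominator across the cut should clean up to the kernel $x/((x-\varrho)^2+\omega)$ appearing in the statement, times the prefactor $(1/\gamma)(1-r\gamma/2)$. The discrete part comes from residues at real zeros $\lambda^*$ of $1-\gamma^2 r\,\hat{h}_2(-2\gamma\lambda^*)$; this equation is exactly \eqref{eq:lambda_*} from Theorem~\ref{thm:main_critical_stepsize}, and for MP it reduces to a quadratic for $\lambda^*$ whose roots are $\varrho\pm\sqrt{-\omega}$. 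When $\omega\ge 0$ these roots are complex (or pinch the edge) and only the branch-cut contribution survives; when $\omega<0$ the larger root $\lambda^*=\varrho+\sqrt{|\omega|}$ lies strictly above $\lambda^+$, producing a genuine exponential term at rate $2\gamma(\varrho+\sqrt{|\omega|})$, and $c(\gamma,r)$ is its residue. Translating $\omega=0$ back to $\gamma$ (combined with $\gamma<2/r$ to discard the spurious root) yields the threshold $\gamma_*=2/(\sqrt{r}(r-\sqrt{r}+1))$.

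The main obstacle is essentially bookkeeping: pushing the Stieltjes-transform algebra through so that the denominator across the spectrum assembles cleanly into $(x-\varrho)^2+\omega$ with the precise $\varrho,\omega$ of the theorem, and identifying the pole location in closed form. A secondary technical point is justifying the contour deformation itself: one must verify that $\hat{\psi}_0(z)$ decays sufficiently at infinity and has no further singularities off the cut, and that under $\gamma<2/r$ the denominator does not vanish on the negative real axis outside the spectrum when $\gamma\le \gamma_*$. Monotonicity of $\hat h_2(-2\gamma\lambda)$ on $(\lambda^+,\infty)$ together with the stability hypothesis handles this. Matching the resulting rate against the general formula of Theorem~\ref{thm:main_critical_stepsize} supplies a useful consistency check on the final expression.
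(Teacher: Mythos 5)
Your overall plan — Laplace transform the Volterra equation, use the MP Stieltjes transform to make $\hat\psi_0(z)$ algebraic, and invert via the singularity structure — is the same strategy the paper follows (Appendix E.2, where the paper works with the ansatz-and-verify variant rather than a literal Bromwich deformation, backed by Lemmas~\ref{lem:semicirclestieltjes} and~\ref{lem:magical}). The algebraic identities you write for $\hat h_1,\hat h_2$ in terms of $m_{\MP}$ are correct. But there is a genuine gap in the part you call ``bookkeeping,'' and it is exactly where the real work in the proof lives.

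You assert that the discrete pole appears precisely when $\omega<0$, that the threshold $\gamma_*$ is recovered ``translating $\omega=0$ back to $\gamma$,'' and that the pole $\lambda^*=\varrho+\sqrt{|\omega|}$ ``lies strictly above $\lambda^+$.'' All three statements are false when $r\neq 1$. From the definition, $\omega=0\iff\gamma=8/(1+r)^2$, whereas $\gamma_*=2/(\sqrt r(r-\sqrt r+1))$; these coincide only at $r=1$ (for instance $r=4$ gives $8/(1+r)^2=8/25$ but $\gamma_*=1/3$). The quadratic $(\zeta-\varrho)^2+\omega=0$ has real roots throughout $\gamma\in(8/(1+r)^2,\,2/r)$, yet for $\gamma$ in the subinterval $(8/(1+r)^2,\gamma_*)$ \emph{neither} root is a true zero of $1-\gamma^2 r\hat h_2(z)$ on the principal sheet of $m_{\MP}$. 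The reason is that the quadratic factors as a product of $1-K(p)$ over \emph{both} branches of the Stieltjes transform (this is the identity used in Lemma~\ref{lem:magical}), so vanishing of the quadratic only tells you that $1-K$ vanishes on \emph{one} of the two sheets. Which sheet it is flips precisely at $\gamma=\gamma_*$, i.e., at the condition $(1+r)^2-8/\gamma=(1-\sqrt r)^4$, not at $\omega=0$. Your contour argument as written would therefore pick up a spurious residue for all $\gamma\in(8/(1+r)^2,\gamma_*)$. The pole location is also wrong: for $\gamma>\gamma_*$ one can check directly (monotonicity of $\zeta\mapsto\tfrac{\gamma r}{2}\int\tfrac{x^2}{x-\zeta}\,d\MP(x)$ on $(-\infty,\lambda^-)$, together with the fact that this quantity is \emph{negative} for $\zeta>\lambda^+$) that the unique real solution satisfies $0<\lambda^*<\lambda^-$; a pole above $\lambda^+$ can never occur. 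This matters because the whole point is that the pole contributes a \emph{slower} exponential than the branch cut edge, consistent with $\lambda^*<\lambda^-$.

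Concretely, then: the missing ingredient is the branch-tracking argument of Lemma~\ref{lem:magical}, which determines for each $(\gamma,r)$ whether the factor $1-\tfrac{r\gamma}{2}+\tfrac{r\gamma p_*}{2\sqrt r}m(q_*)$ or its conjugate $1-\tfrac{r\gamma}{2}+\tfrac{r\gamma p_*}{2\sqrt r}\hat m(q_*)$ is the one that vanishes at $p_*=-\varrho\pm i\sqrt\omega$. One useful check: the continuity argument the paper uses tracks the answer along the curve $\gamma\mapsto p_*(\gamma)$, and the only place the two factors can simultaneously vanish is when $m(q_*)^2=1$, i.e. $-p_*=(1-\sqrt r)^2=\lambda^-$, which pins down $\gamma_*$. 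Once you replace ``$\omega=0$ gives $\gamma_*$'' with this condition and fix the pole location, the remainder of your inversion plan is sound and would reproduce the stated formulas.
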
 
We only record the dynamics for SGD in the noiseless regime and refer the reader to the Appendix~\ref{app: explicit_sol_for_volterra}, Theorem~\ref{thm:isotropic_noisy} for the noisy setting. We first observe the freezing transition as predicted by renewal theory -- a jamming term appears for $\gamma > \gamma_*$ that slows convergence. We note that when the ratio of features to samples $r$ does not equal $1$, the least squares problem in \eqref{eq:lsq} is (almost surely) strongly convex as $\dif \MP$ has a gap between the first non-zero eigenvalue and zero (see Figure~\ref{fig:MP}). As $r$ approaches $1$, the smallest non-zero eigenvalue become arbitrarily close to $0$. This phenomenon suggests different convergence rates in the regimes $r =1$ and $r \neq 1$. Moreover, we see the explicit value of $\lambda^*$, $\varrho + \sqrt{|\omega|}$ which vanishes when $r = 1$. We present our average-case rates in Table~\ref{tab:complexity_main}.  

\section{Numerical simulations} \label{sec: numerical_simulation}

We compare models of SGD's dynamics on two data distributions for moderately-sized problems ($n=1000$): the isotropic features model (see Section~\ref{sec:main_problem_setting}) and one-hidden layer network with random weights. In the latter model, the entries of $\AA$ are the result of a matrix multiplication composed with an activation function $g \, :  \mathbb{R} \to \mathbb{R}$:
\begin{align}
    A_{ij} \defas g \big (\tfrac{[\WW \YY]_{ij}}{\sqrt{m}} \big ), \quad \text{where $\WW \in \RR^{n \times m}$, $\YY \in \RR^{m \times d}$ are random matrices.} 
\end{align}
For the simulations, we took this activation function $g$ to be a shifted ReLU function; the shift makes $\EE[\AA]=0$ (see Appendix \ref{apx: tools_RMT} for details). This model encompasses two-layer neural networks with a squared loss, where the first layer has random weights and the second layer's weights are given by the regression coefficients $\xx$. Note that while the isotropic features model satisfies our assumptions, the one-hidden layer model does not. 
For all these approaches, we compute the objective suboptimality as a function of the number of passes over the dataset (epochs) for the models: (1). SDE (\textit{i.e.}, $\SSigma(\XX) = 0.01 \II$ in \eqref{eq:SDE}), (2). SME (\textit{i.e.}, $\SSigma(\XX)$ matches covariance of the stochastic gradients), (3). streaming (regenerate $\aa_i$ at each step), and (4). our Volterra equation. See Appendix \ref{apx:exp_details} for full details on the setup as well as experiments with other values of $r$. The outcome is displayed in Figure \ref{fig:comparision_SDE_volterra} and discussed in the caption. The fit of the Volterra equation to SGD is extremely accurate across different stepsizes and data distributions (some not covered by our assumptions) and even for medium-sized problems ($n=1000$).
We also note that while SME is often a good approximation, obtaining convergence rates from it is an open problem. On the other hand, the proposed Volterra equation can be analyzed through its link with renewal theory.

% We consider two different data-generating processes: isotropic features (described in X) and random features (described in Y) with a ReLU function the non-linearity. While the first one satisfies our assumptions the second one doesn't (TODO expand a bit). 

\begin{figure}[t]
    \centering
    \includegraphics[width=\linewidth]{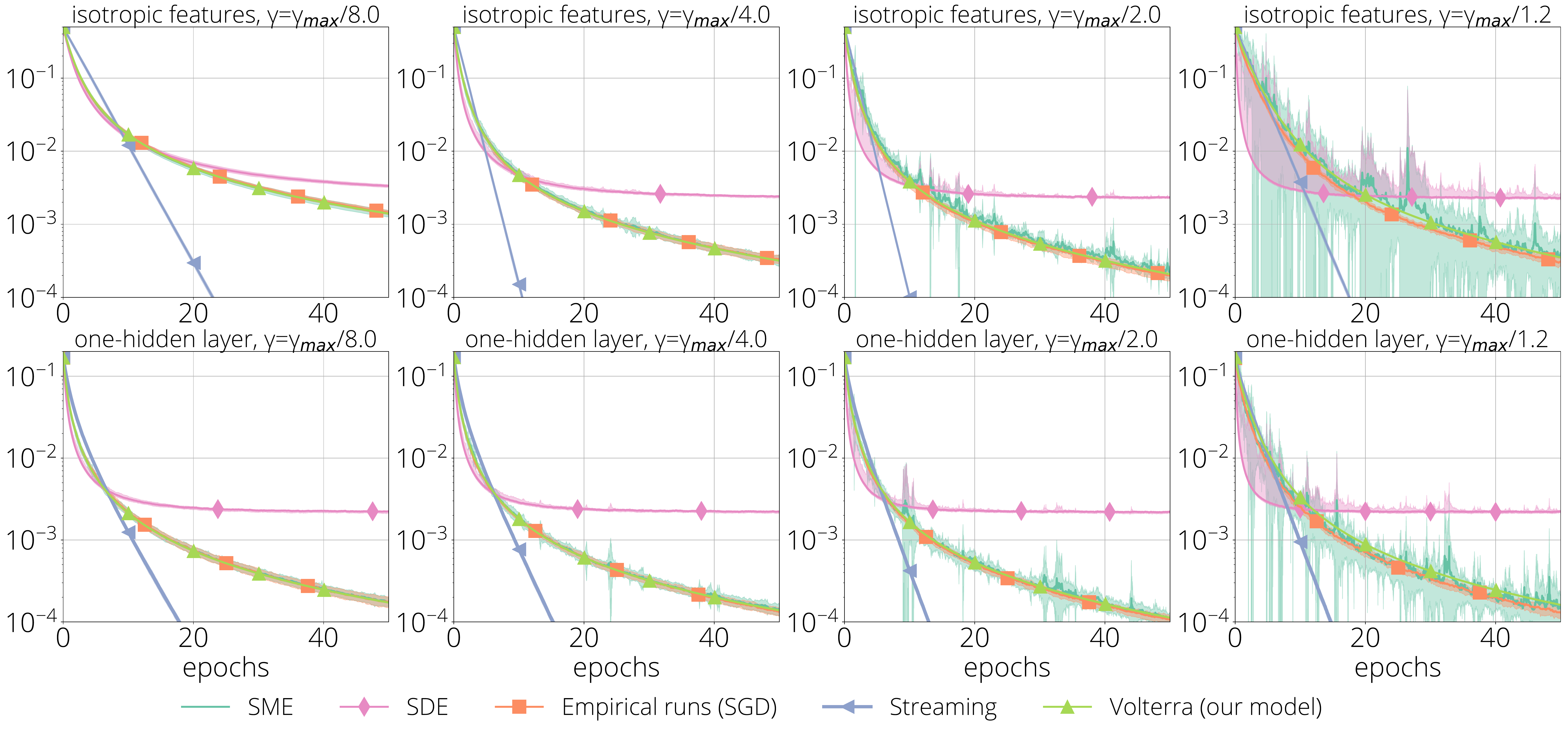} 
    
    \vspace{-0.25cm}
    
    \caption{{\bfseries Comparison of different SGD models}: isotropic features (top) and one-hidden layer network (bottom); $r = 1.2$. Across all stepsizes the Volterra overlaps the objective suboptimality of the empirical SGD runs (orange). The SME (teal) fits SGD for small stepsizes, whereas streaming (blue) and SDE (pink) have noticeable divergences from SGD for all stepsizes. Stochastic methods were averaged across 10 runs, with filled area representing the standard deviation. The parameter $\gamma_{\max}$ is the largest stepsize which still yields convergence of SGD, $\gamma_{\max} = \frac{2}{r} (\frac{1}{d}\text{tr}(\HH))^{-1}$ from Theorem~\ref{thm:main_critical_stepsize}. 
}
    \label{fig:comparision_SDE_volterra}
    % duplicated to account for space
\end{figure}

\paragraph{Conclusion and future work} 
We have shown that the SGD method on least squares objectives admits a tight analysis in the large $n$ and $d$ limit. We described the dynamics of this algorithm through a Volterra integral equation and characterize its average-case convergence rate as well as its stepsize regimes. Although our results only hold in the large $n$-limit, the Volterra equation is remarkably accurate for relatively small dimensions (see \textit{e.g.} Figure~\ref{fig:comparision_SDE_volterra}).

While our theoretical results focus on problems with isotropic data matrix $\AA$, 
Figure \ref{fig:comparision_SDE_volterra} shows that the Volterra equation also predicts remarkably well the dynamics on data generated from a one-hidden layer network model. This suggests that the Volterra prediction might hold in even greater generality, a conjecture that is left for future work. Another direction of future work consists in extending to include other algorithms and problems. We believe the framework presented here should apply to methods like SGD momentum, RMSprop or ADAM and problems such as PCA.
%   A natural extension is to consider $\AA$ having a nontrivial PCA, with clearly defined signals, which is another common setting in practice and which we view as a mathematical extension of the problem we have solved here.

\section*{Acknowledgements} The authors would like to thank our colleagues Nicolas Le Roux, Bart van Merri\"enboer, Zaid Harchaoui, Manuela Girotti, Gauthier Gidel, and Dmitriy Drusvyatskiy for their feedback on this manuscript.

\newpage
\bibliographystyle{plainnat}
\bibliography{reference}

\newpage
\appendix

\begin{center}
\LARGE{{\bfseries SGD in the Large:\\
\Large{Average-case Analysis, Asymptotics, and Stepsize Criticality}}\\
\vspace{0.5em}\Large{Supplementary material}
\vspace{0.5em}}
\end{center}
The appendix is organized into six sections as follows:
\begin{enumerate}
\item Appendix~\ref{apx: tools_RMT} expands upon the data examples in Section~\ref{sec:main_problem_setting}.
    \item Appendix~\ref{apx:proof_main_result} derives the Volterra equation and proves the main concentration for the dynamics of SGD (Theorem~\ref{thm: concentration_main}).
    \item We show in Appendix~\ref{sec:ubounds} that the error terms associated with concentration of measure on the high-dimensional orthogonal group disappear in the large-$n$ limit. This includes the \textit{key lemma}, Proposition~\ref{prop:errorKH}.
    \item Appendix~\ref{sec:mgles} shows the error terms which vanish due to martingale concentration results. 
    \item Appendix~\ref{app: Avg_case} derives the average-case complexity results from Section~\ref{sec:explicit} and provides a proof of the Malthusian exponent (Theorem~\ref{thm:main_critical_stepsize}).
    \item Appendix~\ref{apx:exp_details} contains details on the simulations. 
\end{enumerate}
Unless otherwise stated, all the results hold under Assumptions~\ref{assumption: Vector} and \ref{assumption: spectral_density}. We include all statements from the previous sections for clarity.

\paragraph{Notation.} %The notation we follow is standard throughout.
All stochastic quantities defined hereafter live on a probability space denoted by $(\Pr, \Omega, \mathcal{F})$ with probability measure $\Pr$ and the $\sigma$-algebra $\mathcal{F}$ containing subsets of $\Omega$. A random variable (vector) is a measurable map from $\Omega$ to $\RR$ $(\RR^d)$ respectively. Let $X : (\Omega, \mathcal{F}) \to (\RR, \mathcal{B})$ be a random variable mapping into the borel $\sigma$-algebra $\mathcal{B}$ and the set $B \in \mathcal{B}$. We use the standard shorthand for the event $\{X \in B\} = \{\omega : X(\omega) \in B\}$. We denote the minimum of $a$ and $b$ by $\min\{a,b\} = a \wedge b$. An event $E$ that occurs \emph{with high probability} (or shortened to w.h.p.) is one whose probability depends on $n$, related to matrix dimension in our paper, and the probability of its complementary event goes to 0 as $n\to\infty$. Whereas event $E$ is said to occur \emph{with overwhelming probability} (or w.o.p.) if the probability of its complementary event goes to 0 faster than any polynomial order of $n$ as $n\to \infty$, i.e.\ for any $k>0$,
\[
\lim_{n\to\infty}\Pr(E^c)n^k = 0.
\]
 Throughout the paper, $\beta = \beta(n)$, which denotes the size of $B_k$, a uniformly random subset of $\{1,\cdots,n\}$ at the $k$-th iteration of SGD, is assumed to satisfy $\beta \le n^{1/5-\delta}$ for some $\delta>0$.

\section{Data distributions} \label{apx: tools_RMT}

\subsection{Elaboration on isotropy}\label{app:isotropy}

We recall that in Assumption \ref{assumption: spectral_density}, we have assumed that the data matrix $\AA$ is \emph{orthogonally invariant}.  This is a strong from of isotropy, under which the matrix looks the same in any orthogonal basis.  On a technical level, we work in this setting as it leads to a singular value decomposition with especially simple statistics. 

To state this property, we recall that the set of $n \times n$ orthogonal matrices form a group $O(n)$ under multiplication, and that this group naturally admits a Lie group structure.  In particular, there is a probability measure on this group, the \emph{Haar measure}, which is invariant by left and right multiplication by fixed orthogonal matrices.  To refer to a random matrix whose law is Haar measure, we will simply say a Haar-distributed orthogonal matrix.  While it may appear unwieldy, there are many exceptionally nice tools that exist for working with this measure.  We will elaborate on many of them in Section \ref{sec:ubounds}.  We also refer to \cite{Meckes} for a rich exposition on the intrinsic properties of this group.

The main feature that we will need is the following:
\begin{lemma}\label{lem:orthogonalinvariance}
Suppose that $\AA$ is an $n \times d$ orthogonally invariant random matrix in that $\AA \OO_d \law \AA$ and $\OO_n \AA \law \AA$ for any orthogonal matrices $\OO_n \in O(n)$ and $\OO_d \in O(d)$.  Then there is a singular value decomposition 
\[
    \AA = \UU \SSigma \VV
\]
with $\SSigma$ an $n \times d$ random matrix having 
\[
\SSigma_{11} \geq \SSigma_{22} \geq \SSigma_{33} \geq \cdots \geq \SSigma_{mm}
\quad\text{where}\quad m = \min\{n,d\},
\]
so that $( \UU, \SSigma, \VV)$ are independent, and $\UU$ and $\VV$ are Haar orthogonally distributed. 
\end{lemma}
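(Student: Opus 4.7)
The plan is to construct the claimed SVD by starting from any measurable SVD of $\AA$ and then randomizing via auxiliary independent Haar orthogonal matrices, using the orthogonal invariance of $\AA$ to verify the distributional properties. The ordered singular value matrix $\SSigma$ is a deterministic Borel function of $\AA$ (its squared entries are the ordered eigenvalues of $\AA^T\AA$), so no work is needed for that component.

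First, I would invoke a measurable selection theorem to obtain Borel maps $\UU_0 : \RR^{n \times d} \to O(n)$ and $\VV_0 : \RR^{n \times d} \to O(d)$ such that $\AA = \UU_0(\AA)\,\SSigma\,\VV_0(\AA)^T$ almost surely. Such maps exist from the joint eigendecompositions of $\AA\AA^T$ and $\AA^T\AA$, though the choice is non-canonical when there are repeated or zero singular values; the randomization step below will completely absorb this ambiguity.

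Next, enlarge the probability space to contain independent Haar-distributed orthogonal matrices $\QQ_n \in O(n)$ and $\QQ_d \in O(d)$, taken independent of $\AA$, and set $\widetilde{\AA} := \QQ_n \AA \QQ_d^T$. Conditioning on $(\QQ_n, \QQ_d)$ and using the hypothesis that $\OO_n \AA \OO_d^T \law \AA$ for each deterministic pair, a short Fubini argument yields two facts: (a) $\widetilde{\AA} \law \AA$, and (b) $(\QQ_n, \QQ_d) \perp \widetilde{\AA}$. Now define
\[
    \UU := \QQ_n^T\,\UU_0(\widetilde{\AA}), \qquad \VV := \QQ_d^T\,\VV_0(\widetilde{\AA}).
\]
Then $\AA = \QQ_n^T\,\widetilde{\AA}\,\QQ_d = \UU\,\SSigma(\widetilde{\AA})\,\VV^T$, and $\SSigma(\widetilde{\AA}) = \SSigma(\AA)$ since singular values are invariant under orthogonal multiplication, so $\AA = \UU \SSigma \VV^T$ is a valid SVD.

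The required distributional properties then follow by conditioning on $\widetilde{\AA}$. By observation (b), conditionally on $\widetilde{\AA}$ the pair $(\QQ_n, \QQ_d)$ is still independent and Haar, while $\UU_0(\widetilde{\AA})$ and $\VV_0(\widetilde{\AA})$ are constants; by left-invariance of Haar measure, $\UU$ and $\VV$ are therefore conditionally Haar and conditionally independent. Since this conditional law does not depend on $\widetilde{\AA}$, the pair $(\UU, \VV)$ is unconditionally independent of $\widetilde{\AA}$, hence of $\SSigma = \SSigma(\widetilde{\AA})$, with Haar marginals, giving the full joint independence. The only real subtlety in this program is the verification of (b) — that orthogonal invariance of $\AA$ extends from deterministic to independent Haar random orthogonal matrices — but this is immediate from Fubini once (a) is in hand, so the whole argument is essentially a clean symmetrization.
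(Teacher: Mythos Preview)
Your proposal is correct and follows essentially the same approach as the paper: introduce auxiliary independent Haar matrices, use orthogonal invariance to show the randomized matrix is independent of them, take an SVD of the randomized matrix, and conclude via the translation invariance of Haar measure. The paper's proof is slightly terser (it does not explicitly invoke measurable selection for the SVD factors, nor spell out the conditioning argument), but the structure is identical; the only cosmetic discrepancy is that the lemma as stated writes $\AA = \UU\SSigma\VV$ rather than $\UU\SSigma\VV^T$, so you would relabel your $\VV$ by its transpose.
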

\begin{proof}
The key observation is that if we introduce a new, independent Haar distributed random matrix $\UU \in O(n)$, then $\UU^T \AA$ has the same law as $\AA$ and moreover $(\UU, \UU^T\AA)$ are independent.  To see that $\UU^T \AA$ has the same law as $\AA$ we just observe that conditionally on $\UU^T,$ $\UU^T \AA \law \AA$ by assumption. As the conditional law does not depend on $\UU,$ it follows that $\UU^T\AA \law \AA$ \emph{and} $\UU$ is independent of $\UU^T\AA$.  Extending this, if we introduce two new independent Haar distributed random matrices $\UU \in O(n)$ and $\VV$ in $O(d),$ it follows that $(\UU, \UU^T\AA\VV^T, \VV)$ is a triple on independent random matrices.  
Let 
\[
\UU^T\AA\VV^T = \widetilde{\UU} \SSigma \widetilde{\VV}
\]
be the singular value decomposition with $\SSigma$ having the properties stated in the lemma.  Then
\[
\AA = 
\UU^T(\UU\AA\VV)\VV^T=
(\UU \widetilde{\UU}) \SSigma (\widetilde{\VV} \VV),
\]
with $\UU,\VV$ and $(\widetilde{\UU} \SSigma \widetilde{\VV})$ independent.  By invariance of Haar measure, the triple $(\UU \widetilde{\UU},\VV \widetilde{\VV},\SSigma)$ remain independent and uniformly distributed on $O(n)$ and $O(d)$ respectively.
\end{proof}
\noindent \emph{As a consequence, we will frequently condition on the singular values $\SSigma$ of $\AA$, and most estimates we need are estimates that hold conditionally on $\SSigma$.}

\subsection{Isotropic features and Random features} \label{app:isotropic_random}

In this section, we expand upon Assumptions~\ref{assumption: Vector} and \ref{assumption: spectral_density} in the main text of the paper. We discuss in detail two examples: isotropic features and one-hidden layer networks. 

\subsubsection{Isotropic features}
In their seminal work, \citet{marvcenko1967distribution} show that the spectrum of $\HH = \tfrac{1}{n} \AA^T \AA$ under the isotropic features model converged to a deterministic measure. Subsequent work then characterized the convergence of the largest eigenvalue of $\HH$. We summarize these results below. 

\begin{lemma}[Isotropic features]({\rm \textbf{\citet[Theorem 5.8]{bai2010spectral}}}) \label{lem:bai_Spectral} 
% Suppose Assumption~\ref{assumption: } holds for the matrix $\AA \in \mathbb{R}^{n \times d}$. Consider the matrix $\HH \defas \tfrac{1}{n}\AA^T\AA$. Then 
Suppose the matrix ${\AA \in \RR^{n \times d}}$ is generated using the isotropic features model. 
% and define the matrix $\HH = \tfrac{1}{n} \AA^T \AA$. 
Then the empirical spectral measure (EMS) $\mu_{\HH}$ converges weakly almost surely to the Marchenko-Pastur measure $\MP$ and the largest eigenvalue of $\HH$, $\lambda_{\HH}^+$, converges in probability to $\lambda^+$ where $\lambda^+ = (1+ \sqrt{r})^2$ is the top edge of the support of the Marchenko-Pastur measure. 
% to $\lambda^+ \defas \sigma^2(1+\sqrt{r})^2$.
\end{lemma}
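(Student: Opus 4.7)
The plan is to prove the two statements separately using classical random matrix tools: the method of moments for the bulk convergence $\mu_{\HH} \to \MP$, and a high-moment Markov bound combined with the bulk result for the edge convergence $\lambda_{\HH}^+ \Prto \lambda^+$.

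For the bulk, I will compute the moments $m_k(\mu_{\HH}) = \tfrac{1}{d}\tr(\HH^k)$ and show $m_k(\mu_{\HH}) \to m_k(\MP)$ almost surely for each fixed $k$. Expanding the trace produces a sum indexed by closed walks of length $2k$ on a bipartite graph on $[n] \sqcup [d]$, and Wick's theorem for Gaussian entries collapses the expectation to a sum over pair partitions of the walk's edges. The contribution of order one comes from the non-crossing pairings associated with planar bipartite trees; the combinatorial count gives exactly the $k$-th moment of $\MP$ (a Narayana-type polynomial in $r$), while the crossing diagrams contribute $O(n^{-1})$. An analogous Wick expansion for $\Var(m_k(\mu_{\HH}))$ bounds it by $O(n^{-2})$, so Borel--Cantelli promotes convergence in mean to almost-sure convergence. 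Since $\MP$ has compact support, it is uniquely determined by its moments, and weak almost-sure convergence of $\mu_{\HH}$ follows.

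For the edge, the lower bound $\lambda_{\HH}^+ \geq \lambda^+ - \varepsilon$ with high probability is immediate from the bulk result: the portmanteau theorem gives $\mu_{\HH}([\lambda^+ - \varepsilon, \lambda^+]) \to \MP([\lambda^+ - \varepsilon, \lambda^+]) > 0$, which forces at least one eigenvalue in the interval. The upper bound is more delicate. I would apply the operator-norm estimate $\lambda_{\HH}^+ \leq (\tr \HH^k)^{1/k}$ together with Markov's inequality to obtain
\[
\Pr\bigl(\lambda_{\HH}^+ > \lambda^+ + \varepsilon\bigr) \leq \frac{d \cdot \EE[m_k(\mu_{\HH})]}{(\lambda^+ + \varepsilon)^k},
\]
and then let $k = k(n) \to \infty$ at rate $c \log n$. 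Provided one can show $\EE[m_k(\mu_{\HH})] \leq (\lambda^+)^k (1 + o(1))$ uniformly over this range of $k$, the right-hand side vanishes for $c$ large enough.

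The main obstacle is precisely that last uniform moment estimate: the naive Wick calculation degenerates as $k \to \infty$ because the number of pair partitions grows factorially, so one must track \emph{all} pairings (not just the planar ones) and show their total contribution is at most $C \cdot (\lambda^+)^k$ with the correct base $\lambda^+ = (1+\sqrt{r})^2$. This uniform combinatorial bound is the technical heart of Bai--Silverstein's Theorem 5.8, and a clean exposition would simply invoke that result. An alternative more analytic route proceeds via the Stieltjes transform $m_{\HH}(z) = \tfrac{1}{d}\tr(\HH - z\II)^{-1}$: one derives an approximate self-consistent equation forcing convergence to the Stieltjes transform of $\MP$ outside the support, and extracts the edge by noting that any isolated eigenvalue above $\lambda^+$ would manifest as a singularity detectable through a contour argument around $\lambda^+ + \varepsilon$.
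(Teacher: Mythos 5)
The paper itself does not prove this lemma; it simply cites \citet[Theorem 5.8]{bai2010spectral} for the bulk convergence and relies on the classical Yin--Bai--Krishnaiah theorem (also in that reference) for the convergence of the largest eigenvalue. Your sketch is a faithful outline of how those classical results are actually established, so there is no conflict with the paper, just a different level of detail: you are supplying the argument the paper delegates to the literature. On the bulk, the method of moments with Wick calculus, variance of order $n^{-2}$, and Borel--Cantelli is the standard route and your description is accurate; note in passing that Wick's theorem is available here only because Example~1 takes Gaussian entries (the cited theorem holds under a finite fourth moment, but that extension requires moment truncation, which the paper does not need). On the edge, the portmanteau lower bound and the high-moment Markov upper bound with $k \asymp \log n$ are correct. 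One small imprecision in the upper bound: you do not actually need $\EE[m_k(\mu_{\HH})] \leq (\lambda^+)^k(1+o(1))$; a bound of the form $\EE[\tr\HH^k] \leq C(n,k)\,(\lambda^+)^k$ with $C(n,k)$ growing at most polynomially in $n$ and $k$ suffices, since the polynomial prefactor is absorbed by the exponential gap $\bigl((\lambda^+ + \varepsilon)/\lambda^+\bigr)^k$ once $k \sim c\log n$. You correctly identify that establishing such a uniform combinatorial bound for growing $k$ is the technical heart of the classical proof, and simply citing it at that point, as the paper does, is the sensible choice.
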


The results stated so far did not require that the entries of $\AA$ are normally distributed, and hold equally well for any i.i.d.\ matrices with mean $0$, entry variance $1$ and bounded fourth-moment.  Under the additional assumption that the entries of $\AA$ are normally distributed, it is easily checked by a covariance computation that for fixed orthogonal matrices $\UU$ and $\VV$ the entries $\UU \AA$ and $\AA \VV$ remain independent, mean $0$ and variance $1$.  We summarize this claim below.
\begin{lemma}\label{lem:GinibreIndependence}
    For an $n \times d$ matrix $\AA$ of i.i.d.\ standard normal random variables, $\UU \AA$ and $\AA \VV$ are again $n \times d$ matrices of independent standard normals for fixed orthogonal matrices $\UU$ and $\VV$.
\end{lemma}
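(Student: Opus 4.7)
The plan is to exploit the characterization of multivariate Gaussian distributions by mean and covariance, together with the standard fact that uncorrelated jointly Gaussian entries are independent. Since the entries of $\AA$ are i.i.d.\ standard normals, the matrix $\AA$ (viewed as a random vector in $\mathbb{R}^{nd}$) is jointly Gaussian with mean zero and identity covariance. The product $\UU\AA$ is a fixed linear transformation of $\AA$, so its entries are again jointly Gaussian; hence it suffices to compute the mean and covariance of $\UU\AA$ and verify that these agree with those of an i.i.d.\ standard Gaussian matrix.

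The mean will be immediate from linearity: $\E[(\UU\AA)_{ij}] = \sum_{a} U_{ia}\E[A_{aj}] = 0$. For the covariance, a short computation using independence of the entries of $\AA$ together with the orthogonality relation $\UU\UU^T = \II_n$ will yield $\E[(\UU\AA)_{ij}(\UU\AA)_{kl}] = (\UU\UU^T)_{ik}\,\delta_{jl} = \delta_{ik}\delta_{jl}$. Since jointly Gaussian random variables with diagonal covariance are independent, this identifies the joint law of the entries of $\UU\AA$ with that of an i.i.d.\ standard Gaussian matrix. The argument for $\AA\VV$ is entirely symmetric, using $\VV^T\VV = \II_d$ in place of $\UU\UU^T = \II_n$.

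There is no genuine obstacle here --- the argument is bookkeeping that amounts to the rotation invariance of the standard Gaussian measure on $\mathbb{R}^{nd}$. An equivalent, one-line phrasing would be to vectorize $\AA$ and observe that left multiplication by $\UU$ corresponds to applying the block-orthogonal transformation $\II_d \otimes \UU$ to $\vec(\AA)$, which preserves the standard Gaussian law in $\mathbb{R}^{nd}$ because $(\II_d \otimes \UU)(\II_d \otimes \UU)^T = \II_{nd}$; right multiplication by $\VV$ is handled analogously by $\VV^T \otimes \II_n$.
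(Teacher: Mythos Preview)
Your proof is correct and follows exactly the approach indicated in the paper, which simply notes that the claim ``is easily checked by a covariance computation'' and does not give further details. Your write-up makes this covariance computation explicit and is entirely sound.
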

\noindent We emphasize that while this is not true for matrices $\AA$ with entries that are independent of mean $0$ and variance $1$, there are many senses in which this is approximately true (see \cite{KnowlesYin}).

\subsubsection{One-hidden layer networks}
\paragraph{One-hidden layer network with random weights.} In this model, the entries of $\AA$ are the result of a matrix multiplication composed with a (potentially non-linear) activation function $g \, : \, \mathbb{R} \to \mathbb{R}$:
\begin{align}\label{eq:random_features_model}
    A_{ij} \defas g \big (\tfrac{[\WW \YY]_{ij}}{\sqrt{m}} \big ), \quad \text{where $\WW \in \RR^{n \times m}$, $\YY \in \RR^{m \times d}$ are random matrices.} 
\end{align}
The entries of $\WW$ and $\YY$ are i.i.d. with zero mean, isotropic variances
    $\EE[W_{ij}^2] = \sigma_w^2$ and $\EE[Y_{ij}^2] = \sigma_y^2$, and light tails (see App.~\ref{app:isotropic_random} for details). As in the previous case to study the large dimensional limit, we assume that the different dimensions grow at comparable rates given by $\frac{m}{n} \to r_1 \in (0, \infty)$ and $\frac{m}{d} \to r_2 \in (0, \infty)$.
This model encompasses two-layer neural networks with a squared loss, where the first layer has random weights and the second layer's weights are given by the regression coefficients $\xx$.
Particularly, the optimization problem in \eqref{eq:lsq} becomes
\begin{equation} \label{eq: general_LS}
    \min_\xx \, \left\{ f(\xx) = \frac{1}{2n} \|\g \big ( \tfrac{1}{\sqrt{m}} \WW \YY \big )\xx - \bb\|^2_2  \right\}.
\end{equation} 

The model was introduced by \citep{Rahimi2008Random} as a randomized approach for scaling kernel methods to large datasets, and has seen a surge in interest in recent years as a way to study the generalization properties of neural networks
\citep{hastie2019surprises,mei2019generalization,pennington2017nonlinear}. 
% Despite its simplicity, it replicates some of the most puzzling behaviors of large neural networks, including the ``double descent'' curve of the prediction risk and the benefits of overparametrization. 

The difference between this and the isotropic features model is the activation function, $g$. We assume $g$ to be entire with a growth condition and have zero Gaussian-mean (App.~\ref{app:isotropic_random}). These assumptions hold for common activation functions such as sigmoid $\g(z) = (1+ e^{-z})^{-1}$ and softplus $\g(z) = \log(1+e^z)$, a smoothed variant of ReLU.
%The Gaussian mean assumption \eqref{eq: Gaussian_mean} is always satisfied by a simple translation.

\cite{benigni2019eigenvalue} recently showed that the empirical spectral measure and largest eigenvalue of $\HH$ converge to a deterministic measure and largest element in the support, respectively. 
This implies that this model verifies Assumption~\ref{assumption: spectral_density}.
However, contrary to the isotropic features model, the limiting measure does not have an explicit expression, except for some specific instances of $g$ in which it is known to coincide with the Marchenko-Pastur distribution. 

For the one-hidden layer network, following \citep{benigni2019eigenvalue}, we assume that the activation function $g$ is an entire function with a growth condition satisfying the following zero Gaussian mean property:
\begin{align} \label{eq: Gaussian_mean}
    \hspace{-3em}\text{(Gaussian mean)} \qquad \int \g(\sigma_w \sigma_y z) \tfrac{e^{-z^2/2}}{\sqrt{2 \pi} } \, \dif z = 0\,.
\end{align}
The additional growth condition on the function $g$ is precisely given as there exists positive constants $C_g, c_g, A_0 > 0$ such that for any $A \ge A_0$ and any $n \in \mathbb{N}$ 
\begin{align}
    \sup_{z \in [-A,A]} |g^{(n)}(z)| \le C_g A^{c_g n}\, .
\end{align}
This growth condition is verified for polynomials which can approximate to arbitrary precision common activation functions such as the sigmoid $\g(z) = (1+ e^{-z})^{-1}$ and the softplus $\g(z) = \log(1+e^z)$, a smoothed approximation to the ReLU. The Gaussian mean assumption \eqref{eq: Gaussian_mean} can always be satisfied by incorporating a translation into the activation function.

In addition to the i.i.d., mean zero, and isotropic entries, we also require an assumption on the tails of $\WW$ and $\YY$, that is, there exists constants $\theta_w, \theta_y > 0$ and $\alpha > 0$ such that for any $t > 0$
\begin{equation} \label{eq: light_tail}
    \Pr(|W_{11}| > t) \le \exp(-\theta_w t^{\alpha}) \quad \text{and} \quad \Pr(|Y_{11}| > t) \le \exp(-\theta_y t^{\alpha}).
\end{equation}
Although stronger than bounded fourth moments, this assumption holds for any sub-Gaussian random variables (\textit{e.g.}, Gaussian, Bernoulli, etc). Under these hypotheses, Assumption~\ref{assumption: spectral_density} is verified.

\begin{lemma}[One-hidden layer network]({\rm \textbf{\citet[Theorems~2.2 and~5.1]{benigni2019eigenvalue}}}) \label{lem:rand_feat_measure} Suppose the matrix $\AA \in \RR^{n \times d}$ is generated using the random features model. Then there exists a deterministic compactly supported measure $\mu$ such that $\mu_{\HH} \underset{d \to \infty}{\longrightarrow} \mu$ weakly almost surely. Moreover $\lambda_{\HH}^+ \Prto[d] \lambda^+$ where $\lambda^+$ is the top edge of the support of $\mu$.
\end{lemma}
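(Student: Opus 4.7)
The plan is to adapt the moment-method / Stieltjes-transform strategy combined with a Hermite expansion of the activation $g$, as in \citet{pennington2017nonlinear, benigni2019eigenvalue}. The central difficulty is that, although $\WW$ and $\YY$ have independent entries, the entries of $\AA = g(\WW\YY/\sqrt{m})$ are coupled through shared rows and columns of these two matrices; neither the classical Wishart arguments nor standard products-of-independent-matrices results apply directly. The rescue will come from a \emph{Gaussian equivalence principle}: for the purposes of computing the limiting ESM and the top edge, $\AA$ can be replaced by the sum of a product of two Gaussian matrices and an independent Gaussian noise matrix whose variance matches the nonlinear Hermite mass of $g$.

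First I would use the growth hypothesis and the zero Gaussian mean condition \eqref{eq: Gaussian_mean} to write a Hermite expansion $g(\sigma_w\sigma_y z) = \sum_{k\geq 1}\hat g_k H_k(z)$ against the standard Gaussian weight. The $k=1$ Hermite component is linear, contributing a term $\hat g_1 \cdot (1/\sqrt{m})\WW\YY$, whose limiting singular value distribution is obtained from the free multiplicative convolution of two Marchenko-Pastur laws parameterized by $r_1$ and $r_2$ (this is classical for products of Ginibre matrices). For the higher-order Hermite components, I would establish a moment-matching statement by computing $\EE\tr(\HH^p)$ as a sum over closed walks in an index graph; the tail bounds \eqref{eq: light_tail} together with polynomial concentration in subgaussian variables allow one to argue that only ``tree-like'' walks survive to leading order, and these are reproduced exactly by an i.i.d.\ Gaussian noise matrix with entry variance $\sum_{k\geq 2}\hat g_k^2 / k!$. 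Weak convergence $\mu_\HH \to \mu$ then follows by the method of moments; the limiting $\mu$ is the free additive convolution of the linear part's spectrum with a Marchenko-Pastur law of the matched variance, and both constituents being compactly supported forces $\mu$ to be as well.

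For $\lambda_\HH^+ \to \lambda^+$, the strategy is to rule out outliers by a high trace-moment estimate. Taking $p \sim \log n$ in the bound on $\EE\tr(\HH^p)$ above and combining with Markov and Borel-Cantelli yields $\|\HH\|_{\mathrm{op}} \le \lambda^+ + \varepsilon$ with overwhelming probability, while the matching lower bound $\|\HH\|_{\mathrm{op}} \ge \lambda^+ - \varepsilon$ is inherited from the weak convergence of $\mu_\HH$ already established.

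The \textbf{main obstacle} is the combinatorial bookkeeping for the nonlinear residual: two entries $A_{ij}$ and $A_{i'j'}$ become dependent as soon as $i = i'$ or $j = j'$, so the dependency graph is dense, and one must verify that nevertheless the tree-like walk count dominates both in the leading-order ESM computation and in the higher-moment tail bound controlling outliers. This is precisely where the growth condition on $g$ and the subgaussian tail assumption \eqref{eq: light_tail} on $\WW$ and $\YY$ enter essentially: without them, a single heavy entry in any of the three sources of randomness could already produce an outlier eigenvalue or shift the bulk, and the Gaussian-equivalent reduction would fail.
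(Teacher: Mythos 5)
The statement you are proving is not actually established in the paper: it is a restatement (verbatim citation) of results from \citet{benigni2019eigenvalue}, as signaled by the attribution in the lemma heading. The paper's ``proof'' is just to invoke that external reference, so there is nothing internal to compare your sketch against.

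That said, your sketch is a broadly faithful reconstruction of the strategy one would in fact use, and that Benigni--P\'ech\'e do use: a Hermite expansion of $g$ splitting $\AA$ into a linear part $\hat g_1\WW\YY/\sqrt m$ and a nonlinear residual, a Gaussian-equivalence/moment-matching step showing the residual acts like an independent matrix with i.i.d.\ entries of matched variance, and a high trace-moment argument at $p\sim \log n$ to pin down the top edge. Two cautionary points if you were to carry this out. First, the limiting law $\mu$ is \emph{not} a free additive convolution of the linear-part law with a Marchenko--Pastur law; the linear and nonlinear pieces share the randomness of $\WW$ and $\YY$, so the correct characterization is the coupled self-consistent system for the Stieltjes transform that appears in \citet{benigni2019eigenvalue} (and earlier in Pennington--Worah), of which free additive convolution is only a special degenerate case (e.g.\ when $\hat g_1=0$). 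Second, the claim that only tree-like walks survive is true at leading order for the ESM, but the no-outlier estimate (Theorem 5.1 in the reference) needs a more delicate treatment of the dense dependency structure you flag at the end; this is where the entire-function growth condition on $g$ enters (it controls arbitrarily high Hermite coefficients uniformly), and where the subgaussian tails \eqref{eq: light_tail} rule out a single large entry producing a spurious edge eigenvalue. For the purposes of this paper, however, none of this needs to be re-proved.
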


\section{Derivation of the dynamics of SGD} \label{apx:proof_main_result}

In this section, we derive the Volterra equation from \eqref{eq:Volterra_eq_main}, that is,
\begin{equation} \begin{gathered}
    \psi_0(t) = \tfrac{R}{2} h_1(t) + \tfrac{\widetilde{R}}{2} \big ( rh_0(t)+ (1-r) \big ) + \int_0^t \gamma^2 r h_2(t-s)\psi_0(s)\,ds,\\
    \quad \text{and} \quad h_k(t) = \int_0^\infty x^k e^{-2\gamma tx} \, d\mu(x). 
\end{gathered}
\end{equation}
and prove Theorem~\ref{thm: concentration_main}:
\begin{equation}
  \sup_{0 \le t \le T}  \big | f \big ( \xx_{ \lfloor \tfrac{n}{\beta} t \rfloor } \big ) -  \psi_0(t) \big | \Prto[n] 0,
\end{equation}
provided that error terms go to zero. We begin by setting up the tools to derive an approximate Volterra equation. 
% We then break these error terms up into two types. The first type of errors are those which vanish due to concentration of measure on the high-dimensional orthogonal group (Section~\ref{sec:ubounds}). This includes for instance the \textit{key lemma}, Lemma~\ref{prop:errorKH}. In this section, we heavily use the de-localization property of the eigenvectors (Equation~\eqref{eq:ortho_invariance} in Assumption~\ref{assumption: spectral_density}).
% We then show in Section~\ref{sec:mgles} that the second type of errors vanish. These errors disappear due to martingale concentration results. 

\subsection{Change of basis}
Recall the iterates of SGD satisfy 
\begin{equation}\label{eq:appsgd}
    \xx_{k+1} 
    = \xx_k - \frac{\gamma}{n}\sum_{i \in B_k}\nabla f_{i}(\xx_k) 
    = \xx_k - \frac{\gamma}{n}\AA^T \PP_k (\AA\xx_k - \bb), \quad \text{where} \quad \PP_k \defas \sum_{i \in B_k} \mathbb{e}_{i}\mathbb{e}_{i}^T.
\end{equation}
 Here $\PP_k$ is a random orthogonal projection matrix with $\mathbb{e}_i$ the $i$-th standard basis vector, $\beta \in \N$ is a batch-size parameter, which we will allow to depend on $n$, $\gamma > 0$ is a stepsize parameter, and the function $f_i$ is the $i$-th element of the sum in \eqref{eq:lsq}. 

We recall that $\bb$ has the representation $\bb = \AA \widetilde{\xx} + \sqrt{n} \, \eeta$, and both $\widetilde{\xx}$ and $\eeta$ have norms bounded independently of $n$.  Hence we can represent the updates of SGD \eqref{eq:oursgd}  equation in matrix form as
\begin{equation*}
    \xx_{k+1} 
    = \xx_k - \frac{\gamma}{n}\AA^T \PP_k (\AA(\xx_k - \widetilde{\xx}) + \sqrt{n}\eeta).
\end{equation*}

We will consider a singular value decomposition guaranteed by Lemma \ref{lem:orthogonalinvariance} of $\frac{1}{\sqrt{n}}\AA = \UU \SSigma \VV^T$, where $\UU$ and $\VV$ are Haar distributed orthogonal matrices, i.e.\ $\VV \VV^T = \VV^T \VV = \II$ and $\SSigma$ is the $n \times d$ singular value matrix with diagonal entries $\diag(\sigma_i), i=1,\ldots,n$.  Our analysis will use a different choice of variables.  So we define the vector $\widehat{\nnu}_k \defas \VV^T (\xx_k-\widetilde{\xx}),$ which therefore evolves like
\begin{equation}\label{eq:nuk}
    \widehat{\nnu}_{k+1} = \widehat{\nnu}_k 
    - \gamma \SSigma^T
    \UU^T \PP_k
    (\UU\SSigma \widehat{\nu}_k - \eeta).
    % \widehat{\nnu}_{k+1} = \widehat{\nnu}_k 
    % - \gamma 1 \SSigma^T
    % \UU^T 
    % \mathbb{e}_{r_k} 
    % \mathbb{e}_{r_k}^T
    % (\UU\SSigma \widehat{\nu}_k - \eeta).
\end{equation}

\subsection{Embedding into continuous time}
We next consider an embedding of the process $\widehat{\nnu}_k$ into continuous time.  This is done to simplify the analysis, and it does not change the underlying behavior of SGD. Let $\N_0 = \N \cup \{0\}$. We define an infinite random sequence $\{ \tau_k : k \in \N_0\} \subset [0,\infty)$ with $0 = \tau_0 < \tau_1 < \tau_2 < \cdots,$ which will record the time at which the $k$-th update of SGD occurs.  The distribution of these $\{ \tau_k : k \in \N_0\}$ will follow a standard rate-$(\tfrac{n}{\beta})$ Poisson process.  This means that the family of interarrival times $\{ \tau_k-\tau_{k-1} : k \in \N\}$ are i.i.d. $\text{Exp}(\tfrac{n}{\beta})$ random variables, \textit{i.e.},\ those with mean $\tfrac{\beta}{n}$, and we note that this randomization is independent of both the SGD, $\AA$ and $\bb.$  The function $N_t$ will count the number of arrivals of this Poisson process before time $t$, that is
\[
N_t = \sup \{ k \in \mathbb{N}_0 ~:~ \tau_k \leq t \}.
\]
Then for any $t > 0,$ $N_t$ has the distribution of $\text{Poisson}(\tfrac{n}{\beta}t)$.

We embed the process $\widehat{\nnu}$ into continuous time, by taking $\nnu_t = \widehat{\nnu}_{\tau_{N_t}}$.  We note that we have scaled time (by choosing the rate of the Poisson process) so that in a single unit of time $t,$ the algorithm has done one complete pass (in expectation) over the data set, i.e.\ SGD has completed one epoch.

\subsection{Doob-Meyer decomposition}
We compute the Doob decomposition for quasi-martingales \citep[Thm. 18, Chpt. 3]{protter2005stochastic} of $\nnu_{t}$ and of $\nu_{t,j}^2$, where $\nu_{t,j}$ is the $j$-th coordinate of $\nnu_t$ where $j$ ranges from $j = 1, \hdots, d$.  Here we let $\mathcal{F}_t$ be the $\sigma$-algebra of information available to the process at time $t \geq 0$.
So we take, for any $j \in [d],$
\[
\begin{aligned}
\bm{\mathcal{B}}_{t}
&\defas
\partial_t \E[\nnu_{t}~|~\mathcal{F}_t] =
\lim_{\epsilon \downarrow 0}
\epsilon^{-1}\E[\nnu_{t+\epsilon}-\nnu_{t}~|~\mathcal{F}_t] \\
\mathcal{A}_{t,j}
&\defas
\partial_t \E[\nu_{t,j}^2~|~\mathcal{F}_t] =
\lim_{\epsilon \downarrow 0}
\epsilon^{-1}\E[\nu_{t+\epsilon,j}^2-\nu_{t,j}^2~|~\mathcal{F}_t]. \\
\end{aligned}
\]
In terms of this random variable, we have a decomposition
\begin{equation}\label{eq:doob}
\begin{aligned}
\nnu_{t} &= \nnu_{0} + \int_0^t \bm{\mathcal{B}}_{s}\, \dif s + \widetilde{\MM}_{t}, \\
\nu_{t,j}^2 &= \nu_{0,j}^2 + \int_0^t \mathcal{A}_{s,j}\, \dif s + M_{t,j},
\end{aligned}
\end{equation}
where $(M_{t,j}, \widetilde{\MM_{t}}: t \geq 0)$ are $\mathcal{F}_t$--adapted martingales.

For the computation of $\mathcal{A}_{t,j}$ we observe that as $\epsilon \to 0,$ $\mathcal{A}_{t,j}$ is dominated by the contribution of a single Poisson point arrival; as in time $\epsilon$, the probability of having multiple Poisson point arrivals is $O(\beta^{-2}n^2 \epsilon^2),$ whereas the probability of having a single arrival is $1-e^{-\beta^{-1}n\epsilon} \sim \beta^{-1}n\epsilon$ as $\epsilon \to 0$. For notational simplicity, we let the projection matrix $\PP \in \mathbb{R}^{d \times d}$ be an i.i.d. copy of $\PP_{1}$, which is independent of all the randomness so far and we let $B$ be the corresponding random subset of $\{1,2,\hdots, n\}$ that defines $\PP$.
% Then we conclude that if $B$ is the random set that represents the choice of coordinates $B_{(N_{t^{-}}+1)},$ which is independent of $\mathcal{F}_t$, we define the projection matrix $\PP \in \mathbb{R}^{d \times d}$ by
%\[ \PP = \sum_{i \in B_{(N_t+1)}} \mathbb{e}_i \mathbb{e}_i^T.\]
It follows 
\[
\begin{aligned}
\bm{\mathcal{B}}_{t}
&= \frac{n}{\beta}\E \biggl[
       \nnu_{t}
    -\gamma \SSigma^T
    \UU^T 
    \PP
    (\UU\SSigma \nnu_t - \eeta)
    - 
    \nnu_{t}
    ~\biggl\vert~
    \mathcal{F}_t
\biggr], \\
\mathcal{A}_{t,j}
&= \frac{n}{\beta}\E \biggl[
   \biggl(
       \nu_{t,j}
    -\gamma \mathbb{e}_j^T\SSigma^T
    \UU^T 
    \PP
    (\UU\SSigma \nnu_t - \eeta)
    \biggr)^2
    - 
    \nu_{t,j}^2
    ~\biggl\vert~
    \mathcal{F}_t
\biggr].
\end{aligned}
\]

The mean term of $\nnu_t$ simplifies significantly, and by no accident:  by construction, the SGD update rule has a conditional expectation which is proportional to the gradient of the objective function.  Observe that since
\[
\E[\PP] = \frac{\beta}{n} \sum_{i=1}^n \mathbb{e}_{i} \mathbb{e}_{i}^T = \frac{\beta}{n} \II,
\]
the previous equation simplifies to:
\begin{equation}\label{eq:Bt}
\bm{\mathcal{B}}_{t}
=
-\gamma
\SSigma^T
(\SSigma
\nnu_t - \UU^T\eeta)
\quad\text{and}\quad
\mathcal{B}_{t,j}
= -\gamma \sigma_j^2 \nu_{t,j} + \gamma \sigma_j(\UU^T\eeta)_j.
\end{equation}
%From this, we get
%\[
%\E[ \nnu_t ~|~ \mathcal{F}_0]
%= \nnu_0 + 
%\int_0^t \E[ \bm{\mathcal{B}}_{s} ~|~ \mathcal{F}_0]ds
%= \nnu_0 - 
%\int_0^t \gamma 1
%\SSigma^T
%(\SSigma
%\E[ \nnu_s ~|~ \mathcal{F}_0] - \UU^T\eeta)ds.
%\]
%The integral equation above is equivalent to a system of uncoupled first order linear odes, the solution of which is given by
%\begin{equation}\label{eq:nutmean}
%\E[ \nu_{t,j} ~|~ \mathcal{F}_0]
%=
%\biggl(\nu_{0,j}-\gamma  (\UU^T \eeta)_j}{\sigma_j}\biggr)e^{-\frac{\sigma_j^2 %t}
%+ \frac{ (\UU^T \eeta)_j}{\sigma_j}.
%\end{equation}

We now turn to the evaluation of $\mathcal{A}_{t,j}.$ 
If we let $\PP = \sum_{i \in B} \mathbb{e}_i \mathbb{e}_i^T,$
\begin{equation}\label{eq:Atj}
\begin{aligned}
\mathcal{A}_{t,j}
&= -2\nu_{t,j}\gamma
   \mathbb{e}_j^T\SSigma^T
    \UU^T 
    (\UU\SSigma \nnu_t - \eeta) 
    +\gamma^2\frac{n}{\beta}
    \E
    \biggl(\mathbb{e}_j^T\SSigma^T
    \UU^T 
    \PP
    (\UU\SSigma \nnu_t - \eeta)
    ~\bigg\vert~ \mathcal{F}_t
    \biggr)^2
    \\
    &= 2\nu_{t,j} \mathcal{B}_{t,j}
    +\gamma^2\frac{n\sigma_j^2}{\beta}
    \E
    \biggl(
    \sum_{i\in B}
    \bigl(\mathbb{e}_j^T
    \UU^T 
    \mathbb{e}_{i}
    \bigr)
    \bigl(
    \mathbb{e}_{i}^T
    (\UU\SSigma \nnu_t - \eeta)\bigr)
    ~\bigg\vert~ \mathcal{F}_t
    \biggr)^2.
    \end{aligned}
\end{equation}
To compute this conditional expectation, we record the following lemma.
\begin{lemma}\label{lem:subsetsum}
    Suppose that $\uu$ and $\vv$ are fixed vectors in $\RR^n$.  Then
    \[
    \begin{aligned}
    \E\biggl(
    \sum_{i \in B} u_i v_i
    \biggr)^2
    &= \frac{\beta(\beta-1)}{n(n-1)}(\uu^T \vv)^2 + \biggl(\frac{\beta}{n} - \frac{\beta(\beta-1)}{n(n-1)}\biggr)\sum_{i=1}^n (u_iv_i)^2. \\
    \end{aligned}
    \]
\end{lemma}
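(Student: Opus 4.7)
The plan is to compute the second moment by expanding the square and using linearity of expectation together with the marginal probabilities that indices lie in the uniformly chosen subset $B$.

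First I would write
\[
\biggl(\sum_{i \in B} u_i v_i\biggr)^2 = \sum_{i \in B}(u_iv_i)^2 + \sum_{\substack{i,j \in B \\ i \neq j}} u_i v_i u_j v_j,
\]
splitting the double sum into the diagonal and off-diagonal parts. Next, since $B$ is a uniformly random $\beta$-subset of $\{1,\dots,n\}$, the marginal probabilities are
\[
\Pr(i \in B) = \frac{\beta}{n}, \qquad \Pr(i \in B,\, j \in B) = \frac{\beta(\beta-1)}{n(n-1)} \quad (i \neq j),
\]
by a direct counting argument. Taking expectations termwise gives
\[
\E\biggl[\sum_{i \in B}(u_iv_i)^2\biggr] = \frac{\beta}{n}\sum_{i=1}^n (u_iv_i)^2, \qquad
\E\biggl[\sum_{\substack{i,j \in B \\ i \neq j}} u_i v_i u_j v_j\biggr] = \frac{\beta(\beta-1)}{n(n-1)}\sum_{i \neq j} u_i v_i u_j v_j.
\]
Finally I would use the identity $\sum_{i\neq j} u_iv_i u_jv_j = (\uu^T\vv)^2 - \sum_{i=1}^n (u_iv_i)^2$ to recombine terms, which yields the stated expression after regrouping the coefficient of $\sum_i (u_iv_i)^2$.

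There is no real obstacle here: the statement is purely combinatorial, and the proof reduces to a short expansion plus the two marginal probabilities for the hypergeometric-type sampling. The only thing to be careful about is the $(n-1)$ in the denominator of the pairwise probability, which comes from conditioning on $i \in B$ and then uniformly sampling the remaining $\beta-1$ elements from the remaining $n-1$ indices.
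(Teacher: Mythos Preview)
Your proof is correct and follows exactly the same approach as the paper: the paper's proof simply records the two probabilities $\Pr(i\in B)=\beta/n$ and $\Pr(i,\ell\in B)=\beta(\beta-1)/(n(n-1))$ and says the result follows by expanding both sides, which is precisely the expansion-plus-regrouping you carried out in detail.
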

\begin{proof}
This reduces to the two probabilities:
\[
\Pr( i \in B) = \frac{\beta}{n},
\quad
\text{and}
\quad
\Pr( i,\ell \in B) = \frac{\beta(\beta-1)}{n(n-1)}, 
\]
where $i\neq \ell$ are any fixed numbers in $\{1,2,\ldots,n\}$. The proof now follows by expanding both sides.
\end{proof}

Using Lemma \ref{lem:subsetsum}, we can therefore simplify \eqref{eq:Atj} by writing
\begin{equation}\label{eq:Atj2}
\mathcal{A}_{t,j}
    = 2\nu_{t,j} \mathcal{B}_{t,j}
    +\frac{\beta-1}{n-1}
    \bigl(
    \mathcal{B}_{t,j}
    \bigr)^2
    +\gamma^2 \sigma_j^2\biggl( 1-\frac{\beta-1}{n-1}\biggr)
    \sum_{i=1}^n
    \bigl(\mathbb{e}_j^T
    \UU^T 
    \mathbb{e}_{i}
    \bigr)^2
    \bigl(
    \mathbb{e}_{i}^T
    (\UU\SSigma \nnu_t - \eeta)\bigr)^2.
\end{equation}

\subsection{Constructing the approximate Volterra equation} \label{app:volterra_equation}
In this section, we derive an approximate Volterra equation. First, we can write the function values at the iterates $\xx_t$ in terms of $\nnu_t$,
\begin{equation} \begin{aligned} \label{eq:psidef}
    \psi_{\varepsilon}(t) \defas f(\xx_{N_t})
    &=
    \frac{1}{2n}\| \AA (\xx_{N_t}-\widetilde{\xx}) - \sqrt{n}\, \eeta \|^2
    =
    \frac{1}{2}\| \SSigma \nnu_t - \UU^T\eeta\|^2.\\
    &=\frac{1}{2}
\sum_{j=1}^d \sigma_j^{2}\nu_{t,j}^2
-\sum_{j=1}^{n \wedge d} \sigma_j\nu_{t,j}\, (\UU^T\eeta)_j 
+\frac{1}{2}\|\eeta\|^2.
\end{aligned}
\end{equation}
Hence the dynamics of $\psi_{\varepsilon}(t)$ are governed by the behaviors of $\nu_{t,j}$ and $\nu_{t,j}^2$ processes. We now return to $\mathcal{A}_{t,j}$. The \emph{key lemma} to simplifying \eqref{eq:Atj2} is that the $\bigl(\mathbb{e}_j^T\UU^T \mathbb{e}_{i}\bigr)^2$ expression in \eqref{eq:Atj2}  self-averages to $\frac{1}{n}.$ Furthermore, we are working in the regime when $\beta = o(n),$ and hence the terms with $\tfrac{\beta}{n}$ will vanish in the large-$n$ limit.  Thus we define
\begin{equation}\label{eq:Ahat}
\widehat{\mathcal{A}}_{t,j}
\defas
%   -\gamma 2\sigma_j^2\nu_{t,j}^2 
    2\nu_{t,j} \mathcal{B}_{t,j}
    +\gamma^2 \sigma_j^2
    \sum_{i=1}^n
    \frac{1}{n}
    \biggl(
    \mathbb{e}_{i}^T
    (\UU\SSigma \nnu_t - \eeta)\biggr)^2
    =
    -\gamma 2\sigma_j^2\nu_{t,j}^2 
    +\gamma 2\sigma_j\nu_{t,j}(\UU^T\eeta)_j 
    +\gamma^2\frac{2\sigma_j^2\psi_{\varepsilon}(t)}{n}.
\end{equation}
We will show that $\widehat{\mathcal{A}}_{t,j}$ is a good approximation for $\mathcal{A}_{t,j}$ in a suitably strong sense so that we can derive a deterministic Volterra equation description for $\psi_{\varepsilon}(t)$ in the large-$n$ limit. For the moment, let's group these error terms together. Define the c\`adl\`ag process
\begin{equation}\label{eq:totalerror}
\mathcal{E}_{t,j} \defas 
\nu_{t,j}^2 - \nu_{0,j}^2 - \int_0^t \widehat{\mathcal{A}}_{s,j}\, \dif s.
\end{equation}
In the following lemma, we get an expression for $\nu_{t,j}$. 
\begin{lemma}\label{lem:integrated}
For any $t \geq 0$ and for any $1 \leq j \leq d,$
\[
\begin{aligned}\nu_{t,j}^2
&=
e^{-2t\gamma\sigma_j^2}\nu_{0,j}^2 +
\int_0^t 
e^{-2(t-s)\gamma\sigma_j^2}
\biggl(\bigl(\gamma 2\sigma_j\nu_{s,j}(\UU^T\eeta)_j
+\tfrac{\gamma^22\sigma_j^2\psi_{\varepsilon}(s)}{n}\bigr) \dif s + \dif \mathcal{E}_{s,j}\biggr) \\
%&+\mathcal{E}_{t,j} - \int_0^t \gamma 2\sigma_j^2 e^{-2(t-s)\gamma\sigma_j^2}\mathcal{E}_{s,j}ds.
%&+\int_0^t e^{-2(t-s)\gamma\sigma_j^2}d\mathcal{E}_{s,j}. \\
    \text{and} \qquad \nu_{t,j}
    &=
    e^{-\gamma \sigma_j^2t}\nu_{0,j} 
    +\int_0^t e^{-\gamma \sigma_j^2(t-s)}\bigl(\gamma \sigma_j(\UU^T\eeta)_j \dif s + \dif \widetilde{M}_{s,j}\bigr).
\end{aligned}
\]
\end{lemma}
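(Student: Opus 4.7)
The plan is to view each of the Doob-Meyer decompositions in \eqref{eq:doob} as a linear inhomogeneous stochastic differential equation driven by a c\`adl\`ag semimartingale, and solve it by the standard variation-of-parameters (integrating factor) method. The two identities are structurally identical, so I would start with the second, simpler one.

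From \eqref{eq:doob} and \eqref{eq:Bt}, $\nu_{t,j}$ is a semimartingale satisfying $\dif \nu_{t,j} = (-\gamma \sigma_j^2 \nu_{t,j} + \gamma \sigma_j (\UU^T\eeta)_j)\,\dif t + \dif \widetilde{M}_{t,j}$. Define $X_t := e^{\gamma \sigma_j^2 t}\nu_{t,j}$. Integration by parts for semimartingales gives
\[
\dif X_t = \gamma \sigma_j^2 e^{\gamma \sigma_j^2 t}\nu_{t,j}\,\dif t + e^{\gamma \sigma_j^2 t}\dif \nu_{t,j},
\]
with no cross-variation correction because the integrating factor is a deterministic $C^1$ function of finite variation (even though $\nu_{t,j}$ has jumps from the Poisson clock $N_t$). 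Substituting the expression for $\dif \nu_{t,j}$, the two $\gamma\sigma_j^2\nu_{t,j}$ contributions cancel by design, leaving $\dif X_t = e^{\gamma \sigma_j^2 t}\bigl(\gamma \sigma_j (\UU^T\eeta)_j\,\dif t + \dif \widetilde{M}_{t,j}\bigr)$. Integrating from $0$ to $t$ and dividing through by $e^{\gamma \sigma_j^2 t}$ yields the second claimed identity.

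For the first identity, I would start from the definition \eqref{eq:totalerror} of $\mathcal{E}_{t,j}$, which rewrites the decomposition of $\nu_{t,j}^2$ as $\dif \nu_{t,j}^2 = \widehat{\mathcal{A}}_{t,j}\,\dif t + \dif \mathcal{E}_{t,j}$. The expression \eqref{eq:Ahat} for $\widehat{\mathcal{A}}_{t,j}$ is again affine in $\nu_{t,j}^2$ with linear coefficient $-2\gamma \sigma_j^2$, so applying the same argument with integrating factor $e^{2\gamma \sigma_j^2 t}$ produces the first identity directly: the $-2\gamma\sigma_j^2 \nu_{t,j}^2$ term in $\widehat{\mathcal{A}}_{t,j}$ cancels against $\partial_t e^{2\gamma\sigma_j^2 t}$, and the remaining affine-in-$\nu_{t,j}$ and $\psi_{\varepsilon}$ terms become the driving forcing.

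The only obstruction is bookkeeping: one must justify integration by parts for a c\`adl\`ag semimartingale against a smooth deterministic function, and verify that $\mathcal{E}_{t,j}$ is itself a well-defined semimartingale so that the stochastic integral $\int_0^t e^{-2(t-s)\gamma\sigma_j^2}\,\dif \mathcal{E}_{s,j}$ makes sense. Both are standard consequences of the general theory in \citep{protter2005stochastic} and require no new estimates; the cancellation of the $\nu_{t,j}$ and $\nu_{t,j}^2$ terms in the integrand is purely algebraic and is the only point at which the specific forms of $\mathcal{B}_{t,j}$ and $\widehat{\mathcal{A}}_{t,j}$ are used.
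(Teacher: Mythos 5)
Your proposal is correct and matches the paper's proof: both apply the integrating factor $e^{2t\gamma\sigma_j^2}$ (resp.\ $e^{t\gamma\sigma_j^2}$) to the c\`adl\`ag decomposition of $\nu_{t,j}^2$ (resp.\ $\nu_{t,j}$), use the fact that the multiplier is a deterministic $C^1$ function of finite variation so no cross-variation term appears, observe the algebraic cancellation of the linear-in-$\nu^2$ (resp.\ $\nu$) term, and integrate. The only cosmetic difference is ordering (you treat the simpler first-order identity first, the paper treats the second-order one and remarks the other is similar).
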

\begin{proof}
We show the first equation.  The second follows by a similar argument.
Using the definition of $\mathcal{E}_{t,j}$, the following holds
\[
\nu_{t,j}^2
= 
\nu_{0,j}^2  + 
\int_0^t
\biggl(
-\gamma 2\sigma_j^2\nu_{s,j}^2
+\gamma 2\sigma_j\nu_{s,j}(\UU^T\eeta)_j 
+\gamma^2\frac{2\sigma_j^2\psi_{\varepsilon}(s)}{n}
\biggr)
\, \dif s
+\mathcal{E}_{t,j}.
\]
Using c\`adl\`ag differentiation, we get that
\[
\dif (e^{2t\gamma\sigma_j^2}\nu_{t,j}^2)
= 
e^{2t\gamma\sigma_j^2}\gamma 2\sigma_j\nu_{t,j}(\UU^T\eeta)_j
+e^{2t\gamma\sigma_j^2}\gamma^2\frac{2\sigma_j^2\psi_{\varepsilon}(t)}{n}
+e^{2t\gamma\sigma_j^2}\dif \mathcal{E}_{t,j}.
\]
Hence integrating both sides, one obtains
\[
\nu_{t,j}^2=
e^{-2t\gamma\sigma_j^2}
\biggl(\nu_{0,j}^2 + \int_0^t e^{2s\gamma\sigma_j^2}
\biggl(\gamma 2\sigma_j\nu_{s,j}(\UU^T\eeta)_j
+\gamma^2\frac{2\sigma_j^2\psi_{\varepsilon}(s)}{n}\biggr)\dif s
+\int_0^t e^{2s\gamma\sigma_j^2}\dif \mathcal{E}_{s,j}\biggr),
\]
which completes the proof.
%The last term we integrate by parts, to conclude the desired statement.
\end{proof}
We then apply Lemma \ref{lem:integrated} to \eqref{eq:psidef} and we derive the \textit{approximate Volterra equation}
\begin{equation}\label{eq:psivolterra}
\begin{aligned}
\psi_{\varepsilon}(t)
&=
\frac{1}{2}
\sum_{j=1}^d \sigma_j^{2}
\biggl(e^{-2t\gamma\sigma_j^2}\nu_{0,j}^2 + \int_0^t e^{-2(t-s)\gamma\sigma_j^2}\gamma^2\frac{2\sigma_j^2\psi_{\varepsilon}(s)}{n}\dif s\biggr) \\ 
&+
\frac{1}{2}
\sum_{j=1}^d
\int_0^t 
e^{-2(t-s)\gamma\sigma_j^2}
\gamma 2\sigma_j^3\nu_{s,j}(\UU^T\eeta)_j \dif s
+\frac{1}{2}\|\eeta\|^2
    -\sum_{j=1}^{n \wedge d} \sigma_j\nu_{t,j}\, (\UU^T\eeta)_j \\
    &+\frac{1}{2}\sum_{j=1}^d 
    \sigma_j^{2}
    \int_0^t e^{-2(t-s)\gamma\sigma_j^2}\dif \mathcal{E}_{s,j}.
%    \biggl(\mathcal{E}_{t,j}
%    - \int_0^t \gamma 2\sigma_j^2 e^{-2(t-s)\gamma\sigma_j^2}\mathcal{E}_{s,j}ds\biggr).
\end{aligned}
\end{equation}
In this expression, we have gathered the terms on each line 
that have different limit behaviors.  
On the first line, we have the terms, that due to the convergence of the empirical measure of singular values (Assumption \ref{assumption: spectral_density}), will have continuum limits.
The second line are those terms that survive in the limit due to the effect of noise $\eeta$. The third line are error terms that vanish in the limit.  We will make explicit the convergence in the first two lines in the following lemmata.

\subsection{Stability of the Volterra equation} \label{App: stability_volterra}
We begin by defining some Laplace transforms of the limiting spectral measures, for $k \in \N_0,$
\begin{equation}\label{eq:deterministic}
h_k(t) \defas \int_0^\infty 
    x^ke^{-2\gamma tx} \dif \mu(x).
\end{equation}
We begin by showing that the terms on the first line of \eqref{eq:psivolterra} converge to some finite limit. Under our Assumption~\ref{assumption: spectral_density},
\begin{lemma}\label{lem:modelconvergence}
Locally uniformly on compact sets of time,
\begin{align}
\frac{1}{2}\sum_{j=1}^d \sigma_j^{2} e^{-2t\gamma\sigma_j^2}\nu_{0,j}^2
\Prto[n] \frac{Rh_1(t)}{2} \label{eq:conv1}
\quad\text{and}\\
\sum_{j=1}^d \gamma^2\frac{\sigma_j^{4}}{n} e^{-2t\gamma\sigma_j^2}
\Prto[n] \gamma^2 r h_2(t). \label{eq:conv2}
\end{align}
\end{lemma}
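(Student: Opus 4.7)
My plan is to prove the two convergences in turn, starting with \eqref{eq:conv2} which is the cleaner of the two, and then tackling \eqref{eq:conv1}, where the additional randomness from $\nu_{0,j}$ must be controlled. Throughout, I will use that the squared singular values of $\tfrac{1}{\sqrt{n}}\AA$ equal the eigenvalues $\lambda_j$ of $\HH$, so $\sigma_j^2 = \lambda_j$.

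\textbf{Step 1 (pointwise convergence of \eqref{eq:conv2}).} Rewrite the sum as
\begin{equation*}
\sum_{j=1}^d \gamma^2\frac{\sigma_j^{4}}{n} e^{-2t\gamma\sigma_j^2}
= \gamma^2 \cdot \tfrac{d}{n} \int_0^{\infty} x^2 e^{-2\gamma t x} \,\dif \mu_{\HH}(x).
\end{equation*}
The integrand is bounded and continuous on the random compact set $[0, \lambda_{\HH}^+]$. By Assumption~\ref{assumption: spectral_density}, $\mu_{\HH} \to \mu$ weakly in probability, $\lambda_{\HH}^+ \Prto[n] \lambda^+$, and $d/n \to r$. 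A standard continuous mapping / truncation argument (truncate the integrand at some $\lambda^{+} + 1$, use boundedness and weak convergence, then remove the truncation using that $\lambda_{\HH}^+$ stays below $\lambda^+ + 1$ with high probability) gives pointwise convergence to $\gamma^2 r h_2(t)$.

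\textbf{Step 2 (pointwise convergence of \eqref{eq:conv1}).} Using the orthogonal invariance (Lemma \ref{lem:orthogonalinvariance}), $\VV$ is Haar-distributed and independent of $\SSigma$, so conditionally on $\SSigma$ the vector $\VV^T(\xx_0 - \widetilde{\xx})$ is uniformly distributed on the sphere of radius $\sqrt{R}$ in $\RR^d$. Set $g_j(t) = \sigma_j^2 e^{-2t\gamma\sigma_j^2}$ and $\DD(t)=\diag(g_j(t))$; then the sum equals the quadratic form $(\xx_0-\widetilde{\xx})^T \VV \DD(t) \VV^T (\xx_0-\widetilde{\xx})$. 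Its conditional expectation given $\SSigma$ is
\begin{equation*}
\tfrac{R}{d}\sum_{j=1}^d \sigma_j^2 e^{-2t\gamma \sigma_j^2} = R \int_0^\infty x e^{-2\gamma t x}\,\dif \mu_{\HH}(x) \Prto[n] R h_1(t),
\end{equation*}
again by weak convergence on the eventually compact support. For concentration around this conditional mean, I will use the standard fact that a linear functional $\ww \mapsto \ww^T \DD \ww$ of a uniform vector on the $(d-1)$-sphere of radius $\sqrt{R}$ concentrates around its mean with variance of order $\|\DD\|_{\mathrm{op}}^2/d$; equivalently this follows from the log-Sobolev inequality on $O(d)$ applied to the $1$-Lipschitz function $\VV \mapsto \|(\DD(t))^{1/2} \VV^T(\xx_0-\widetilde x)\|$. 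Since $\|\DD(t)\|_{\mathrm{op}} \le (\lambda_{\HH}^+)^2$ is bounded with high probability, the fluctuations vanish in probability, giving pointwise convergence.

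\textbf{Step 3 (promotion to local uniform convergence).} Both the deterministic limits $h_1(t)$, $h_2(t)$ and the random prelimits are monotone decreasing in $t$ when multiplied/not by the polynomial prefactors — more usefully, $t$-derivatives of the integrands are bounded in absolute value by a polynomial in $\lambda_{\HH}^+$, which is itself bounded with high probability. Thus the families $\{t \mapsto \text{LHS}(t)\}$ are equicontinuous on $[0,T]$ with high probability. Combining equicontinuity with pointwise convergence on a countable dense set of $t$'s (via a standard Arzel\`a--Ascoli style $\varepsilon/3$ argument) yields $\sup_{0\le t \le T}|\text{LHS}(t)-\text{RHS}(t)| \Prto[n] 0$.

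\textbf{Main obstacle.} The principal technical point is the concentration of the quadratic form in Step 2, since the randomness in $\VV$ and in $\SSigma$ are both present and coupled only through the joint Haar/spectral law. The key is that orthogonal invariance decouples them, after which sphere-concentration (or equivalently concentration of Lipschitz functions on $O(d)$ with constant order $1$) produces fluctuations of size $O(1/\sqrt{d})$. Once this is in hand, extending from pointwise to uniform convergence is routine because all relevant $t$-derivatives are controlled by a power of $\lambda_{\HH}^+$, which concentrates near $\lambda^+$.
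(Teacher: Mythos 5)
Your proof is correct and follows the same blueprint as the paper's: (2) is a direct consequence of weak convergence of the ESM, and (1) is handled by conditioning on $\SSigma$ (so $\nnu_0$ is uniform on the sphere of radius $\sqrt{R}$), matching the conditional mean, controlling fluctuations, and then promoting pointwise convergence to local uniform convergence via a Lipschitz/mesh argument. The one place you diverge in technique is the concentration step: you invoke Lipschitz concentration on $O(d)$ (equivalently sphere concentration, as in the paper's Theorem~\ref{thm:lipschitz} / Corollary~\ref{cor:lipschitz}), whereas the paper's proof bounds the conditional variance by hand using negative association of the Dirichlet law together with a fourth-moment bound and then applies Chebyshev; both are valid, and yours gives stronger (sub-Gaussian) tails at no extra cost, though a Lipschitz constant of order $\sqrt{R\,\|\DD(t)\|_{\mathrm{op}}}$ rather than $1$ is what actually comes out of the computation, which is harmless since $\|\DD(t)\|_{\mathrm{op}}\le\lambda_{\HH}^+$ is bounded with high probability.
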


\begin{proof}
We begin by showing that each term in \eqref{eq:conv1} and \eqref{eq:conv2} converge pointwise in probability. Note that the convergence of \eqref{eq:conv2} is trivial because of Assumption  \ref{assumption: spectral_density}. Hence, it only remains to show pointwise convergence of \eqref{eq:conv1}.

Under Assumption \ref{assumption: Vector} and using uniform distribution of $\VV$ we have that $\nnu_0 = \VV (\xx_0 - \widetilde{\xx})$ is a uniformly distributed vector on the sphere of norm $\sqrt{R}$.
Observe first that conditioned on $\SSigma$, the conditional expectation of the LHS of \eqref{eq:conv1} is given by
\[
\mathbb{E} \biggl[\frac{1}{2}\sum_{j=1}^d \sigma_j^{2} e^{-2t\gamma\sigma_j^2}\nu_{0,j}^2
\bigg| \SSigma
\biggr] = \frac{R}{2}\frac{1}{d}\sum_{j=1}^d \sigma_j^{2} e^{-2t\gamma\sigma_j^2}.
\]
The vector $\nnu_0^2$ follows the Dirichlet distribution, which is negatively associated.  In particular, 
\(
\mathbb{E}( \nu_{0,j}^2\nu_{0,j}^2) \leq \mathbb{E}( \nu_{0,j}^2) \mathbb{E}(\nu_{0,j}^2)
\)
. Further $\mathbb{E}( \nu_{0,j}^4) \leq 3R^2d^{-2},$ as the moments are strictly bounded by the normal moments.  Hence, the variance is bounded by 
\begin{align*}
\Var\biggl( \frac{1}{2}\sum_{j=1}^d \sigma_j^{2} e^{-2t\gamma\sigma_j^2}\nu_{0,j}^2
\bigg| \SSigma \biggr)
&=\mathbb{E} \biggl[\bigl(\frac{1}{2}\sum_{j=1}^d \sigma_j^{2} e^{-2t\gamma\sigma_j^2}\nu_{0,j}^2 -
\frac{R}{2}\frac{1}{d}\sum_{j=1}^d \sigma_j^{2} e^{-2t\gamma\sigma_j^2}
\bigr)^2
\bigg| \SSigma
\biggr]\\
&= \mathbb{E} \biggl[\bigl(\frac{1}{2d}\sum_{j=1}^d \sigma_j^{2} e^{-2t\gamma\sigma_j^2}
(d\nu_{0,j}^2 - R)
\bigr)^2
\bigg| \SSigma
\biggr]\\
&\leq \frac{1}{4d^2}\mathbb{E}
\biggl[ \sum_{j=1}^d (\sigma_j^{2} e^{-2t\gamma\sigma_j^2})^2
(d\nu_{0,j}^2 - R)^2
\bigg| \SSigma
\biggr]\\
&= \frac{1}{4d}\biggl[\frac{1}{d}\sum_{j=1}^d \sigma_j^{4} e^{-2\cdot2t\gamma\sigma_j^2}
\biggr]
(d^2\mathbb{E}[\nu_{0,j}^4] - R^2) = \mathcal{O}\biggl(\frac{1}{d}\biggr).
\end{align*}
%In the last line, we used Assumption \ref{assumption: Vector} and \ref{assumption: spectral_density}. 
Therefore, for $\epsilon>0$, conditional Chebyshev inequality gives
\[
\Pr \biggl(\biggl|
\frac{1}{2}\sum_{j=1}^d \sigma_j^{2} e^{-2t\gamma\sigma_j^2}\nu_{0,j}^2 -
\frac{R}{2}\frac{1}{d}\sum_{j=1}^d \sigma_j^{2} e^{-2t\gamma\sigma_j^2}
\biggr| > \epsilon
\bigg| \SSigma
\biggr) \le \frac{1}{\epsilon^2} \Var\biggl( \frac{1}{2}\sum_{j=1}^d \sigma_j^{2} e^{-2t\gamma\sigma_j^2}\nu_{0,j}^2
\bigg| \SSigma \biggr)
 { \xrightarrow[ n \to \infty]{}} 0.
\]
Applying the law of total probability to this and combining it with the weak convergence of ESM in probability (Assumption \ref{assumption: spectral_density}) gives
\begin{align*}
    \Pr \biggl( \biggl| \frac{1}{2}\sum_{j=1}^d \sigma_j^{2} e^{-2t\gamma\sigma_j^2}\nu_{0,j}^2
    &- \frac{Rh_1(t)}{2} \biggr| > \epsilon \biggr)\\
    &\le \Pr\biggl(\biggl|
    \frac{1}{2}\sum_{j=1}^d \sigma_j^{2} e^{-2t\gamma\sigma_j^2}\nu_{0,j}^2 -
    \frac{R}{2}\frac{1}{d}\sum_{j=1}^d \sigma_j^{2} e^{-2t\gamma\sigma_j^2}
    \biggr| > \frac{\epsilon}{2}
    \biggr)\\
    &+\Pr \biggl( \biggl|
    \frac{R}{2}\frac{1}{d}\sum_{j=1}^d \sigma_j^{2} e^{-2t\gamma\sigma_j^2}
    - \frac{Rh_1(t)}{2} \biggr| > \frac{\epsilon}{2} \biggr)
    { \xrightarrow[ n \to \infty]{}} 0.
\end{align*}

Now we show the uniform convergence of \eqref{eq:conv1} on time interval $[0,T]$ for a fixed time $T>0$ (the same argument applies to \eqref{eq:conv2}). Considering mesh points on $[0,T]$ with spacing, let us say $\lambda >0$, we can say that the pointwise convergence holds on those mesh points and so does the supremum convergence on them. For an arbitrary time $t\in [0,T]$, there exists a mesh point $t_0$ such that $|t-t_0|\le \lambda$. Then, since $e^{-2t\gamma\sigma_j^2}$ is a Lipschitz function on $[0,T]$ with some Lipschitz constant $C>0$, we have
\[
\sup_{0\le t\le T} \biggl| \frac{1}{2}\sum_{j=1}^d \sigma_j^{2} (e^{-2t\gamma\sigma_j^2} - e^{-2t_0\gamma\sigma_j^2})\nu_{0,j}^2 \biggr|
\le \frac{C\lambda}{2}\sum_{j=1}^d \sigma_j^2 \nu_{0,j}^2.
\]
Note that $\sum_{j=1}^d \sigma_j^2 \nu_0^2 \Prto[n] R \int_0^\infty x^2 d\mu(x) < \infty$ using a similar idea by conditioning on $\SSigma$ and applying Assumption \ref{assumption: spectral_density}. Then observe, applying triangle inequality and taking supremum on $t\in [0,T]$ gives
\begin{align*}
    \sup_{0\le t\le T} \biggl| \frac{1}{2}\sum_{j=1}^d \sigma_j^{2} e^{-2t\gamma\sigma_j^2}\nu_{0,j}^2
    - \frac{Rh_1(t)}{2} \biggr| &\le 
    \sup_{t_0\in [0,T]}\biggl| \frac{1}{2}\sum_{j=1}^d \sigma_j^{2} e^{-2t_0\gamma\sigma_j^2}\nu_{0,j}^2
    - \frac{Rh_1(t_0)}{2} \biggr|\\
    &\quad +\sup_{0\le t\le T} \biggl| \frac{1}{2}\sum_{j=1}^d \sigma_j^{2} (e^{-2t\gamma\sigma_j^2} - e^{-2t_0\gamma\sigma_j^2})\nu_{0,j}^2 \biggr|\\
    &\quad +  \sup_{t,t_0\in [0,T]} \frac{R}{2} \bigl| h_1(t_0) - h_1(t) \bigr|.
\end{align*}
Given that $\mu$ has a finite support, we have $\sup_{t,t_0\in [0,T]}|h_1(t)-h_1(t_0)| \le C' \lambda$ for some $C'>0$. Now the claim follows as $\lambda$ can be chosen as small as possible. 
\end{proof}
We can now recast \eqref{eq:psivolterra} as an approximate Volterra type integral equation, where
\begin{equation}\label{eq:approxvolterra}
\psi_\varepsilon(t) = 
\frac{Rh_1(t)}{2} + {\widetilde{R}}\,\cdot\frac{rh_0(t) +(1-r)}{2}
+ \varepsilon_1^{(n)}(t) + \int_0^t \biggl(\gamma^2 r h_2(t-s) + \varepsilon_2^{(n)}(t-s)\biggr)\psi_\varepsilon(s)\,\dif s,
\end{equation}
and where $\varepsilon_{i}^{(n)}$ are defined implicitly by comparison with \eqref{eq:psivolterra}.  In particular, $\varepsilon_2^{(n)}(t)$ is given by the difference
\[
\varepsilon_2^{(n)}(t)
=
\sum_{j=1}^d \gamma^2\frac{\sigma_j^{4}}{n} e^{-2t\gamma\sigma_j^2}
- \gamma^2 r h_2(t),
\]
which is therefore guaranteed to converge to $0$ by Lemma \ref{lem:modelconvergence}.  The other error $\varepsilon_1^{(n)}$ is substantially more complicated; we discuss it fully in  \eqref{eq:errorpieces}, Section~\ref{app: bounding_the_error_terms}.

However, all we need to show is that this error tends to $0,$
as Volterra equations are stable:
\begin{proposition}[Stability of the Volterra equation]\label{prop:volterrastability} Fix a constant $T>0$ and 
suppose $\psi_\varepsilon$ solves \eqref{eq:approxvolterra} with bounded error terms,
\[
\sup_{0 \leq t \leq T} |\varepsilon_1^{(n)}(t)| \Prto[n] 0
\quad
\text{and}
\quad
\sup_{0 \leq t \leq T} |\varepsilon_2^{(n)}(t)| \Prto[n] 0.
\]
 Suppose that $\psi_0$ solves \eqref{eq:approxvolterra} with $\varepsilon_1^{(n)} = \varepsilon_2^{(n)} = 0$.  Then for any fixed length of time $T$, the perturbed solution of the Volterra equation $\psi_{\varepsilon}$ converges uniformly, in probability, to the unperturbed solution of the Volterra equation, $\psi_0(t)$:
\[
\sup_{0 \leq t \leq T} |\psi_\varepsilon(t) - \psi_0(t)| \Prto[n] 0.
\]
\end{proposition}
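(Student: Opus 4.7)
The plan is to apply the standard stability theory for linear Volterra convolution equations, which at bottom is a two-step Gronwall argument. Write $F(t) = \tfrac{R}{2}h_1(t) + \tfrac{\widetilde{R}}{2}(rh_0(t)+(1-r))$ and $K(t) = \gamma^2 r h_2(t)$. Since $\mu$ is compactly supported (Assumption~\ref{assumption: spectral_density}), both $F$ and $K$ are deterministic, continuous, and bounded on $[0,T]$ by a constant $M=M(T,R,\widetilde{R},\gamma,\mu)$. In particular the unperturbed solution $\psi_0$ inherits a deterministic uniform bound $\sup_{[0,T]}|\psi_0| \le C_0$ directly from Gronwall's inequality applied to its defining integral equation.

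The first substantive step is to bound $\psi_\varepsilon$ uniformly in $n$ on a high-probability event. Fix the event $E_n = \{\sup_{[0,T]}|\varepsilon_1^{(n)}| \le 1,\ \sup_{[0,T]}|\varepsilon_2^{(n)}| \le 1\}$; by hypothesis $\Pr(E_n) \to 1$. On $E_n$ the perturbed equation \eqref{eq:approxvolterra} has forcing bounded by $M+1$ and kernel bounded by $M+1$, so Gronwall's inequality gives $\sup_{[0,T]}|\psi_\varepsilon(t)| \le C_1$ for a deterministic constant $C_1$ independent of $n$.

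The second step is to subtract the two Volterra equations. Setting $\Delta(t) = \psi_\varepsilon(t)-\psi_0(t)$, the linear structure yields
\[
\Delta(t) = \varepsilon_1^{(n)}(t) + \int_0^t \varepsilon_2^{(n)}(t-s)\,\psi_\varepsilon(s)\,\dif s + \int_0^t K(t-s)\Delta(s)\,\dif s.
\]
On $E_n$ the effective forcing is bounded pointwise by $\sup_{[0,T]}|\varepsilon_1^{(n)}| + TC_1\sup_{[0,T]}|\varepsilon_2^{(n)}|$ and $|K|\le M$, so a final application of Gronwall's inequality (equivalently, convolution with the resolvent kernel of $K$) gives
\[
\sup_{0 \le t \le T}|\Delta(t)| \le e^{MT}\Bigl(\sup_{[0,T]}|\varepsilon_1^{(n)}| + T C_1 \sup_{[0,T]}|\varepsilon_2^{(n)}|\Bigr).
\]
The right-hand side tends to $0$ in probability by hypothesis, and combined with $\Pr(E_n)\to 1$ this yields $\sup_{[0,T]}|\Delta(t)| \Prto[n] 0$.

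The argument is essentially mechanical, since $F$ and $K$ are deterministic and bounded on compact time intervals; the only subtlety is avoiding circularity in the bound on $\psi_\varepsilon$. One must apply Gronwall directly to $|\psi_\varepsilon|$ using only the pointwise control of $\varepsilon_2^{(n)}$ \emph{before} attempting to control $\Delta$, because the forcing term $\int_0^t \varepsilon_2^{(n)}(t-s)\psi_\varepsilon(s)\dif s$ in the $\Delta$ equation must be treated as an exogenous perturbation rather than folded into the Gronwall kernel. This is the main, and only mild, obstacle.
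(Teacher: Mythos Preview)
Your argument is correct. Both your proof and the paper's rest on the same underlying mechanism---linear stability of a convolution Volterra equation---but they implement it differently.

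The paper uses resolvent theory: it writes the equation as $\psi_\varepsilon + k_\varepsilon * \psi_\varepsilon = f_\varepsilon$, invokes the existence and $L^1_{\text{loc}}$-continuity of the resolvent $r_\varepsilon = \sum (-1)^{j-1} k_\varepsilon^{*j}$ (citing Gripenberg et al.), and then expresses $\psi_\varepsilon = f_\varepsilon - r_\varepsilon * f_\varepsilon$ to bound $|\psi_\varepsilon - \psi_0|$ by $|f_\varepsilon - f_0|$, $\|r_\varepsilon - r_0\|_{L^1[0,T]}$, and $\|r_\varepsilon\|_{L^1[0,T]}$. This is more general in that it only needs $k_\varepsilon \in L^1_{\text{loc}}$.

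Your route is more elementary: you exploit that $K = \gamma^2 r h_2$ is \emph{bounded} on $[0,T]$ (since $\mu$ has compact support), which reduces everything to two successive applications of the integral Gronwall inequality---first to bound $|\psi_\varepsilon|$ on the high-probability event $E_n$, then to bound $|\Delta|$. The explicit two-step structure (bound $\psi_\varepsilon$ before bounding $\Delta$, so that the $\varepsilon_2^{(n)} * \psi_\varepsilon$ term can be treated as a small exogenous forcing) is exactly right and avoids any circularity. The trade-off is that your bound $e^{MT}$ is cruder than the resolvent bound, but since the proposition is qualitative this costs nothing. Your proof is self-contained and arguably cleaner for this setting.
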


\begin{proof} Throughout this proof, we set $\mathbb{R}^+ = \{ x \in \mathbb{R} \, : \, x \ge 0\}$ and we use the notation $L^1_{\text{loc}}(\mathbb{R}^+)$ to be the locally integrable functions on $\mathbb{R}^+$. We will supress the $n$ dependency in the error terms $\varepsilon_1^{(n)}$ and $\varepsilon_2^{(n)}$. We begin by defining some notation for solving Volterra equations, namely the kernel and forcing function respectively by
\begin{equation}
    k_{\varepsilon}(t) \defas - \big ( \gamma^2 rh_2(t) + \varepsilon_2(t) \big ) \quad \quad \text{and} \quad \quad  f_\varepsilon(t) \defas \frac{R h_2(t)}{2} + \widetilde{R} \cdot \frac{rh_0(t)+1-r}{2} + \varepsilon_1(t).
\end{equation}
Here we use the convention that $k_0$ and $f_0$ correspond to where $\varepsilon_1(t) = \varepsilon_2(t) = 0$. Under this notation, the Volterra equation in \eqref{eq:approxvolterra} becomes
\begin{equation} \label{eq:volterra_stuff} \psi_{\varepsilon}(t) + \int_0^t k_{\varepsilon}(t-s) \psi_{\varepsilon}(s) \, \dif s = f_{\varepsilon}(t).
\end{equation}
Now we check that $k_{\varepsilon}(t) \in L^1_{\text{loc}}(\mathbb{R}^+)$ with high probability. To see this we only need that $h_2(t) \in L^1_{\text{loc}}(\mathbb{R}^+)$ as the supremum condition on $\varepsilon_2(t)$ guarantees that the error term in $k_{\varepsilon}$ is bounded with high probability and therefore $\varepsilon_2(t)$ is in $L^1_{\text{loc}}(\mathbb{R}^+)$ with high probability. 
% Let $K$ be a bounded subset of $\mathbb{R}^+$. Using that $m$ is the Lebesgue measure, it follows that
% \begin{align*}
%     \int_K |k_{\varepsilon}(t)| \, dt \le \int_0^\infty \gamma^2 r h_2(t) \, dt + \int_K |\varepsilon_2(t)| \, dt \le \int_0^\infty \gamma^2 r h_2(t) \, dt + m(K) \delta. 
% \end{align*}
% It remains to show that $h_2(t) \in L^1_{\text{loc}}(\mathbb{R}^+)$. 
Since $h_2(t) \ge 0$, we can apply Tonelli's theorem 
\begin{align*}
    \int_0^\infty h_2(t) \, \dif t = \int_0^\infty \int_0^\infty x^2 e^{-2 \gamma t x} \, \dif t \, \dif \mu(x) = \int_0^\infty \frac{x}{2\gamma} \dif \mu(x) < \infty.
\end{align*}
Here we used that $\mu(x)$ is compactly supported to conclude the last integral. Hence it follows that $h_2(t) \in L^1(\mathbb{R}^+)$ which shows that $k_{\varepsilon}(t) \in L^1_{\text{loc}}(\mathbb{R}^+)$. 
To prove the conclusion of the proposition, we will use a stability theorem together with the existence and uniqueness for Volterra equations of convolution type kernels. The solutions of convolution kernel Volterra equations rely on a function defined through the kernel $k$ called the \textit{resolvent of the kernel $k$}. We define this resolvent as the function $r_{\varepsilon}: \mathbb{R}^+ \to \mathbb{R}$ such that
\begin{equation}
    r_{\varepsilon}(t) = \sum_{j=1}^\infty (-1)^{j-1}k^{*j}_{\varepsilon}(t),
\end{equation}
where the function $k^{*j}_{\varepsilon}(t)$, $j \ge 1$ is the $(j-1)$-fold convolution of the kernel $k_{\varepsilon}$ with itself. We want to show that a perturbed kernel $k_{\varepsilon}$ results in a perturbation of the resolvent. Since $k_{\varepsilon} \in L^1_{\text{loc}}(\mathbb{R}^+)$, the stability theorem for kernels, Theorem 3.1 in \cite{gripenberg1990volterra}, says that the resolvent $r_{\varepsilon} \in L^1_{\text{loc}}(\mathbb{R}^+)$ is unique and depends continuously on $k_{\varepsilon}$ in the $L_{\text{loc}}^1(\mathbb{R}^+)$ topology. As $\displaystyle \sup_{0 \le t \le T} |\varepsilon_2(t)| \Prto[n] 0$, we have that 
\begin{align*}
    \int_0^T |k_{\varepsilon}(t)-k_0(t)| \, \dif t = \int_0^T |\varepsilon_2(t)| \, \dif t \Prto[n] 0,
\end{align*}
so by continuity in $L^1_{\text{loc}}$, we get that
\begin{align*}
    \int_0^T |r_{\varepsilon}(t)-r_0(t)| \, \dif t \Prto[n] 0.
\end{align*}
Using this resolvent, the unique solution to the Volterra equation in \eqref{eq:volterra_stuff} \citep[Theorem 3.5]{gripenberg1990volterra} is give by 
\begin{equation}
    \psi_\varepsilon(t) = f_{\varepsilon}(t) - (r_{\varepsilon} * f_{\varepsilon})(t).
\end{equation}
A simple computation yields that
\begin{align*}
    |\psi_{\varepsilon}(t)-\psi_0(t)| &\le |f_{\varepsilon}(t)-f_0(t)| + \big | \int_0^t (r_0-r_{\varepsilon})(t-s)f_0(s) \, \dif s \big |\\
    & \qquad + \big | \int_0^t r_{\varepsilon}(t-s) [f_0(s)-f_{\varepsilon}(s)] \, \dif s \big |\\
    &\le |f_{\varepsilon}(t)-f_0(t)| + \sup_{0 \le s \le t} |f_0(t)| \int_0^t |(r_0-r_\varepsilon)|(t-s) \, \dif s\\
    & \quad + \sup_{0 \le s \le t} |f_0(s)-f_\varepsilon(s)| \int_0^t |r_\varepsilon(t-s)| \, \dif s.
\end{align*}
Since $h_k(t)$ is bounded, we clearly have that $\sup_{0 \le t \le T} |f_0(t)|$ is bounded. 
% Moreover with high probability, we have that $\int_0^T |r_{\varepsilon}(t)| \, dt$ is bounded since $r_{\varepsilon}(t) \in L^1_{\text{loc}}(\mathbb{R}^+)$ with high probability. 
Working on the event that $r_{\varepsilon}(t) \in L^1_{\text{loc}}(\mathbb{R}^+)$ (\textit{i.e.} $\int_0^T |r_{\varepsilon}(t)| \, dt$ is bounded),  $\int_0^T |r_{\varepsilon}(t) - r_0(t)| \, dt$ is small, and $\sup_{0 \le t \le T} |\varepsilon_1(t)|$ is small, it follows that 
\begin{align*}
   \sup_{0 \le t \le T} |\psi_{\varepsilon}(t)-\psi_0(t)| &\le \sup_{0 \le t \le T} | \varepsilon_1(t)| + \sup_{0 \le t \le T} |f_0(t)| \int_0^T |r_0-r_{\varepsilon}|(t) \, \dif t\\
   & \quad + \sup_{0 \le t \le T} |\varepsilon_1(t)| \int_0^T |r_{\varepsilon}(t)| \, \dif t.
\end{align*}
Since every term on the RHS is small and the complement of the event on which we proved the inequality above has small probability, the result immediately follows.  
\end{proof}

This yields one of the main theorems of this paper which we restate for clarity (Theorem~\ref{thm: concentration_main}):

\begin{theorem}[Concentration of SGD] \label{thm: concentration} 
Suppose $\beta \in \mathbb{N}$ is a batch-size parameter such that $0 < \beta \le n^{1/5-\delta}$ for some $\delta > 0$ and the stepsize is $\gamma < \frac{2}{r} \big (\int_0^\infty x \, d\mu(x) \big )^{-1}$. Let the constant $T > 0$. Under Assumptions~\ref{assumption: Vector} and \ref{assumption: spectral_density}, the function values at the iterates of SGD converge to 
\begin{equation}
  \sup_{0 \le t \le T}  \big | f \big ( \xx_{ \lfloor \tfrac{n}{\beta} t \rfloor } \big ) -  \psi_0(t) \big | \Prto[n] 0,
\end{equation}
where the function $\psi_0$ is the solution to the Volterra equation
\begin{equation} \begin{gathered} \label{eq:Volterra_eq}
    \psi_0(t) = \tfrac{R}{2} h_1(t) + \tfrac{\widetilde{R}}{2} \big ( rh_0(t)+ (1-r) \big ) + \int_0^t \gamma^2 r h_2(t-s)\psi_0(s)\,\dif s,\\
    \quad \text{and} \quad h_k(t) = \int_0^\infty x^k e^{-2 \gamma tx} \, \dif \mu(x). 
\end{gathered}
\end{equation}
\end{theorem}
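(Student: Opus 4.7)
The plan is to combine the perturbed Volterra equation \eqref{eq:approxvolterra} with the stability result of Proposition~\ref{prop:volterrastability}, reducing the entire theorem to checking that the two error processes $\varepsilon_1^{(n)}$ and $\varepsilon_2^{(n)}$ tend to zero uniformly on $[0,T]$ in probability, and then transferring from the Poisson-embedded process $f(\xx_{N_t})$ back to the deterministic index $\lfloor \tfrac{n}{\beta} t \rfloor$. The second error, $\varepsilon_2^{(n)}(t) = \sum_j \gamma^2 \sigma_j^4 n^{-1} e^{-2t\gamma\sigma_j^2} - \gamma^2 r h_2(t)$, is handled directly by Lemma~\ref{lem:modelconvergence} (spectral convergence plus an equicontinuity argument over a compact time window). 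The stepsize condition $\gamma < \tfrac{2}{r} (\int x\,\dif\mu)^{-1}$ ensures that the resolvent of $k_0 = -\gamma^2 r h_2$ lies in $L^1(\mathbb{R}^+)$ on the relevant horizon, so Proposition~\ref{prop:volterrastability} applies.

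The real work is bounding $\varepsilon_1^{(n)}$ uniformly on $[0,T]$. Reading off \eqref{eq:psivolterra}, I would split $\varepsilon_1^{(n)}$ into three groups: (i) the approximation error $\mathcal{A}_{t,j} - \widehat{\mathcal{A}}_{t,j}$ appearing through $\dif\mathcal{E}_{s,j}$, (ii) the martingale contributions $M_{t,j}$ and $\widetilde{M}_{t,j}$ from the Doob--Meyer decomposition of $\nu_{t,j}^2$ and $\nu_{t,j}$, and (iii) the noise cross-terms $-\sum_j \sigma_j \nu_{t,j}(\UU^T\eeta)_j$ and $\tfrac12\|\eeta\|^2 - \tfrac{\widetilde R}{2}(r h_0(t)+(1-r))$. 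For (i), I would invoke the \emph{key lemma} (Proposition~\ref{prop:errorKH}) which shows that $(\mathbb{e}_j^T \UU^T \mathbb{e}_i)^2$ self-averages to $1/n$; this is where orthogonal invariance of $\AA$ (Lemma~\ref{lem:orthogonalinvariance}) enters most strongly, since $\UU$ is Haar distributed and one can apply standard concentration-of-measure results on the orthogonal group. The $\tfrac{\beta-1}{n-1}$ correction in $\mathcal{A}_{t,j}$ is negligible because $\beta = o(n)$. For (iii), independence of $\eeta$ from $\UU$ means the vector $\UU^T \eeta$ is again distributed as an isotropic random vector of squared norm $\widetilde R$, and after integrating against the deterministic kernel $e^{-\gamma\sigma_j^2 (t-s)}$ these terms concentrate on their gradient-flow mean, contributing the $\widetilde{R}(r h_0(t)+(1-r))/2$ piece.

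The hardest step will be (ii), the martingale bounds: to control $\sup_{0 \le t \le T} |\varepsilon_1^{(n)}(t)|$ one needs Doob's maximal inequality applied to the predictable quadratic variations of $M_{t,j}$ and $\widetilde{M}_{t,j}$, and these in turn depend on $\mathbb{E}[\nu_{t,j}^2 \mid \mathcal{F}_t]$ and on entries of $\UU^T$. The central difficulty is a circular bootstrap: bounding the martingale increments requires an a priori bound on $\sqrt{n}\,\nu_{t,j}$ uniformly in $j$ and $t \le T$, but $\nu_t$ is the solution to a stochastic recursion whose variance is itself what we are estimating. I would close this loop by a Gr\"onwall-style argument on $\max_j \mathbb{E}[\nu_{t,j}^2]$, using the explicit decay factor $e^{-2\gamma\sigma_j^2 t}$ in Lemma~\ref{lem:integrated} and the fact that an SGD update $\gamma n^{-1} \AA^T \PP_k (\AA(\xx_k - \widetilde\xx) + \sqrt{n}\eeta)$ is nearly orthogonal to any fixed row of $\UU$ (again by Haar isotropy, which only requires control of $\max_{i,j}|U_{ij}|$, a much weaker random-matrix input than full orthogonal invariance). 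The batch-size restriction $\beta \le n^{1/5-\delta}$ is exactly what is needed so that the contribution of the $(\beta-1)/(n-1)$ term in \eqref{eq:Atj2} is absorbed by these martingale estimates.

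Once all three pieces of $\varepsilon_1^{(n)}$ are shown to vanish uniformly, Proposition~\ref{prop:volterrastability} gives $\sup_{0 \le t \le T} |\psi_\varepsilon(t) - \psi_0(t)| \Prto[n] 0$, where $\psi_\varepsilon(t) = f(\xx_{N_t})$. To finish, I would transfer to the deterministic index $\lfloor \tfrac{n}{\beta} t \rfloor$: since $N_t \sim \text{Poisson}(nt/\beta)$ concentrates around its mean with fluctuations $O(\sqrt{n/\beta})$ by standard Poisson tail bounds, and since on any compact time interval $\psi_0$ is Lipschitz (its derivatives are Laplace transforms of $\mu$), the difference $|f(\xx_{\lfloor nt/\beta \rfloor}) - \psi_\varepsilon(t)|$ is controlled by a further equicontinuity argument on the process $t \mapsto f(\xx_{N_t})$ over mesh points in $[0,T]$, completing the proof.
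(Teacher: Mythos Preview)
Your overall architecture matches the paper's: derive the perturbed Volterra equation \eqref{eq:approxvolterra}, invoke Proposition~\ref{prop:volterrastability} once $\varepsilon_1^{(n)},\varepsilon_2^{(n)}$ are shown to vanish uniformly, then transfer from the Poisson clock $N_t$ to the deterministic index $\lfloor nt/\beta\rfloor$. Your handling of $\varepsilon_2^{(n)}$ via Lemma~\ref{lem:modelconvergence}, of the self-averaging $(\mathbb{e}_j^T\UU^T\mathbb{e}_i)^2\approx 1/n$ via the key lemma, and of the noise cross-terms via isotropy of $\UU^T\eeta$ is essentially the paper's decomposition $\varepsilon_1^{(n)}=\varepsilon^{(n)}_{\operatorname{IC}}+\varepsilon^{(n)}_{\operatorname{KL}}+\varepsilon^{(n)}_{\operatorname{M}}+\varepsilon^{(n)}_{\operatorname{beta}}+\varepsilon^{(n)}_{\operatorname{eta}}$ in Section~\ref{app: bounding_the_error_terms}. (For the final transfer, the paper writes $f(\xx_{\lfloor nt/\beta\rfloor})=\psi_\varepsilon(\tau_{\lfloor nt/\beta\rfloor})$ and shows $\sup_{t\le T}|\tau_{\lfloor nt/\beta\rfloor}-t|\to 0$, using continuity of $\psi_0$ rather than equicontinuity of $\psi_\varepsilon$, which is piecewise constant; your version would need a small fix here, but this is minor.)

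There is, however, a genuine gap in your step~(ii). You propose to close the circular bootstrap by ``a Gr\"onwall-style argument on $\max_j \mathbb{E}[\nu_{t,j}^2]$'', but moment bounds on individual coordinates do not yield the \emph{pathwise, with-overwhelming-probability} control that the martingale quadratic variations require. The jumps of $M_{t,j}$ and $\widetilde M_{t,j}$ involve $(\mathbb{e}_i^T(\UU\SSigma\nnu_{s-}-\eeta))^2$ (see \eqref{eq:QV}), so what is actually needed is the delocalization $\sup_{i}\sup_{t\le T}(\mathbb{e}_i^T(\UU\SSigma\nnu_t-\eeta))^2\le \beta n^{\epsilon-1}$ w.o.p.\ (Proposition~\ref{prop:delocalization}), not a second-moment bound on $\nu_{t,j}$. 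The paper gets this by first establishing a crude a~priori supermartingale bound $\|\UU\SSigma\nnu_t-\eeta\|\le n^\theta$ (Lemma~\ref{lem:psibound}), and then running an \emph{iterative pathwise bootstrap} via a tailored concentration lemma for martingales whose jumps split into two types (Lemma~\ref{lem:raremartingales}): the ``rare'' jumps are those steps where the fixed coordinate $i$ lies in the minibatch $B_k$, and these can be of order~$1$, so they are outside the reach of Doob's inequality plus quadratic-variation bounds. The iteration has the form $\psi_i^{(T)}\le n^{-\aleph}\Rightarrow \psi_i^{(T)}\le n^{2\theta-1/2}\max\{\sqrt{\beta}\,n^\theta,\,\beta n^{-\aleph},\,1\}$ and is repeated finitely many times; it is precisely this iteration, not the $(\beta-1)/(n-1)$ term in \eqref{eq:Atj2}, that forces the restriction $\beta\le n^{1/5-\delta}$. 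Without this delocalization input, your martingale step cannot close.
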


\begin{proof}
 By definition of $N_t$ and $\psi_\varepsilon(t)$, we have
\[
f \big ( \xx_{ \lfloor \tfrac{n}{\beta} t \rfloor } \big ) = f \big ( \xx_{N_{\tau_{\lfloor n\beta/t \rfloor}}} \big) = \psi_\varepsilon(\tau_{\lfloor \tfrac{n}{\beta} t \rfloor }).
\]
Also, Proposition \ref{prop:volterrastability} gives
\[
\sup_{0 \leq t \leq T} |\psi_\varepsilon(t) - \psi_0(t)| \Prto[n] 0.
\]
Therefore, triangle inequality gives
\[
\sup_{0 \leq t \leq T} \big | f \big ( \xx_{ \lfloor \tfrac{n}{\beta} t \rfloor } \big ) -  \psi_0(t) \big | 
\le \sup_{0 \leq t \leq T} \big | \psi_\varepsilon(\tau_{\lfloor \tfrac{n}{\beta} t \rfloor }) -  \psi_0(\tau_{\lfloor \tfrac{n}{\beta} t \rfloor }) \big | + 
\sup_{0 \leq t \leq T} \big | \psi_0(\tau_{\lfloor \tfrac{n}{\beta} t \rfloor }) - \psi_0(t) \big|,
\]
and by the continuity of $\psi_0(t)$, it would suffice to show 
\begin{equation}\label{eq:convtau}
\sup_{0 \leq t \leq T} \big| \tau_{\lfloor \tfrac{n}{\beta} t \rfloor } - t \big|
\Prto[n] 0.
\end{equation}
First, note that 
\begin{equation}\label{eq:convpoisson}
\sup_{0\le s \le T} \big| N_s\frac{\beta}{n} - s \big| \Prto[n] 0
\end{equation}
holds. For a fixed time $s\in [0,T]$, this comes from the strong law of large numbers, see \cite[(4.18)]{kingman1993}. And the result for the supremum on $[0,T]$ follows using monotonicity of $N_t$ on $[0,T]$ and the meshing arguments as used in proving Lemma \ref{lem:modelconvergence}.

Now for $t>0$, let $s>0$ be such that $\lfloor \tfrac{n}{\beta} t \rfloor = N_s$, or $t = N_s\frac{\beta}{n} + \frac{\beta}{n}r$ for $0\le r<1$. Therefore, observe
\begin{align*}
\big| \tau_{\lfloor \tfrac{n}{\beta} t \rfloor } - t \big|
 = \big| \tau_{N_s} - N_s\frac{\beta}{n} - \frac{\beta}{n}r \big | 
 &\le 
 \big | \tau_{N_s} - s \big| + \big| N_s\frac{\beta}{n} -s \big| + \frac{\beta}{n}r.
\end{align*}

The last term converges to 0 as $n\to \infty$, and so does the second term in probability, by \eqref{eq:convpoisson}. So, it is left to show the convergence in probability of the first term. By the definition of $N_s$, we have
\[
s-\Delta \le \tau_{N_s} \le s,
\]
where $\Delta$ denotes the largest spacing between adjacent jumps in [0,T]. Note that $N_s \le N_T \le \frac{2Tn}{\beta}$ with overwhelming probability \cite[Prop. 1]{klar2000}. Recalling again that $\tau_k - \tau_{k-1}, k\in\mathbb{N}$, follow $\text{Exp}(\frac{n}{\beta})$ independently on $k\in \mathbb{N}$, we have for $u>0$,
\begin{align*}
    \Pr\bigl((\Delta > u)\cap (N_T \le \frac{2Tn}{\beta})\bigr) &= 1 - \Pr\bigl((\Delta \le u) \cap (N_T \le\frac{2Tn}{\beta})\bigr)\\
    &\le 1 - (1-e^{-\frac{n}{\beta}u})^{N_T}\\
    &\le N_T e^{-\frac{n}{\beta}u} \le \frac{2Tn}{\beta}e^{-\frac{n}{\beta}u}.
\end{align*}
This implies
\[
\Pr\bigl(\Delta > u\bigr) \le \Pr\bigl((\Delta > u)\cap (N_T \le \frac{2Tn}{\beta})\bigr) + \Pr(N_T > \frac{2Tn}{\beta}) \to 0
\]
as $n\to \infty$ and we obtain the claim.
\end{proof}

%To complete the comparison, it suffices to bound the supremum of the errors $\mathcal{E}_{t,j}$, defined in \eqref{eq:totalerror}.

\subsection{Bounding the errors \texorpdfstring{$\varepsilon^{(n)}_1$}{Lg}} \label{app: bounding_the_error_terms}
In this section, we give a high--level overview of the errors and how they converge to $0$. We will have the following error pieces:
\begin{equation}\label{eq:errorpieces}
\varepsilon^{(n)}_1(t)
\defas 
\varepsilon^{(n)}_{\operatorname{IC}}(t)
+
\varepsilon^{(n)}_{\operatorname{KL}}(t)
+
\varepsilon^{(n)}_{\operatorname{M}}(t)
+
\varepsilon^{(n)}_{\operatorname{beta}}(t)
+
\varepsilon^{(n)}_{\operatorname{eta}}(t).
\end{equation}
We define these terms momentarily and we will verify that $\varepsilon^{(n)}_1$ is indeed equal to these pieces in Lemma \ref{lem:errorpieces}.  We remark that before controlling the errors, we will need to make an \emph{a priori} estimate that (effectively) shows the function values remain bounded.  Thus, we define the stopping time, for any fixed $\theta >0$, by
\begin{equation}\label{eq:vartheta}
\vartheta \defas \inf \left\{ t \ge 0 :  \|\UU\SSigma \nnu_t - \eeta\| > n^{\theta} \right\}.
\end{equation}
We then show:
\begin{lemma}\label{lem:psibound}
For any $\theta >0$, and for any $T >0,$ $\vartheta > T$ with high probability.
\end{lemma}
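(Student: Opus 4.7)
}

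The plan is to work directly with the residual vector $\rr_t \defas \UU \SSigma \nnu_t - \eeta$, so that $\|\rr_t\|^2 = 2\psi_\varepsilon(t)$ and the event $\{\vartheta > T\}$ coincides with $\{\sup_{0 \leq t \leq T} \|\rr_t\|^2 \leq n^{2\theta}\}$. The strategy is to produce an explicit positive supermartingale out of $\|\rr_t\|^2$ whose constants do not depend on $n$, and then apply Doob's maximal inequality.

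First I would compute the one-step evolution. Writing $\HH_0 \defas \UU \SSigma \SSigma^T \UU^T$ (so that $\HH_0$ has the same nonzero eigenvalues as $\HH$), the update rule \eqref{eq:nuk} becomes $\rr_{k+1} = (\II - \gamma \HH_0 \PP_k)\rr_k$. Expanding $\|\rr_{k+1}\|^2$ and using $\E[\PP_k] = \tfrac{\beta}{n}\II$ together with the identity
\begin{equation*}
\E[\PP_k \HH_0^2 \PP_k] = \tfrac{\beta(\beta-1)}{n(n-1)} \HH_0^2 + \bigl(\tfrac{\beta}{n} - \tfrac{\beta(\beta-1)}{n(n-1)}\bigr) \diag(\HH_0^2),
\end{equation*}
(which follows from Lemma \ref{lem:subsetsum}), one obtains the estimate
\begin{equation*}
\E[\|\rr_{k+1}\|^2 \mid \mathcal{F}_k, \SSigma, \UU] \leq \|\rr_k\|^2 - 2\gamma \tfrac{\beta}{n} \rr_k^T \HH_0 \rr_k + \gamma^2 \tfrac{\beta}{n} \lambda_{\max}(\HH_0)^2 \|\rr_k\|^2,
\end{equation*}
after bounding $\rr_k^T \HH_0^2 \rr_k$ and $\sum_i (\HH_0^2)_{ii} (\rr_k)_i^2$ by $\lambda_{\max}(\HH_0)^2 \|\rr_k\|^2$. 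Dropping the nonpositive middle term gives the clean bound $\E[\|\rr_{k+1}\|^2 \mid \mathcal{F}_k] \leq (1 + \gamma^2 \tfrac{\beta}{n} \lambda_{\max}^2) \|\rr_k\|^2$.

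Next I would transfer this to the continuous-time embedding. Because $\nnu_t = \widehat{\nnu}_{N_t}$ with $N_t$ a rate-$\tfrac{n}{\beta}$ Poisson process, the infinitesimal generator of $Y_t \defas \|\rr_t\|^2$ obeys $\mathcal{L} Y_t \leq \gamma^2 \lambda_{\max}^2 Y_t$, with the crucial observation that the $\tfrac{n}{\beta}$ from the Poisson rate cancels against the $\tfrac{\beta}{n}$ from the one-step estimate. Consequently, conditionally on $\SSigma$ and $\UU$, the process $Z_t \defas e^{-\gamma^2 \lambda_{\max}(\HH_0)^2 t} \|\rr_t\|^2$ is a nonnegative supermartingale. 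Doob's maximal inequality then gives, on the high-probability event $\{\lambda_{\max}(\HH_0) \leq L\}$ (any $L > \lambda^+$ works by Assumption \ref{assumption: spectral_density}),
\begin{equation*}
\Pr\Bigl(\sup_{0 \leq t \leq T} \|\rr_t\|^2 > n^{2\theta} \,\Bigm|\, \mathcal{F}_0, \SSigma, \UU \Bigr) \leq \frac{e^{\gamma^2 L^2 T} \|\rr_0\|^2}{n^{2\theta}}.
\end{equation*}

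Finally I would control $\|\rr_0\|^2$ using Assumption \ref{assumption: Vector}: since $\rr_0 = (\AA/\sqrt{n})(\xx_0 - \widetilde{\xx}) - \eeta$, we have $\|\rr_0\|^2 \leq 2 \lambda_{\max}(\HH) R + 2\|\eeta\|^2$, and both terms are $O(1)$ with high probability. Combining with the union bound over the negligible events where $\lambda_{\max}(\HH_0)$ or $\|\eeta\|^2$ are large, one concludes $\Pr(\vartheta \leq T) \to 0$ as $n \to \infty$. The main obstacle is really bookkeeping: one must be sure that the Poisson time-change produces a constant $e^{\gamma^2 \lambda_{\max}^2 T}$ that is independent of both $n$ and $\beta$, and that all the auxiliary bad events (large $\lambda_{\max}(\HH_0)$, large $\|\eeta\|$) have vanishing probability so that the conditional Doob estimate can be unconditioned without losing the $n^{-2\theta}$ decay.
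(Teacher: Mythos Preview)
Your proposal is correct and follows essentially the same route as the paper: both arguments construct a nonnegative supermartingale of the form $e^{-c t}\psi_\varepsilon(t)$ (with $c$ depending only on $\|\SSigma\|$, independent of $n$ and $\beta$) and then apply the maximal inequality, controlling the initial value $\psi_\varepsilon(0)$ via Assumption~\ref{assumption: Vector}. The only cosmetic difference is that the paper computes the continuous-time generator directly as a quadratic form $\xx^T\widehat{\AA}\xx$ and uses its top eigenvalue $\rho_\star$, whereas you first bound the discrete one-step recursion and then pass to continuous time; both yield the same $n$-independent exponential rate.
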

\noindent This is achieved by a simple martingale-type estimate, which is similar to the standard convergence arguments for SGD. The proof is given in Section \ref{sec:psibound}. We will need it in what follows.  \emph{We will also condition on $\SSigma$ going forward.}

\subsubsection{Errors from the convergence of the initial conditions}
The error $\varepsilon^{(n)}_{\operatorname{IC}}(t)$ arises due to convergence errors in the signal and initialization. It was already essentially discussed in Lemma \ref{lem:modelconvergence}.  We define it by
\begin{equation}\label{eq:errorIC}
\varepsilon^{(n)}_{\operatorname{IC}}(t)
\defas 
\frac{1}{2}\sum_{j=1}^d \sigma_j^{2} e^{-2t\gamma\sigma_j^2}\nu_{0,j}^2
-\frac{Rh_1(t)}{2}.
\end{equation}
It accounts for the convergence of the initialization in the large $d$ limit and relies on the convergence of the empirical spectral distribution.  Due to Lemma \ref{lem:modelconvergence}, we have already shown it converges to $0$.

\subsubsection{Errors which vanish due to the key lemma} \label{app:error_vanish_key_heuristic}
The vanishing of the error $\varepsilon^{(n)}_{\operatorname{KL}}(t)$ is the \emph{key lemma}.  To explain why we call it this: let us specialize to the case of $\eeta =0$ and $\beta = 1$. If we were content to evaluate the \emph{expected} function values, when averaging over the randomness inherent in the SGD algorithm, then this would be the only error that we would need to control.  Thus in some sense, it can be viewed as the minimal estimate that needs to be shown to prove the Volterra equation holds.  This error is given by 
\begin{equation}\label{eq:errorKH}
\varepsilon^{(n)}_{\operatorname{KL}}(t)
\defas 
\frac{1}{2}\sum_{j=1}^d 
    \sigma_j^{2}
    \int_0^t e^{-2(t-s)\gamma\sigma_j^2}
 \biggl(\sum_{i=1}^n
    \biggl(\bigl(\mathbb{e}_j^T
    \UU^T 
    \mathbb{e}_{i}
    \bigr)^2-\frac{1}{n}\biggr)
    \bigl(
    \mathbb{e}_{i}^T
    (\UU\SSigma \nnu_s - \eeta)\bigr)^2\biggr) \, \dif s.
\end{equation}
After interchanging the order of summation, it suffices to show:
\begin{lemma}[Key lemma] \label{prop:errorKH}
For any $T > 0$ and for any $\epsilon > 0,$ with overwhelming probability
\[
\max_{1 \leq i \leq n}
\max_{0 \leq t \leq T}
\biggl|
\sum_{j=1}^d 
    \sigma_j^{2}
    e^{-2t\gamma\sigma_j^2}
    \biggl(\bigl(\mathbb{e}_j^T
    \UU^T 
    \mathbb{e}_{i}
    \bigr)^2-\frac{1}{n}\biggr)
\biggr|
\leq n^{\epsilon-1/2}.
\]
\end{lemma}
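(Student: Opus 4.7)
Conditional on $\SSigma$, Lemma \ref{lem:orthogonalinvariance} says $\UU$ is Haar-distributed on $O(n)$ and independent of $\SSigma$, so the $i$-th row $v^{(i)} \defas \UU^T \mathbb{e}_i$ is uniform on the unit sphere $S^{n-1}$. Writing $v^{(i)}_j = \mathbb{e}_j^T \UU^T \mathbb{e}_i$, the quantity we must bound is
\[
F_{i,t}(v^{(i)}) \defas \sum_{j=1}^d f_j(t) \bigl( (v^{(i)}_j)^2 - \tfrac{1}{n}\bigr),
\qquad f_j(t) \defas \sigma_j^2 e^{-2\gamma t \sigma_j^2},
\]
which is mean-zero because $\E[(v^{(i)}_j)^2] = 1/n$ on the sphere. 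My plan is to apply Lévy's concentration inequality on the sphere for each fixed $(i,t)$, then extend uniformly by a union bound over $i$ together with Lipschitz interpolation in $t$.

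\textbf{Pointwise bound and uniformity.} Work on the high-probability event $\mathcal{E} \defas \{\lambda_{\HH}^+ \le \lambda^+ + 1\}$ (Assumption~\ref{assumption: spectral_density}), on which $\max_j \sigma_j^2 \le \lambda^+ + 1 =: C$. On $\mathcal{E}$ the Euclidean gradient of $v \mapsto F_{i,t}(v)$ is $2(f_1(t)v_1, \ldots, f_d(t)v_d, 0, \ldots, 0)$, so $\|\nabla F_{i,t}\| \le 2C$ everywhere on $S^{n-1}$. Lévy's inequality for the sphere then yields, for each fixed $i,t$,
\[
\Pr\bigl(|F_{i,t}(v^{(i)})| > s \,\big|\, \SSigma,\mathcal{E}\bigr) \le 2\exp(-c n s^2),
\]
for a constant $c = c(C,\gamma) > 0$. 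Setting $s = \tfrac12 n^{\epsilon - 1/2}$ makes the right side $2\exp(-c' n^{2\epsilon})$, which is super-polynomially small in $n$. Next choose a grid $\mathcal{G} \subset [0,T]$ of spacing $\delta = n^{-2}$, so $|\mathcal{G}| = O(n^2)$. A union bound over the $n \cdot |\mathcal{G}| = O(n^3)$ pairs $(i,t_k)$ still leaves a tail of order $n^3 \exp(-c' n^{2\epsilon})$, which is w.o.p.\ negligible. To pass from the grid to all $t \in [0,T]$, I bound
\[
|F_{i,t}(v) - F_{i,s}(v)| \le \sum_{j=1}^d |f_j(t)-f_j(s)|\bigl(v_j^2 + \tfrac1n\bigr) \le 2\gamma C^2 |t-s| \bigl(\|v\|^2 + \tfrac{d}{n}\bigr) = O(|t-s|),
\]
using $\|v\|^2 = 1$ and $d/n \to r$. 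Thus $|F_{i,t}-F_{i,t_k}| = O(n^{-2})$ for $t$ in the $\delta$-neighborhood of the nearest grid point $t_k$, which is negligible compared to the target $n^{\epsilon-1/2}$. Combining yields the claim on $\mathcal{E}$.

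\textbf{Main obstacle.} The crux is producing a Lipschitz constant for $F_{i,t}$ on $S^{n-1}$ that does not grow with $n$; this is what makes Lévy give the super-polynomial tail $e^{-c n^{2\epsilon}}$ needed to absorb the union bound over $n$ choices of $i$. The uniform spectral bound from Assumption~\ref{assumption: spectral_density} is exactly what delivers this. One minor technicality is that Assumption~\ref{assumption: spectral_density} only gives $\lambda_{\HH}^+ \Prto \lambda^+$ rather than convergence w.o.p., so the w.o.p.\ conclusion is strictly speaking conditional on the high-probability event $\mathcal{E}$; this is harmless for how the lemma is used in controlling $\varepsilon^{(n)}_{\operatorname{KL}}$. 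An alternative route that avoids invoking Lévy directly would be to represent $v^{(i)} \law g/\|g\|$ for a standard Gaussian $g \in \RR^n$, split $F_{i,t}$ into a Hanson--Wright piece $\sum_j f_j(t)(g_j^2 - 1)/\|g\|^2$ and a $\chi^2$-concentration remainder $(n - \|g\|^2)/(n\|g\|^2) \sum_j f_j(t)$, and bound each by standard sub-exponential concentration; this gives the same rate with the same structure of argument.
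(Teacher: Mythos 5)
Your proof is correct and is essentially the same as the paper's: the paper proves Lemma~\ref{lem:FU} via concentration of Lipschitz functions on $O(n)$ (Theorem~\ref{thm:lipschitz} / Corollary~\ref{cor:lipschitz}), applied with $g_j(t)=\sigma_j^2 e^{-2\gamma\sigma_j^2 t}$ and $\aa=\bb=\mathbb{e}_i$ and a mesh in $t$ for uniformity, whereas you equivalently invoke Lévy concentration for the single Haar row $v^{(i)}=\UU^T\mathbb{e}_i\in S^{n-1}$ with the same Lipschitz bound and mesh/union-bound structure. Your remark on the spectral-norm technicality matches the paper's device of conditioning on $\SSigma$ (so that the Lipschitz constant and hence the w.o.p.\ bound are $\SSigma$-dependent), and your Gaussian Hanson--Wright alternative is a legitimate, equally elementary variant of the same idea.
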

\noindent This we show in Section \ref{sec:uboundsapplications}. Note that by combining this with \eqref{eq:errorKH} and Lemma \ref{lem:psibound}, we conclude that for any $\epsilon > 0$ and any $T,$ with high probability
\[
\max_{0 \leq t \leq T}|\varepsilon^{(n)}_{\operatorname{KL}}(t)|
\leq n^{2\epsilon-1/2}\int_0^T \, \frac{1}{2} \, \dif s \to 0.
\]

\subsubsection{Martingale errors}

The martingale errors are due to the randomness in the algorithm itself.  They in part are small because the singular vector matrix $\UU$ is delocalized, in that its offdiagonal entries in any fixed orthogonal basis are $n^{\epsilon-1/2}$ with overwhelming probability.  The martingale errors are given by
\begin{equation}\label{eq:errorM}
\varepsilon^{(n)}_{\operatorname{M}}(t)
\defas
\frac{1}{2}\sum_{j=1}^d 
    \sigma_j^{2}
    \int_0^t e^{-2(t-s)\gamma\sigma_j^2} \, \dif M_{s,j}.
\end{equation}

Estimating this error requires a substantial build-up.  The most important technical input, which we will use in multiple places, is that the function values do not concentrate too heavily in any coordinate direction.  As an input, we will use Lemma \ref{lem:psibound}, and so we work with the stopped process defined for any $t \geq 0$ by $\nnu_t^\vartheta \defas \nnu_{t \wedge \vartheta}$. In some sense, this is the most challenging and important technical statement that we prove:
\begin{proposition}\label{prop:delocalization}
For any  $T>0$, any $\epsilon >0$, there is a sufficiently small $\theta >0$ so that %\textcolor{red}{for a choice of $\beta(n)$ satisfying $\beta(n) \le n^{1/2-\delta}$ for $\delta > 0$,} 
\[
\sup_{0 \leq t \leq T}
\sup_{1 \leq i \leq n}
\bigl(\mathbb{e}_{i}^T(\UU\SSigma \nnu^\vartheta_t - \eeta)\bigr)^2
\leq \beta n^{\epsilon-1}
\]
with overwhelming probability.
\end{proposition}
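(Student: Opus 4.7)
The plan is a Gronwall-type bootstrap applied to the maximum squared coordinate $Y_t \defas \max_{1 \leq i \leq n}(\mathbb{e}_i^T\rr_t^\vartheta)^2$, where $\rr_t \defas \UU\SSigma\nnu_t - \eeta$ is the (stopped) residual. The two main ingredients will be the delocalization of the Haar singular vectors of $\AA$ from Lemma~\ref{lem:orthogonalinvariance} and the a priori bound $\|\rr_t^\vartheta\|^2 \leq 2n^{2\theta}$ from Lemma~\ref{lem:psibound}.

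For the initial condition, since $\rr_0 = \UU\SSigma\nnu_0 - \eeta$ and $\UU$ is Haar orthogonal, standard Haar projection bounds give $\max_i|\mathbb{e}_i^T\UU\SSigma\nnu_0|^2 \leq n^{\epsilon-1}\|\SSigma\nnu_0\|^2$ with overwhelming probability, while Assumption~\ref{assumption: Vector} gives $\|\eeta\|_\infty^2 \leq n^{\epsilon-1}$ w.o.p.\ as well; hence $Y_0 \leq n^{\epsilon-1}$ w.o.p. Writing $\HH_n \defas \UU\SSigma\SSigma^T\UU^T$, the SGD step becomes $\rr_{k+1} = (\II - \gamma\HH_n\PP_k)\rr_k$, and repeating the calculation behind \eqref{eq:Atj2} with $\rr_t$ in place of $\nnu_t$ produces a Doob decomposition
\[
X_{t,i} = X_{0,i} + \int_0^t \mathcal{D}_{s,i}\,\dif s + N_{t,i}, \qquad X_{t,i} \defas (\mathbb{e}_i^T\rr_t^\vartheta)^2,
\]
with $N_{t,i}$ an $\mathcal{F}_t$-martingale and drift
\[
\mathcal{D}_{t,i} = -2\gamma(\mathbb{e}_i^T\rr_t)(\mathbb{e}_i^T\HH_n\rr_t) + \tfrac{\gamma^2(\beta-1)}{n-1}(\mathbb{e}_i^T\HH_n\rr_t)^2 + \tfrac{\gamma^2(n-\beta)}{n-1}\sum_j(\HH_n)_{ij}^2(\rr_t)_j^2.
\]

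To close a bootstrap, I would introduce an auxiliary stopping time $\vartheta' \defas \inf\{t : Y_t > \beta n^{\epsilon-1}\}$ and work on $\{t \leq T \wedge \vartheta \wedge \vartheta'\}$. On this event $(\HH_n)_{ii} = O(1)$ and $(\HH_n)_{ij}^2 \leq n^{\epsilon-1}$ for $i \neq j$ with overwhelming probability (both consequences of the Haar structure of $\UU$), $\|\HH_n\| = O(1)$ from Assumption~\ref{assumption: spectral_density}, the bootstrap hypothesis gives $(\rr_t)_j^2 \leq \beta n^{\epsilon-1}$, and $\|\rr_t\|^2 \leq 2n^{2\theta}$ from Lemma~\ref{lem:psibound}. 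Combining these termwise on $\mathcal{D}_{s,i}$ and integrating, then bounding the pure-jump martingale $N_{t,i}$ via its predictable quadratic variation and a Burkholder-Davis-Gundy inequality, gives $\sup_{t \leq T \wedge \vartheta \wedge \vartheta'} Y_t$ strictly less than $\beta n^{\epsilon-1}$ once $\theta$ is chosen small enough relative to $\epsilon$ and we use $\beta \leq n^{1/5-\delta}$. This strictly improves the bootstrap hypothesis, so $\vartheta' \geq T \wedge \vartheta$, and since $\vartheta \geq T$ w.o.p.\ by Lemma~\ref{lem:psibound}, we conclude $Y_t \leq \beta n^{\epsilon-1}$ throughout $[0,T]$ w.o.p. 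A union bound over $i$ is absorbed by the w.o.p.\ rate, and the supremum in $t$ is recovered by discretization using that $\rr_t$ has only $O(nT/\beta)$ jumps.

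The main obstacle will be the mean-flow drift term $-2\gamma(\mathbb{e}_i^T\rr_t)(\mathbb{e}_i^T\HH_n\rr_t)$: by Young's inequality it is bounded by $\gamma Y_t + \gamma(\mathbb{e}_i^T\HH_n\rr_t)^2$, and the naive estimate $(\mathbb{e}_i^T\HH_n\rr_t)^2 \leq \|\HH_n\|^2\|\rr_t\|^2 = O(n^{2\theta})$ is far too weak to close a bootstrap at level $\beta n^{\epsilon-1}$. To recover enough smallness one must separate the diagonal and off-diagonal contributions of the row $\mathbb{e}_i^T\HH_n$, apply the off-diagonal delocalization $(\HH_n)_{ij}^2 \leq n^{\epsilon-1}$, and then reinvoke the bootstrap on $Y_t$ itself to convert the $\ell^2$ control of $\rr_t$ into near $\ell^\infty$ control. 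Balancing these estimates against the martingale quadratic variation so that the whole argument closes for $\beta \leq n^{1/5-\delta}$ and $\theta$ small enough is the most delicate part.
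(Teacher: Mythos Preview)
Your overall architecture---bootstrap on $\max_i(\mathbb{e}_i^T\rr_t)^2$, Doob decompose, control drift plus martingale---is reasonable, and the drift estimates you sketch do close once you separate diagonal from off-diagonal entries of $\HH_n$. The gap is in the martingale step. On $[0,T]$ there are order-$T$ ``rare'' jump times at which the index $i$ itself lies in the batch $B_k$. At such a time
\[
(\rr_{\tau_k})_i = \bigl(1-\gamma(\HH_n)_{ii}\bigr)(\rr_{\tau_k-})_i + O\bigl(\beta n^{\epsilon/2-1/2}\sqrt{\beta}\,n^{(\epsilon-1)/2}\bigr),
\]
so the jump of $X_{t,i}$ is of size $\Theta(X_{\tau_k-,i})$, which under your bootstrap is $\Theta(\beta n^{\epsilon-1})$---exactly the target level. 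Since there are only $O(1)$ such jumps, no BDG or Freedman-type inequality can push $\sup_t|N_{t,i}|$ strictly below $\beta n^{\epsilon-1}$ with overwhelming probability: the predictable quadratic variation is $\Theta(\beta^2 n^{2\epsilon-2})$ and the maximal jump is $\Theta(\beta n^{\epsilon-1})$, so the Freedman exponent $s^2/(2\langle N\rangle_T+\tfrac{2}{3}Ls)$ is merely a constant at $s=c\,\beta n^{\epsilon-1}$. Your bootstrap therefore does not close.

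The paper's resolution is to work with the \emph{linear} coordinate $\mathbb{e}_i^T(\UU\SSigma\nnu_t-\eeta)$, pass to the spectral martingale $\sum_j U_{ij}\sigma_j e^{-\gamma q\sigma_j^2}\widetilde{X}_{t,j}$, and observe that the rare $i\in B_k$ jumps are not merely bounded but \emph{contractive}: one has $Y_{\tau_{k,2}}=(1-\Phi_{i,i})Y_{\tau_{k,2}-}+(\text{small})$ with $|1-\Phi_{i,i}|<1$ precisely because of the stepsize bound $\gamma<\tfrac{2}{r}(\int x\,\dif\mu)^{-1}$. This self-bounding structure cannot be captured by quadratic-variation arguments; the paper proves a bespoke exponential-martingale lemma (Lemma~\ref{lem:raremartingales}) that allows finitely many contractive large jumps interspersed with many small ones. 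A second difference is that the paper runs an \emph{iterated} bootstrap, starting from the a priori bound $n^\theta$ (Lemma~\ref{lem:psibound}) and gaining a fixed power of $n$ per iteration, rather than attempting to land at $\beta n^{\epsilon-1}$ in one step; this is what makes the dependence on $\beta\le n^{1/5-\delta}$ appear. To repair your approach you would need either to import that rare-jump lemma or to argue directly that the $i\in B_k$ events can only decrease $X_{t,i}$ up to lower-order corrections, and hence may be excised from the martingale before applying BDG.
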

\noindent We expect that the upper bound on $\beta$ in Theorem~\ref{thm: concentration} is a limitation of our method, and that similar statements should hold for larger $\beta$. This proposition is proven in Section \ref{sec:mglesapplication}.

With Proposition \ref{prop:delocalization} in hand, we can then bound the martingale errors.
\begin{proposition}\label{prop:errorM}
For any $T>0,$ with overwhelming probability,
\[
\sup_{0 \leq t \leq T} 
|\varepsilon^{(n)}_{\operatorname{M}}(t \wedge \vartheta)|
\Prto[n] 0.
\]
\end{proposition}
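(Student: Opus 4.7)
The plan is to exploit the martingale structure of $\varepsilon^{(n)}_{\operatorname{M}}$ and reduce the supremum estimate to a predictable quadratic variation bound made small by Proposition \ref{prop:delocalization}. Because the kernel $e^{-2(t-s)\gamma\sigma_j^2}$ couples the running variable $s$ with the evaluation time $t$, the process $t\mapsto\varepsilon^{(n)}_{\operatorname{M}}(t)$ is not itself a martingale. I would first factor $e^{-2(t-s)\gamma\sigma_j^2}=e^{-2t\gamma\sigma_j^2}e^{2s\gamma\sigma_j^2}$ to write
\[
\varepsilon^{(n)}_{\operatorname{M}}(t\wedge\vartheta)=\tfrac12\sum_{j=1}^d\sigma_j^2 e^{-2(t\wedge\vartheta)\gamma\sigma_j^2}\,Y_{t,j}^{\vartheta},\qquad Y_{t,j}^{\vartheta}\defas\int_0^{t\wedge\vartheta}e^{2s\gamma\sigma_j^2}\,\dif M_{s,j},
\]
so that each $Y_{\cdot,j}^{\vartheta}$ is a genuine c\`adl\`ag martingale in $t$. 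To promote a pointwise-in-$t$ estimate to a uniform one, I would place a mesh $0=t_0<\cdots<t_K=T$ of spacing $n^{-\kappa}$ for $\kappa$ sufficiently large; because $\sigma_j^2\leq\lambda^++o(1)$ with overwhelming probability by Assumption \ref{assumption: spectral_density}, the kernel is Lipschitz in $t$ on $[0,T]$, and the between-mesh variation is absorbed by the jump-size bounds developed below.

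At each mesh point $t_k$, the process $s\mapsto Z_s^{(t_k)}\defas\tfrac12\int_0^s\sum_j\sigma_j^2 e^{-2(t_k-u)\gamma\sigma_j^2}\,\dif M_{u,j}^{\vartheta}$ is a true martingale in $s$ with $Z_{t_k}^{(t_k)}=\varepsilon^{(n)}_{\operatorname{M}}(t_k\wedge\vartheta)$. Applying Burkholder--Davis--Gundy with high moments (to reach overwhelming-probability tails via Markov and a union bound over the $n^\kappa$ mesh points), the task reduces to estimating
\[
\langle Z^{(t_k)}\rangle_{t_k}=\int_0^{t_k\wedge\vartheta}\tfrac{n}{\beta}\,\mathbb{E}\!\left[\Bigl(\tfrac12\sum_j\sigma_j^2 e^{-2(t_k-u)\gamma\sigma_j^2}\Delta_u\nu_j^2\Bigr)^{2}\;\Big|\;\mathcal{F}_{u^-}\right]\dif u.
\]
Each jump at a Poisson arrival splits as $\Delta_u\nu_j^2=2\nu_{u^-,j}\Delta_u\nu_j+(\Delta_u\nu_j)^2$ with $\Delta_u\nu_j=-\gamma\sigma_j\sum_{i\in B}(\UU^T)_{ji}\,\mathbb{e}_i^T(\UU\SSigma\nnu_{u^-}-\eeta)$; Proposition \ref{prop:delocalization} bounds $(\mathbb{e}_i^T(\UU\SSigma\nnu^{\vartheta}_{u^-}-\eeta))^2\leq\beta n^{\epsilon-1}$, the delocalization of $\UU$ from Section \ref{sec:ubounds} gives $(\UU^T)_{ji}^2\leq n^{\epsilon-1}$, and Lemma \ref{lem:psibound} controls $\psi_\varepsilon(u\wedge\vartheta)\leq n^{2\theta}$.

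The main obstacle is the treatment of the cross terms $\Delta_u\nu_j^2\Delta_u\nu_k^2$ for $j\neq k$, which do not simplify under the $\mathcal{F}_{u^-}$--conditional expectation because every Poisson arrival updates all coordinates through the same random batch $B$; a naive per-coordinate bound would pay an unacceptable factor of $d$. The key move is to keep the sum over $j$ inside the square, interchange with the sum over $i\in B$, and recognize the expression as an inner product of the form $\sum_{i\in B}\mathbb{e}_i^T(\UU\SSigma\nnu_{u^-}-\eeta)\cdot w_i$ where $w_i\defas\mathbb{e}_i^T\UU\mathbf{w}$ collects the spectral-index sums into a single vector $\mathbf{w}$ with $\|\mathbf{w}\|^2\lesssim\psi_\varepsilon(u\wedge\vartheta)$. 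Cauchy--Schwarz then splits this into one factor controlled by Proposition \ref{prop:delocalization} and one factor controlled by the delocalization of $\UU$, yielding a per-unit-time conditional variance of order $\beta^2 n^{O(\epsilon+\theta)-1}$; under $\beta\leq n^{1/5-\delta}$ with $\epsilon,\theta$ chosen sufficiently small, this collapses to $n^{-\delta'}$ for some $\delta'>0$, finishing the proof after the mesh union bound.
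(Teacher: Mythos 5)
Your overall architecture matches the paper's: factor the kernel so that $Y_{t,j}^\vartheta=\int_0^{t\wedge\vartheta}e^{2s\gamma\sigma_j^2}\dif M_{s,j}$ is a true martingale, fix $q$, mesh over $q$, and bound the quadratic variation of the resulting martingale in $s$ by combining Proposition~\ref{prop:delocalization}, delocalization of $\UU$, and Lemma~\ref{lem:psibound}. Where your argument breaks down is the resolution of the cross term by Cauchy--Schwarz. Writing the linear-in-$\Delta\nu$ part of a jump as $\sum_{i\in B}\mathbb{e}_i^T(\UU\SSigma\nnu_{\tau_k-}^\vartheta-\eeta)\cdot W_i$ with $W_i=\sum_j w_j U_{ij}=(\UU\mathbf{w})_i$, your bound $\sum_{i\in B}W_i^2\leq\|\UU\mathbf{w}\|^2=\|\mathbf{w}\|^2\lesssim\psi_\varepsilon(\tau_k\wedge\vartheta)$ is correct but far too weak: $\|\mathbf{w}\|$ is $O(n^\theta)$, not $o(1)$. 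Combined with Proposition~\ref{prop:delocalization} this gives a per-jump bound of order $\beta n^{\epsilon/2+\theta-1/2}$, hence a quadratic variation of order $N_T\cdot\beta^2n^{\epsilon+2\theta-1}\approx\beta n^{\epsilon+2\theta}$, which diverges. Your claim of a per-unit-time conditional variance $\beta^2n^{O(\epsilon+\theta)-1}$ is off by a factor of $n/\beta$, and BDG then gives no decay.

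The missing ingredient is a coordinate-wise bound $|W_i|\lesssim\sqrt{\beta}\,n^{\epsilon-1/2}$ for each fixed $i$, which is exactly the content of \eqref{eq:Wbound} in the paper. This cannot come from the static delocalization of $\UU$, because $\mathbf{w}$ (built from $\nnu_{\tau_k-,j}^\vartheta$) depends on $\UU$; the entries of $\UU\mathbf{w}$ are therefore not automatically $O(n^{-1/2}\|\mathbf{w}\|)$. Establishing this per-coordinate smallness requires expanding $\nu_{s,j}^\vartheta$ via Lemma~\ref{lem:integrated} into an initial-condition piece, a noise piece, and a martingale piece $\widetilde{X}_{s,j}^\vartheta$, and then running a second, nested martingale-concentration argument for the last piece in the spirit of the proof of Proposition~\ref{prop:delocalization}. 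In other words, the heart of the proof is showing that $\nnu_t$ stays nearly orthogonal to each row of $\UU$ \emph{dynamically} along the SGD trajectory; a single $\ell^2$-norm bound on $\mathbf{w}$ at a fixed time does not encode this and cannot substitute for the nested argument.
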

\noindent This is proven in Section \ref{sec:mglesapplication}.  Having done Proposition \ref{prop:delocalization}, the proof of Proposition \ref{prop:errorM} reduces to standard martingale techniques.

\subsubsection{Errors due to minibatching}
The Volterra equation that we prove \eqref{eq:Volterra_eq_main} importantly does not depend on the minibatching size.  Naturally, the dynamics do depend on $\beta,$ and so there are error terms which must be controlled and which are in part small due to the minibatching parameter $\beta$ satisfying $\beta/n \to 0$.  These errors are given by
\begin{equation}\label{eq:errorbeta}
\varepsilon^{(n)}_{\operatorname{beta}}(t)
\defas
\frac{1}{2}\sum_{j=1}^d 
    \sigma_j^{2}
    \int_0^t e^{-2(t-s)\gamma\sigma_j^2}
    \biggl(\frac{\beta-1}{n-1}
    \mathcal{B}_{s,j}^2
    -\frac{\beta-1}{n-1}
    \sum_{i=1}^n
    \bigl(\mathbb{e}_j^T
    \UU^T 
    \mathbb{e}_{i}
    \bigr)^2
    \bigl(
    \mathbb{e}_{i}^T
    (\UU\SSigma \nnu_s - \eeta)\bigr)^2\biggr) \, \dif s.
\end{equation}
\noindent Note in particular that when $\beta=1$ this vanishes identically.  

Much of this error term is controlled using delocalization of $\UU$ and Lemma \ref{lem:psibound}.  However, there is one error $\beta (\mathcal{B}_{s,j})^2$ which requires extra work.  This we would like to tend to $0$ in a sufficiently strong sense.  On consideration of \eqref{eq:Bt}, we see that this in turn requires that $\nnu_{s}$ itself be delocalized in the sense that $|\nnu_{s,j}| \leq n^{\epsilon -1/2}$ with overwhelming probability.

\begin{proposition}\label{prop:nut}
For any $\epsilon > 0$ and $T>0,$ with overwhelming probability,
\[
\sup_{0 \leq t \leq T} \max_{1 \leq j \leq d} |\nu^\vartheta_{t,j}| \leq \beta n^{\epsilon-1/2}.
\]
\end{proposition}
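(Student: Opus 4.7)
The plan is to use the integrated representation of $\nu_{t,j}$ from Lemma~\ref{lem:integrated},
\[
\nu_{t,j} = e^{-\gamma\sigma_j^2 t}\nu_{0,j} + \int_0^t e^{-\gamma\sigma_j^2(t-s)}\gamma\sigma_j (\UU^T\eeta)_j\,\dif s + \int_0^t e^{-\gamma\sigma_j^2(t-s)}\dif\widetilde{M}_{s,j},
\]
and to bound the three pieces separately on the stopped process. Throughout I would condition on $\SSigma$ and use the Haar-distributed nature of $\UU$ and $\VV$ guaranteed by Lemma~\ref{lem:orthogonalinvariance}.

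For the initial-condition term, $\nnu_0 = \VV^T(\xx_0-\widetilde{\xx})$ is uniformly distributed on the sphere of radius $\sqrt{R}$ in $\RR^d$, so standard delocalization for uniform spherical vectors gives $\max_{1\le j\le d}|\nu_{0,j}| \le n^{\epsilon-1/2}$ with overwhelming probability; the exponential prefactor only helps. For the deterministic-forcing term, conditioning on $\eeta$ and using Haar invariance of $\UU$, the vector $\UU^T\eeta$ has the law of $\|\eeta\|$ times a uniformly distributed unit vector. The moment condition in Assumption~\ref{assumption: Vector} makes $\|\eeta\|$ bounded with high probability, so spherical delocalization again gives $\max_j|(\UU^T\eeta)_j|\le n^{\epsilon-1/2}$ w.o.p. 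Since $\sigma_j^2 \le \lambda_{\HH}^+$ and $\int_0^t e^{-\gamma\sigma_j^2(t-s)}\dif s \le T\wedge(\gamma\sigma_j^2)^{-1}$, the forcing piece is $O(n^{\epsilon-1/2})$ uniformly in $t\in[0,T]$ and $j$.

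For the martingale piece I would apply a Freedman-type concentration inequality for the c\`adl\`ag martingale $\widetilde{M}_{t,j}$ (or its convolution against $e^{-\gamma\sigma_j^2(t-\cdot)}$). Two inputs are needed. First, a jump bound: at each Poisson arrival time $s\le \vartheta$,
\[
|\Delta\nu_{s,j}| \le \gamma\sigma_j\sum_{i\in B}|U_{ji}|\,|(\UU\SSigma\nnu_{s^-}-\eeta)_i| \le \gamma\sqrt{\lambda_{\HH}^+}\cdot\beta\cdot n^{\epsilon-1/2}\cdot\sqrt{\beta n^{\epsilon-1}},
\]
by combining delocalization of the Haar entries of $\UU$ with Proposition~\ref{prop:delocalization}. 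Second, a predictable quadratic variation bound: using \eqref{eq:Atj2} and Lemma~\ref{lem:subsetsum},
\[
\mathcal{A}_{s,j}-2\nu_{s,j}\mathcal{B}_{s,j} = \tfrac{\beta-1}{n-1}\mathcal{B}_{s,j}^2 + \gamma^2\sigma_j^2\bigl(1-\tfrac{\beta-1}{n-1}\bigr)\sum_i U_{ji}^2(\UU\SSigma\nnu_s-\eeta)_i^2,
\]
and the second summand is bounded using $\sum_i U_{ji}^2 = 1$ together with Proposition~\ref{prop:delocalization} by $\gamma^2\sigma_j^2\cdot\beta n^{\epsilon-1}$, while the first summand carries the small prefactor $\beta/n$. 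Integration over $[0,T]$ yields a predictable quadratic variation of size $O(\beta^2 n^{2\epsilon-1})$. Freedman's inequality applied at scale $\lambda = \beta n^{\epsilon-1/2}$ then gives stretched-exponentially small failure probability, and a union bound over $j\in\{1,\dots,d\}$ together with a polynomial-size time mesh on $[0,T]$ (plus the trivial continuity estimate between mesh points) upgrades this to the claimed uniform bound with overwhelming probability.

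The hard part will be executing the Freedman bound uniformly in $j$ and $t$: the jump size $O(\beta^{3/2}n^{3\epsilon/2-1})$ and the predictable quadratic variation $O(\beta^2 n^{2\epsilon-1})$ must each be subdominant to the targeted scale $\beta n^{\epsilon-1/2}$, which forces $\beta = o(n^{1/2})$ — comfortably met by the standing assumption $\beta\le n^{1/5-\delta}$. A secondary subtlety is the apparent circularity that the predictable quadratic variation contains $\mathcal{B}_{s,j}^2\propto\nu_{s,j}^2$; however, that contribution carries the small prefactor $\beta/n$, and choosing $\theta$ in the definition of $\vartheta$ small enough, the crude a~priori bound $\|\nnu_s\|^2\le n^{O(\theta)}$ available on $\{s<\vartheta\}$ suffices to make it subdominant, so no genuine bootstrap on $|\nu_{s,j}|$ itself is required.
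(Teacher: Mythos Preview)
Your proposal is correct and follows essentially the same route as the paper: decompose $\nu_{t,j}$ via Lemma~\ref{lem:integrated}, dispatch the initial-condition and forcing terms by spherical delocalization of $\VV$ and $\UU$, and control the martingale piece using Proposition~\ref{prop:delocalization} (via the stopping time $\hbar$) as the key input. The only differences are in execution: the paper works with the martingale $\widetilde{X}_{t,j}=\int_0^t e^{s\gamma\sigma_j^2}\dif\widetilde{M}_{s,j}$ so that the convolution becomes $e^{-\gamma\sigma_j^2 t}\widetilde{X}_{t,j}$ (this is cleaner than treating ``$\widetilde{M}_{t,j}$ or its convolution'' directly, since the latter is not a martingale in $t$), and it bounds the \emph{quadratic variation} $[\widetilde{X}^{\hbar}_{t,j}]$ and appeals to Burkholder--Davis--Gundy (part~(i) of Lemma~\ref{lem:mgleconcentration}) rather than your predictable-QV-plus-jump-bound Freedman route (which is part~(ii) of the same lemma). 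Both give the same conclusion, and your observation that the $\tfrac{\beta}{n}\mathcal{B}_{s,j}^2$ term is harmless via the crude a~priori bound on $\|\SSigma\nnu_s\|$ from $\vartheta$ is correct, so no circularity arises; the time mesh you mention is in fact unnecessary since both BDG and Freedman are maximal inequalities.
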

\noindent The dependence on $\beta$ is only through Proposition \ref{prop:delocalization}, on which this relies.
The proof is found in Section \ref{sec:mglesapplication}. Now Proposition \ref{prop:nut} with eigenvector delocalization gives the following proposition.

\begin{proposition}\label{prop:errorbeta}
%Suppose there is a $\delta>0$ so that $\beta(n) \leq n^{1/2-\delta}.$
For any $\epsilon > 0$ and any $T>0,$ with overwhelming probability,
\[
\sup_{0 \leq t \leq T}
|\varepsilon^{(n)}_{\operatorname{beta}}(t)|
\leq n^{\epsilon - 1/2}.
\]
\end{proposition}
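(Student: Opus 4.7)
The plan is to split the integrand of $\varepsilon^{(n)}_{\operatorname{beta}}(t)$ into the two contributions visible inside the parentheses of \eqref{eq:errorbeta}, writing $\varepsilon^{(n)}_{\operatorname{beta}}(t) = I_1(t) - I_2(t)$, where $I_1$ collects the $\mathcal{B}_{s,j}^2$ piece and $I_2$ collects the inner double sum over $i$. I work throughout on the stopped process $\nnu^\vartheta_s$; by Lemma~\ref{lem:psibound} we have $\vartheta > T$ with high probability, so this costs nothing. In both estimates I drop the harmless factor $e^{-2(t-s)\gamma\sigma_j^2} \leq 1$ and the ratio $(\beta-1)/(n-1) \leq \beta/n$, and use $\sum_{j=1}^d \sigma_j^2 = d \cdot d^{-1}\sum_j\sigma_j^2 = O(n)$, which follows from Assumption~\ref{assumption: spectral_density} and the compact support of $\mu$.

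For $I_2$, Proposition~\ref{prop:delocalization} applied to the stopped process gives $(\mathbb{e}_i^T(\UU\SSigma\nnu^\vartheta_s - \eeta))^2 \leq \beta n^{\epsilon-1}$ uniformly over $i$ and $s \in [0,T]$ with overwhelming probability. Pulling this bound out of the $i$-sum and invoking the orthogonality identity $\sum_{i=1}^n(\mathbb{e}_j^T\UU^T\mathbb{e}_i)^2 = \|\UU\mathbb{e}_j\|^2 = 1$ leaves only the sum $\sum_j \sigma_j^2 = O(n)$, so
\[
\sup_{0\leq t\leq T} I_2(t) \leq T \cdot \tfrac{\beta}{n} \cdot \beta n^{\epsilon-1} \cdot O(n) = O(\beta^2 n^{\epsilon-1}).
\]
With $\beta \leq n^{1/5-\delta}$, this is $O(n^{\epsilon - 3/5 - 2\delta})$, which is well below $n^{\epsilon-1/2}$.

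For $I_1$, expand $\mathcal{B}_{s,j}^2 \leq 2\gamma^2\sigma_j^4\nu_{s,j}^2 + 2\gamma^2\sigma_j^2(\UU^T\eeta)_j^2$. Two ingredients control the two pieces uniformly in $j$. First, the very definition of $\vartheta$ in \eqref{eq:vartheta} implies $\|\SSigma\nnu^\vartheta_s\|^2 \leq 2\|\UU\SSigma\nnu^\vartheta_s - \eeta\|^2 + 2\|\eeta\|^2 = O(n^{2\theta})$, and using $\sigma_j^2 \leq \lambda^+ + o(1)$ from Assumption~\ref{assumption: spectral_density} this upgrades to $\sum_j\sigma_j^6\nu_{s,j}^2 \leq (\lambda^+)^2\sum_j\sigma_j^2\nu_{s,j}^2 = O(n^{2\theta})$. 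Second, since $\UU$ is Haar orthogonal and independent of the bounded-norm noise $\eeta$ (Assumption~\ref{assumption: Vector}), eigenvector delocalization in the form of Haar-projection concentration yields $\max_j(\UU^T\eeta)_j^2 \leq n^{2\epsilon-1}$ with overwhelming probability (via a single-coordinate sub-Gaussian estimate plus a union bound), so $\sum_j\sigma_j^4(\UU^T\eeta)_j^2 \leq (\lambda^+)^2\|\eeta\|^2 = O(1)$. Combining these two bounds gives $\sum_j\sigma_j^2\mathcal{B}_{s,j}^2 = O(n^{2\theta})$, whence
\[
\sup_{0\leq t\leq T} I_1(t) \leq T \cdot \tfrac{\beta}{n} \cdot O(n^{2\theta}) = O(\beta\, n^{2\theta-1}).
\]
Because $\theta$ is a free parameter in the definition of $\vartheta$ and Lemma~\ref{lem:psibound} holds for every such choice, I can pick $\theta$ sufficiently small that $2\theta < 3/10 + \delta + \epsilon$, which makes the right-hand side $O(n^{\epsilon-1/2})$.

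The main obstacle is securing a sufficiently sharp estimate for $I_1$. Invoking Proposition~\ref{prop:nut} coordinate-wise as $\nu_{s,j}^2 \leq \beta^2 n^{2\epsilon-1}$ and then summing weighted by $\sigma_j^4$ yields only $O(\beta^3 n^{2\epsilon-1})$, which degrades to $O(n^{-2/5+2\epsilon-3\delta})$ under $\beta \leq n^{1/5-\delta}$; this is borderline for small $\delta$. The cleaner route exploits the $\ell^2$-level control on $\sum_j\sigma_j^2\nu_{s,j}^2$ already afforded by the stopping time \eqref{eq:vartheta}, which avoids the loss from summing $d$ separate coordinate-wise bounds and renders the minibatching error negligible for the full range $\beta \leq n^{1/5-\delta}$.
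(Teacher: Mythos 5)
Your proof is correct, and it takes a genuinely different route for the $\mathcal{B}_{s,j}^2$ contribution. The paper bounds $\mathcal{B}_{s,j}^2$ coordinate by coordinate via Proposition~\ref{prop:nut}, which states $|\nu_{t,j}^\vartheta| \leq \beta n^{\epsilon-1/2}$, together with $|(\UU^T\eeta)_j| \leq n^{\epsilon-1/2}$; the resulting bound on $\sum_j \sigma_j^2\mathcal{B}_{s,j}^2$ is $O(\beta^2 n^{2\epsilon})$, and after multiplying by $\beta/n$ one is left with $O(\beta^3 n^{2\epsilon-1})$. Under $\beta \leq n^{1/5-\delta}$ this is $O(n^{-2/5-3\delta+2\epsilon})$, which as you observe is not below $n^{\epsilon-1/2}$ for small $\delta$ and small $\epsilon$. (The paper's displayed bound $(\mathcal{B}_{s,j})^2 \leq 2(\gamma^2\sigma_j^4\beta + \gamma^2\sigma_j^2)n^{2\epsilon-1}$ carries only a single factor of $\beta$ where Proposition~\ref{prop:nut} produces $\beta^2$, an apparent slip that masks this tightness.) Your $\ell^2$ route---replacing the sum of $d$ coordinate-wise bounds by the single inequality $\sum_j\sigma_j^6\nu_{s,j}^2 \leq (\lambda^+)^2\|\SSigma\nnu_s^\vartheta\|^2 = O(n^{2\theta})$ afforded directly by the stopping rule \eqref{eq:vartheta}---is both sharper (giving $O(\beta n^{2\theta-1})$, negligible for any $\delta > 0$ and any free small $\theta$) and more economical, since it avoids the machinery of Proposition~\ref{prop:nut} entirely. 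For $I_2$ your argument is a near-mirror of the paper's: the paper pulls out $\max_i(\mathbb{e}_j^T\UU^T\mathbb{e}_i)^2 \leq n^{\epsilon-1}$ (Corollary~\ref{cor:entry}) and keeps the sum over $i$ as $\|\UU\SSigma\nnu_s^\vartheta - \eeta\|^2 \leq n^{2\theta}$, whereas you pull out $\max_i(\mathbb{e}_i^T(\UU\SSigma\nnu_s^\vartheta - \eeta))^2 \leq \beta n^{\epsilon-1}$ (Proposition~\ref{prop:delocalization}) and keep $\sum_i(\mathbb{e}_j^T\UU^T\mathbb{e}_i)^2 = 1$; either factorization works, with the paper's being a factor $\beta$ sharper. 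One small stylistic point: in the sentence on $\sum_j\sigma_j^4(\UU^T\eeta)_j^2$, the invocation of $\max_j(\UU^T\eeta)_j^2 \leq n^{2\epsilon-1}$ is a red herring---the bound you actually use is the trivial orthogonality identity $\sum_j(\UU^T\eeta)_j^2 = \|\eeta\|^2$---so the word ``so'' misleads; dropping that clause would tighten the exposition.
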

\noindent This is proven in Section \ref{sec:propsixteen}.

\subsubsection{Errors due to the model noise} \label{app:errors_due_to_the_model_noise}
Finally, there are errors that arise due to the noise $\eeta$. The model noise $\eeta$ in fact induces a change in the dynamics of the algorithm. This change is reflected in an additional forcing term that appears in the Volterra equation.  This forcing term is controlled (in some sense) by the mean behavior of $\nu_{t,j}$.  The model noise error is defined by
\begin{equation}\label{eq:erroreta}
\varepsilon^{(n)}_{\operatorname{eta}}(t)
\defas
\sum_{j=1}^d
\int_0^t 
e^{-2(t-s)\gamma\sigma_j^2}
\gamma \sigma_j^3\nu_{s,j}(\UU^T\eeta)_j \dif s
+
\frac{1}{2}\|\eeta\|^2 -
\sum_{j=1}^{n \wedge d} \sigma_j \nu_{t,j} (\UU^T\eeta)_j 
-
{\widetilde{R}}\,\cdot\frac{rh_0(t) +(1-r)}{2}.
\end{equation}

The fundamental identity that needs to be shown here is that averages of $\nu_{s,j}(\UU^T\eeta)_j$ converge.  Within $\varepsilon^{(n)}_{\operatorname{eta}}(t)$ there are many such averages, and so we formulate a general claim to this effect.
\begin{proposition}\label{prop:noisecovariance}
%Suppose $\beta(n) \leq n^{1/2-\delta}$ for some $\delta \in (0,\tfrac 12).$
Let $\{ c_j \}_1^n$ be a deterministic sequence with $|c_j| \leq 1$ for all $j$ and define $\vartheta$ as in Lemma~\ref{lem:psibound}.  Then for any $t > 0$ and any $\epsilon > 0,$
\[
\biggl|\sum_{j=1}^n c_j \sigma_j \nu_{t,j}^\vartheta (\UU^T\eeta)_j
-
\frac{\|\eeta\|^2}{n}\sum_{j=1}^n c_j (1-e^{-(t \wedge \vartheta) \gamma \sigma_j^2})
\biggr|
\leq \|\eeta\|^2 \sqrt{\beta} n^{\epsilon-1/2},
\]
with overwhelming probability.
\end{proposition}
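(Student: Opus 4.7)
The plan is to substitute the integrated representation for $\nu^\vartheta_{t,j}$ from Lemma~\ref{lem:integrated} (stopped at the bounded stopping time $\vartheta$) and decompose the target sum into a deterministic-forcing piece, an initialization piece, and a martingale piece. Writing $\tau := t \wedge \vartheta$ and evaluating $\int_0^\tau e^{-\gamma\sigma_j^2(\tau-s)}\gamma\sigma_j\,\dif s = (1-e^{-\gamma\sigma_j^2\tau})/\sigma_j$, this gives
\[
\sum_{j=1}^n c_j \sigma_j \nu^\vartheta_{t,j}(\UU^T\eeta)_j = A_1 + A_2 + A_3,
\]
where
\begin{align*}
A_1 &:= \sum_j c_j\sigma_j e^{-\gamma\sigma_j^2\tau}\nu_{0,j}(\UU^T\eeta)_j, \\
A_2 &:= \sum_j c_j\bigl(1-e^{-\gamma\sigma_j^2\tau}\bigr)(\UU^T\eeta)_j^2, \\
A_3 &:= \sum_j c_j\sigma_j(\UU^T\eeta)_j\int_0^\tau e^{-\gamma\sigma_j^2(\tau-s)}\,\dif\widetilde{M}_{s,j}.
\end{align*}
The desired deterministic term $\tfrac{\|\eeta\|^2}{n}\sum_j c_j(1-e^{-\gamma\sigma_j^2\tau})$ emerges as the $\SSigma,\eeta$-conditional mean of $A_2$, and the remaining work is to bound each of $A_1$, $A_3$, and $A_2-\mathbb{E}[A_2\mid\SSigma,\eeta]$ by $\|\eeta\|^2\sqrt{\beta}\,n^{\epsilon-1/2}$.

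For $A_1$ and $A_2$ I would condition on $\SSigma$ and $\eeta$ and use Lemma~\ref{lem:orthogonalinvariance}, which furnishes two independent Haar-distributed orthogonal matrices $\UU$ and $\VV$. Consequently $\UU^T\eeta/\|\eeta\|$ is uniform on the sphere in $\RR^n$ and $\nnu_0/\sqrt{R} = \VV^T(\xx_0-\widetilde\xx)/\sqrt{R}$ is an \emph{independent} uniform vector on the sphere in $\RR^d$. The fluctuation of $A_2$ about its conditional mean is then the same Dirichlet-type negative-association / second-moment computation used in Lemma~\ref{lem:modelconvergence}, applied to the bounded weights $w_j := c_j(1-e^{-\gamma\sigma_j^2\tau}) \in [-1,1]$; this yields $|A_2 - \tfrac{\|\eeta\|^2}{n}\sum_j w_j| \leq \|\eeta\|^2 n^{\epsilon-1/2}$ with overwhelming probability. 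For $A_1$, the conditional independence of $\nnu_0$ and $\UU^T\eeta$ makes it a bilinear form in two independent uniform sphere vectors with bounded coefficients $c_j\sigma_j e^{-\gamma\sigma_j^2\tau}$ (using $\sup_j \sigma_j^2 \to \lambda^+$ via Assumption~\ref{assumption: spectral_density}); subgaussian concentration on the sphere gives $|A_1| \leq \sqrt{R}\|\eeta\|\,n^{\epsilon-1/2}$ w.o.p., which is absorbed into the target bound since $R$ is a fixed constant.

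The main obstacle is the martingale piece $A_3$, and this is where the $\sqrt{\beta}$ factor in the proposition is born. Factoring $\int_0^\tau e^{-\gamma\sigma_j^2(\tau-s)}\dif\widetilde{M}_{s,j} = e^{-\gamma\sigma_j^2\tau}Z_{\tau,j}$ where $Z_{r,j} := \int_0^r e^{\gamma\sigma_j^2 s}\dif\widetilde{M}_{s,j}$ is a genuine $\mathcal{F}_r$-martingale, the plan is to bundle the $j$-dependent factors into a single scalar martingale $W_r := \sum_j c_j\sigma_j(\UU^T\eeta)_j e^{-\gamma\sigma_j^2\tau} Z_{r\wedge\tau,j}$ (the $\mathcal{F}_0$-measurable weights commute with the martingale property) whose terminal value is $A_3$, and then apply Burkholder--Davis--Gundy. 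At each Poisson arrival $\tau_\ell \leq \tau$ with batch $B_\ell$, the jump $\Delta\widetilde{M}_{\tau_\ell,j}$ is read off from \eqref{eq:nuk} and Lemma~\ref{lem:subsetsum} as a contraction of $\UU^T\PP_\ell$ against the residual $\UU\SSigma\nnu^\vartheta_{\tau_\ell^-}-\eeta$; two a priori bounds then drive the bracket estimate, namely eigenvector delocalization for Haar $\UU$, $(\mathbb{e}_j^T\UU^T\mathbb{e}_i)^2 \leq n^{\epsilon-1}$ w.o.p., and the crucial Proposition~\ref{prop:delocalization}, $\bigl(\mathbb{e}_i^T(\UU\SSigma\nnu^\vartheta_{\tau_\ell^-}-\eeta)\bigr)^2 \leq \beta n^{\epsilon-1}$. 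Combining these with the identity $\sum_j(\UU^T\eeta)_j^2 = \|\eeta\|^2$ bounds the per-jump contribution to $\langle W\rangle$ by $\|\eeta\|^4\beta^2 n^{2\epsilon-3}$, and summing over the $\mathcal{O}(n/\beta)$ arrivals in $[0,T]$ gives $\langle W\rangle_\tau \leq \|\eeta\|^4 \beta\, n^{2\epsilon-2}$; a Freedman-type tail inequality converts this into $|A_3| \leq \|\eeta\|^2\sqrt{\beta}\,n^{\epsilon-1/2}$ w.o.p. The hard part is the reorganization inside the bracket: a naive Cauchy--Schwarz in $j$ loses a factor of $n$, so the cancellation from the $\UU^T\eeta$-weighting (via delocalization of the rows of $\UU$) must be extracted carefully.
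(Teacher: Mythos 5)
Your decomposition is exactly the one the paper uses: substitute the integrated representation from Lemma~\ref{lem:integrated} (stopped at $\vartheta$), split into an initialization term, a forcing term whose conditional mean is $\tfrac{\|\eeta\|^2}{n}\sum_j c_j(1-e^{-(t\wedge\vartheta)\gamma\sigma_j^2})$, and a martingale remainder, then control the martingale via the a priori delocalization of Proposition~\ref{prop:delocalization} together with a Freedman-type inequality (the paper's Lemma~\ref{lem:mgleconcentration}). Your treatment of $A_1$ and $A_3$ is the same strategy as the paper's (which for $A_3$ packages the $j$-sum as the bilinear form $\tfrac{1}{\|\eeta\|^2}\eeta^T\UU\DD\SSigma\SSigma^T\UU^T\PP_{k-1}(\UU\SSigma\nnu^\vartheta_{\tau_k-}-\eeta)$ and bounds the operator norm of $\UU\DD\SSigma\SSigma^T\UU^T$; your Cauchy--Schwarz in $j$ with $\sum_j(\UU^T\eeta)_j^2 = \|\eeta\|^2$ is equivalent and doesn't actually need entrywise delocalization of~$\UU$).

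There is a real gap in your treatment of $A_2$. You propose to control $A_2 - \E[A_2\mid\SSigma,\eeta]$ by the ``Dirichlet negative-association / second-moment computation used in Lemma~\ref{lem:modelconvergence}.'' That lemma uses Chebyshev, which only gives a probability bound of order $n^{-c}$ --- convergence \emph{in probability}, not the \emph{overwhelming probability} the proposition requires. The paper instead uses the orthogonal-group Lipschitz concentration (Theorem~\ref{thm:lipschitz}, packaged in Lemma~\ref{lem:FU} with $\aa=\bb=\eeta/\|\eeta\|$ and $g_j(t)=c_j(1-e^{-\gamma\sigma_j^2(t\wedge\vartheta)})$), which yields Gaussian tails and hence overwhelming probability. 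A Hanson--Wright or Haar--Lipschitz argument would also work; a raw variance bound does not.

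Two smaller points. First, your per-jump quadratic-variation bookkeeping for $A_3$ gives $\|\eeta\|^4\beta^2 n^{2\epsilon-3}$, which appears to carry an extra factor of $\|\eeta\|^2 n^{-1+\epsilon}$ relative to what the estimates actually deliver (the correct per-jump bound is of order $\|\eeta\|^2\beta^2 n^{\epsilon-2}$, giving $\langle W\rangle \lesssim \|\eeta\|^2\beta n^{\epsilon-1}$ and hence $|A_3|\lesssim\|\eeta\|\sqrt{\beta}\,n^{\epsilon-1/2}$, which suffices since $\|\eeta\|$ is bounded below w.o.p.). The slip is harmless --- your stated $\langle W\rangle$ is actually \emph{smaller} than what's needed --- but the arithmetic should be fixed. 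Second, when invoking Lemma~\ref{lem:subsetsum} for the predictable quadratic variation there are two pieces (a dense $\uu^T\vv$ piece weighted by $\beta(\beta-1)/n(n-1)$ and a diagonal sum weighted by roughly $\beta/n$); you should show both are controlled, using $\|\UU\SSigma\nnu^\vartheta_t-\eeta\|\le n^\theta$ from the definition of $\vartheta$ for the first and Proposition~\ref{prop:delocalization} for the second, as the paper does.
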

\noindent This is proven in Section \ref{sec:mglesapplication}.

Using a mesh argument, and appealing to the convergence of the empirical spectrum, we can then show that $\varepsilon^{(n)}_{\operatorname{eta}}(t)$ tends to $0$.
\begin{proposition}\label{prop:erroreta}
For any $T > 0$,
\[
\max_{0 \leq t \leq T}
|\varepsilon^{(n)}_{\operatorname{eta}}(t)|
\Prto[n] 0.
\]
\end{proposition}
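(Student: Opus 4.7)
The plan is to apply Proposition \ref{prop:noisecovariance} to reduce each stochastic sum inside $\varepsilon^{(n)}_{\operatorname{eta}}(t)$ to a deterministic spectral quantity scaled by $\|\eeta\|^2$, and then to verify that the resulting expression converges in probability to $0$ using $\|\eeta\|^2 \Prto[n] \widetilde{R}$ together with the weak convergence of the empirical spectral measure. Throughout I work on the event $\{\vartheta > T\}$, which by Lemma \ref{lem:psibound} has high probability, so that $\nnu_{s,j}$ may be replaced by $\nnu_{s,j}^{\vartheta}$ without asymptotic cost.

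The explicit sum $\sum_{j=1}^{n\wedge d}\sigma_j\nu_{t,j}^{\vartheta}(\UU^T\eeta)_j$ is handled by Proposition \ref{prop:noisecovariance} with $c_j=\mathbf{1}_{\{j\le n\wedge d\}}$, which replaces it by $\frac{\|\eeta\|^2}{n}\sum_{j=1}^{n\wedge d}(1-e^{-t\gamma\sigma_j^2})$ up to error $O(\|\eeta\|^2\sqrt{\beta}\,n^{\epsilon-1/2})$. For the integral term, I interchange sum and integral and apply the proposition pointwise in $s$ with $c_j = K^{-1}\gamma\sigma_j^2 e^{-2(t-s)\gamma\sigma_j^2}$, where $K$ is a deterministic upper bound for $\gamma\sigma_j^2$ valid with overwhelming probability via the edge convergence in Assumption \ref{assumption: spectral_density}. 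The antiderivative identity
\[
\int_0^t \gamma\sigma_j^2 e^{-2(t-s)\gamma\sigma_j^2}\bigl(1-e^{-s\gamma\sigma_j^2}\bigr)\,\dif s = \tfrac{1}{2}\bigl(1-e^{-t\gamma\sigma_j^2}\bigr)^2
\]
then replaces the integral by $\frac{\|\eeta\|^2}{2n}\sum_{j=1}^{d}(1-e^{-t\gamma\sigma_j^2})^2$. After substitution into \eqref{eq:erroreta} and telescoping, $\varepsilon^{(n)}_{\operatorname{eta}}(t)$ equals
\[
\frac{\|\eeta\|^2}{2n}\sum_{j=1}^{n\wedge d}\bigl(e^{-2t\gamma\sigma_j^2}-1\bigr) + \tfrac{1}{2}\|\eeta\|^2 - \widetilde R\cdot\tfrac{rh_0(t)+(1-r)}{2} + \mathrm{error}_n(t),
\]
since the indices $j>n\wedge d$ contribute $0$ from $\sigma_j = 0$. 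Applying $\|\eeta\|^2\Prto[n]\widetilde R$ from Assumption \ref{assumption: Vector}, and using $\tfrac{1}{d}\sum_{j=1}^d e^{-2t\gamma\sigma_j^2}\Prto[n] h_0(t)$ from Assumption \ref{assumption: spectral_density} while accounting separately for the $\max(d-n,0)$ deterministic zero eigenvalues when $r>1$, a short algebraic check in the cases $r\le 1$ and $r>1$ shows the deterministic limit is exactly $0$.

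The main obstacle is upgrading Proposition \ref{prop:noisecovariance}, which is pointwise in $t$, to a bound uniform in $t$ (and in the integration variable $s$). I would resolve this by a union bound over a polynomial-sized mesh of times on $[0,T]$---the overwhelming-probability conclusion of Proposition \ref{prop:noisecovariance} survives this union bound---and interpolate to all intermediate $t$ and $s$ via Lipschitz estimates in time on the event $\{\vartheta>T\}$, together with the uniform bound $\sigma_j^2\le\lambda^++o(1)$. The constraint $\beta\le n^{1/5-\delta}$ enters precisely here: the per-point error $\sqrt{\beta}\,n^{\epsilon-1/2}$ must remain vanishing after being integrated in $s$ and controlled over the mesh.
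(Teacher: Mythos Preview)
Your proposal is correct and follows essentially the same route as the paper: both reduce the two stochastic sums in $\varepsilon^{(n)}_{\operatorname{eta}}$ via Proposition~\ref{prop:noisecovariance} (with $\SSigma$-measurable coefficients, after conditioning on $\SSigma$) and then verify the resulting deterministic spectral expression converges to $0$ using $\|\eeta\|^2 \Prto[n]\widetilde R$ and the ESM convergence. Your algebra is a slight repackaging of the paper's---your identity $\int_0^t \gamma\sigma_j^2 e^{-2(t-s)\gamma\sigma_j^2}(1-e^{-s\gamma\sigma_j^2})\,\dif s = \tfrac{1}{2}(1-e^{-t\gamma\sigma_j^2})^2$ and the telescoping $(u_j^2-2u_j)=(e^{-2t\gamma\sigma_j^2}-1)$ lead to exactly the expression the paper computes line by line---and you are actually more explicit than the paper about the mesh/uniformity step, which the paper treats as routine.
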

\noindent This is proven in Section \ref{sec:propeighteen}.

\subsubsection{Verification of \texorpdfstring{\eqref{eq:errorpieces}}{Lg}}

The combination of Propositions \ref{prop:errorKH}, \ref{prop:errorM}, \ref{prop:errorbeta}, and \ref{prop:erroreta} together show all remaining errors are small in \eqref{eq:errorpieces}.  Before proceeding, we connect \eqref{eq:errorpieces} to the previous sections to demonstrate this is truly the sum of errors that must be controlled.  

\begin{lemma}\label{lem:errorpieces}
Equation \eqref{eq:errorpieces} holds.
\end{lemma}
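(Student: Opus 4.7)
The plan is to verify \eqref{eq:errorpieces} by pure bookkeeping: I start from the approximate Volterra equation \eqref{eq:psivolterra} and compare it to \eqref{eq:approxvolterra}, which implicitly defines $\varepsilon_1^{(n)}(t)$. Each line of \eqref{eq:psivolterra} then gets matched against one of the named error terms.

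First, the convolution term $\sum_j \gamma^2 \sigma_j^4 n^{-1} \int_0^t e^{-2(t-s)\gamma\sigma_j^2}\psi_\varepsilon(s)\,ds$ on line one of \eqref{eq:psivolterra} matches the convolution on the right-hand side of \eqref{eq:approxvolterra} precisely by the definition of $\varepsilon_2^{(n)}$, so it contributes nothing to $\varepsilon_1^{(n)}$. The remaining piece of line one, $\tfrac{1}{2}\sum_j \sigma_j^2 e^{-2t\gamma\sigma_j^2}\nu_{0,j}^2$, equals $Rh_1(t)/2 + \varepsilon_{\operatorname{IC}}^{(n)}(t)$ directly from \eqref{eq:errorIC}. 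The noise-bearing terms on line two of \eqref{eq:psivolterra}, i.e.\ $\sum_j \int_0^t e^{-2(t-s)\gamma\sigma_j^2}\gamma\sigma_j^3 \nu_{s,j}(\UU^T\eeta)_j\,ds + \tfrac{1}{2}\|\eeta\|^2 - \sum_j \sigma_j\nu_{t,j}(\UU^T\eeta)_j$, are exactly what the definition \eqref{eq:erroreta} records as $\varepsilon_{\operatorname{eta}}^{(n)}(t) + \widetilde{R}(rh_0(t)+1-r)/2$, again by construction.

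The substantive (but still purely algebraic) step is to unpack the last line $\tfrac{1}{2}\sum_j \sigma_j^2 \int_0^t e^{-2(t-s)\gamma\sigma_j^2}\,d\mathcal{E}_{s,j}$. Combining the Doob decomposition $\nu_{t,j}^2 = \nu_{0,j}^2 + \int_0^t \mathcal{A}_{s,j}\,ds + M_{t,j}$ with the definition \eqref{eq:totalerror} of $\mathcal{E}_{t,j}$ gives $d\mathcal{E}_{s,j} = (\mathcal{A}_{s,j} - \widehat{\mathcal{A}}_{s,j})\,ds + dM_{s,j}$. The martingale part produces $\varepsilon_{\operatorname{M}}^{(n)}(t)$ of \eqref{eq:errorM}. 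For the drift part I subtract \eqref{eq:Ahat} from \eqref{eq:Atj2} and rewrite
\[
\mathcal{A}_{s,j} - \widehat{\mathcal{A}}_{s,j}
= \tfrac{\beta-1}{n-1}\mathcal{B}_{s,j}^2
- \tfrac{\beta-1}{n-1}\gamma^2 \sigma_j^2 \!\sum_{i=1}^n \!(\mathbb{e}_j^T\UU^T\mathbb{e}_i)^2(\mathbb{e}_i^T(\UU\SSigma\nnu_s-\eeta))^2
+ \gamma^2 \sigma_j^2\!\sum_{i=1}^n\! \bigl((\mathbb{e}_j^T\UU^T\mathbb{e}_i)^2 - \tfrac{1}{n}\bigr)(\mathbb{e}_i^T(\UU\SSigma\nnu_s-\eeta))^2.
\]
The first two summands, integrated against $\tfrac{1}{2}\sigma_j^2 e^{-2(t-s)\gamma\sigma_j^2}$, reproduce $\varepsilon_{\operatorname{beta}}^{(n)}(t)$ in \eqref{eq:errorbeta}, while the third summand reproduces $\varepsilon_{\operatorname{KL}}^{(n)}(t)$ in \eqref{eq:errorKH}.

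Summing these identifications and subtracting from \eqref{eq:approxvolterra} the terms that appear with matching coefficients yields \eqref{eq:errorpieces}. There is no real obstacle here beyond careful accounting; the only place one must be attentive is in splitting the coefficient $(1-\tfrac{\beta-1}{n-1})(\mathbb{e}_j^T\UU^T\mathbb{e}_i)^2$ from \eqref{eq:Atj2} into the two pieces $[(\mathbb{e}_j^T\UU^T\mathbb{e}_i)^2 - \tfrac{1}{n}]$ and $-\tfrac{\beta-1}{n-1}(\mathbb{e}_j^T\UU^T\mathbb{e}_i)^2 + \tfrac{1}{n}$ so that the "key-lemma" error and the "minibatch" error are cleanly separated.
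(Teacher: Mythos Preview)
Your proof is correct and follows essentially the same bookkeeping as the paper's own argument: match lines one and two of \eqref{eq:psivolterra} against $\varepsilon_{\operatorname{IC}}^{(n)}$ and $\varepsilon_{\operatorname{eta}}^{(n)}$, then expand $\dif\mathcal{E}_{s,j}=(\mathcal{A}_{s,j}-\widehat{\mathcal{A}}_{s,j})\,\dif s+\dif M_{s,j}$ and split the drift into the minibatch and key-lemma pieces. Your displayed formula for $\mathcal{A}_{s,j}-\widehat{\mathcal{A}}_{s,j}$ (carrying the $\gamma^2\sigma_j^2$ factor explicitly) is in fact slightly more careful than the paper's own write-up of the same identity.
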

\begin{proof}
We recall the approximate Volterra equation in \eqref{eq:psivolterra}.  The error $\varepsilon^{(n)}_{1}(t)$ is defined implicitly in \eqref{eq:approxvolterra}.  By using \eqref{eq:errorIC} and \eqref{eq:erroreta}, we conclude that
\begin{equation}\label{eq:errorpieces2}
\varepsilon^{(n)}_{1}(t)
-
\varepsilon^{(n)}_{\operatorname{IC}}(t)
-
\varepsilon^{(n)}_{\operatorname{eta}}(t)
=
\frac{1}{2}\sum_{j=1}^d 
    \sigma_j^{2}
    \int_0^t e^{-2(t-s)\gamma\sigma_j^2} \dif \mathcal{E}_{s,j}.
\end{equation}
Recall from \eqref{eq:totalerror} and \eqref{eq:doob}
\[
\mathcal{E}_{t,j} =
\int_0^t \mathcal{A}_{s,j}\, \dif s + M_{t,j}
- \int_0^t \widehat{\mathcal{A}}_{s,j}\, \dif s.
\]
Using \eqref{eq:Atj2} and \eqref{eq:Ahat}, we can express
\begin{equation*}
\begin{aligned}
\dif \mathcal{E}_{t,j} 
&= \dif M_{t,j} \\
    &
    %+\int_0^t 
    +
    \biggl(\frac{\beta-1}{n-1}
    \bigl(
    \mathcal{B}_{t,j}
    \bigr)^2
    -\frac{\beta-1}{n-1}
    \sum_{i=1}^n
    \bigl(\mathbb{e}_j^T
    \UU^T 
    \mathbb{e}_{i}
    \bigr)^2
    \bigl(
    \mathbb{e}_{i}^T
    (\UU\SSigma \nnu_t - \eeta)\bigr)^2\biggr) \dif t \\
%    ds \\
    &+%+\int_0^t\gamma^2 \sigma_j^2
    \biggl(\sum_{i=1}^n
    \biggl(\bigl(\mathbb{e}_j^T
    \UU^T 
    \mathbb{e}_{i}
    \bigr)^2-\frac{1}{n}\biggr)
    \bigl(
    \mathbb{e}_{i}^T
    (\UU\SSigma \nnu_t - \eeta)\bigr)^2\biggr) \dif t.
\end{aligned}
\end{equation*}
Each of these three lines, on substituting into \eqref{eq:errorpieces2}, produces $\varepsilon^{(n)}_{\operatorname{M}}(t),$
$\varepsilon^{(n)}_{\operatorname{beta}}(t),$
and 
$\varepsilon^{(n)}_{\operatorname{KL}}(t),$
respectively.
\end{proof}

\subsubsection{Proof organization}
We organize the remainder of the proof as follows.  We begin by proving Lemma \ref{lem:psibound} in Section \ref{sec:psibound} using standard martingale techniques.  Arguments along this line are well--known in the context of analysis of SGD, and this argument is similar (and in fact easier) than convergence arguments for SGD.

In Section \ref{sec:ubounds}, we introduce standard machinery for concentration of Lipschitz functions on the orthogonal group.  In this section, we then make the error estimates that follow from this type of estimate.   In particular we prove that the key lemma, Lemma~\ref{prop:errorKH} holds.  We also show Proposition \ref{prop:errorbeta} and Proposition \ref{prop:erroreta} hold.  Note these latter propositions depend on some estimates that require other martingale techniques.

In Section \ref{sec:mgles}, we give the estimates that depend heavily on martingale concentration techniques.  In Section \ref{sec:mglesabstract}, we outline the general martingale concentration techniques that we need.  These extend general martingale techniques in ways that are appropriate to our setting.  In Section \ref{sec:mglesapplication}, we prove the remaining propositions, beginning with the main technical proposition Proposition \ref{prop:delocalization}.
We then give the bounds that prove Propositions~\ref{prop:errorM}, \ref{prop:nut}, and \ref{prop:noisecovariance}.

\subsection{ An \emph{a priori} bound for the objective function values} \label{sec:psibound}

Here we combine some of the estimates already developed to give a simple starting bound for the function values $\psi_{\varepsilon}(t)$ in the proof of Lemma \ref{lem:psibound}.  We will need this starting bound in many of our future estimates.  We will do this by constructing an appropriate supermartingale, which we then use to control the evolution of $\psi_{\varepsilon}$.

\begin{proof}[Proof of Lemma \ref{lem:psibound}]
For convenience, we will set $\psi_{\varepsilon} = \psi$. We recall from \eqref{eq:psidef} that
\begin{equation*}
\begin{aligned}
\psi(t)
&=
\frac{1}{2}
\sum_{j=1}^d \sigma_j^{2}\nu_{t,j}^2
-\sum_{j=1}^{n \wedge d} \sigma_j\nu_{t,j}\, (\UU^T\eeta)_j 
+\frac{1}{2}\|\eeta\|^2.
\end{aligned}
\end{equation*}
Hence using \eqref{eq:Atj} and \eqref{eq:Bt},
\[
\begin{aligned}
\lim_{\epsilon \downarrow 0} \, 
&\epsilon^{-1}
\mathbb{E}
[
\psi(t+\epsilon)-\psi(t)
~\vert~\mathcal{F}_t
]
=\frac{1}{2}
\sum_{j=1}^d \sigma_j^{2}\mathcal{A}_{t,j}
-\sum_{j=1}^{n \wedge d} \sigma_j\mathcal{B}_{t,j}\, (\UU^T\eeta)_j \\
&= 
\frac{1}{2}
\sum_{j=1}^d
\sigma_j^2\biggl(
-2\nu_{t,j}\gamma
   \mathbb{e}_j^T\SSigma^T
    \UU^T 
    (\UU\SSigma \nnu_t - \eeta) 
    +\gamma^2\frac{n}{\beta}
    \E
    \biggl(\mathbb{e}_j^T\SSigma^T
    \UU^T 
    \PP
    (\UU\SSigma \nnu_t - \eeta)
    ~\bigg\vert~ \mathcal{F}_t
    \biggr)^2
\biggr)\\
&
+\gamma
\sum_{j=1}^n 
%(\mathbb{e}_j^T\SSigma^T\UU^T \eeta)
(\eeta^T \UU \SSigma \mathbb{e}_j)
\mathbb{e}_j^T
\SSigma^T
(\SSigma
\nnu_t - \UU^T\eeta) \\
&=
-\gamma
(\SSigma
\nnu_t - \UU^T\eeta)^T
\SSigma \SSigma^T
(\SSigma
\nnu_t - \UU^T\eeta) 
+ \gamma^2\frac{n}{2\beta}
\sum_{j=1}^n
    \E
    \biggl(\mathbb{e}_j^T\SSigma \SSigma^T
    \UU^T 
    \PP
    (\UU\SSigma \nnu_t - \eeta)
    ~\bigg\vert~ \mathcal{F}_t
    \biggr)^2.
\end{aligned}
\]
Using Lemma \ref{lem:subsetsum}, we can give the expression
\[
\begin{aligned}
\lim_{\epsilon \downarrow 0}
&\epsilon^{-1}
\mathbb{E}
[
\psi(t+\epsilon)-\psi(t)
~\vert~\mathcal{F}_t
]
=
-\gamma
(\SSigma
\nnu_t - \UU^T\eeta)^T
\SSigma \SSigma^T
(\SSigma
\nnu_t - \UU^T\eeta)  \\
&+ \gamma^2\frac{\beta-1}{2(n-1)}
\sum_{j=1}^d
    \biggl(\mathbb{e}_j^T\SSigma \SSigma^T
    \UU^T 
    (\UU\SSigma \nnu_t - \eeta)
    \biggr)^2 \\
&+ \gamma^2\frac{1}{2}\biggl(1-\frac{\beta-1}{n-1}\biggr)
\sum_{j=1}^d
\sum_{i=1}^n
    \biggl(\mathbb{e}_j^T\SSigma \SSigma^T
    \UU^T 
    \mathbb{e}_i\biggr)^2
    \biggl(\mathbb{e}_i^T
    (\UU\SSigma \nnu_t - \eeta)
    \biggr)^2.
\end{aligned}
\]
All three terms have the interpretation as a quadratic form $\xx^T \widehat{\AA} \xx$ for some matrix $\widehat{\AA}$ and the vector $\xx = \UU\SSigma \nnu_t - \eeta$. Specifically, we have
\[
\widehat{\AA} = 
-\gamma \UU \SSigma \SSigma^T \UU^T
+\frac{\gamma^2(\beta-1)}{2(n-1)}
%\sum_{j=1}^d 
\UU\SSigma\SSigma^T
%\mathbb{e}_j\mathbb{e}_j^T
\SSigma \SSigma^T\UU^T
+\frac{\gamma^2}{2}\biggl(1-\frac{\beta-1}{n-1}\biggr)
\sum_{i=1}^n 
\mathbb{e}_i \mathbb{e}_i^T \| \SSigma \SSigma^T\UU^T \mathbb{e}_i\|^2.
\]
As $\SSigma$ is bounded, we can let $\rho_\star$ be the largest eigenvalue of $\widehat{\AA},$ which is symmetric, and which can be bounded solely in terms of the norm of $\SSigma$.  Then we conclude that
\[
\lim_{\epsilon \downarrow 0}
\epsilon^{-1}
\mathbb{E}
[
\psi(t+\epsilon)-\psi(t)
~\vert~\mathcal{F}_t
]
\leq 2\rho_\star \psi(t).
\]
It follows immediately that
\[
X_t \defas e^{-2\rho_\star t}\psi(t)
\]
is a positive supermartingale.  Hence by optional stopping, for any $T > 0$
\[
\Pr[ \sup_{0 \leq t \leq T} X_t \geq \lambda ~\vert~ \mathcal{F}_0] 
\leq \frac{\psi(0)}{\lambda}.
\]
Hence,
\[
\Pr[ \sup_{0 \leq t \leq T} \psi(t) \geq \lambda ~\vert~ \mathcal{F}_0] 
\leq \frac{\psi(0)e^{2\rho_\star T}}{\lambda}.
\]
Taking $\lambda=n^{2\theta}/2$ completes the proof.
\end{proof}

\section{Estimates based on concentration of measure on the high--dimensional orthogonal group}
\label{sec:ubounds}

\subsection{Generalities}
We recall a few properties of Haar measure on the orthogonal group.  We endow the orthogonal group $O(n)$ with the metric given by the Frobenius norm, so that $d(\OO,\UU) = \|\OO-\UU\|_F$.  Say that a function $F : O(n) \to \RR$ is Lipschitz with constant $L$ if 
\[
|F(\OO)-F(\UU)| \leq L \|\OO-\UU\|_F.
\]
Recall that the orthogonal group can be partitioned as two disconnected copies of the special orthogonal group $\operatorname{SO}(n),$ which we endow with the same metric.  These are given as the preimages of $\{\pm 1\}$ under the determinant map.
Haar measure on the special orthogonal group enjoys a strong concentration of measure property.
\begin{theorem}
    \label{thm:lipschitz}
    Suppose that $F : \operatorname{SO}(n) \to \RR$ is Lipschitz with constant $L$.  Then for all $t \geq 0,$
    \[
    \Pr[
    |F(\UU)-\E[F(\UU)]| > t
    ] \leq 2e^{-c n t^2/L^2},
    \]
    where $c>0$ is an absolute constant.
\end{theorem}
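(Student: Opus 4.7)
My plan is to deduce Theorem \ref{thm:lipschitz} from the general concentration-of-measure principle for Riemannian manifolds with positive Ricci curvature, applied to $\operatorname{SO}(n)$ equipped with its bi-invariant metric. This is the standard route for Haar-measure concentration, and the technical heart of the matter is the Ricci curvature lower bound, not the Lipschitz inequality itself.

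First, I would pass from the Frobenius metric on $\operatorname{SO}(n)$ to the intrinsic geodesic metric $d_g$ induced by the bi-invariant Riemannian metric (inherited from the trace inner product on $\mathfrak{so}(n)$). Since a geodesic is length-minimizing and its chord in the ambient space of matrices is at most its length, one has $\|\OO - \UU\|_F \le d_g(\OO,\UU)$ for all $\OO, \UU \in \operatorname{SO}(n)$. Consequently any $F$ that is $L$-Lipschitz with respect to $\|\cdot\|_F$ is also $L$-Lipschitz with respect to $d_g$, so it suffices to prove concentration for geodesic-Lipschitz functions.

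The second step is the curvature estimate: for the bi-invariant metric on $\operatorname{SO}(n)$, a direct computation from the Lie-algebra structure constants yields a Ricci curvature lower bound of the form $\operatorname{Ric} \ge \tfrac{n-2}{4} \, g$. (For small $n$ one absorbs the constants into the final $c$.) Plugging this into Gromov's concentration theorem for manifolds with positive Ricci curvature — equivalently, into Bakry--\'Emery's $\operatorname{CD}(K,\infty)$ criterion, which yields a logarithmic Sobolev inequality with constant $2/K$ and then by Herbst's argument a sub-Gaussian tail bound — one obtains: for any $L$-geodesic-Lipschitz $F$ and any $t \ge 0$,
\[
\Pr\bigl[\,|F(\UU) - \E F(\UU)| > t\,\bigr] \le 2 \exp\!\left(-\frac{Kt^2}{2L^2}\right), \qquad K = \tfrac{n-2}{4}.
\]
Combining with the Lipschitz-reduction of the previous paragraph gives the theorem with $c$ an absolute constant (of the order $1/8$ asymptotically).

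The main obstacle, and the only nontrivial input, is the Ricci lower bound $\operatorname{Ric} \ge c\, n \cdot g$; once it is in hand, the concentration inequality is black-box machinery. I would simply cite the Ricci calculation and Gromov's theorem from a standard reference such as \cite{Meckes}, where precisely this statement for the special orthogonal group is carried out in detail, and where the constant $c$ is tracked explicitly. No additional care is needed for $O(n)$ versus $\operatorname{SO}(n)$ here because the theorem is formulated on the connected component.
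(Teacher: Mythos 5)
Your proposal is correct and follows exactly the route that the paper's references (Vershynin, Theorem 5.2.7; Meckes, Theorem 5.17) use to establish this result — the paper itself does not prove the theorem, but simply cites it. The chain (Frobenius-to-geodesic comparison, Ricci lower bound of order $n$ via the bi-invariant metric, Bakry--\'Emery/Gromov concentration, Herbst's argument) is precisely the standard proof that those sources carry out, so there is no substantive difference to note.
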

\noindent See \cite[Theorem 5.2.7]{vershynin2018high} or \cite[Theorem 5.17]{Meckes} for precise constants.
We can derive concentration for even functions of the orthogonal group automatically:
\begin{corollary}
    \label{cor:lipschitz}
    Suppose that $F : O(n) \to \RR$ is Lipschitz with constant $L$ and suppose that 
    \[
    \E[ F(\UU)\det \UU]
    =\E [F(\UU)] = 0.
    \]
    Then for all $t \geq 0,$
    \[
    \Pr[
    |F(\UU)-\E[F(\UU)]| > t
    ] \leq 2e^{-c n t^2/L^2},
    \]
    where $c>0$ is an absolute constant.
\end{corollary}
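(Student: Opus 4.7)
The plan is to exploit the decomposition $O(n) = SO(n) \sqcup O^-(n)$, where $O^-(n) = \{\OO \in O(n) : \det \OO = -1\}$, and reduce the claim to two applications of Theorem \ref{thm:lipschitz} on $SO(n)$. Under Haar measure on $O(n)$, each component carries mass $1/2$; the conditional law on $SO(n)$ is exactly Haar on $SO(n)$, and the conditional law on $O^-(n)$ is the pushforward of Haar on $SO(n)$ under left multiplication by any fixed $\OO_- \in O^-(n)$.

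The first step is to recognize that the two hypotheses $\E[F(\UU)] = 0$ and $\E[F(\UU)\det\UU] = 0$ are jointly equivalent to $F$ having mean zero on \emph{each} component. Indeed, writing $\mu_+ := \E[F(\UU) \mid \UU \in SO(n)]$ and $\mu_- := \E[F(\UU) \mid \UU \in O^-(n)]$, the two hypotheses read $\tfrac{1}{2}(\mu_+ + \mu_-) = 0$ and $\tfrac{1}{2}(\mu_+ - \mu_-) = 0$, which forces $\mu_+ = \mu_- = 0$.

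With this in hand, Theorem \ref{thm:lipschitz} applies verbatim on $SO(n)$ to give $\Pr[|F(\UU)| > t \mid \UU \in SO(n)] \leq 2e^{-cnt^2/L^2}$. For the coset, I would transfer by setting $G(\VV) := F(\OO_- \VV)$; since left multiplication by an orthogonal matrix is a Frobenius isometry, $G$ is Lipschitz on $SO(n)$ with the same constant $L$, and its Haar mean over $SO(n)$ equals $\mu_- = 0$. Applying Theorem \ref{thm:lipschitz} to $G$ yields the same tail bound on the $O^-(n)$ piece, and combining via the law of total probability gives the stated inequality with identical constant $c$. There is no real obstacle here beyond the parity observation linking the two expectation conditions to component-wise mean zero; this is the standard passage from the connected to the disconnected case.
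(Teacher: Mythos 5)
Your proposal is correct and fills in the details of the same conditioning argument that the paper states tersely: both note that the two expectation hypotheses are equivalent to mean zero on each of the two components of $O(n)$, apply Theorem~\ref{thm:lipschitz} on each component (transferring to the coset via left multiplication), and combine by total probability. No meaningful difference in approach.
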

\begin{proof}
Under the assumption, the mean of $F$ conditioning on either $\det \UU = 1$ or $\det \UU = -1$ is $0$.  Hence, by conditioning, we achieve the desired concentration around $0,$ which is the mean $\E[F(\UU)].$
\end{proof}

As a useful illustration, an entry of $\UU$, which is a Haar--distributed random matrix on $O(n)$ is concentrated.
\begin{corollary}\label{cor:entry}
For any $n > 1,$ there is an absolute constant $c>0$ so that for all $t \geq 0$ and all $i,j$ in $\{1,2,...,n\}$
\[
\Pr[ |U_{ij}| > t] \leq 2e^{-cnt^2}.
\]
The same statement holds for any generalized entry $\xx^T \UU \yy$ for fixed unit vectors $\xx,\yy$, i.e.
\[
\Pr[ |\xx^T \UU \yy| > t] \leq 2e^{-cnt^2}.
\]
\end{corollary}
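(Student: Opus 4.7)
My plan is to deduce both statements from Corollary~\ref{cor:lipschitz} by applying it to the linear function $F(\UU) \defas \xx^T \UU \yy$, where $\xx, \yy$ are unit vectors (the entry bound is the special case $\xx = \mathbb{e}_i$, $\yy = \mathbb{e}_j$). Since Corollary~\ref{cor:lipschitz} delivers concentration of Lipschitz functions on $O(n)$ around their mean, conditional on checking its two hypotheses, there will be essentially nothing to do.

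First I would verify that $F$ is $1$-Lipschitz in the Frobenius metric. For any $\OO, \UU \in O(n)$, Cauchy--Schwarz (equivalently $\|\cdot\|_{\mathrm{op}} \leq \|\cdot\|_F$) gives
\begin{equation*}
|F(\OO) - F(\UU)| = |\xx^T(\OO-\UU)\yy| \leq \|\xx\|_2\,\|\OO-\UU\|_{\mathrm{op}}\,\|\yy\|_2 \leq \|\OO-\UU\|_F,
\end{equation*}
so $L = 1$.

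Next I would verify the two mean-type conditions $\E[F(\UU)] = 0$ and $\E[F(\UU)\det \UU] = 0$, each by a symmetry trick using left/right invariance of Haar measure on $O(n)$. For the first, pick any $S \in O(n)$ with $S\yy = -\yy$ (for example the reflection $S = \II - 2\yy\yy^T$). Then $\UU S \law \UU$ and $F(\UU S) = \xx^T \UU S \yy = -F(\UU)$, so $\E[F(\UU)] = \E[F(\UU S)] = -\E[F(\UU)]$ and hence $\E[F(\UU)] = 0$. For the second, use that $n > 1$ to pick a unit vector $v$ orthogonal to $\yy$ and set $S = \II - 2vv^T$; then $S\yy = \yy$ and $\det S = -1$, so $F(\UU S) = F(\UU)$ while $\det(\UU S) = -\det \UU$, giving
\begin{equation*}
\E[F(\UU)\det \UU] = \E[F(\UU S)\det(\UU S)] = -\E[F(\UU)\det \UU],
\end{equation*}
so this expectation also vanishes.

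With both hypotheses verified and $L=1$, Corollary~\ref{cor:lipschitz} immediately gives $\Pr[|\xx^T \UU \yy| > t] \leq 2 e^{-cnt^2}$, which specializes to the entry bound by the choice $\xx = \mathbb{e}_i$, $\yy = \mathbb{e}_j$. There is no real obstacle here: everything reduces to finding the right reflection for the two symmetry arguments, and the condition $n > 1$ is used only to secure a nonzero vector $v \perp \yy$ in the second argument.
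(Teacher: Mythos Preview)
Your proof is correct and follows essentially the same approach as the paper: both verify that $\UU \mapsto \xx^T\UU\yy$ is $1$-Lipschitz and that the two mean conditions of Corollary~\ref{cor:lipschitz} vanish by a sign-flip symmetry of Haar measure. The only cosmetic difference is that the paper first treats the entry case via row negations (negating row $i$ kills $\E[U_{ij}]$, negating a different row kills $\E[U_{ij}\det\UU]$) and then extends to generalized entries by linearity, whereas you go directly for the general case using Householder reflections on the right; your route is slightly more streamlined but not materially different.
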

\begin{proof}
The entry map $\UU \mapsto U_{ij}$ is $1$--Lipschitz.  Moreover, it has mean $0,$ restricted to either component, as
\[
\E[ U_{ij} \det(\UU)] = 0 = \E[U_{ij}].
\]
Note that by distributional invariance, negating row $i$ of $\UU$ leaves the distribution of $\UU$ invariant.  Doing so shows the second equality.  Negating any row except for $i$ (which exists as $n > 1$) shows the first equality.

For the generalized entry, by the linearity of the expectation,
\[
\E[ \xx^T \UU \yy \det(\UU)]
=\sum_{i,j} x_iy_j \E [U_{ij} \det(\UU)]
=0
=\sum_{i,j} x_iy_j \E[U_{ij}]
=\E[ \xx^T \UU \yy].
\]
Using that $\UU \mapsto \xx^T \UU \yy$ is $1$--Lipschitz, the proof follows.
%For the generalized entry, once can mimic the above argument using Householder reflections, i.e.\ unitary matrices $\VV_{\aa}$ which negate vector $\aa$ and preserve the orthogonal complement of $\aa$.  By invariance of $\UU,$ the matrix $\VV_{\aa}\UU$ has the same law as $\UU$. Being a reflection, $\det(\VV_{\aa})=-1$, and so with $\aa$ orthogonal to $\xx,$
%\[
%\E( \xx^T \UU \yy \det(U))= 
%\E( \xx^T \VV_{\aa}\UU \yy \det(\VV_{\aa}U))= 
%\E( \xx^T \UU \yy \det(\VV_{\aa}U))=
%-\E( \xx^T \UU \yy \det(U)),
%\]
%where we have used: invariance in law, orthogonality of $\aa$ and $\xx$ and finally the reflection property.  
\end{proof}

\subsection{Applications to the Volterra equation errors}\label{sec:uboundsapplications}

More to the point, we need concentration of random combinations of functions weighted by entries of $\UU.$
\begin{lemma}\label{lem:FU}
Let $T > 0$ be fixed, and suppose that $\{ g_i\}$ are functions from $[0,T] \to \RR$ which are bounded by $L>0$ and have Lipschitz constant $1$. 
Then, for any $\epsilon > 0,$ and any fixed unit vectors $\aa$ and $\bb$ in $\mathbb{R}^n$, with overwhelming probability
\[
\max_{0 \leq t \leq T}
\biggl|
\sum_{j=1}^n 
    g_j(t)
    \bigl((\aa^T \UU)_j (\bb^T \UU)_j
    -\E [(\aa^T \UU)_j (\bb^T \UU)_j]\bigr)
\biggr|
\leq n^{\epsilon-1/2}.
\]
\end{lemma}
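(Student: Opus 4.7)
The plan is to apply Lipschitz concentration on the orthogonal group pointwise in $t$ to the functional
\[
F_t(\UU) \defas \sum_{j=1}^n g_j(t) (\aa^T \UU)_j (\bb^T \UU)_j,
\]
and then promote pointwise concentration to uniform control in $t$ via a mesh argument.

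The key computation is showing that $F_t$ is Lipschitz on $O(n)$ with a dimension-independent constant of order $L$. A direct gradient computation gives $\partial F_t/\partial U_{k\ell} = g_\ell(t)[a_k (\bb^T\UU)_\ell + b_k (\aa^T\UU)_\ell]$, and then the elementary inequality $(x+y)^2 \le 2x^2 + 2y^2$ combined with the orthogonality identities $\|\aa^T\UU\|^2 = \|\aa\|^2 = 1$ and $\|\bb^T\UU\|^2 = \|\bb\|^2 = 1$ yields $\|\nabla F_t\|_F \le 2L$. Thus $F_t$ is $(2L)$-Lipschitz. Applying Theorem~\ref{thm:lipschitz} on each connected component of $O(n)$, one obtains
\[
\Pr\bigl[|F_t(\UU) - \E F_t(\UU)| > s\bigr] \le C e^{-c n s^2 / L^2},
\]
and choosing $s = n^{\epsilon-1/2}$ makes the probability overwhelming (super-polynomially small in $n$).

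To upgrade this to a uniform estimate on $[0,T]$, I would use that both $t \mapsto F_t(\UU)$ and $t \mapsto \E F_t(\UU)$ are $1$-Lipschitz in $t$: indeed,
\[
|F_t(\UU) - F_s(\UU)| \le |t-s| \sum_j |(\aa^T\UU)_j(\bb^T\UU)_j| \le |t-s|
\]
by Cauchy-Schwarz and orthogonality, and the same bound holds in expectation. Discretizing $[0,T]$ with spacing of order $n^{\epsilon-1/2}$ yields polynomially-many mesh points; a union bound preserves overwhelming probability at the mesh, and the $1$-Lipschitz control interpolates to all intermediate $t$ at an additional cost absorbed into the $n^{\epsilon-1/2}$ budget (after minor adjustment of $\epsilon$).

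The main obstacle is the Lipschitz-constant computation: without the cancellation provided by orthogonality, which collapses $\|\aa^T\UU\|$ back to $\|\aa\| = 1$, the Lipschitz constant would scale with $n$ and concentration at the scale $n^{\epsilon-1/2}$ would be lost. A minor subtlety is extending the $\operatorname{SO}(n)$ concentration of Theorem~\ref{thm:lipschitz} to all of $O(n)$ without invoking the symmetry hypothesis of Corollary~\ref{cor:lipschitz}, which the quadratic $F_t$ need not satisfy. This is handled by conditioning on $\det\UU \in \{\pm 1\}$, applying concentration around each conditional mean, and then showing that the gap between the two conditional means of $F_t$ is also of order $n^{\epsilon-1/2}$; the latter follows from a short computation using the row-negation symmetry that flips $\det\UU$, leveraging that the unconditional expectation $\E[F_t(\UU) - F_t(D_1\UU)]$ vanishes by direct evaluation of the relevant Haar covariances.
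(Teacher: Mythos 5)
Your Lipschitz computation and mesh argument are both correct, and the gradient-based bound $\|\nabla F_t\|_F \leq 2L$ via $\|\aa^T\UU\| = \|\bb^T\UU\| = 1$ is a clean way to arrive at the same constant $2L$ that the paper obtains by a direct Cauchy--Schwarz estimate. The paper's mesh argument is cruder (bounding the increment by $2\lambda n$ rather than your $2\lambda$), but both close since $\lambda$ is free.

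The gap is in your handling of the two components of $O(n)$, and it is twofold. First, the fix you reach for is unnecessary: the hypothesis of Corollary~\ref{cor:lipschitz} \emph{is} satisfied by the centered functional. Right-multiplication by a diagonal matrix $\DD$ negating a single column leaves $F_t$ literally unchanged, since $(\aa^T\UU\DD)_j(\bb^T\UU\DD)_j = (\aa^T\UU)_j(\bb^T\UU)_j$ for every $j$; this map preserves Haar measure and flips $\det$, so $\E[F_t(\UU)\det\UU] = -\E[F_t(\UU\DD)\det(\UU\DD)] = -\E[F_t(\UU)\det\UU]$, giving $\E[F_t(\UU)\det\UU] = 0$ and hence equality of the conditional means. (The paper runs this column-negation argument on each summand $f_j$ and sums.) Second, the workaround you actually sketch does not hold up. Row negation $\UU \mapsto \DD_1\UU$ does \emph{not} fix $F_t$: one has $(\aa^T\DD_1\UU)_j = ((\DD_1\aa)^T\UU)_j$, which changes $F_t$ unless $a_1 = b_1 = 0$, so the relevant symmetry is on the right, not the left. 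Moreover, the vanishing of the unconditional $\E[F_t(\UU) - F_t(\DD_1\UU)]$ is automatic from Haar invariance (both terms have the same law) and does not control the conditional means: setting $G = F_t - F_t\circ\DD_1$, one has $G\circ\DD_1 = -G$, so $0 = \E[G] = \tfrac12\E[G\,|\,\det=1] + \tfrac12\E[G\,|\,\det=-1] = \tfrac12\E[G\,|\,\det=1] - \tfrac12\E[G\,|\,\det=1]$, which is a tautology. Replacing your row-negation step with the column-negation identity above closes the argument and reduces it to a direct application of Corollary~\ref{cor:lipschitz}, matching the paper.
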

\begin{proof} We first prove the claim for a fixed $t\in[0,T]$ and  generalize the result for any $t\in[0,T]$ later using a mesh points argument. In proving this, we can take advantage of Corollary \ref{cor:lipschitz}. For $t\in [0,T]$, let $F_t: O(n) \to \mathbb{R}$ be
\begin{equation}
    F_t(\UU) \defas  \sum_{j=1}^n g_j(t)
    \bigl((\aa^T \UU)_j (\bb^T \UU)_j
    -\E [(\aa^T \UU)_j (\bb^T \UU)_j]\bigr).
\end{equation}
We can show that $F_t$ is a Lipschitz function on $O(n)$. Indeed, for $\UU,
\VV \in O(n)$, 
\[\mathbb{E}[(\aa^T\UU)_j (\bb^T\UU)_j]=\mathbb{E}[(\aa^T\VV)_j (\bb^T\VV)_j] = \frac{1}{n} \langle \aa, \bb \rangle \] 
and 
\begin{align*}
    |F_t&(\UU) - F_t(\VV)| \\
    &= \bigl|\sum_{j=1}^n g_j(t) 
    \bigl((\aa^T \UU)_j-(\aa^T \VV)_j\bigr)(\bb^T \UU)_j + \sum_{j=1}^n g_j(t)(\aa^T \VV)_j
    \bigl( (\bb^T \UU)_j - (\bb^T \VV)_j\bigr) \bigr|\\
    &\le \sqrt{\sum_{j=1}^n (\aa^T(\UU-\VV))^2}\sqrt{\sum_{j=1}^n g_j^2(t)(\bb^T \UU)_j^2}
    + \sqrt{\sum_{j=1}^n (\bb^T(\UU-\VV))^2}\sqrt{\sum_{j=1}^n g_j^2(t)(\aa^T \VV)_j^2}\\
    &\le L\|\aa^T(\UU-\VV)\|_2 \|\bb^T\UU\|_2 + L\|\bb^T(\UU-\VV)\|_2\|\aa^T\VV\|_2\\
    &\le 2L\|\UU-\VV\|_F.
\end{align*}
Therefore,  we conclude that $F_t$ is a Lipschitz function of Lipschitz constant $2L$. For $j\in\{1,\cdots,n\}$, let 
\[f_j\defas (\aa^T \UU)_j (\bb^T \UU)_j
    -\E [(\aa^T \UU)_j (\bb^T \UU)_j].
\]
Then conditioning on $\det \UU = 1$ and $\det\UU = -1$, negating any column of $\UU$ leaves the distribution of $f_j$ invariant, which gives 
\[
\E[ f_j(\UU)\det \UU ] = \E [f_j(\UU)] = 0,
\]
and thus, using linearity, 
\[
    \E[ F_t(\UU)\det \UU]
    =\E [F_t(\UU)] = 0.
    \]
Now Corollary \ref{cor:lipschitz} gives, for $s\ge 0$,
\[
    \Pr[
    |F_t(\UU)-\E[F_t(\UU)]| > s
    ] \leq 2e^{-c n s^2},
\]
where $c>0$ is an absolute constant. Or, replacing $s=n^{\epsilon-1/2}$ gives the claim for a fixed time $t\in[0,T]$.

Now we generalize the result to any time in $[0,T]$. Assume that the claim is attained for mesh points on $[0,T]$ with arbitrarily small spacing, say $\lambda$.  Then for any $t\in[0,T]$, there exists a mesh point $t_0 \in [0,T]$ such that $|t-t_0| \le \lambda$. The assumption that $\{g_j\}$ are Lipschitz functions with Lipschitz constant 1 implies that $|g_j(t) - g_j(t_0)| \le |t-t_0| \le \lambda.$ Then we see
\begin{align*}
\big|&\sum_{j=1}^n (g_j(t)-g_j(t_0))
    \bigl((\aa^T \UU)_j (\bb^T \UU)_j
    -\E [(\aa^T \UU)_j (\bb^T \UU)_j]\bigr)\big|\\ 
    &\le 
    \sum_{j=1}^n |g_j(t)-g_j(t_0)|
    \big|\bigl((\aa^T \UU)_j (\bb^T \UU)_j
    -\E [(\aa^T \UU)_j (\bb^T \UU)_j]\bigr)\big| 
    \le 2\lambda n.
\end{align*}
Note that $\lambda$ can be arbitrarily small. Thus we have
\begin{align*}
\biggl|
\sum_{j=1}^n 
    g_j(t)
    \bigl((\aa^T \UU)_j (\bb^T \UU)_j
    &-\E [(\aa^T \UU)_j (\bb^T \UU)_j]\bigr)
\biggr|\\
&\le 
\biggl|
\sum_{j=1}^n 
    g_j(t_0)
    \bigl((\aa^T \UU)_j (\bb^T \UU)_j
    -\E [(\aa^T \UU)_j (\bb^T \UU)_j]\bigr)
\biggr|\\
& +  \biggl|
\sum_{j=1}^n 
    (g_j(t) - g_j(t_0))
    \bigl((\aa^T \UU)_j (\bb^T \UU)_j
    -\E [(\aa^T \UU)_j (\bb^T \UU)_j]\bigr)
\biggr|\\
&\le n^{\epsilon-1/2} + 2n\lambda < n^{\epsilon-1/2}
\end{align*}
with overwhelming probability and with small enough $\epsilon$ in the last part. All in all, we have with overwhelming probability
\[
\max_{0 \leq t \leq T}
\biggl|
\sum_{j=1}^n 
    g_j(t)
   \bigl((\aa^T \UU)_j (\bb^T \UU)_j
    -\E [(\aa^T \UU)_j (\bb^T \UU)_j]\bigr)
\biggr|
\leq n^{\epsilon-1/2}.
\]
\end{proof}

\noindent As $\epsilon > 0$ is arbitrary, Lemma \ref{prop:errorKH} follows immediately.
\begin{proof}[Proof of Lemma~\ref{prop:errorKH}]
We just need to apply Lemma \ref{lem:FU} with 
\[
g_j(t) = \sigma_j^2 e^{-2\gamma \sigma_j^2 t} \qquad \text{and} \qquad \aa = \bb=\mathbb{e}_i.
\]
Note that we are conditioning on $\SSigma$ so that the expectation in the statement of Lemma \ref{lem:FU} is only taken over $\UU$. By the boundedness of $\sigma_j,$ by dividing by a sufficiently large constant depending on $\SSigma,$ the Lemma applies.
\end{proof}

\subsection{ Control of the beta errors}\label{sec:propsixteen}
Next, we prove Proposition~\ref{prop:errorbeta} provided that Proposition \ref{prop:nut} holds.
\begin{proof}[Proof of Proposition \ref{prop:errorbeta}]
For $t\in [0,T]$, by equation \eqref{eq:Bt}, we have
\[
(\mathcal{B}_{s,j})^2 = (-\gamma \sigma_j^2 \nu_{t,j}^\vartheta + \gamma \sigma_j(\UU^T\eeta)_j)^2
\le 2(\gamma^2\sigma_j^4\beta + \gamma^2\sigma_j^2)n^{2\epsilon-1},
\]
with overwhelming probability. Here Proposition \ref{prop:nut} was used to bound $\nu_{t,j}^\vartheta$, and Assumption \ref{assumption: Vector} and Corollary \ref{cor:entry} imply $|(\UU^T\eeta)_j| \le n^{\epsilon-1/2}$ w.o.p.
by observing
\[
\bigl( (\UU^T \eeta)_j :1 \leq j \leq n \bigr) \law \| \eeta\| \bigl( U_{1,j} :1 \leq j \leq n \bigr).
\]
We recall the definition of $\vartheta$ from Lemma~\ref{lem:psibound} as 
\[ \vartheta = \inf \big \{ t \ge 0 \, : \, \|\UU \SSigma \nnu_t- \eeta\| > n^{\theta} \big \}, \]
where $\theta <\epsilon/2$. By applying Corollary \ref{cor:entry} and the definition of $\vartheta$, we get
\[\sum_{i=1}^n
    \bigl(\mathbb{e}_j^T
    \UU^T 
    \mathbb{e}_{i}
    \bigr)^2
    \bigl(
    \mathbb{e}_{i}^T
    (\UU\SSigma \nnu_t^\vartheta - \eeta)\bigr)^2 \le n^{\epsilon-1}\|\UU\SSigma \nnu_t^\vartheta - \eeta\|^2
    \le n^{2\epsilon -1},
\]
with overwhelming probability. Therefore,
\begin{align*}
     |&\varepsilon^{(n)}_{\operatorname{beta}}(t)| \\
     &= \frac{1}{2}\biggl| \sum_{j=1}^d 
    \sigma_j^{2}
    \int_0^t e^{-2(t-s)\gamma\sigma_j^2}
     \biggl(\frac{\beta-1}{n-1}
    \bigl(
    \mathcal{B}_{s,j}
    \bigr)^2
    -\frac{\beta-1}{n-1}
    \sum_{i=1}^n
    \bigl(\mathbb{e}_j^T
    \UU^T 
    \mathbb{e}_{i}
    \bigr)^2
    \bigl(
    \mathbb{e}_{i}^T
    (\UU\SSigma \nnu_s - \eeta)\bigr)^2\biggr) \dif s \biggr|\\
    &\le  \frac{1}{2}\sum_{j=1}^d 
    \sigma_j^{2}
   \left ( \int_0^t e^{-2(t-s)\gamma\sigma_j^2} \dif s \right )
    \left( \frac{\beta}{n} \cdot 2 \big (\gamma^2 \sigma_j^4  + \gamma^2\sigma_j^2 \big ) n^{2\epsilon-1} + \frac{\beta}{n}\cdot n^{2\epsilon-1} \right )\\
    &= \frac{1}{2}\sum_{j=1}^d \frac{1}{2\gamma} \cdot \frac{\beta}{n} \cdot n^{2\epsilon-1} (2\gamma^2\sigma_j^4 + 2\gamma^2\sigma_j^2 + 1)\\
    & \le n^{\epsilon-1/2},
\end{align*}
with small enough $\epsilon$ in the last line, given our assumption on $\beta \le n^{1/5-\epsilon}$.
\end{proof}

\subsection{ Control of the eta errors}\label{sec:propeighteen}
We now give the proof of Proposition \ref{prop:erroreta} provided that Proposition \ref{prop:noisecovariance} holds.
\begin{proof}[Proof of Proposition \ref{prop:erroreta}]

Note that the proof of Proposition \ref{prop:noisecovariance} is based on conditioning on $\SSigma$. Therefore, Proposition \ref{prop:noisecovariance} by substituting $c_j = \int_0^t 
e^{-2(t-s) \gamma\sigma_j^2} \gamma \sigma_j^2 \dif s$ and $c_j = 1$, respectively, implies with overwhelming probability
\[
\biggl|
\sum_{j=1}^d \int_0^t 
e^{-2(t-s) \gamma\sigma_j^2}
\gamma \sigma_j^3\nu_{s,j}^\vartheta(\UU^T\eeta)_j
- \frac{\|\eeta\|^2}{n}\sum_{j=1}^d
\int_0^t 
e^{-2(t-s) \gamma\sigma_j^2}\gamma \sigma_j^2(1-e^{-(s\wedge \vartheta) \gamma \sigma_j^2}) 
\dif s
\biggr|
\le \|\eeta\|^2\sqrt{\beta} n^{\epsilon -1/2}
\]
and
\[
\biggl| \sum_{j=1}^{n \wedge d} \sigma_j \nu_{t,j}^\vartheta (\UU^T\eeta)_j - \frac{\|\eeta\|^2}{n}\sum_{j=1}^{n\wedge d} (1-e^{-(t\wedge \vartheta) \gamma \sigma_j^2}) \biggr| \le \|\eeta\|^2\sqrt{\beta} n^{\epsilon -1/2}.
\]
 Therefore, it suffices to show

\begin{equation}\label{eq:etaerrorapprox}
\frac{\|\eeta\|^2}{n} \sum_{j=1}^d \int_0^t 
e^{-2(t-s)\gamma\sigma_j^2}
\gamma \sigma_j^2(1-e^{-s \gamma \sigma_j^2}) 
\dif s
+\frac{1}{2}\|\eeta\|^2 
- \frac{\|\eeta\|^2}{n}\sum_{j=1}^{n\wedge d} (1-e^{-t \gamma \sigma_j^2})
-
{\widetilde{R}}\,\cdot\frac{rh_0(t) +(1-r)}{2}
\end{equation}
converges to 0 in probability as $n\to \infty$.

%Using that $\gamma \sigma_j^2 \int_0^t e^{-2\gamma \sigma_j^2(t-s)} \dif s = \frac{1}{2} \big ( 1-e^{-2 \gamma \sigma_j^2 t} \big ) ,$ 

Observe
\begin{align*}
\frac{\|\eeta\|^2}{n} \sum_{j=1}^d \int_0^t 
e^{-2(t-s) \gamma\sigma_j^2}&
\gamma \sigma_j^2(1-e^{-s \gamma \sigma_j^2}) 
\dif s \\
&= \frac{\|\eeta\|^2}{n} 
\sum_{j=1}^d \gamma\sigma_j^2 e^{-2t\gamma\sigma_j^2} \int_{0}^t e^{2s\gamma\sigma_j^2}(1-e^{-s\gamma\sigma_j^2}) \dif s\\
&= \frac{\|\eeta\|^2}{n} 
\sum_{j=1}^d e^{-2t\gamma\sigma_j^2} \biggl[ \frac{1}{2}(e^{2t\gamma\sigma_j^2}-1) - (e^{t\gamma\sigma_j^2 }-1) \biggr]\\
&= \frac{\|\eeta\|^2}{n}
\biggl[
\frac{d}{2} + \frac{1}{2}\sum_{j=1}^d e^{-2\gamma\sigma_j^2 t} - \sum_{j=1}^d e^{-t\gamma\sigma_j^2}
\biggr].
\end{align*}
Note that $\frac{1}{2}\|\eeta\|^2 
\Prto[n]
\frac{\widetilde{R}}{2}, \frac{d\|\eeta\|^2}{2n} \Prto[n] \frac{r\tilde{R}}{2}$ by Assumptions \ref{assumption: Vector} and $\frac{\|\eeta\|^2}{2n}\sum_{j=1}^d e^{-2\gamma\sigma_j^2 t} \Prto[n] \frac{\tilde{R}r}{2}h_0(t)$ by Assumptions \ref{assumption: spectral_density}. Moreover, $\sum_{j=1}^{n\wedge d} (1-e^{-t \gamma \sigma_j^2}) = \sum_{j=1}^{d} (1-e^{-t \gamma \sigma_j^2})$ always holds because $\sigma_j = 0$ for $j > n\wedge d$ and this cancels out $\frac{\|\eeta\|^2}{n}\sum_{j=1}^d e^{-t\gamma\sigma_j^2}$ in \eqref{eq:etaerrorapprox}.

\end{proof}

\section{Estimates based on martingale concentration}
\label{sec:mgles}
\subsection{General techniques}
\label{sec:mglesabstract}
We recall that the martingales $M_{t,j}$ and $\widetilde{M}_{t,j}$ are defined in \eqref{eq:doob}.  Both of these martingales need to be controlled, but only after summing them in a specific way.  First, we do not need these martingales directly, but only certain integrals against these martingales. These are defined for all $t \geq 0$ and all $j \in \{1,2,\hdots, d\},$
\begin{equation}\label{eq:othermgles}
\begin{aligned}
\widetilde{X}_{t,j} &=
\int_0^t e^{s \gamma \sigma_j^2} \dif \widetilde{M}_{{s},j} \\
{X}_{t,j} &=
\int_0^t e^{2s \gamma \sigma_j^2} \dif {M}_{{s},j}, \\
\end{aligned}
\end{equation}
which are again c\`adl\`ag, finite variation martingales.  We will need to show concentration for sums of these martingales such as $\sum_{j=1}^d c_j \widetilde{X}_{t,j}$ and $\sum_{j=1}^d c_j {X}_{t,j}$ for bounded coefficients $\left\{ c_j \right\}.$

We formulate some general concentration lemmas for c\`adl\`ag, finite variation martingales $Y_t$ with jumps given exactly by $\{ \tau_k : k \geq 0\}$.  For such a process, the jumps entirely determine its fluctuations. We will define for any c\`adl\`ag process $Y,$
\[
\Delta Y_t \defas Y_t - Y_{t-},
\]
which is $0$ for all $t$ except $\{ \tau_k : k \geq 0\}$.  For concreteness and for reference, we record that the jumps of $\widetilde{X}$ and $X$ are given by
\begin{equation}
  \begin{aligned}
  \Delta \widetilde{X}_{\tau_k,j}
    &=
    e^{\gamma \sigma_{j}^2\tau_k}
    \bigl(
    \mathbb{e}_{j}^T
    \gamma \SSigma^T
    \UU^T \PP_{k-1}
    (\UU\SSigma {\nnu}_{\tau_k-} - \eeta)
    \bigr), \\
    \Delta {X}_{\tau_k,j}
    &=
    e^{2\gamma \sigma_{j}^2\tau_k}
    \bigl(\nu_{\tau_k,j}+\nu_{\tau_k-,j} \bigr)
    \bigl(
    \mathbb{e}_{j}^T
    \gamma \SSigma^T
    \UU^T \PP_{k-1}
    (\UU\SSigma {\nnu}_{\tau_k-} - \eeta)
    \bigr), \\
  \end{aligned}
  \label{eq:QV}
\end{equation}

To control the fluctuations of these martingales, we need to control their quadratic variations. The \emph{quadratic variation} $[Y_t]$ is the sum of squares of all jumps of the process, and hence
\[
[Y_t] \defas \sum_{k=1}^{N_{t}} (\Delta Y_{\tau_k})^2.
\]
Likewise the \emph{predictable quadratic variation} $\langle Y_t \rangle$ is
\[
\langle Y_t \rangle \defas \sum_{k=1}^{N_{t}} \E\bigl[ (\Delta Y_{\tau_k})^2 ~| \mathcal{F}_{\tau_k-}\bigr].
\]
Moreover, for some of the martingales we consider here, it is possible to find good events on which the quadratic variation or the predictable quadratic variations are in control.  Then it is a relatively standard fact that the fluctuations of these processes are in control:
\begin{lemma}\label{lem:mgleconcentration}
Suppose that $(Y_t : t\geq 0)$ is a c\`adl\`ag finite variation martingale.
Suppose there is an event $\mathcal{G}$ which is measurable with respect to $\mathcal{F}_0$ that holds with overwhelming probability, and so that for some $T > 0$
\[
  (i)
  \quad
[Y_T]\mathbb{1}_{\mathcal{G}} \leq \tfrac{\beta}{nT} N_T;
\qquad
\text{or}
\qquad
(ii)
\quad
\langle Y_T \rangle \mathbb{1}_{\mathcal{G}} \leq \tfrac{\beta}{nT} N_T
\quad
\text{and}
\quad
\max_{0 \leq t \leq T} |Y_{t} - Y_{t-}| \mathbb{1}_{\mathcal{G}} \leq 1. 
\]
Then for any $\epsilon > 0$ with overwhelming probability
\[
\sup_{0 \leq t \leq T} |Y_t| \leq n^{\epsilon}.
\]
\end{lemma}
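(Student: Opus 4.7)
The plan is to reduce both cases to standard c\`adl\`ag martingale tail inequalities after first converting the random hypotheses into almost-sure bounds. Since $\mathcal{G}$ is $\mathcal{F}_0$-measurable, the indicator-weighted process $\widetilde{Y}_t \defas Y_t \mathbb{1}_{\mathcal{G}}$ is again an $\mathcal{F}_t$-adapted c\`adl\`ag finite-variation martingale, with $[\widetilde{Y}_t] = [Y_t] \mathbb{1}_{\mathcal{G}}$, $\langle \widetilde{Y}_t \rangle = \langle Y_t\rangle \mathbb{1}_{\mathcal{G}}$, and $\Delta\widetilde{Y}_t = (\Delta Y_t)\mathbb{1}_{\mathcal{G}}$. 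Hence under hypothesis $(i)$ we may assume $[\widetilde{Y}_T] \le \tfrac{\beta}{nT} N_T$ holds almost surely, and under $(ii)$ both $\langle \widetilde{Y}_T\rangle \le \tfrac{\beta}{nT}N_T$ and $\sup_t|\Delta \widetilde{Y}_t| \le 1$ almost surely. Since $\Pr(\mathcal{G}^c)$ is negligible and $\{\sup_t|Y_t| > n^\epsilon\}\cap\mathcal{G} = \{\sup_t|\widetilde{Y}_t|>n^\epsilon\}\cap\mathcal{G}$, it suffices to prove the desired tail bound for $\widetilde{Y}$.

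For case $(i)$, the tool is the Burkholder--Davis--Gundy inequality for c\`adl\`ag martingales: for any integer $p \ge 1$,
\[
\E\bigl[\sup_{0 \le t \le T} |\widetilde{Y}_t|^{2p}\bigr] \le C_p\, \E\bigl[[\widetilde{Y}_T]^p\bigr].
\]
Combining this with the almost-sure bound on $[\widetilde{Y}_T]$ and the Poisson moment estimate $\E[N_T^p] \le c_p(nT/\beta)^p$ (valid since $N_T$ has mean $nT/\beta\to\infty$) yields $\E[\sup_t|\widetilde{Y}_t|^{2p}] \le C_p c_p$, a constant independent of $n$. Markov's inequality then gives $\Pr(\sup_t|\widetilde{Y}_t| \ge n^\epsilon) \le C_p c_p n^{-2p\epsilon}$, and taking $p$ arbitrarily large produces decay faster than any polynomial, i.e.\ overwhelming probability in the sense defined in the notation section.

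For case $(ii)$, the tool is Freedman's inequality for c\`adl\`ag martingales: if $\sup_t|\Delta \widetilde{Y}_t| \le R$ and $\langle \widetilde{Y}_T\rangle \le \sigma^2$ almost surely, then
\[
\Pr\bigl(\sup_{0 \le t \le T}|\widetilde{Y}_t| \ge s\bigr) \le 2\exp\left(-\frac{s^2/2}{\sigma^2 + Rs/3}\right).
\]
To make $\sigma^2$ non-random, I further intersect with the event $\{N_T \le 2nT/\beta\}$, which holds with overwhelming probability by the standard Poisson concentration bound already invoked in the proof of Theorem~\ref{thm: concentration}. On this enlarged good event the inequality applies with $\sigma^2 \le 2$ and $R \le 1$; choosing $s = n^\epsilon$ then yields a bound of $2\exp(-c n^{2\epsilon})$, which is again overwhelming.

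The main care—rather than genuine difficulty—lies in checking that the $\mathcal{F}_0$-indicator construction preserves the martingale, quadratic-variation, and predictable-quadratic-variation structures, and in citing the correct c\`adl\`ag, purely-jump versions of BDG and Freedman (cf.\ \cite{protter2005stochastic}). Both are standard once the reduction to an a.s.\ bounded quadratic variation is in place, so no new probabilistic ingredient is required beyond choosing $p$ large (in case $(i)$) or invoking Freedman directly (in case $(ii)$).
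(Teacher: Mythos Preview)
Your proposal is correct and matches the paper's own proof essentially line for line: BDG plus Poisson moment bounds and Markov for case $(i)$, and a Freedman-type exponential inequality combined with Poisson concentration on $N_T$ for case $(ii)$. One small slip: with $\sigma^2 \le 2$ and $R \le 1$, the Freedman exponent at $s = n^\epsilon$ is of order $n^{\epsilon}$ (the $Rs/3$ term dominates the denominator), not $n^{2\epsilon}$, but this still gives overwhelming probability and does not affect the argument.
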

\begin{proof}
  We begin with the proof of \textit{(i)}.
 Using the Burkholder--Davis--Gundy inequalities (see \cite[Theorem IV.49]{protter2005stochastic}), for any $p > 1$ there is a constant $C_p$ so that
\[
\E \biggl( \sup_{0 \leq t \leq T} |Y_t| \mathbb{1}_{\mathcal{G}} \biggr)^p
\leq C_p \E \bigl[ [Y_T]^p \mathbb{1}_{\mathcal{G}}\bigr] \le C_p\bigl( \tfrac{\beta}{nT} \bigr)^p\E [N_T^p ].
\]
There is an absolute constant $C > 0$ so that
\[
\E( N_T^p ) \leq C p! (\E N_T)^p,
\]
and so we conclude that
\[
\E \biggl( \sup_{0 \leq t \leq T} |Y_t| \mathbb{1}_{\mathcal{G}} \biggr)^p
\leq CC_p.
\]
Using Markov's inequality, we conclude that
\[
\Pr( \{ \sup_{0 \leq t \leq T} |Y_t| \geq n^{\epsilon}\})
\leq \Pr(\mathcal{G}^c) + CC_p n^{-\epsilon p}.
\]
Hence letting $p$ tend slowly to infinity with $n,$ this concludes the proof of \textit{(i)}.

We turn to the proof of \textit{(ii)}.  We need a tail bound for martingales (see \cite[Appendix B.6 Inequality 1]{ShorackWellner}), which states that
\[
  \Pr( \{\sup_{0 \leq t \leq T} |Y_t| > s\} \cap \{\langle Y_T \rangle \leq r\} \cap \mathcal{G})
  \leq 
  2\exp\left( -\frac{s^2}{\tfrac{2s}{3} + 2r} \right).
\]
Taking $s=r=n^{\epsilon},$ this vanishes faster than any power of $n.$  The probability that $N_T > n^{\epsilon}(\E [N_T])$ additionally decays faster than any power of $n,$ so that we conclude that on $\mathcal{G},$ $\sup_{0 \leq t \leq T} |Y_t| \leq n^{\epsilon}$ with overwhelming probability. 
\end{proof}

We will need an extension of this standard type of concentration, which allows for exceptional jumps.  Suppose we can decompose the jumps $\left\{ \tau_k\right\}$ of $(Y_t: t\geq 0)$ into two types $\left\{\tau_{k,1}, \tau_{k,2} \right\}$.  In our application, we shall pick the jumps of the second type to be those for which a fixed coordinate $1 \in B_k$ and the first type to be all that remains.  Thus by properties of the Poisson process, the two processes $\left\{\tau_{k,1}, \tau_{k,2} \right\}$ are independent Poisson processes.
\begin{lemma}\label{lem:raremartingales}
Suppose that $(Y_t : t \geq 0)$ is a c\`adl\`ag finite variation martingale with jumps given by $\{ \tau_k \}$.  Suppose these jumps are divided into two groups $\left\{\tau_{k,1}, \tau_{k,2} \right\}$ by a rule depending only on $(k,\PP_k)$.  Let $N_{t,1}$ and $N_{t,2}$ be the counting functions of the number of jumps from either type.  Suppose that the jumps of $Y_t$ of type $1$ (the typical ones) satisfy
\[
\mathbb{E} [ \bigl(\Delta Y_{\tau_{k,1}}\bigr)^2~\vert~\mathcal{F}_{\tau_{k,1}-}]
\leq \frac{\beta}{n}
\quad
\text{and}
\quad
|\Delta Y_{\tau_{k,1}}| \leq 1.
\]
For the jumps of the second type, suppose that for some $T> 0$ there is a constant $C>1$ so that 
\(
\mathbb{E} [N_{T,2} ] \leq C
\)
and a constant $\delta \in (0,1)$ so that
\[
(i). \quad
|\Delta Y_{\tau_{k,2}}| \leq \delta|Y_{\tau_{k,2}-}|+1
\qquad\text{or}\qquad
(ii). \quad 
%\operatorname{sgn}( (\Delta Y_{\tau_{k,2}})Y_{\tau_{k,2}-})
%\leq 0
%\quad\text{or}\quad
|Y_{\tau_{k,2}}| \leq \delta|Y_{\tau_{k,2}-}|+1.
\]
Then for any $\epsilon > 0$ with overwhelming probability
\[
\sup_{0 \leq t \leq T} |Y_t| \leq n^{\epsilon}.
\]
\end{lemma}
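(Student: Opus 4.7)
The plan is to slice $[0,T]$ at the type-$2$ jump times and combine martingale concentration inside each segment, where only the small, well-behaved type-$1$ jumps occur, with the prescribed growth bound applied at each rare type-$2$ jump. Write $J \defas N_{T,2}$ and $0 = \tau_{0,2} < \tau_{1,2} < \cdots < \tau_{J,2} \leq T$, with the convention $\tau_{J+1,2} = T$. In the setup of the paper, the underlying jump times $\{\tau_k\}$ form a Poisson process and the type-$2$ rule depends only on the i.i.d.\ projections $\PP_k$, so a thinning argument shows that $J$ is a Poisson random variable with mean at most $C$, and the standard Poisson tail bound $\Pr(J \geq m) \leq (Ce/m)^m$ gives $J \leq \log n$ with overwhelming probability.

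Inside each segment $[\tau_{k,2}, \tau_{k+1,2})$, the process $t \mapsto Y_t - Y_{\tau_{k,2}}$ is a finite variation martingale whose only jumps are of type~$1$. Since these satisfy $|\Delta Y_{\tau_{k,1}}| \leq 1$ and $\mathbb{E}[(\Delta Y_{\tau_{k,1}})^2 \mid \mathcal{F}_{\tau_{k,1}-}] \leq \beta/n$, and since $N_T \leq 2nT/\beta$ with overwhelming probability, the predictable quadratic variation over each segment is bounded by $2T$. Freedman's inequality (as used in the proof of Lemma~\ref{lem:mgleconcentration}(ii)) yields
\[
e_k \defas \sup_{t \in [\tau_{k,2},\tau_{k+1,2})} |Y_t - Y_{\tau_{k,2}}| \leq n^{\epsilon/4}
\]
with overwhelming probability on each individual segment, and a union bound over the at most $\log n + 1$ segments (on the event $J \leq \log n$) preserves the overwhelming probability.

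Set $Z_k \defas |Y_{\tau_{k,2}-}|$ and $W_k \defas |Y_{\tau_{k,2}}|$, so that $Z_{k+1} \leq W_k + e_k$. Under hypothesis $(ii)$, the relation $W_k \leq \delta Z_k + 1$ is a strict contraction, and iterating yields $\max_k Z_k \leq (1+n^{\epsilon/4})/(1-\delta)$, so that $\sup_{0 \leq t \leq T}|Y_t| \leq n^{\epsilon/2}$ with overwhelming probability. Under hypothesis $(i)$, one only obtains the multiplicative recursion $Z_{k+1} \leq (1+\delta)Z_k + 1 + n^{\epsilon/4}$, which iterates to $Z_J \leq (1+\delta)^J(1+n^{\epsilon/4})/\delta$; it then remains to control $(1+\delta)^J$.

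The main obstacle is precisely this last step: since $J \leq \log n$ only with overwhelming probability and $\delta$ is a fixed constant in $(0,1)$, the naive bound $(1+\delta)^J \leq n^{\log(1+\delta)}$ is a fixed positive power of $n$ and not $n^\epsilon$. The resolution is to pass to high moments: since $J$ is Poisson of mean at most $C$, $\mathbb{E}[(1+\delta)^{pJ}] \leq \exp(C((1+\delta)^p-1))$ for any $p \geq 1$, obtained from the Poisson moment generating function $\mathbb{E}[e^{tN}] = e^{\lambda(e^t-1)}$ by taking $e^t = (1+\delta)^p$. Choosing $p$ growing like $\log\log n / \log(1+\delta)$ and applying Markov's inequality gives $(1+\delta)^J \leq n^{\epsilon/4}$ with failure probability of order $\exp(-c \log n \log\log n)$, which is faster than any polynomial, and combined with the segment bounds delivers $\sup_{0 \leq t \leq T}|Y_t| \leq n^\epsilon$ with overwhelming probability.
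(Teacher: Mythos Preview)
Your slicing-and-recursion strategy is valid and is genuinely different from the paper's proof, though both end up iterating over the rare type-2 jumps. The paper does not apply Freedman on segments; instead it builds a single exponential supermartingale that handles both jump types at once: the Yor exponential $\exp\bigl(\lambda Y_{t,1}-\int(e^{\lambda x}-1-\lambda x)\,n_{t,1}(dx)\bigr)$ for the type-1 part, multiplied by the Dol\'eans exponential $\exp(\lambda Y_{t,2})\prod_k f(\lambda\Delta Y_{\tau_{k,2}})$ for the type-2 part. Assumptions (i)/(ii) are then used to lower-bound the Dol\'eans factor $f(\lambda\Delta Y)$ at the single rare jump occurring before the first exit, and optional stopping yields the bound up to $\tau_{1,2}$; the paper then restarts conditionally on $\mathcal{F}_{\tau_{\ell,2}}$ and iterates, with the bound growing by the factor $2/(1-\delta)$ at each step. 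Your version replaces this exponential-martingale machinery by an appeal to Lemma~\ref{lem:mgleconcentration}(ii) between rare jumps, which is conceptually simpler.

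Two comments. First, your moment argument for $(1+\delta)^J$ is correct but unnecessary. The same Poisson tail bound you already invoke gives not merely $J\le\log n$ but $J\le c\epsilon\log n$ with overwhelming probability for \emph{any} fixed $c>0$; choosing $c$ with $c\log(1+\delta)\le 1/4$ yields $(1+\delta)^J\le n^{\epsilon/4}$ directly. This is exactly how the paper closes its proof.

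Second, there is a small technical wrinkle in applying Freedman to $Y_t-Y_{\tau_{k,2}}$ on the open segment: although only type-1 \emph{jumps} occur there, the true predictable quadratic variation $\langle Y\rangle$ still accrues a contribution from the type-2 intensity, of size roughly $\lambda_2(\delta\sup|Y|+1)^2$ per unit time, which makes the bound circular. The cleanest repair is to apply Lemma~\ref{lem:mgleconcentration}(ii) once to the type-1 martingale $Y_{t,1}$ over all of $[0,T]$, and then run your recursion directly on the sequence $Y_{\tau_{k,2}}$, absorbing the small type-2 compensator drift on each segment into the additive error via an exit-time stopping argument. The paper's Dol\'eans construction avoids this issue entirely by packaging the type-2 compensator into the supermartingale from the start.
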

\begin{proof}
Let $n_{t,1}$ and $n_{t,2}$ be the L\'evy measures for the jumps of $Y$ of types $1$ and $2$; i.e.\ the measures so that for any bounded continuous function $f$ and $\ell \in \{1,2\},$
\[
\sum_{k = 1}^{N_{t,\ell}}
f\bigl( \Delta Y_{\tau_{k,\ell}}\bigr)
-\int f(x) n_{t,\ell}(\dif x)
\]
is a martingale.  We decompose the martingale $(Y_t :t \geq 0)$ into pieces.
Define 
\[
Y_{t,\ell}=\sum_{k = 1}^{N_{t,\ell}}\Delta Y_{\tau_{k,\ell}}-\int x n_{t,\ell}(\dif x).
\]
Then $Y_t = Y_{t,1}+Y_{t,2}$ for all $t \geq 0$.

We use two different versions of the exponential martingale.  The first, which we believe originates with \cite{Yor} (c.f.\ \cite[Lemme 2]{Lepingle}) is
\[
\widehat{Z}_{t,1} \defas
\exp\biggl( \lambda Y_{t,1} - \int \bigl(e^{\lambda x} -1 - \lambda x\bigr)n_{t,1}(\dif x) \biggr),
\]
which is a martingale.
The second is the Dol\'eans exponential, which is the more commonly cited (\cite[II. Theorem 37]{protter2005stochastic}, \cite{Yor}), and which shows
\[
\widehat{Z}_{t,2} \defas
\exp\bigl( \lambda Y_{t,2} \bigr)\prod_{k=1}^{N_{t,2}}f\bigl( \lambda(\Delta Y_{\tau_{k,2}})\bigr) 
\quad
\text{where}
\quad f(x) = (1+x)e^{-x}.
\]
As both processes are finite variation and have no common jumps, their product remains a martingale.  Thus
\[
\widehat{Z}_t \defas 
\exp\biggl( \lambda Y_{t,1} + \lambda Y_{t,2} - \int \bigl(e^{\lambda x} -1 - \lambda x\bigr)n_{t,1}(\dif x) \biggr)
\prod_{k=1}^{N_{t,2}}f\bigl( \lambda\Delta Y_{\tau_{k,2}}\bigr)
\]
is a martingale.  Note that the two martingales combine to form $Y_t.$  By the assumption, the jumps of $Y_{t,1}$ are less than or equal to $1$.  Hence the measure $n_{t,1}(\dif x)$ is supported on $[-1,1]$.  For $|u| \leq 1,$
\[
e^u - 1 - u \leq \tfrac{u^2}{e-2}.
\]
So we define a supermartingale $(Z_t : t \geq 0)$ for any $\lambda \leq 1$ by
\[
\widehat{Z}_t \geq {Z}_t \defas 
\exp\biggl( \lambda Y_{t} - \int \frac{\lambda^2 x^2}{e-2}n_{t,1}(\dif x) \biggr)
\prod_{k=1}^{N_{t,2}}f\bigl( \lambda \Delta Y_{\tau_{k,2}}\bigr).
\]
The integral $\int x^2 n_{t,1}(\dif x)$ is the predictable quadratic variation
\[
\langle Y_{t,1}\rangle
= \sum_{k=1}^{N_{t,1}} \mathbb{E} \bigl[ ( \Delta Y_{\tau_{k,1}})^2~|~\mathcal{F}_{\tau_{k,1}-} \bigr]
\leq \tfrac{\beta}{n}N_{t,1}.
\]

Now we fix a parameter $r > \frac{1}{1-\delta}$ and let 
\[
\vartheta = \inf\{ t \geq 0 ~:~ |Y_t| \geq r \}.
\]
By optional stopping for any bounded stopping time $\rho \geq 0,$
\begin{equation}\label{eq:l14_0}
\E\bigl[ Z_{\vartheta \wedge \rho} \mathbb{1}_{\vartheta \leq \rho}\bigr] \leq  \E [Z_{\vartheta \wedge \rho}] \leq \E [Z_0] = 1.
\end{equation}
%By the assumptions on the jumps $Y_{\vartheta\wedge T} \leq C\bigl(Y_{\vartheta \wedge T-}\bigr)_+ +1 \leq Cr + 1,$ and so for $\lambda \in (0,1),$
So, for $\lambda \in (0,1),$
\[
\begin{aligned}
Z_{\vartheta \wedge \rho} \mathbb{1}_{\vartheta \leq \rho} \geq 
&\exp\bigl( \lambda r - \tfrac{\lambda^2}{e-2} \tfrac{\beta}{n}N_{\rho,1}\bigr)\prod_{k=1}^{N_{\rho,2}}f\bigl( \lambda \Delta Y_{\tau_{k,2}})\bigr)\mathbb{1}_{\{ Y_{\vartheta \wedge \rho} \geq r\}}\\
-&\exp\bigl( -\lambda r - \tfrac{\lambda^2}{e-2}\tfrac{\beta}{n}N_{\rho,1}\bigr)\prod_{k=1}^{N_{\rho,2}}|f\bigl( \lambda \Delta Y_{\tau_{k,2}}\bigr)|
\mathbb{1}_{\{Y_{\vartheta \wedge \rho} \leq -r\}}.\\
\end{aligned}
\]
We produce a similar bound on taking $-\lambda \in (0,1)$ although with the roles reversed.

The product may in principle be negative or $0$.  So we consider taking $\rho = T \wedge \tau_{1,2}$, for some fixed $T > 0$. Then if $\vartheta < \tau_{1,2},$ we have an empty product.  Otherwise, we have $\vartheta = \tau_{1,2},$ in which case the product contains a single term.  

If assumption (ii) is in force, then either the jump decreases the absolute value of $Y_{\tau_{1,2}}$ as it is opposite sign from $Y_{\tau_{1,2}-}$ and does not cross $0$ 
or the second condition is in force. 
In that case, since $|Y_{\tau_{1,2}-}| \leq r,$ and since
\[
r \leq |Y_{\tau_{1,2}}| \leq \delta r + 1,
\]
we would have $r \leq \frac{1}{1-\delta}.$  However, we have chosen $r$ large enough that this is not the case.  So, we conclude that when assumption (ii) is in force, we could not have had $\vartheta = \tau_{1,2}.$ We conclude in the case of assumption (ii) that
\begin{equation}\label{eq:l14_2}
Z_{\vartheta \wedge \rho} \mathbb{1}_{\{\vartheta \leq \rho\}} \geq 
\exp\bigl( \lambda r - \tfrac{\lambda^2}{e-2}\tfrac{\beta}{n}N_{\rho,1}\bigr)\mathbb{1}_{\{Y_{\vartheta \wedge \rho} \geq r\}}
-\exp\bigl( -\lambda r - \tfrac{\lambda^2}{e-2}\tfrac{\beta}{n}N_{\rho,1}\bigr)
\mathbb{1}_{\{Y_{\vartheta \wedge \rho} \leq -r\}}.\\
\end{equation}

If assumption (i) is in force,
then if $Y_\vartheta \geq r,$ the jump of $Y$ at $\tau_{1,2}$ is necessarily positive, as this is the first time the martingale jumped above some level.  As assumption (i) is in force, then $Y_{\tau_{1,2}-} > 0$ as well, and so the jump of type $2$ must satisfy
\[
\Delta Y_{\tau_{1,2}}
\leq 
\delta Y_{\tau_{1,2}-}+1
\leq \delta r + 1.
\] 
We conclude that when $Y_\vartheta \geq r$ and assumption (i) holds,
\[
f\bigl( \lambda\Delta Y_{\tau_{1,2}}\bigr)
\geq e^{-\lambda\Delta Y_{\tau_{1,2}}}
\geq e^{-\lambda(\delta r+1)}.
\]
If on the other hand $Y_\vartheta \leq -r,$ then the jump must have been negative, and we conclude similarly that
\[
|f\bigl( \lambda\Delta Y_{\tau_{1,2}}\bigr)|
\leq (1-\lambda\Delta Y_{\tau_{1,2}})e^{-\lambda\Delta Y_{\tau_{1,2}}}
\leq (1+\lambda (\delta r + 1))e^{\lambda(\delta r+1)}.
\]
Hence
\begin{equation}\label{eq:l14_1}
\begin{aligned}
Z_{\vartheta \wedge \rho} \mathbb{1}_{\{\vartheta \leq \rho\}} 
&\geq 
\exp\bigl( -1+\lambda (1-\delta)r - \tfrac{\lambda^2}{e-2}\tfrac{\beta}{n}N_{\rho,1}\bigr)\mathbb{1}_{\{Y_{\vartheta \wedge \rho} \geq r\}}\\
&-
(1+\lambda (\delta r + 1))
\exp\bigl( 1-\lambda (1-\delta)r - \tfrac{\lambda^2}{e-2}\tfrac{\beta}{n}N_{\rho,1}\bigr)
\mathbb{1}_{\{Y_{\vartheta \wedge \rho} \leq -r\}}.\\
\end{aligned}
\end{equation}

In either case of \eqref{eq:l14_2} or \eqref{eq:l14_1},  using \eqref{eq:l14_0} and the boundedness of 
\[
r \mapsto \lambda(\delta r + 1)e^{-\lambda(1-\delta)r},
\]
there is a constant $C_\delta>0$ so that
\[
\mathbb{E}
\biggl(
\exp\bigl( \lambda (1-\delta) r - \tfrac{\lambda^2}{e-2}\tfrac{\beta}{n}N_{\rho,1}\bigr)\mathbb{1}_{\{Y_{\vartheta \wedge \rho} \geq r\}}
\biggr)
\leq C_\delta.
\]
With overwhelming probability $\tfrac{\beta}{n}N_{\rho,1} \leq \tfrac{\beta}{n}N_{T}\leq 2T$, and hence on the event $\mathcal{E}$ that $\tfrac{\beta}{n}N_{\rho,1} \leq 2T,$
\begin{equation}\label{eq:l14_3}
\mathbb{E}
\biggl(
\exp\bigl( \lambda (1-\delta) r - \tfrac{\lambda^2}{e-2}2T\bigr)\mathbb{1}_{\{Y_{\vartheta \wedge \rho}\mathbb{1}_{\mathcal{E}} \geq r\}}
\biggr)
\leq C_\delta.
\end{equation}
Thus taking $\lambda = 1$ and $r=(1-\delta)^{-1}(\log n)^2,$ we conclude that
\[
e^{(\log n)^2}
\Pr( Y_{\vartheta \wedge \rho} \geq n^{\epsilon} \cap \mathcal{E})= O(1),
\]
and hence $Y_{\vartheta \wedge \rho} \leq n^{\epsilon}$ with overwhelming probability.

By applying the same argument to $-Y_t$ which is again a martingale satisfying the same assumptions, we can conclude with overwhelming probability that \[
\sup\bigl\{
|Y_t| 
: 0 \leq t \leq (T \wedge \tau_{1,2}) 
\bigr\}
\leq 2(1-\delta)^{-1}(\log n)^2.
\]
Now we suppose that with overwhelming probability, we have shown for some $\ell \in \N$
\[
\sup\bigl\{
|Y_t| 
: 0 \leq t \leq (T \wedge \tau_{\ell,2}) 
\bigr\}
\leq 2^{\ell}(1-\delta)^{-\ell}(\log n)^2.
\]
We now apply the same bounds to
\[
Z_t/Z_{t \wedge \tau_{\ell,2}}
\defas \exp\biggl( \lambda (Y_{t}-Y_{t \wedge \tau_{\ell,2}}) - \int \frac{\lambda^2 x^2}{e-2}n_{t,1}(\dif x)
\biggr)
\prod_{k=\ell+1}^{N_{t,2}}f\bigl( \lambda \Delta Y_{\tau_{k,2}}\bigr).
\]
In particular taking the conditional expectation, with the same $\vartheta$ and with $\rho = T \wedge \tau_{\ell+1,2}$
\[
\mathbb{E}
[
Z_{\vartheta \wedge \rho}/Z_{\vartheta \wedge 
\rho \wedge \tau_{\ell,2}}
\mathbb{1}_{\{\vartheta \leq \rho\}}
~\vert~ \mathcal{F}_{\tau_{k,2}}
]
\leq 1.
\]
Rearranging, we conclude
\[
\mathbb{E}
\biggl(
\exp\biggl( \lambda Y_{\vartheta \wedge \rho} - \int \frac{\lambda^2 x^2}{e-2}n_{t,1}(\dif x)%-n_{t \wedge \tau_{\ell_2},1}(dx))
\biggr)
\prod_{k=\ell+1}^{N_{\vartheta \wedge \rho,2}}f\bigl( \lambda \Delta Y_{\tau_{k,2}}\bigr)
%\mathbb{1}_{ \vartheta > T \wedge \tau_{\ell,2}}
~\vert~ \mathcal{F}_{\tau_{\ell,2}}
\biggr)
\leq
\exp\biggl(
\lambda Y_{\vartheta \wedge \rho \wedge \tau_{\ell,2}}
\biggr).
\]
Hence following the same line of argument that leads to \eqref{eq:l14_3},
\[
\E
\biggl(
\exp\bigl(
 \lambda (1-\delta) r - \tfrac{\lambda^2}{e-2}2T\bigr)
\bigr)
\mathbb{1}_{\{Y_{\vartheta \wedge \rho} \geq r\}}
\mathbb{1}_{\mathcal{E}}
~\vert~ \mathcal{F}_{\tau_{\ell,2}}
\biggr)
\leq 
C_\delta \exp\biggl(
\lambda Y_{\vartheta \wedge \rho \wedge \tau_{\ell,2}}
\biggr).
\]
Taking $\lambda = 1$ and $r = 2^{\ell+1} (1-\delta)^{-\ell-1}$ and restricting to the event in the inductive hypothesis, 
\[
\E
\biggl(
\exp\bigl(
 2^{\ell+1}(1-\delta)^{-\ell}
 (\log n)^2
\bigr)
\mathbb{1}_{\{Y_{\vartheta \wedge \rho} \geq r\}}
\mathbb{1}_{\mathcal{E}}
~\vert~ \mathcal{F}_{\tau_{\ell,2}}
\biggr)
\leq 
C_\delta \exp\biggl(
 2^{\ell}(1-\delta)^{-\ell} (\log n)^2
\biggr).
\]
In particular, with overwhelming probability,
\[
\sup\bigl\{
|Y_t| 
: 0 \leq t \leq (T \wedge \tau_{\ell+1,2}) 
\bigr\}\leq 2^{\ell}(1-\delta)^{-\ell}(\log n)^2.
\]

The number of type-2 jumps before $T$ is $N_{T,2},$ which is Poisson with mean $C$.  Hence with overwhelming probability, for any $\epsilon > 0,$ $N_{T,2} \leq \bigl(\log \tfrac{2}{1-\delta}\bigr)^{-1} \frac{\epsilon}{2} \log n.$
Hence, we conclude that with overwhelming probability,
\[
\sup\bigl\{
|Y_t| 
: 0 \leq t \leq T  
\bigr\}\leq 
\bigl( \tfrac{2}{1-\delta} \bigr)^{N_{T,2}}
(\log n)^2
\leq n^{\epsilon} (\log n)^2.
\]
As $\epsilon >0$ may be picked as small as desired, the proof follows.
\end{proof}

\subsection{Applications to the control of errors in the Volterra equation}
\label{sec:mglesapplication}

\subsubsection{Delocalization of the function values: the proof of Proposition \ref{prop:delocalization}}

%We recall the statement that we wish to prove:
%\begin{proposition*}
%For choices of $\beta(n)$ satisfying $\beta(n) \le n^{1/2-\delta}$ for some $\delta >0$, 
%for any $\epsilon >0$ and for any $T>0$
%\[
%\sup_{0 \leq t \leq T}
%\sup_{1 \leq i \leq n}
%\bigl(\mathbb{e}_{i}^T(\UU\SSigma \nnu^\vartheta_t - \eeta)\bigr)^2
%\leq n^{\epsilon-1}
%\]
%with overwhelming probability.
%\end{proposition*}
\begin{proof}[Proof of Proposition \ref{prop:delocalization}]
% In view of Lemma \ref{lem:psibound}, it is enough to show the claim for the stopped processes $\nnu_t^\vartheta \defas \nnu_{t \wedge \vartheta}$. 
It suffices to prove that for a fixed $i$ and for any $T >0$ and any $\epsilon > 0,$
\[
\sup_{0 \leq t \leq T}
\bigl(\mathbb{e}_{i}^T(\UU\SSigma \nnu^\vartheta_t - \eeta)\bigr)^2
\leq \beta n^{\epsilon-1}
\]
with overwhelming probability.

Using Lemma \ref{lem:integrated}, we have the representation
\begin{equation*}
\nu_{t,j}
    =
    e^{-\gamma \sigma_j^2t}\nu_{0,j} 
    +\int_0^t e^{-\gamma \sigma_j^{2}(t-s)}\gamma \sigma_j(\UU^T\eeta)_j \dif s +  e^{-\gamma \sigma_j^2 t} \widetilde{X}_{t,j}.
\end{equation*}
Note that $\nu_{t,j}^{\vartheta}$ has the same representation by replacing $t \to t \wedge \vartheta$. Observe that each of the first two terms has the contribution of $\mathcal{O}(n^{\epsilon-1/2})$ to $|\mathbb{e}_{i}^T(\UU\SSigma \nnu^\vartheta_t - \eeta)|$ with overwhelming probability. Indeed, we have
\begin{align}\label{eq:nonMGerror}
\begin{split}
    \mathbb{e}_{i}^T(\UU\SSigma \nnu^\vartheta_t - \eeta) - \sum_{j=1}^d U_{ij}\sigma_j e^{-\gamma t\sigma_j^2} \widetilde{X}_{t,j} 
    &= - \eta_i + \sum_{j=1}^d U_{ij}\sigma_je^{-\gamma \sigma_j^2t}\nu_{0,j}\\ 
    &\quad+\sum_{j=1}^d U_{ij}\sigma_j\int_0^t e^{-\gamma \sigma_j^{2}(t-s)}\gamma \sigma_j(\UU^T\eeta)_j \dif s\\
    &= \mathcal{O}(n^{\epsilon-1/2}).
    \end{split}
\end{align}
Here the order of first term comes from Assumption \ref{assumption: Vector}. Corollary \ref{cor:lipschitz} gives the order of the second term, by defining $F(U) := \sum_{j=1}^d U_{ij}\sigma_j e^{-\gamma\sigma_j^2 t} \nu_{0,j}$ with conditioning on $\nnu_0$. The order of the last term is obtained from Lemma \ref{lem:FU} with setting $\aa=\mathbb{e}_i, \bb = \eeta$ and conditioning on $\SSigma$. Indeed, note that $\mathbb{E}[U_{ij}(\UU^T\eeta)_j] = \frac{1}{n}\eta_i$ so that w.o.p.,
\[
\E \bigg[\sum_{j=1}^d \gamma U_{ij}\sigma_j^2 (\UU^T\eeta)_j \int_0^{t} e^{-(t-s)\gamma \sigma_j^2}  \dif s \bigg] = \bigg( \frac{1}{n} \sum_{j=1}^d \gamma\sigma_j^2 \int_0^{t} e^{-(t-s)\gamma \sigma_j^2} \bigg) \eta_i = \mathcal{O}(n^{\epsilon-1/2}).
\]

%from observing that
%\textcolor{red}{
%\[
%\bigl( (\UU^T \eeta)_j :1 \leq j \leq n \bigr) \law \bigl( \| \eeta\|  x_j :1 \leq j \leq n, (x_j)_{j=1}^n \sim \text{Unif}(S^{n-1}) \bigr)
%\]}
%and applying Corollary \ref{cor:lipschitz} to the function
%\[
%F(U): = \sum_{j=1}^d U_{ij}\sigma_j \int_0^t e^{-\gamma \sigma_j^{2}(t-s)}\gamma \sigma_j
%\|\eeta\|x_j ds .
%\]
Therefore, it would suffice to bound
\begin{equation}\label{eq:Yt}
    Y_t \defas \sum_{j=1}^d U_{ij}\sigma_j e^{-\gamma q\sigma_j^2} \widetilde{X}_{t,j} \qquad  0\le t\le q,
\end{equation}
for some fixed $q \ge 0$. Note that as we did in Lemma \ref{lem:modelconvergence}, showing the bound for a fixed $q\in[0,T]$ should be sufficient, considering mesh points on $[0,T]$ with spacing, let us say, $\lambda = \lambda(n) >0$, which depends on $n$. Since the process $\nnu_t$ is constant between jumps, the only cases which cannot be covered by mesh points are having multiple jumps between two adjacent mesh points. However, as the possibility of such events is given by $\mathcal{O}(\beta^{-2}n^2\lambda)$, which can be smaller than any power of polynomial of $n$, we conclude that every jump can be covered by mesh points with overwhelming probability.  Each jump for the process $Y_{(\cdot)}$ is given by
\begin{equation}\label{eq:DeltaYdef}
    \Delta Y_{\tau_{k+1}} \defas Y_{\tau_{k+1}} - Y_{\tau_{{(k+1)}-}} = -\sum_{j=1}^d U_{ij}\sigma_j e^{-(q-\tau_{k+1})\gamma \sigma_j^2} \mathbb{e}_j^T \gamma \SSigma^T \UU^T \PP_k(\UU\SSigma \nnu_{\tau_{{(k+1)}-}}^\vartheta- \eeta).
\end{equation}
Note that there are two different types of jumps, i.e. \begin{enumerate}
    \item $B_k$ does not include the index $i$.
    \item $B_k$ includes the index $i$.
\end{enumerate}
Replacing $\PP_k$ by $\sum_{l\in B_k} \mathbb{e}_l\mathbb{e}_l^T$, the jump $\Delta Y_{\tau_{k+1}}$ can be translated as
\begin{equation}
    \Delta Y_{\tau_{k+1}} = -\sum_{l \in B_k} \bigl[\sum_{j=1}^d \gamma U_{ij} U_{lj}\sigma_j^2 e^{-(q-\tau_{k+1}) \gamma \sigma_j^2}\bigr] \mathbb{e}_l^T (\UU\SSigma \nnu_{\tau_{(k+1)-}}^\vartheta- \eeta).
\end{equation}
Let 
\begin{equation}
    \Phi_{i,l}(t) \defas \sum_{j=1}^d \gamma U_{ij} U_{lj}\sigma_j^2 e^{-(q-t) \gamma \sigma_j^2},
\end{equation}
and let $\mathcal{G} = \mathcal{G}(\theta)$ for $\theta >0$ be the event defined as
\begin{equation}
   \mathcal{G} \defas \biggl\{ \sup_{1\le l\le n, l\neq i}\max_{0 \le t \le T} |\Phi_{i,l}(t) | \le
    n^{\theta - 1/2},\  \max_{0 \le t \le T} |\Phi_{i,i}(t)| < 2 \biggr\}.
\end{equation}
Note that this holds with overwhelming probability, by Lemma \ref{lem:FU} and condition on the stepsize $\gamma$, see Theorem \ref{thm:main_critical_stepsize}.  Furthermore, in order to apply the \textit{bootstrap} argument, let us define, for $\aleph \in [-\epsilon/2,1/2)$, 
\begin{equation}
    \hbar \defas \inf \{ t \le \vartheta: \max_{1\le l \le n} |\mathbb{e}_l^T (\UU \SSigma \nnu_t - \eeta)| > n^{-\aleph} \}.
\end{equation}

Now we are ready to apply Lemma  \ref{lem:raremartingales} to prove the claim.\\[5pt]

\textbf{Case 1.} When $B_k$ does not include the index $i$: we need to control $\mathbb{E}[(\Delta Y_{\tau_{k+1,1}}^\hbar)^2| \mathcal{F}_{\tau_{(k+1)}-}]$ and $|\Delta Y_{\tau_{k+1,1}}^\hbar|$. Observe,
\begin{equation}\label{eq:DeltaYhbar}
    |\Delta Y_{\tau_{k+1,1}}^\hbar| =
    |\sum_{l \in B_k} \Phi_{i,l} \mathbb{e}_l^T (\UU\SSigma \nnu_{\tau_{k+1,1}-}^\hbar- \eeta)| 
    \le \beta n^{\theta - 1/2 - \aleph}.
\end{equation}
On the other hand,
\begin{align}\label{eq:DeltaYhbarsquared}
\begin{split}
    \mathbb{E}[(\Delta Y_{\tau_{k+1,1}}^\hbar)^2| \mathcal{F}_{\tau_{(k+1)-}}]&= \frac{\beta(\beta - 1)}{(n-1)(n-2)} \bigl[ \sum_{l=1}^n \Phi_{i,l} \mathbb{e}_l^T (\UU\SSigma \nnu_{\tau_{k+1,1}-}^\hbar- \eeta) \bigr]^2\\ 
    &\quad + \biggl(\frac{\beta}{n-1} - \frac{\beta(\beta-1)}{(n-1)(n-2)} \biggr) \sum_{l=1}^n \Phi_{i,l}^2 (\mathbb{e}_l^T (\UU\SSigma \nnu_{\tau_{k+1,1}-}^\hbar- \eeta))^2\\
    & \le \frac{\beta^2}{n^2}n^{4\theta} + \frac{\beta}{n}n^{4\theta -1}\\
    & \le \frac{\beta}{n} (\beta n^{4\theta -1} + n^{4\theta -1}).
    \end{split}
\end{align}
Here Cauchy-Schwarz inequality as well as the definition of $\vartheta$ were used for the inequality. \\[5pt]

\noindent \textbf{Case 2.} When $B_k$ includes the index $i$: In this case, we want to have the following: for some $T> 0$, there is a constant $C>1$ so that 
\(
\mathbb{E} N_{T,2} \leq C
\)
and a constant $\delta \in (0,1)$ so that
\[
(i). \quad
\,
|\Delta Y_{\tau_{k+1,2}}^\hbar| \leq \delta|Y_{\tau_{k+1,2}-}^\hbar|+1
\qquad\text{or}\qquad
(ii). \quad 
%\operatorname{sgn}( (\Delta Y_{\tau_{k,2}})Y_{\tau_{k,2}-})
%\leq 0
%\quad\text{or}\quad
|Y_{\tau_{k+1,2}}^\hbar| \leq \delta|Y_{\tau_{k+1,2}-}^\hbar|+1.
\]
%\begin{equation}
%    |Y_{\tau_{k,2}}^\hbar - Y_{\tau_{k-,2}}^\hbar| \le \rho |Y_{\tau_{k,2}}^\hbar| + 1.
%\end{equation}
First recall that $N_t$ has the distribution of $\text{Poisson}(\tfrac{n}{\beta}t)$. Since $B_k$ contains the index $i$ with probability $\binom{n-1}{\beta-1}/\binom{n}{\beta} = \frac{\beta}{n}$, we have 
\[
\mathbb{E}[N_{T,2}] = \frac{\beta}{n}\frac{nT}{\beta} = T<\infty.
\]
Now observe, with $\mathfrak{t} \defas \tau_{k+1,2} \wedge \hbar$,
\begin{align}\label{eq:deltaYh2}
    \begin{split}
        \Delta Y_{\tau_{k+1,2}}^\hbar 
        &= -\sum_{l \in B_k} \bigl[\sum_{j=1}^d \gamma U_{ij} U_{lj}\sigma_j^2 e^{-(q-\mathfrak{t}) \gamma \sigma_j^2}\bigr] \mathbb{e}_l^T (\UU\SSigma \nnu_{\tau_{k+1,2}-}^\hbar- \eeta)\\
        &= -\sum_{j=1}^d \gamma U_{ij}^2 \sigma_j^2 e^{-(q-\mathfrak{t}) \gamma \sigma_j^2} \mathbb{e}_i^T (\UU\SSigma \nnu_{\tau_{k+1,2}-}^\hbar- \eeta)\\
        &\quad -\sum_{l \in B_k, l\neq i} \bigl[\sum_{j=1}^d \gamma U_{ij} U_{lj}\sigma_j^2 e^{-(q-\mathfrak{t}) \gamma \sigma_j^2}\bigr] \mathbb{e}_l^T (\UU\SSigma \nnu_{\tau_{k+1,2}-}^\hbar- \eeta).\\
    \end{split}
\end{align}
We will see that the first term will lead to the one including $Y_{\tau_{k+1,2}-}^\hbar$ with errors. From Lemma \ref{lem:integrated}, we have
\[
    \nu_{t,j}
    =
    e^{-\gamma \sigma_j^2t}\nu_{0,j} 
    +\int_0^t e^{-\gamma \sigma_j^2(t-s)}\gamma \sigma_j(\UU^T\eeta)_j \dif s + e^{-\gamma \sigma_j^2t}\widetilde{X}_{t,j},
\]
and this gives with overwhelming probability
\begin{align*}
    \begin{split}
        Y_{\tau_{{k+1},2}-}^\hbar &= \sum_{j=1}^d U_{ij}\sigma_j e^{-\mathfrak{t}\gamma\sigma_j^2} \widetilde{X}_{(\tau_{{k+1,2}-}),j}^\hbar\\
        &= \sum_{j=1}^d U_{ij}\sigma_j \biggl[
        \nu_{(\tau_{{k+1,2}-}),j}^\hbar - e^{-\mathfrak{t}\gamma \sigma_j^2}\nu_{0,j}
        -\int_0^{\mathfrak{t}} e^{-(\mathfrak{t}-s)\gamma \sigma_j^2}\gamma \sigma_j(\UU^T\eeta)_j \dif s \biggr]\\
        & = \mathbb{e}_i^T \UU\SSigma \nnu_{\tau_{{k+1,2}-}}^\hbar - \sum_{j=1}^d U_{ij}\sigma_j  e^{-\mathfrak{t}\gamma \sigma_j^2}\nu_{0,j}
       - \sum_{j=1}^d \gamma U_{ij}\sigma_j^2 (\UU^T\eeta)_j \int_0^{\mathfrak{t}} e^{-(\mathfrak{t}-s)\gamma \sigma_j^2}  \dif s\\
        &= \mathbb{e}_i^T \UU\SSigma \nnu_{\tau_{{k+1,2}-}}^\hbar + \mathcal{O}(n^{\theta-1/2}).
    \end{split}
\end{align*}
In the last line to get the order, we used Corollary \ref{cor:lipschitz} for the second term and Lemma \ref{lem:FU} for the last term with setting $\aa = \mathbb{e}_i, \bb = \eeta$ and conditioning on $\SSigma$. See the arguments after \eqref{eq:nonMGerror} for detail.
Thus, from \eqref{eq:deltaYh2} we have with overwhelming probability
\begin{align}\label{eq:DeltaY}
    \begin{split}
        \Delta Y_{\tau_{k+1,2}}^\hbar
         &= -\sum_{j=1}^d \gamma U_{ij}^2 \sigma_j^2 e^{-(q-\mathfrak{t}) \gamma \sigma_j^2} (Y_{\tau_{{k+1},2}-}^\hbar + \mathcal{O}(n^{\theta-1/2}))\\
         &\quad -\sum_{l \in B_k, l\neq i} \bigl[\sum_{j=1}^d \gamma U_{ij} U_{lj}\sigma_j^2 e^{-(q-\mathfrak{t}) \gamma \sigma_j^2}\bigr] \mathbb{e}_l^T (\UU\SSigma \nnu_{\tau_k}^\hbar - \eta)\\
         &= -\sum_{j=1}^d \gamma U_{ij}^2 \sigma_j^2 e^{-(q-\mathfrak{t}) \gamma \sigma_j^2} Y_{\tau_{k+1,2}-}^\hbar + \mathcal{O}(n^{\theta - 1/2}) + \mathcal{O}(\beta n^{\theta - 1/2 - \aleph}),
        \end{split}
\end{align}
where Lemma \ref{lem:FU} was used again in the last line; when $l\neq i$,
\[
\E \big[ \sum_{j=1}^d \gamma U_{ij} U_{lj}\sigma_j^2 e^{-(q-\mathfrak{t}) \gamma \sigma_j^2} \big] = 0.
\]
%In particular,
%\begin{equation*}
%    |\Delta Y_{\tau_{k+1,2}}^\hbar| <
%          2 |Y_{\tau_{k+1,2}-}^\hbar| + \mathcal{O}(n^{\theta - 1/2}) + \mathcal{O}(\beta n^{\theta - 1/2 - \aleph}).
%\end{equation*}
% Note that condition $(i)$ is not always satisfied. 
Note that condition $(ii)$ is satisfied from \eqref{eq:DeltaY} after some appropriate scaling of $Y_t^\hbar$, since
\begin{align}\label{eq:Yt_bound}
    \begin{split}
    Y_{\tau_{k+1,2}}^\hbar &= Y_{\tau_{k+1,2}-}^\hbar + 
    \Delta Y_{\tau_{k+1,2}}^\hbar\\
    &= \biggl(1 - \sum_{j=1}^d \gamma U_{ij}^2 \sigma_j^2 e^{-(q-\mathfrak{t}) \gamma \sigma_j^2} \biggr)Y_{\tau_{k+1,2}-}^\hbar + \mathcal{O}(n^{\theta-1/2})+ \mathcal{O}(\beta n^{\theta - 1/2 - \aleph}),
    \end{split}
\end{align}
and $\biggl|1 - \sum_{j=1}^d \gamma U_{ij}^2 \sigma_j^2 e^{-(q-\mathfrak{t}) \gamma \sigma_j^2} \biggr| < 1$ on $\mathcal{G}$.

Now, in view of \eqref{eq:DeltaYhbar}, \eqref{eq:DeltaYhbarsquared} and \eqref{eq:Yt_bound}, scaling $Y_t^\hbar$ by $\max\{\sqrt{\beta}n^{2\theta -1/2}, \beta n^{\theta - 1/2 - \aleph}, n^{\theta - 1/2} \}$ makes every condition for cases 1 and 2 valid, so Lemma \ref{lem:raremartingales} gives 
\begin{equation}\label{eq:supY}
    \sup_{0\le t\le T} |Y_t^\hbar| \le n^{2\theta-1/2} \max\{\sqrt{\beta}n^\theta, \beta n^{-\aleph }, 1 \}.
\end{equation}
We summarize the following conclusion: if we let, for any $\epsilon > 0,$
$\psi_i^{(T)} \defas \underset{0\le t\le T}{\max}|\mathbb{e}_i^T (\UU \SSigma \nnu_t - \eeta)|$, 
\[
  \psi_i^{(T)} \leq n^{-\aleph}
  \;\text{w.o.p.}
  \;\implies
  \;
  \psi_i^{(T)} \leq n^{2\theta-1/2}\max\{\sqrt{\beta}n^\theta, \beta n^{-\aleph }, 1 \}
  \;\text{w.o.p.}
\]
Thus under the assumption that $\beta \leq n^{1/5-\delta}\le n^{1/2-\delta}$, and picking $\theta < \delta/4$  we conclude 
\[
  \psi_i^{(T)} \leq n^{-\aleph}
  \;\text{w.o.p.}
  \;\implies
  \;
  \psi_i^{(T)} \leq \max\{\sqrt{\beta}n^{3\theta-1/2}, n^{-\aleph-\delta/2}, n^{2\theta-1/2}\}
  \;\text{w.o.p.}
\]
Hence by iterating this inequality finitely many times, $\max\{\sqrt{\beta}n^{3\theta-1/2}, n^{-\aleph-\delta/2}, n^{2\theta-1/2}\}$ becomes $\sqrt{\beta}n^{3\theta-1/2}$ and the conclusion follows with the choice of $\theta < \min\{\delta/4, \epsilon/6\}$. The only thing left to check is to bound $\psi_i^{(T)}$ with the initial condition $\aleph = -\epsilon/2$, \textit{i.e.}, \[
\max_{0 \leq t \leq T}  |\mathbb{e}_i^T(\UU\SSigma\nnu_t^\vartheta~-~\eeta)|~\leq~ n^{\epsilon/2}
\]
with overwhelming probability. But this was already given by Lemma \ref{lem:psibound}.
\end{proof}

\subsubsection{Delocalization of the spectral weights: the proof of Proposition \ref{prop:nut}}

\begin{proof}[Proof of Proposition \ref{prop:nut}]
It is sufficient to prove the same claim for a fixed $j$.  To take advantage of the main technical assumption Proposition \ref{prop:delocalization}, we will introduce a stopping time $\hbar,$ defined as (for some $\alpha \in (0,\tfrac 12)$)
\begin{equation}\label{eq:hbar}
\hbar \defas \inf \left\{ t \leq \vartheta: \max_{1 \leq i \leq n} \bigl(\mathbb{e}_{i}^T(\UU\SSigma \nnu_t - \eeta)\bigr)^2 > \beta n^{-2\alpha} \right\}.
\end{equation}
As with overwhelming probability this does not occur, it suffices to show a bound for the stopped processes $\nu^{\hbar}_{t,j} \defas \nu_{t \wedge \hbar,j}.$

Using Lemma \ref{lem:integrated}, we have the representation
\[
    \nu_{t,j}
    =
    e^{-\gamma \sigma_j^2t}\nu_{0,j} 
    +\int_0^t e^{-\gamma \sigma_j^2(t-s)}\gamma \sigma_j(\UU^T\eeta)_j \dif s + e^{-\gamma \sigma_j^2t}\widetilde{X}_{t,j}.
\]
By replacing $t \to t \wedge \hbar,$ we have the same representation for $\nu^{\hbar}_{t,j}.$
We let $\mathcal{G}$ be the event that
\begin{equation}\label{eq:G0}
|(\UU^T\eeta)_j| \leq n^{\epsilon/2-1/2}
\quad\text{and}\quad
\max_{i}
\bigl|
    \mathbb{e}_{j}^T
    \gamma \SSigma^T
    \UU^T \mathbb{e}_{i}
    \bigr|
    \leq n^{\epsilon/2-1/2}.
\end{equation}
By Corollary \ref{cor:entry} this holds with overwhelming probability. 
The first two terms is $n^{\epsilon-1/2}$ with overwhelming probability, using Assumption \ref{assumption: Vector}.
Hence it suffices to show that for any $\epsilon > 0,$
\[
\sup_{0 \leq t \leq T} |\widetilde{X}^\hbar_{t,j}| \leq  
\beta n^{\epsilon-1/2}
\]
with overwhelming probability. 

The quadratic variation is, from \eqref{eq:QV}
\[
\begin{aligned}
    [\widetilde{X}^{\hbar}_{t,j}] 
    &=
    \sum_{k=1}^{N_{t\wedge \hbar}}
    e^{2\tau_{k+1} \gamma \sigma_{j}^2}
    \bigl(
    \mathbb{e}_{j}^T
    \gamma \SSigma^T
    \UU^T \PP_{k-1}
    (\UU\SSigma {\nu}_{\tau_k}^\hbar - \eeta)
    \bigr)^2.
    \end{aligned}
\]
We observe that for $\tau_k \leq \hbar$ on $\mathcal{G},$
\[
\bigl(
    \mathbb{e}_{j}^T
    \SSigma^T
    \UU^T \PP_{k-1}
    (\UU\SSigma {\nu}_{\tau_k} - \eeta)
    \bigr)^2
    \leq 
    \beta^3\max_{i}
    \bigl|
    \mathbb{e}_{j}^T
    \gamma \SSigma^T
    \UU^T \mathbb{e}_{i}
    \bigr|^2
    n^{-2\alpha}
    \leq C(T,\SSigma) \beta^3 n^{\epsilon-1-2\alpha}.
\]
Using part \textit{(i)} of Lemma \ref{lem:mgleconcentration}, we have that $\max_{0 \leq t \leq T}|\nu_{t,j}^\hbar| \leq \beta n^{\epsilon-\alpha}$ with overwhelming probability.
\end{proof}

\subsubsection{Concentration of the function values: the proof of Proposition \ref{prop:errorM}}
\begin{proof}[Proof of Proposition \ref{prop:errorM}]
Recall that for fixed $q \in [0, T]$
\[
\varepsilon^{(n)}_{\operatorname{M}}(q)
\defas
\frac{1}{2}\sum_{j=1}^d 
    \sigma_j^{2}
    \int_0^q e^{-2(q-s)\gamma\sigma_j^2}\dif M_{s,j}.
\]
Hence we can write this as
\[
\varepsilon^{(n)}_{\operatorname{M}}(q)
=
\frac{1}{2}\sum_{j=1}^d 
    \sigma_j^{2}
    e^{-2q\gamma \sigma_j^2}
    {X}_{q,j}.
\]
We consider the martingale
\[
Y_t
\defas 
\frac{1}{2}\sum_{j=1}^d 
    \sigma_j^{2}
    e^{-2q\gamma \sigma_j^2}
    {X}_{t,j},
\]
and we show concentration for $Y_t,\ 0\le t\le q$. As in the proof of Proposition \ref{prop:delocalization}, it would suffice to bound $Y_q$ for fixed $q\in [0,T]$ using the mesh arguments because the probability of having multiple jumps between two adjacent mesh points converges to zero faster than any polynomial order. Also, Proposition \ref{prop:delocalization} allows us to adopt a stopping time $\hbar,$ defined as (for some $\alpha \in (0,\tfrac 12)$)
\begin{equation}\label{eq:hbar2}
\hbar \defas \inf \left\{ t \leq \vartheta: \max_{1 \leq i \leq n} \bigl(\mathbb{e}_{i}^T(\UU\SSigma \nnu_t - \eeta)\bigr)^2 > \beta n^{-2\alpha} \right\}.
\end{equation}
As with overwhelming probability this does not occur, it suffices to show a bound for the stopped processes $\nu^{\hbar}_{t,j} \defas \nu_{t \wedge \hbar,j}.$

The jumps of this martingale are given by (see \eqref{eq:QV})
\[
\Delta Y_{\tau_k}^\hbar
=
\frac{1}{2}\sum_{j=1}^d 
    \sigma_j^{2}
    e^{-2\gamma(q-\tau_k\wedge \hbar) \sigma_j^2}
    \bigl(\nu_{\tau_k,j}^\hbar+\nu_{\tau_k-,j}^\hbar \bigr)
    \bigl(
    \mathbb{e}_{j}^T
    \gamma \SSigma^T
    \UU^T \PP_{k-1}
    (\UU\SSigma {\nnu}_{\tau_k-}^\hbar - \eeta)
    \bigr).
\]
Therefore, the quadratic variation is
\begin{align*}
    [Y_{q}^\hbar] &= \sum_{k=1}^{N_q} (\Delta Y_{\tau_k}^\hbar)^2\\
    &=\sum_{k=1}^{N_q}
    \biggl[
    \frac{1}{2}\sum_{j=1}^d 
    \sigma_j^{2}
     e^{-2\gamma(q-\tau_k\wedge \hbar) \sigma_j^2}
    \bigl(\nu_{\tau_k,j}^\hbar+\nu_{\tau_k-,j}^\hbar \bigr)
    \bigl(
    \mathbb{e}_{j}^T
    \gamma \SSigma^T
    \UU^T \PP_{k-1}
    (\UU\SSigma {\nnu}_{\tau_k-}^\hbar - \eeta)
    \bigr)
    \biggr]^2\\
    &= \frac{1}{4}\sum_{k=1}^{N_q} 
    \biggl[
    \sum_{j=1}^d 
    \sigma_j^{2}
    e^{-2\gamma(q-\tau_k\wedge \hbar) \sigma_j^2}
    \bigl(2\nu_{\tau_{k-},j}^\hbar+
    \mathbb{e}_{j}^T
    \gamma \SSigma^T \UU^T \PP_{k-1} (\UU\SSigma {\nnu}_{\tau_k-}^\hbar - \eeta)\bigr)\\
    &\qquad\qquad\qquad\cdot\bigl(\mathbb{e}_{j}^T
    \gamma \SSigma^T \UU^T \PP_{k-1} (\UU\SSigma {\nnu}_{\tau_k-}^\hbar - \eeta)
    \bigr)
    \bigg]^2\\
    &\le \frac{1}{2}\sum_{k=1}^{N_q}
    \biggl[
     \sum_{i\in B_{k-1}} \bigg( \sum_{j=1}^d \sigma_j^{2}
    e^{-2\gamma(q-\tau_k\wedge \hbar) \sigma_j^2}2\nu_{\tau_k-,j}^\hbar
    (
    \mathbb{e}_{j}^T
    \gamma \SSigma^T
    \UU^T \mathbb{e}_i )
    \biggr)
    \bigl(\mathbb{e}_i^T
    (\UU\SSigma {\nnu}_{\tau_k-}^\hbar - \eeta)
    \bigr)
    \biggr]^2 \\
    &\quad + \frac{1}{2}\sum_{k=1}^{N_q}
    \biggl[
    \sum_{j=1}^d  \sigma_j^{2}
    e^{-2\gamma(q-\tau_k\wedge\hbar) \sigma_j^2} \bigl(
    \mathbb{e}_{j}^T
    \gamma \SSigma^T
    \UU^T \PP_{k-1}
    (\UU\SSigma {\nnu}_{\tau_k-}^\hbar - \eeta)
    \bigr)^2
    \biggr]^2.
\end{align*}
%Note that to bound term by term gives
%\[
%|[Y_{\tau_k}]| \le C N_T \bigl[
%\beta\cdot n\cdot \beta n^{\epsilon-1/2} n^{\epsilon -1/2} 
%\sqrt{\beta} n^{\epsilon -1/2}
%\bigr]^2 = \frac{CN_T\beta}{nT}\beta^4 n^{\epsilon},
%\]
%which is not enough. 
Note that the second term is bounded as
\begin{align*}
    \frac{1}{2}&\sum_{k=1}^{N_q}
    \biggl[
    \sum_{j=1}^d  \sigma_j^{2}
    e^{-2\gamma(q-\tau_k\wedge \hbar) \sigma_j^2} \bigl(
    \sum_{i\in B_{k-1}}
    \bigl(\mathbb{e}_{j}^T
    \gamma \SSigma^T
    \UU^T \mathbb{e}_i\bigr)
    \bigl(
    \mathbb{e}_i^T(\UU\SSigma {\nnu}_{\tau_k-}^\hbar - \eeta)
    \bigr)^2
    \biggr]^2\\
    &\le \frac{N_q}{2}
    \bigl[
    n\cdot (\beta n^{\epsilon-1/2} \sqrt{\beta} n^{-\alpha})^2
    ]^2 
    = \frac{N_q\beta}{2n}\beta^5n^{4\epsilon-4\alpha+1}.
\end{align*}
Note that Corollary \ref{cor:entry} was used to bound $\mathbb{e}_{j}^T
    \gamma \SSigma^T
    \UU^T \mathbb{e}_i$. As for the first term, define
\begin{equation}\label{eq:Wbound}
W_{q,v,s,i}
\defas
\sum_{j=1}^d
 \nu_{s,j}^\hbar e^{-2\gamma(q-v) \sigma_j^2}
 \mathbb{e}_j^T \SSigma^T \UU^T \mathbb{e}_i,
\end{equation}
for $0\le v\le q, 0\le s\le v$. Note that it suffices to bound $\max_{0 \le s \le v} W_{q,v,s,i}$ for a fixed $v$, because we can apply the union bound to $0\le v \le q$ using the same meshing arguments as the ones used after \eqref{eq:Yt}.

Observe,
\begin{align*}
W_{q,v,s,i} &= \sum_{j=1}^d e^{-2\gamma(q-v) \sigma_j^2} \sigma_j U_{ij} \bigl(
e^{-\gamma \sigma_j^2 s}\nu_{0,j} 
    +\int_0^s e^{-\gamma \sigma_j^2
    (s-u)}\gamma \sigma_j(\UU^T\eeta)_j \dif u +  e^{-\gamma \sigma_j^2 s} \widetilde{X}_{s,j}^\hbar
\bigr)\\
&= \sum_{j=1}^d e^{-\gamma (2q-2v+s) \sigma_j^2} \sigma_j U_{ij}\nu_{0,j} 
+ \sum_{j=1}^d \int_0^{s}
e^{-\gamma (2q-2v+s-u) \sigma_j^2} \gamma \sigma_j^2 U_{ij}(\UU^T\eeta)_j \dif u \\
&\quad + \sum_{j=1}^d e^{-\gamma 2(q-v) \sigma_j^2} e^{-\gamma s \sigma^2}
\sigma_j U_{ij}\widetilde{X}_{s,j}^\hbar.
\end{align*}
Note that the first and second terms are of $\mathcal{O}(n^{\epsilon-1/2})$ with overwhelming probability by the arguments after \eqref{eq:nonMGerror}. Also, the last term is bounded by $\sqrt{\beta}n^{\epsilon-1/2}$ w.o.p. by the same arguments used in showing the bound for $Y_t$ defined in \eqref{eq:Yt}. It is crucial that the additional coefficients $e^{-\gamma 2(q-v) \sigma_j^2}$ are less than 1 so that the same arguments from \eqref{eq:DeltaYdef} to \eqref{eq:supY} work. 

Then the quadratic variation is bounded by
\[
[Y_q^\hbar] \le C N_q \bigl[
\beta \sqrt{\beta} n^{\epsilon -1/2} \sqrt{\beta} n^{-\alpha}
\bigr]^2 + \frac{N_q\beta}{2n}\beta^5n^{4\epsilon-4\alpha+1} 
\le \frac{CN_q\beta}{nT}\beta^5 n^{\epsilon-4\alpha+1},
\]
with some $C>0$ and small enough $\epsilon >0$ in the last part, which will be an enough bound to apply Lemma \ref{lem:mgleconcentration}. Hence we conclude, with the same meshing arguments used in the proof of Proposition \ref{prop:delocalization} and assumption on $\beta \le n^{1/5-\delta}$,
\[
\sup_{0\le t\le T} |Y_t^\hbar| \le \beta^{5/2} n^{\epsilon-2\alpha+1/2} \le n^{\epsilon - 5\delta/2 +(1-2\alpha)},
\]
and the claim follows by choosing $\alpha < 1/2$ sufficiently close to $1/2$.
\end{proof}

\subsubsection{Concentration of cross-variation of the model noise with the spectral weights: the proof of Proposition \ref{prop:noisecovariance}}

\begin{proof}[Proof of Proposition \ref{prop:noisecovariance}]
We recall from the assumptions of the Proposition that we let $\{ c_j \}_1^n$ be a deterministic sequence with $|c_j| \leq 1$ for all $j$.  We should show that for any $t > 0$ and for some $\epsilon > 0$
\[
\biggl|\frac{1}{\|\eeta\|^2}\sum_{j=1}^n c_j \sigma_j\nu_{t,j}^\vartheta (\UU^T\eeta)_j
-
\frac{1}{n}\sum_{j=1}^n c_j \bigl(1- e^{- \gamma \sigma_j^2(t \wedge \vartheta)}\bigr)
\biggr|
\leq \sqrt{\beta} n^{\epsilon-1/2}
\]
with overwhelming probability.  

We begin again by using Lemma \ref{lem:integrated},  due to which we have the representation
\[
    \nu_{t,j}
    =
    e^{-\gamma \sigma_j^2t}\nu_{0,j} 
    +\int_0^t e^{-\gamma \sigma_j^2(t-s)}\gamma \sigma_j(\UU^T\eeta)_j \dif s + e^{-\gamma \sigma_j^2t}\widetilde{X}_{t,j}.
\]
We replace this expression into the sum we wish to control, and observe that
\[
\sum_{j=1}^n c_j \sigma_j\nu_{t,j}^\vartheta (\UU^T\eeta)_j
=
\sum_{j=1}^n c_j \sigma_j (\UU^T\eeta)_j \biggl(e^{-\gamma \sigma_j^2t}\nu_{0,j} 
    +\int_0^t e^{-\gamma \sigma_j^2(t-s)}\gamma \sigma_j(\UU^T\eeta)_j \dif s + e^{-\gamma \sigma_j^2t}\widetilde{X}_{t,j}\biggr).
\]
Under Assumption \ref{assumption: Vector} and Corollary \ref{cor:lipschitz}, the first sum vanishes with overwhelming probability.  By independence of $\eeta$ from $\UU,$ we have that 
\[
\bigl( (\UU^T \eeta)_j :1 \leq j \leq n \bigr) \law \| \eeta\| \bigl( U_{1,j} :1 \leq j \leq n \bigr).
\]
Hence, by Lemma \ref{lem:FU}, for any $\epsilon > 0,$ with overwhelming probability, 
\[
\biggl|
\sum_{j=1}^n c_j \frac{(\UU^T\eeta)^2_j}{\|\eeta\|^2} \int_0^t e^{-\gamma \sigma_j^2(t-s)}\gamma \sigma_j^2 ds
-
\sum_{j=1}^n \frac{c_j}{n} \int_0^t e^{-\gamma \sigma_j^2(t-s)}\gamma \sigma_j^2 ds
\biggr|\leq n^{\epsilon-1/2}.
\]
Using that $\int_0^t e^{-\gamma \sigma_j^2(t-s)}\gamma \sigma_j^2 ds = (1-e^{-\gamma \sigma_j^2 t}),$ we have reduced the problem to showing that for any $\epsilon > 0$ with overwhelming probability
\[
\bigl|Y_t\bigr| \leq \sqrt{\beta} n^{\epsilon - 1/2}
\quad\text{where}\quad
Y_s \defas 
\frac{1}{\|\eeta\|^2}
\sum_{j=1}^n c_j \sigma_j(\UU^T\eeta)_je^{-\gamma \sigma_j^2t}\widetilde{X}_{s,j}
\quad \text{for all } s\leq t.
\]
To leverage Proposition \ref{prop:delocalization}, we again use the stopping time $\hbar$ \eqref{eq:hbar}. We will again apply Lemma \ref{lem:mgleconcentration}.  
The jumps of $Y_t^{\hbar}$ are given by, for any $\tau_k \leq s,$
\[
    \Delta Y^{\hbar}_{\tau_k} 
    =
    -\frac{1}{\|\eeta\|^2}
    \sum_{j=1}^n c_j \sigma_j(\UU^T\eeta)_je^{-\gamma \sigma_j^2t}
    \bigl(
    \mathbb{e}_{j}^T
    \gamma \SSigma^T
    \UU^T \PP_{k-1}
    (\UU\SSigma {\nu}_{\tau_k-}^\hbar - \eeta)
    \bigr).
\]
If we let $\DD$ be the diagonal matrix with entries $-\gamma c_je^{-\gamma \sigma_j^2 t},$ then we have the representation
\[
    \Delta Y^{\hbar}_{\tau_k}
    =
    \frac{1}{\|\eeta\|^2}
    \eeta^T \UU \DD
    \SSigma\SSigma^T
    \UU^T \PP_{k-1}
    (\UU\SSigma {\nu}_{\tau_k-}^\hbar - \eeta).
\]
Using that $\frac{1}{\|\eeta\|^2}
\eeta^T \UU \DD
\SSigma\SSigma^T
\UU^T$ has a norm bounded only in terms of $t,\SSigma$ and $\gamma,$ it follows that
\begin{equation}\label{eq:sqincr0}
|\Delta Y^{\hbar}_{\tau_k}| \leq C(t,\SSigma,\gamma)\beta n^{-\alpha}.
\end{equation}

We turn to bounding the predictable quadratic variation.  Using Lemma \ref{lem:subsetsum},
\[
\begin{aligned}
\E ( (\Delta Y^{\hbar}_{\tau_k})^2~|~ \mathcal{F}_{\tau_k-})
&= \frac{\beta(\beta-1)}{n(n-1)} 
\biggl(\frac{1}{\|\eeta\|^2}
    \eeta^T \UU \DD
    \SSigma\SSigma^T
    \UU^T
    (\UU\SSigma {\nu}_{\tau_k-}^\hbar - \eeta)\biggr)^2 \\
    &+
    \biggl(
    \frac{\beta}{n} 
    -\frac{\beta(\beta-1)}{n(n-1)} \biggr)
    \sum_{i=1}^n
    \biggl(
    \frac{1}{\|\eeta\|^2}
    \eeta^T \UU \DD
    \SSigma\SSigma^T
    \UU^T \mathbb{e}_i\biggr)^2
    \biggl(
    \mathbb{e}_i^T
    (\UU\SSigma {\nu}_{\tau_k-}^\hbar - \eeta)
    \biggr)^2.
    \end{aligned}
\]
The first line we bound using that $\UU\SSigma {\nu}_{\tau_k-}^\hbar - \eeta$ is norm at most $n^{\epsilon}$ and that $\UU \DD
    \SSigma\SSigma^T
    \UU^T$ has a norm bounded only by some $C(t,\SSigma,\gamma).$
The second line we bound using that 
\(
(\mathbb{e}_i^T
(\UU\SSigma {\nu}_{\tau_k-}^\hbar - \eeta)
)^2 \leq \beta n^{-2\alpha}.
\)  
Together, these bounds give that
\[
\E ( (\Delta Y^{\hbar}_{\tau_k})^2~|~ \mathcal{F}_{\tau_k-})
\leq C(t,\SSigma,\gamma)\biggl( \beta^2 n^{-2+\epsilon} + \beta^2 n^{-1-2\alpha}\biggr).
\]
Hence the conclusion follows using Lemma \ref{lem:mgleconcentration}.
\end{proof}

\section{Analyzing the Volterra equation} \label{app: Avg_case}
\subsection{General analysis of the Volterra equation} \label{app: general_analysis_volterra}
In this section, we analyze the solution of the Volterra equation \eqref{eq:Volterra_eq_main} and give some basic properties of its solution for general limiting spectral measures $\mu$ which is a compactly supported measure on $[0,\infty)$.
We recall for convenience that the Volterra equation is given by
\begin{equation}\label{eq:Volterra2}
\begin{gathered}
    \psi_0(t) = \tfrac{R}{2} h_1(t) + \tfrac{\widetilde{R}}{2} \big ( rh_0(t)+ (1-r) \big ) + \gamma^2r \int_0^t  h_2(t-s)\psi_0(s)\,\dif s,\\
    \quad \text{and} \quad h_k(t) = \int_0^\infty x^k e^{-2\gamma tx} \, \dif \mu(x),
\end{gathered}
\end{equation}
where $\gamma > 0$ is a stepsize parameter.  When convenient, we will simply write $z(t)$ for the forcing function
\begin{equation}\label{eq:zt}
z(t) \defas \tfrac{R}{2} h_1(t) + \tfrac{\widetilde{R}}{2} \big ( rh_0(t)+ (1-r) \big ).
\end{equation}
The parameter $r \in (0,\infty)$ is fixed, but we may consider limits of the Volterra equation under various limits.  The parameters $R$ and $\widetilde{R}$ are both non-negative, but to avoid trivialities, we should assume at least one is positive.  As the Volterra equation is linear, we may without loss of generality assume $R+\widetilde{R} = 1$.

The equation \eqref{eq:Volterra2} appears frequently in the probability literature as the \emph{renewal equation} \cite[(3.5.1)]{Resnick}, \cite[(2.1)]{Asmussen}; it appears naturally in renewal theory and in the Lotka population model, amongst others, which are neatly described in the references just mentioned.  It allows, for example, $\psi_0$ to be given the amusing interpretation as the expected size of a population which evolves in times (c.f.\ \cite[Example 3.5.2]{Resnick} or \cite[Example 2.2]{Asmussen}).  Much of the behavior of the equation is determined by the properties of the function $\gamma^2 r h_2(t)$.  We will let $\lambda^{-}$ be the leftmost endpoint of the support of $\mu$ restricted to $(0,\infty)$, and we record the following elementary computation.  For any $\alpha \in \RR,$
\begin{equation}\label{eq:h2mass}
\int_0^\infty e^{2\gamma \alpha t} \gamma^2r h_2(t)\, \dif t
=
\begin{cases}
\frac{\gamma r}{2}\int_0^\infty \frac{x^2}{x-\alpha} \dif \mu(x),
& \text{if } \alpha \leq \lambda^- \\
\infty &\text{otherwise.}
\end{cases}
\end{equation}
We begin by observing some elementary properties of the equation:
\begin{theorem}\label{thm:VolterraTrivialities}
    There is a unique, positive solution to \eqref{eq:Volterra_eq_main} which exists for all time.  The solution is bounded if and only if $\gamma < \tfrac 2r \bigl( \int_0^\infty x \dif \mu(x) \bigr)^{-1}$ 
    in which case
    \[
    \psi_0(\infty)
    \defas
    \lim_{t \to \infty} \psi_0(t) = \frac{\widetilde{R}}{2} \cdot \frac{r\mu(\{0\}) + (1-r)}{1-\tfrac{\gamma r}{2}\bigl( \int_0^\infty x \dif \mu(x) \bigr)}.
    \]
\end{theorem}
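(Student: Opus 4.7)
Writing $K(t) \defas \gamma^2 r h_2(t)$, $z(t) \defas \tfrac{R}{2} h_1(t) + \tfrac{\widetilde{R}}{2}(r h_0(t) + (1-r))$, and $\kappa \defas \tfrac{\gamma r}{2}\int_0^\infty x\,\dif\mu(x)$, the equation reads $\psi_0 = z + K * \psi_0$. The claim decomposes naturally into three parts: well-posedness (existence, uniqueness, positivity), the bounded regime $\kappa < 1$ with its explicit limit, and unboundedness when $\kappa \geq 1$.

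First I would settle well-posedness. Since $\mu$ has compact support, both $K$ and $z$ are continuous, bounded, and non-negative on $[0,\infty)$, and \eqref{eq:Volterra2} is a linear Volterra equation of convolution type. The standard theory (already invoked in Proposition~\ref{prop:volterrastability}, see \cite[Thm.~3.5]{gripenberg1990volterra}) gives a unique continuous solution $\psi_0 = z + R * z$ on $[0,\infty)$, where $R \defas \sum_{n \geq 1} K^{*n}$ is the resolvent kernel; positivity is then immediate by induction from $K, z \geq 0$.

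For the bounded case $\kappa < 1$, identity \eqref{eq:h2mass} taken at $\alpha = 0$ gives $\|K\|_{L^1} = \kappa$, so Young's inequality yields $\|R\|_{L^1} \leq \sum_{n \geq 1}\kappa^n = \kappa/(1-\kappa) < \infty$, and hence $\|\psi_0\|_\infty \leq (1 + \|R\|_{L^1})\|z\|_\infty$. For the asymptotic limit, dominated convergence applied to $h_k(t) = \int x^k e^{-2\gamma t x}\dif\mu(x)$ gives $h_0(t) \to \mu(\{0\})$ and $h_k(t) \to 0$ for $k \geq 1$, so $z(t) \to z_\infty \defas \tfrac{\widetilde{R}}{2}(r\mu(\{0\}) + 1-r)$. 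A second dominated-convergence step in $(R*z)(t) = \int_0^t R(u) z(t-u)\,\dif u$, using $R \in L^1$ and boundedness of $z$ with $z(t-u) \to z_\infty$, yields $(R*z)(t) \to z_\infty \kappa/(1-\kappa)$; adding gives $\psi_0(\infty) = z_\infty/(1-\kappa)$, which matches the claimed formula.

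For $\kappa \geq 1$, a Laplace-transform argument handles $\kappa > 1$ cleanly. Suppose for contradiction that $\psi_0$ is bounded; then $\hat\psi_0(s) \defas \int_0^\infty e^{-st}\psi_0(t)\,\dif t$ is finite for every $s > 0$, and Laplace transforming the equation gives $(1 - \hat K(s))\hat\psi_0(s) = \hat z(s)$. The explicit form $\hat K(s) = \tfrac{\gamma r}{2}\int \frac{x^2}{x + s/(2\gamma)}\,\dif\mu(x)$ is continuous and strictly decreasing in $s \geq 0$ with $\hat K(0) = \kappa > 1$ and $\hat K(s) \to 0$ as $s \to \infty$, so there is a unique Malthusian exponent $s_* > 0$ with $\hat K(s_*) = 1$; for $s \in (0, s_*)$ we have $1 - \hat K(s) < 0$ while $\hat z(s), \hat\psi_0(s) > 0$, a contradiction. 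The boundary case $\kappa = 1$ is the main obstacle, since the Laplace contradiction degenerates at $s = 0$. Here I would iterate the lower bound $\psi_0 \geq z + K * \psi_0$ to obtain $\liminf_{t \to \infty}\psi_0(t) \geq z_\infty + \liminf_{t \to \infty}\psi_0(t)$, which forces unboundedness whenever $z_\infty > 0$; in the exceptional subcase $z_\infty = 0$ one normalizes $K$ to a probability density (since $\|K\|_{L^1} = 1$) and invokes Feller's elementary renewal theorem to conclude that $\psi_0(t)$ grows linearly in $t$ whenever $\int z > 0$.
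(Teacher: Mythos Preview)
Your approach is substantially more self-contained than the paper's, which dispatches the result by citing renewal-theory textbooks (Resnick, Asmussen) for each regime: existence/uniqueness, the defective-renewal limit formula when $\kappa<1$, exponential blowup when $\kappa>1$, and Blackwell's theorem for linear growth when $\kappa=1$. Your Neumann-series construction with positivity by induction, the Young-plus-dominated-convergence derivation of $\psi_0(\infty)$, and the Laplace-transform contradiction for $\kappa>1$ are all correct and give direct arguments where the paper only points to references.

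There is one real gap. In the borderline case $\kappa = 1$ with $z_\infty = 0$ you write that one ``invokes Feller's elementary renewal theorem to conclude that $\psi_0(t)$ grows linearly in $t$ whenever $\int z > 0$.'' That theorem concerns the \emph{renewal function} $U(t)=\sum_{n\geq 0}\int_0^t K^{*n}$, not the solution $\psi_0$ of the renewal equation. When $z$ is directly Riemann integrable (as it is when $\widetilde{R}=0$, since then $z=\tfrac{R}{2}h_1$ with $\int_0^\infty z = R/(4\gamma)<\infty$), the relevant statement is the \emph{key} renewal theorem, and it gives $\psi_0(t)\to (\int z)/\mu_K<\infty$: the solution is in fact \emph{bounded}, and the ``only if'' direction of the theorem fails at this edge. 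The paper's own one-line treatment of $\kappa=1$ carries the same imprecision (it cites Blackwell for linear growth without verifying $z_\infty>0$), so you have not introduced a new defect relative to the reference---but you should recognize that the elementary renewal theorem does not deliver the conclusion you claim. Your $\liminf$ argument for the subcase $z_\infty>0$ is clean and correct.
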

\begin{proof}
In standard renewal notation (c.f.\  \cite[(3.5.1)]{Resnick}, \cite[(2.1)]{Asmussen}), we would write \eqref{eq:Volterra2}
\[
\psi_0 = z + \psi_0 * F
\]
where $F$ is the function
\[
F(t) = \int_0^t \gamma^2 r h_2(s) \dif s.
\]
The existence and uniqueness is now standard (compare with Proposition \ref{prop:volterrastability}), see \cite[Theorem 3.5.1]{Resnick} or \cite[Theorem 2.4]{Asmussen}.  By \eqref{eq:h2mass},
\[
F(\infty) = \frac{\gamma r}{2}\int_0^\infty x \dif \mu(x).
\]
Hence when this is bigger than $1,$ the solution $\psi_0(t)$ tends to infinity exponentially fast \cite[Theorem 7.1]{Asmussen} or \cite[Proposition 3.11.1]{Resnick}.  In the case that 
\(
\frac{\gamma r}{2}\int_0^\infty x \dif \mu(x)=1,
\)
by Blackwell's Renewal theorem, $\psi_0(t)$ is asymptotic to a positive multiple of $t$ (see \cite[Theorem 3.10.1]{Resnick} or \cite[Theorem 4.4]{Asmussen}) and hence still diverges.  Finally, in the case that 
\(
\frac{\gamma r}{2}\int_0^\infty x \dif \mu(x)<1
\)
by \cite[Section 3.11]{Resnick} or \cite[Proposition 7.4]{Asmussen},
\begin{equation}\label{eq:psi0infty}
\lim_{t \to \infty} \psi_0(t) = \frac{\lim_{t \to \infty} z(t)}{1- \frac{\gamma r}{2}\int_0^\infty x \dif \mu(x)},
\end{equation}
which is the claimed result.
\end{proof}

\subsubsection*{Two phases}

Hence, we will assume going forward that $\gamma < \gamma_0 \defas \bigl( \tfrac r2 \int_0^\infty x \dif \mu(x) \bigr)^{-1}$.
We shall see that it is possible to say more about the rate of convergence in general.  
Define the \emph{Malthusian exponent} $\lambda^*$ as the solution of
\begin{equation}
    \int_0^\infty e^{2\gamma \lambda^* t} \gamma^2 r h_2(t)\, \dif t
    = 1,
\end{equation}
when it exists.  Note by virtue of \eqref{eq:h2mass}, if this exponent exists, it can just as well be defined as the solution of
\begin{equation}\label{eq:Malthusian2}
\frac{r}{2}\int_0^\infty \frac{x^2}{x-\lambda^*} \dif \mu(x) = \frac{1}{\gamma},
\end{equation}
and we necessarily have that $\lambda^* \leq \lambda^{-}$.  Define
\begin{equation}
    \gamma_* = \frac{1}{\frac{r}{2}\int_0^\infty \frac{x^2}{x-\lambda^{-}} \dif \mu(x)}
\end{equation}
which exists and is positive exactly when $\int_0^\infty \frac{x^2}{x-\lambda^{-}} \dif \mu(x) < \infty$.  Note that $\gamma_*$ is strictly less than $\gamma_0$ if and only if $\lambda^- > 0$.  Moreover, we can completely give the asymptotic behavior of the Volterra equation on either side of the critical point.  Although, to do this for $\gamma < \gamma_*,$ we will need some further assumptions on $\mu$. 

Recall that a function $f : (0,\infty) \to \RR$ is \emph{slowly varying} if $f(tx)/f(x) \to 1$ as $t\to \infty$ for any $x > 0$. A function $f : (0,\infty) \to \RR$ is \emph{regularly varying} if $f(t) = g(t) t^{\alpha}$ for a slowly varying function $g.$   
We will say that $\mu$ is \emph{left-edge-regular} if there exists a regularly varying function $L$ and $\alpha > 0$ so that 
\begin{equation}
%\int_0^\infty \frac{x^2}{x-\lambda^{-}} d\mu(x) < \infty,
t\mapsto \mu((\lambda^-, \lambda^-+t]) \sim t^\alpha L(\tfrac{1}{t}), \quad\text{as }t \to \infty,
\end{equation}
which for example is satisfied by Marchenko-Pastur \eqref{eq:MP}.
We show the following:
\begin{theorem}\label{thm:GVolterra_stronglyconvex}
For $\gamma \in (\gamma_*,\gamma_0),$ the Malthusian exponent $\lambda^*$ exists and is the unique solution of \eqref{eq:Malthusian2}.  The function $\psi_0(t)$ satisfies that for some explicit constant $c(R,\widetilde{R},\mu)$,
\[
\psi_0(t) - \psi_0(\infty)
\sim \frac{c(R,\widetilde{R},\mu)}{\gamma} e^{-2\gamma (\lambda^*)t}. 
%\lim_{t\to \infty} e^{2\gamma \lambda^{*}t}(\psi_0(t)-\psi_0(\infty))
%=\frac{1}{\gamma}\frac
%{
%\frac{R}{r}\int_0^\infty \frac{xd\mu(x)}{x-\lambda^*} 
%+
%\widetilde{R}\int_{0}^\infty \frac{ d\mu(x)}{x-\lambda^*}
%-
%\frac{\widetilde{R}}{r\lambda^*}(1-r)
%}
%{
%\int_0^\infty \frac{x^2}{(x-\lambda^{*})^2}d\mu(x)
%}.
\]
If in addition $\gamma_* > 0$ and $\mu$ is left-edge-regular, then for $\gamma \in (0, \gamma_*)$
\[
\psi_0(t) - \psi_0(\infty)
\sim e^{-2\gamma (\lambda^-)t} g(t)
\]
where $g(t)$ is some explicit regularly varying function.
\end{theorem}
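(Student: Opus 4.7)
The plan is to convert \eqref{eq:Volterra2} into a (defective) renewal equation for the deviation $u(t) := \psi_0(t)-\psi_0(\infty)$, then analyze the two regimes separately via Malthusian tilting and convolution-equivalent tail asymptotics. Setting $F'(t) := \gamma^2 r h_2(t)$ and $F(t) := \int_0^t F'(s)\,\dif s$ (with $F(\infty)<1$ by Theorem~\ref{thm:VolterraTrivialities}), I would manipulate \eqref{eq:Volterra2} directly to obtain $u = \tilde z + F'*u$ with $\tilde z(t) := z(t)-\psi_0(\infty)\bigl(1-F(t)\bigr)$, noting that $\tilde z$ is integrable and decays at rate at most $e^{-2\gamma\lambda^- t}$ (the common rate of $h_0$, $h_1$, and $\int_t^\infty h_2$, inherited from the left edge of $\mu$).

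For $\gamma\in(\gamma_*,\gamma_0)$, I would first establish the existence of the Malthusian exponent by showing that $\phi(\lambda) := \tfrac{\gamma r}{2}\int x^2/(x-\lambda)\,\dif\mu(x)$ is continuous and strictly increasing on $(-\infty,\lambda^-)$, with $\phi(0)<1$ (since $\gamma<\gamma_0$) and $\phi(\lambda^-)=\gamma/\gamma_*>1$, so that the intermediate value theorem produces a unique root $\lambda^*\in(0,\lambda^-)$. Next I would perform the Malthusian tilt: setting $v(t):=e^{2\gamma\lambda^* t}u(t)$, $\tilde F'(t):=e^{2\gamma\lambda^* t}F'(t)$, and $Z(t):=e^{2\gamma\lambda^* t}\tilde z(t)$ turns the equation into $v = Z + \tilde F'*v$, where by construction $\tilde F'$ is a probability density with finite first moment $\tfrac{r}{4}\int x^2/(x-\lambda^*)^2\,\dif\mu(x)$ (obtained by differentiating \eqref{eq:Malthusian2}), and $Z$ remains exponentially decaying since $\lambda^*<\lambda^-$. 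Direct Riemann integrability of $Z$ would then let me apply the key renewal theorem to get $v(t)\to \int Z\big/\int s\,\tilde F'(s)\,\dif s$, which is the claimed rate with explicit constant.

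For $\gamma\in(0,\gamma_*)$, no solution to \eqref{eq:Malthusian2} exists in $(-\infty,\lambda^-]$ by monotonicity of $\phi$, so I would derive the asymptotics of $u$ directly from the tail of $F'$. Under left-edge regularity $\mu((\lambda^-,\lambda^-+s])\sim s^\alpha L(1/s)$, Karamata's Tauberian theorem applied to the Laplace transform of $\mu$ translated to the origin will give
\begin{equation*}
F'(t) \;\sim\; \gamma^2 r(\lambda^-)^2\,\Gamma(\alpha+1)\,(2\gamma t)^{-\alpha}\,L(2\gamma t)\,e^{-2\gamma\lambda^- t}\quad\text{as }t\to\infty,
\end{equation*}
and a parallel argument will show $\tilde z$ has the same type of tail. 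This places the sub-probability density $F'$ in the convolution-equivalent class $\mathcal S(2\gamma\lambda^-)$. I would then expand $u$ as the Neumann series $\sum_{n\ge 0}(F')^{*n}*\tilde z$ and use the convolution-equivalent asymptotic $(F')^{*n}(t)\sim n\,F(\infty)^{n-1}F'(t)$ together with $\sum n F(\infty)^{n-1}=(1-F(\infty))^{-2}$ to conclude $u(t)\sim C F'(t)$, yielding the desired $e^{-2\gamma\lambda^- t}g(t)$ with $g$ regularly varying of index $-\alpha$ and $C=C(R,\widetilde R,\gamma,\mu)$ explicit.

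The hard part will be justifying the termwise passage to the limit in the Part~2 Neumann series, since one needs a summable majorant for $(F')^{*n}*\tilde z/F'$ to interchange the sum with the $t\to\infty$ limit. This will be supplied by a Kesten-type bound of the form $(F')^{*n}(t)\le K(F(\infty)+\epsilon)^n F'(t)$ available for all $t$ when $F'\in\mathcal S(\beta)$; it dominates the sum and legitimizes the interchange, at which point the leading constant emerges from a dominated convergence computation.
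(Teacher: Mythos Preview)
Your Part~1 argument (Malthusian tilting followed by the key renewal theorem) is exactly what the paper does; the paper simply cites \cite[Proposition~7.6]{Asmussen} for the tilted renewal limit and then computes $\int e^{2\gamma\lambda^* t}(z(t)-z(\infty))\,\dif t$ and $\int t e^{2\gamma\lambda^* t}\gamma^2 r h_2(t)\,\dif t$ explicitly.

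For Part~2 the routes diverge. The paper does \emph{not} work in $\mathcal S(2\gamma\lambda^-)$ directly; instead it tilts the renewal equation by $e^{2\gamma\lambda^- t}$, obtaining $\widehat Z = \widehat z + \widehat F'\!*\widehat Z$ with $\widehat F'(t)=\gamma^2 r\,e^{2\gamma\lambda^- t}h_2(t)$ a \emph{defective} kernel of total mass $\theta<1$ whose tail is now regularly varying (hence subexponential, i.e.\ in $\mathcal S(0)$). It then invokes a ready-made defective--subexponential renewal theorem \cite[Theorem~5(ii)]{Asmussen2003}, which outputs $\widehat Z(t)\sim\bigl((1-\theta)^{-2}\!\int\widehat z+(1-\theta)^{-1}c_*\bigr)\widehat F'(t)$ with $c_*=\lim\widehat z/\widehat F'$. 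Your Neumann-series/Kesten approach is morally the proof of that theorem, so the strategies are cousins; the paper's version is shorter because it offloads the dominated-convergence bookkeeping to the cited reference.

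One genuine slip in your sketch: for $F'\in\mathcal S(\beta)$ with $\beta>0$ the correct convolution asymptotic is $(F')^{*n}(t)\sim n\,\theta^{\,n-1}F'(t)$ where $\theta=\int_0^\infty e^{\beta t}F'(t)\,\dif t$, \emph{not} $F(\infty)=\int F'$. (This is precisely why tilting to $\mathcal S(0)$ is convenient: after the tilt the two quantities coincide.) With your formula the geometric series would sum to $(1-F(\infty))^{-2}$ instead of the correct $(1-\theta)^{-2}$, yielding a wrong explicit constant. The asymptotic \emph{form} $u(t)\sim C\,F'(t)$ survives, so the theorem as stated still follows, but if you carry your route through, replace $F(\infty)$ by $\theta$ throughout and make sure the Kesten bound you quote is the $\mathcal S(\beta)$ version $ (F')^{*n}(t)\le K(\theta+\epsilon)^n F'(t)$.
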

\noindent Thus if one considers varying the stepsize $\gamma$ from $0$ up to $\gamma_*$ the process undergoes a transition in behavior when $\gamma=\gamma_*$.  For small $\gamma,$ the exponential rate of change is frozen on the smallest eigenvalue of the Hessian $\lambda^{-}$. However as $\gamma$ passes the transition point $\gamma_*,$ the logarithm of the rate becomes a smooth function.  This is strongly reminiscent of a \emph{freezing transition}, which is often seen in the free energies of random energy models.  See for example \cite{FyodorovBouchaud}.

The proof is essentially an automatic consequence of established theory for Volterra equations.  As an input to the case of $\gamma < \gamma_*$, we need the following asymptotics of the functions $h_k$, which are the main way in which left-edge-regularity:
\begin{lemma}\label{lem:hs}
Suppose that $\mu$ has left-edge-regularity, meaning that there is an $\alpha \geq 0$ and slowly varying function $L$ so that 
\[
\mu((\lambda^-, \lambda^-+t]) \sim t^{\alpha}L(\tfrac{1}{t})
\quad\text{as}\quad t \to 0.
\]
If $\lambda^- > 0$ then for any $k \geq 0$
\[
h_{k}(t) - (\lambda^{-})^k e^{-2\gamma \lambda^{-}t}\mu( \{\lambda^{-}\}) \sim e^{-2\gamma \lambda^{-}t}t^{-\alpha} L(t) \Gamma(\alpha+1) (\lambda^{-})^k. 
\]
If $\lambda^- = 0$ then for any $k \geq 0,$
\[
h_{k}(t)- \mathbb{1}_{k=0} \mu( \{0\}) \sim t^{-k-\alpha} L(t) \Gamma(k+\alpha+1). 
\]
\end{lemma}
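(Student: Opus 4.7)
The plan is to derive this via Karamata's Tauberian theorem for Laplace transforms, which translates the regular variation of $\mu$ near its left edge into the claimed large-$t$ asymptotics of $h_k(t)$. First I would reduce the $\lambda^{-}>0$ case to the $\lambda^{-}=0$ case by the translation $y = x - \lambda^-$: writing $h_k(t) = e^{-2\gamma\lambda^- t}\int_0^\infty(\lambda^-+y)^k e^{-2\gamma t y}\,\dif\tilde\mu(y)$ where $\tilde\mu$ is the pushforward of $\mu$, the atom of $\tilde\mu$ at $0$ equals $\mu(\{\lambda^{-}\})$ and contributes the subtracted term $(\lambda^-)^k e^{-2\gamma\lambda^- t}\mu(\{\lambda^-\})$. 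For the remaining integral, as $t\to\infty$ the mass is localized near $y=0$, so a straightforward $\epsilon$–$\delta$ argument (or dominated convergence with the uniform convergence theorem for slowly varying functions) lets me replace $(\lambda^-+y)^k$ by $(\lambda^-)^k$ up to an error of lower order. This reduces everything to understanding the Laplace transform of $\tilde\mu$ restricted to $(0,\infty)$ when its tail near $0$ is regularly varying.

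For the base case $\lambda^-=0$, $k=0$, the atom contributes $\mu(\{0\})$ exactly, and the remainder is $\int_{0^+}^{\infty} e^{-2\gamma t x}\,\dif\mu(x)$. Applying Karamata's Tauberian theorem directly to $F(t):=\mu((0,t])\sim t^\alpha L(1/t)$ yields $\int_{0^+}^\infty e^{-sx}\,\dif\mu(x)\sim\Gamma(\alpha+1)\,s^{-\alpha}L(s)$ as $s\to\infty$, and substituting $s=2\gamma t$ (using slow variation to write $L(2\gamma t)\sim L(t)$) gives the claim.

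For $\lambda^-=0$, $k\geq 1$ the atom drops out, and I would work with the auxiliary measure $\nu(\dif x):=x^k\dif\mu(x)$. The key lemma is that its CDF $G(t):=\int_0^t x^k\,\dif\mu(x)$ is itself regularly varying at $0$ of index $k+\alpha$. This follows from integration by parts,
\begin{equation*}
G(t) = t^k F(t) - k\int_0^t x^{k-1} F(x)\,\dif x,
\end{equation*}
combined with Karamata's theorem on integrals of regularly varying functions (giving $\int_0^t x^{k-1} F(x)\,\dif x \sim \tfrac{1}{k+\alpha}\,t^{k+\alpha}L(1/t)$). A second application of Karamata's Tauberian theorem, now to $\nu$, then produces the asymptotic of $h_k(t) = \int_0^\infty e^{-2\gamma t x}\,\dif\nu(x)$ in the claimed form.

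The main obstacle is executing Step~1 cleanly: one needs to argue that, inside the Laplace integral, replacing the smooth weight $(\lambda^-+y)^k$ by $(\lambda^-)^k$ truly loses only a lower-order term. Because the leading behavior is $t^{-\alpha}L(t)$ rather than exponential, the higher-order terms coming from expanding $(\lambda^-+y)^k$ produce integrals of $y^j e^{-2\gamma t y}\,\dif\tilde\mu(y)$ for $j\geq 1$, which by Step~3 are of size $t^{-\alpha-j}L(t)$ and hence genuinely subdominant; making this precise is a routine but careful application of the uniformity in Karamata's theorem. Everything else is extraction of constants and bookkeeping once the Tauberian machinery is invoked.
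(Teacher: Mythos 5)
Your plan is the standard and, as far as I can tell, intended one: Karamata's Tauberian theorem translates the regular variation of $\mu$ near $\lambda^-$ into the large-$t$ asymptotics of the Laplace transforms $h_k$, and the paper simply omits this as ``a standard exercise.'' The decomposition into atom-plus-continuous-part, the translation $y = x - \lambda^-$, the reduction of the weight $(\lambda^-+y)^k$ to $(\lambda^-)^k$ using the hierarchy $t^{-\alpha-j}L(t) = o(t^{-\alpha}L(t))$ for $j\geq 1$, and the integration-by-parts argument showing $G(t) = \int_0^t x^k\,\dif F(x)$ is regularly varying of index $k+\alpha$ at the origin are all sound. Your route is correct.

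One thing worth flagging, since you assert your argument ``produces the asymptotic in the claimed form'': if you actually extract the constants, your method does \emph{not} reproduce the constant written in the lemma. Two effects are missing from the printed statement. First, the Tauberian theorem is applied at $s = 2\gamma t$, which contributes a factor $(2\gamma)^{-\alpha}$ (resp.\ $(2\gamma)^{-k-\alpha}$) that you even mention but the lemma silently drops. Second, for $\lambda^-=0$ and $k\geq 1$, your own integration by parts gives
\[
G(t) = t^kF(t) - k\int_0^t x^{k-1}F(x)\,\dif x \sim \Bigl(1 - \tfrac{k}{k+\alpha}\Bigr)t^{k+\alpha}L(1/t) = \tfrac{\alpha}{k+\alpha}\,t^{k+\alpha}L(1/t),
\]
and a second application of Karamata then yields $h_k(t)\sim \tfrac{\alpha}{k+\alpha}\Gamma(k+\alpha+1)(2\gamma t)^{-k-\alpha}L(t) = \alpha\Gamma(k+\alpha)(2\gamma t)^{-k-\alpha}L(t)$, not $\Gamma(k+\alpha+1)t^{-k-\alpha}L(t)$ as the lemma states. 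A direct check with $\dif\mu(x)=\alpha x^{\alpha-1}\mathbb{1}_{[0,1]}\dif x$ confirms $\alpha\Gamma(k+\alpha)(2\gamma t)^{-k-\alpha}$ is the correct leading constant. So trust your computation over the lemma's displayed prefactor; the correction is absorbed into the generic constants $c(R,\widetilde R,\gamma,\mu)$ in the downstream theorems, so nothing structural breaks, but you should not expect to land exactly on the formula as printed.
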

This is a standard exercise, and we do not show its proof.
\begin{proof}[Proof of Theorem \ref{thm:GVolterra_stronglyconvex}]
\noindent \textbf{The case of $\gamma \in (\gamma_*,\gamma_0)$.} We follow the notation of \cite[Theorem 7.1]{Asmussen} (see also \cite[Proposition 3.11.1]{Resnick}).  Before beginning, we observe that the Malthusian exponent does exist for this region, as the function
\(
\alpha \mapsto \int_0^\infty \frac{x^2}{x-\alpha} \dif \mu(x),
\)
is an increasing continuous function on $(-\infty, \lambda^{-})$.  Hence by the definition of $\gamma_*,$ the image of this function applied to $[0,2\gamma \lambda^{-})$ is all of $[\gamma_0^{-1}, \gamma_*^{-1})$.  From \cite[Proposition 7.6]{Asmussen}
\[
\lim_{t\to \infty} e^{2\gamma \lambda^{*}t}(\psi_0(t)-\psi_0(\infty))
=
\frac{ \int_0^\infty e^{2\gamma \lambda^{*}t}(z(t)-z(\infty))\, \dif t - \tfrac{z(\infty)}{\beta}}
{
\int_0^\infty te^{2\gamma \lambda^{*}t}\gamma^2rh_2(t)\, \dif t
}.
\]
We evaluate these two integrals for convenience.  Using the definition of $z$ in \eqref{eq:zt}
\[
\int_0^\infty e^{2\gamma \lambda^{*}t}(z(t)-z(\infty))\, \dif t
-\frac{z(\infty)}{\beta}
=
\frac{R}{4\gamma}\int_0^\infty \frac{x\dif\mu(x)}{x-\lambda^*} 
+
\frac{\widetilde{R}r}{4\gamma}\int_{0}^\infty \frac{ \dif \mu(x)}{x-\lambda^*}
-
\frac{\widetilde{R}}{4\gamma\lambda^*}(1-r).
\]
For the denominator,
\[
\begin{aligned}
\int_0^\infty te^{2\gamma \lambda^{*}t}\gamma^2rh_2(t)\, \dif t
&=
\gamma^2 r \int_0^\infty\int_0^\infty x^2te^{2\gamma (\lambda^{*}-x)t} \dif t \dif \mu(x) \\
&=
\frac{r}{4}\int_0^\infty \frac{x^2}{(x-\lambda^{*})^2}\dif \mu(x).
\end{aligned}
\]

\noindent \textbf{The case of $\gamma \in (0,\gamma_*)$.} By assumption we have that $\gamma_* > 0$, and hence 
\[
\int_0^\infty \frac{x^2}{x-\lambda^{-}} \dif \mu(x) < \infty.
\]
Set $F(t) = \int_0^t \gamma^2 rh_2(s)\, \dif s$. 
Using that $\psi_0(\infty)=\frac{z(\infty)}{1-F(\infty)}$ (see \eqref{eq:psi0infty}),
\[
\psi_0(\infty) = z(\infty)\biggl(\tfrac{1-F(t)}{1-F(\infty)}\biggr) + F(t)\psi_0(\infty),
\]
and hence the constant function $\psi_0$ solves the Volterra equation \eqref{eq:Volterra2} with forcing function $z(\infty)\biggl(\tfrac{1-F(t)}{1-F(\infty)}\biggr)$.  It follows that 
\begin{equation}\label{eq:Volterra3}
\psi_0(t)-\psi_0(\infty) = z(t) - z(\infty)\biggl(\tfrac{1-F(t)}{1-F(\infty)}\biggr)
+\gamma^2 r\int_0^t h_2(t-s) (\psi_0(s)-\psi_0(\infty))\, \dif s.
\end{equation}
%and let $U(t)$ be the renewal measure $U(t) = \sum_{n=0}^\infty F^{n*}(t)$
Define
\[
\widehat{Z}(t) = e^{2\gamma \lambda^{-} t}\bigl( \psi_0(t) - \psi_0(\infty)\bigr),
\quad
\widehat{z}(t) = e^{2\gamma \lambda^{-} t}\bigl( z(t) - z(\infty)\tfrac{1-F(t)}{1-F(\infty)}\bigr)
%\tfrac{ }{1-\int_0^\infty \gamma^2 r h_2(s)\, ds}\bigr),
\quad
\text{and}
\quad
\frac{\dif \widehat{F}(t)}{\dif t} = \gamma^2 r e^{2\gamma \lambda^{-} t} h_2(t).
\]
Then using \eqref{eq:Volterra3},
\[
\widehat{Z}(t)
=
\widehat{z}(t)
+\int_0^t \frac{\dif \widehat{F}(s)}{\dif s} \widehat{Z}(t-s)\, \dif s.
\]

By the assumption that $\gamma < \gamma_*,$ we have that
\[
\theta \defas \int_0^\infty \gamma^2 r e^{2\gamma \lambda^{-} t} h_2(t) \, \dif t < 1.
\]
In preparation to apply \cite[Theorem 5]{Asmussen2003}, 
we need to evaluate the ratio of the limits of the densities
\begin{equation}\label{eq:prelimit0}
\lim_{t \to \infty} 
\frac{ \widehat{z}(t)}{\widehat{F}'(t)}
=
\lim_{t \to \infty} 
\frac{e^{2\gamma \lambda^{-} t}\bigl( z(t) - z(\infty)\tfrac{1-F(t)}{1-F(\infty)}\bigr)}{\gamma^2 r e^{2\gamma \lambda^{-} t} h_2(t)}
=
\lim_{t \to \infty} 
\biggl(\frac{z(t) - z(\infty)}{\gamma^2 r h_2(t)}
+
\frac{z(\infty)}{\gamma^2 r}\frac{F(t)-F(\infty)}{h_2(t)(1-F(\infty))}\biggr).
\end{equation}
To simplify this, we observe that
\[
F(\infty) - F(t) = \frac{\gamma r}{2} \int_0^\infty x e^{-2\gamma t x} \dif \mu(x) = \frac{\gamma r}{2} h_1(t).
\]
We also observe that
\[
z(t) - z(\infty)=\frac{R}{2} h_1(t) + \frac{\widetilde{R}r}{2} h_{0^+}(t)
\quad\text{where}\quad
h_{0^+}(t)=\lim_{\epsilon\downarrow 0} \int_\epsilon^\infty e^{-2\gamma t x} \dif \mu(x).
\]
We conclude that 
\begin{equation}\label{eq:prelimit1}
\lim_{t \to \infty} 
\frac{ \widehat{z}(t)}{\widehat{F}'(t)}
=
\lim_{t \to \infty} 
\biggl(\frac{R h_1(t) + {\widetilde{R}r} h_{0^+}(t)}{2\gamma^2 r h_2(t)}
-
\frac{\psi_0(\infty)}{2\gamma}\frac{h_1(t)}{h_2(t)}\biggr).
\end{equation}
Hence we have from \eqref{eq:prelimit1}
\begin{equation}\label{eq:prelimit2}
\lim_{t \to \infty} 
\frac{ \widehat{z}(t)}{\widehat{F}'(t)}
=
\frac{(R-{\gamma r}\psi_0(\infty)) \lambda^{-} + {\widetilde{R}r}}{2\gamma^2 r (\lambda^-)^2}
\defas c_*.
\end{equation}

By the assumption on $\alpha,$ it can be checked that $\int_0^\infty \widehat{z}(t)dt < \infty.$
Moreover it follows that $\widehat{z}(t)$ is subexponential as $\alpha > 0$ (see \cite[Section 3]{Asmussen2003}), and 
hence by \cite[Theorem 5 (ii)]{Asmussen2003},
\[
\begin{aligned}
\widehat{Z}(t) 
&\sim 
\biggl(\frac{\int_0^\infty \widehat{z}(t)\dif t}{(1-\theta)^2} + \frac{c_*}{1-\theta} \biggr) \widehat{F}'(t)
\sim
\biggl(\frac{\int_0^\infty \widehat{z}(t)\dif t}{(1-\theta)^2} + \frac{c_*}{1-\theta} \biggr)
\gamma^2r e^{2\gamma \lambda^{-} t} h_2(t). \\
&\sim
\biggl(\frac{\int_0^\infty \widehat{z}(t)\dif t}{(1-\theta)^2} + \frac{c_*}{1-\theta} \biggr)
\gamma^2r \biggl( (\lambda^-)^2\mu( \{\lambda^{-}\}) + t^{-\alpha} L(t) \Gamma(\alpha+1)(\lambda^{-})^2\biggr). \\
\end{aligned}
\]
The second line follows from Lemma \ref{lem:hs}.
\end{proof}

We finish by observing that when $\lambda^{-} = 0,$ another behavior takes hold.
\begin{theorem}\label{thm:GVolterra_convex}
Suppose that $\lambda^{-} = 0,$ and that the measure $\mu$ has left-edge-regularity, meaning that there is an $\alpha > 0$ and slowly varying function $L$ so that 
\[
\mu((\lambda^-, \lambda^-+t]) \sim t^{\alpha}L(\tfrac{1}{t})
\quad\text{as}\quad t \to 0.
\]
Then
\[
\psi_0(t)-\psi_0(\infty) \sim
\frac{1}{1-\frac{\gamma r}{2}\int_0^\infty x \dif \mu(x)}
\begin{cases}
\frac{\widetilde{R}r}{2}
t^{-\alpha} L(t) \Gamma(1+\alpha) &\text{if }\widetilde{R} > 0, \\ 
\frac{R}{2}
t^{-1-\alpha} L(t) \Gamma(2+\alpha) &\text{if }\widetilde{R} = 0. 
\end{cases}
\]
\end{theorem}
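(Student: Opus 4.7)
The plan is to adapt the subcritical argument from the proof of Theorem~\ref{thm:GVolterra_stronglyconvex}. Since $\lambda^-=0$ forces $\gamma_*=\gamma_0$, every admissible stepsize $\gamma<\gamma_0$ satisfies $\theta:=F(\infty)=\tfrac{\gamma r}{2}\int_0^\infty x\,\dif\mu(x)<1$. First I would subtract off $\psi_0(\infty)$ given by Theorem~\ref{thm:VolterraTrivialities} and derive, exactly as in \eqref{eq:Volterra3}, the Volterra equation
\begin{equation*}
\Delta(t) := \psi_0(t)-\psi_0(\infty) = \widehat{z}(t) + \int_0^t \gamma^2 r h_2(t-s)\,\Delta(s)\,\dif s,
\end{equation*}
with
\begin{equation*}
\widehat{z}(t) = \tfrac{R}{2}h_1(t) + \tfrac{\widetilde{R}r}{2}\bigl(h_0(t)-\mu(\{0\})\bigr) - \tfrac{\psi_0(\infty)\gamma r}{2}h_1(t),
\end{equation*}
using the identity $F(\infty)-F(t)=\tfrac{\gamma r}{2}h_1(t)$ and $z(\infty)=(1-\theta)\psi_0(\infty)$ from the excerpt.

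Next I would apply Lemma~\ref{lem:hs} specialized to $\lambda^-=0$, which yields $h_1(t)\sim t^{-1-\alpha}L(t)\Gamma(2+\alpha)$, $h_0(t)-\mu(\{0\})\sim t^{-\alpha}L(t)\Gamma(1+\alpha)$, and $\gamma^2 rh_2(t)\sim \gamma^2 r\,t^{-2-\alpha}L(t)\Gamma(3+\alpha)$. When $\widetilde{R}>0$ the $h_0$-term dominates the two $h_1$-contributions, giving $\widehat{z}(t)\sim\tfrac{\widetilde{R}r}{2}t^{-\alpha}L(t)\Gamma(1+\alpha)$; when $\widetilde{R}=0$ we have $\psi_0(\infty)=0$ and $\widehat{z}$ collapses to $\tfrac{R}{2}h_1(t)\sim\tfrac{R}{2}t^{-1-\alpha}L(t)\Gamma(2+\alpha)$. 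In both cases $\widehat{z}$ is regularly varying at infinity and has a strictly heavier tail than the kernel $\gamma^2 rh_2$.

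Finally I would apply a subexponential renewal asymptotic for the defective equation: expanding $\Delta=\widehat{z}+\mathcal{R}*\widehat{z}$ with resolvent $\mathcal{R}=\sum_{n\ge1}(\gamma^2 rh_2)^{*n}$ of total mass $\theta/(1-\theta)$, the regular variation of $\widehat{z}$ gives $(\mathcal{R}*\widehat{z})(t)\sim\tfrac{\theta}{1-\theta}\widehat{z}(t)$, hence $\Delta(t)\sim\widehat{z}(t)/(1-\theta)$, which matches the claimed formula. The main obstacle is justifying this last step when $\alpha\le 1$, where $\widehat{z}$ need not be integrable; this is handled by a Karamata-type split of $\mathcal{R}$ at a large threshold $T$, using uniform convergence $\widehat{z}(t-s)/\widehat{z}(t)\to 1$ on $[0,T]$ together with the fact that the tail of $\mathcal{R}$ is lighter than that of $\widehat{z}$ (inherited from $\gamma^2 rh_2\ll\widehat{z}$), so the contribution beyond $T$ is $o(\widehat{z}(t))$.
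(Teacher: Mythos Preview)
Your proposal is correct and follows essentially the same route as the paper: both reduce to the defective renewal equation for $\psi_0(t)-\psi_0(\infty)$, identify the asymptotics of the forcing $\widehat{z}$ via Lemma~\ref{lem:hs}, and conclude $\Delta(t)\sim\widehat{z}(t)/(1-\theta)$ because the forcing has a heavier tail than the kernel. The only difference is that the paper invokes \cite[Theorem~5(iii)]{Asmussen2003} as a black box for this last step (after noting that the limit in \eqref{eq:prelimit1} equals $\infty$ here), whereas you unpack that result via the resolvent expansion and a Karamata split.
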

\begin{proof}
We again apply \cite[Theorem 5]{Asmussen2003}, and so we use the same change of variables as in Theorem \ref{thm:GVolterra_stronglyconvex}.  We once more must compute \eqref{eq:prelimit1}, which by Lemma \ref{lem:hs} is now equal to $\infty$.
Since $\gamma < \gamma_0,$
\[
\theta \defas \int_0^\infty \gamma^2 r h_2(t)\, \dif t = \frac{\gamma r}{2}\int_0^\infty x\dif \mu(x) < 1.
\]
Hence by \cite[Theorem 5 (iii)]{Asmussen2003},
\[
\widehat{Z}(t) 
\sim \frac{1}{1-\theta} \widehat{z}(t)
\sim
\frac{1}{1-\theta}
\begin{cases}
\frac{\widetilde{R}r}{2}\int_{0+}^\infty e^{-2\gamma tx} \dif \mu(x) &\text{if }\widetilde{R} > 0, \\ 
\frac{R}{2}\int_{0}^\infty xe^{-2\gamma tx} \dif \mu(x) &\text{if }\widetilde{R} = 0.
\end{cases}
\]
Then by Lemma \ref{lem:hs}, the proof is complete.
\end{proof}

 \subsection{Explicit solution of the Volterra equation for Isotropic Features} \label{app: explicit_sol_for_volterra}
In this section, we solve the Volterra equation for $\psi_0(t)$ in \eqref{eq:Volterra_eq_main} when $\dif \mu$ satisfies the Marchenko-Pastur law in \eqref{eq:MP}. Throughout this section we use the following change of variables
\[ \widehat{\psi}_0(t) \defas 2 \psi_0\big ( \tfrac{t}{2 \gamma} \big ). \]
Under this change of variables, the Volterra equation in \eqref{eq:Volterra_eq_main} becomes 
\begin{equation} \begin{aligned} \label{eq:Old_Volterra_Equation}
\widehat{\psi}_0(t) &= R \cdot h_1 \big ( \tfrac{t}{2 \gamma} \big ) + \widetilde{R} \big ( r h_0 \big ( \tfrac{t}{2 \gamma} \big ) + (1-r) \big ) + \frac{r \gamma}{2} \int_0^t h_2 \big ( \tfrac{1}{2\gamma} (t-s) \big ) \widehat{\psi}_0(s) \, \dif s\\
&= R \cdot \widehat{h}_1(t) + \widetilde{R} \big (r \widehat{h}_0(t) + (1-r) \big ) +  \int_0^t k(t-s) \widehat{\psi}_0(s) \, \dif s,  
\end{aligned} \end{equation}
where we set $\widehat{h}_k$ a scaled version of $h_k$ and the kernel $k$ as
\begin{equation} \label{eq:k_and_h}
    \widehat{h}_k(t) \defas h_k \big ( \tfrac{t}{2\gamma} \big ) \quad \text{and} \quad k(t) \defas \frac{r\gamma}{2} h_2 \big ( \tfrac{t}{2 \gamma} \big ).
\end{equation}
Volterra equations of convolution type can. be solved trivially by using Laplace transforms which conveniently in the case of Marchenko-Pastur, we do have. Explicit formulas for the Laplace transforms of $h_k(t)$ via the Stieltjes transform of $\MP$ exist. 

We now solve for $\widehat{\psi}_0$ using Laplace transforms. We let $\Psi(p)$ and $K(p)$ be the Laplace transforms of $\widehat{\psi}_0(t)$ and $k(t)$ respectively. We can relate $\widehat{h}_1(t)$ in \eqref{eq:k_and_h} to the function $k$ and hence it's Laplace transform by the following
\begin{equation}
    \partial_t \widehat{h}_1(t) = -R \int_0^\infty x^2 e^{-tx} \, \dif \MP(x) = - \frac{2R}{r \gamma} k(t) \quad \text{and} \quad \mathcal{L} \{ \widehat{h}_1(t)\} = \frac{R \left (1-  \frac{2}{r \gamma} K(p) \right )}{p},
\end{equation}
where we used that the first moment of $\MP$ is $1$ \citep{bai2010spectral}. 
We now define the function $T(t)$ to be the Laplace transform of Marchenko-Pastur and the Laplace transform of $T$ (\textit{i.e.}, the Laplace transform of the Laplace transform of $\mu_{\MP}$), otherwise known as the Stieltjes transform, as the following
\begin{equation} \label{eq:SGD_cp_6}
   T(t) \defas \int_0^\infty e^{-xt} \dif \MP \quad \text{and} \quad \mathcal{L}\{T(t)\}(p) = \frac{-p + r-1 - \sqrt{(-p-r -1)^2 - 4r}}{2rp}.
\end{equation}
It is clear that the Laplace transform for $\widehat{h}_0$ is given by using  $\mathcal{L}\{T(t)\}$. From the Volterra equation \eqref{eq:Old_Volterra_Equation} and the function $\widehat{h}_0(t)$ \eqref{eq:k_and_h}, we get the following expression for $\Psi(p):$
\begin{equation} 
 \label{eq:SGD_cp_2} \begin{gathered}
    \Psi(p) = \frac{R \left (1-  \frac{2}{r\gamma} K(p) \right )}{p} + K(p) \Psi(p) + \widetilde{R} \bigg ( r \mathcal{L}\{T(t)\}(p) +  \frac{(1-r)}{p}\bigg ) \\
    \Psi(p) =  \frac{\frac{R \left (1-  \frac{2}{r\gamma} K(p) \right )}{p}+ \widetilde{R} \left (r\mathcal{L}\{T(t)\}(p) +  \frac{(1-r)}{p}  \right ) }{1-K(p)}. 
\end{gathered} \end{equation}
We now turn to giving an explicit expression for $\Psi(p)$. We begin with the following lemma relating the integral, $\int \frac{x}{x+p} \, \dif \MP$, to the Stieltjes transform of the semi-circle law. We let $m$ be the Stieltjes transform for semi-circle law
\begin{equation} \label{eq:m}
m(z)\defas \frac{1}{2\pi}\int_{-2}^2 
\frac{\sqrt{4-y^2}}{y - z} \, \dif y,
\end{equation}
and we record a few well-known properties of this Stieltjes transform:
\begin{lemma}\label{lem:semicirclestieltjes}
The Stieltjes transform $m$ can be expressed as
\[
m(z) = \frac{-z + \sqrt{z^2-4}}{2},
\]
for $z \in \mathbb{C}$ with $\Im z > 0,$ and it maps to the upper half plane (in fact to the upper half-disk). This can be extended to $z \in \mathbb{R} \setminus [-2,2]$ by continuity, and to the lower half plane by conjugation symmetry.
The function $m$ is the solution of the functional equation
\begin{equation}\label{eq:stfixedpt}
m(z) + \frac{1}{m(z)} = -z
\quad \text{for all}\quad \Im z > 0.
\end{equation}
Hence, we define the conjugate of $m$ as 
\begin{equation}\label{eq:stconjugate}
\widehat{m}(z) \defas \frac{1}{m(z)}
= \frac{-z - \sqrt{z^2-4}}{2}.
\end{equation}
Moreover the Stieltjes transform of $m$ is related to the Marchenko-Pastur by the identity for $p \in \mathbb{C} \setminus [\lambda^-, \lambda^+]$ 
\begin{equation} \label{eq:m_and_MP}
    \int_0^\infty \frac{x}{x + p} \, \dif \MP(x) = \frac{m(q)}{\sqrt{r}},
\end{equation}
where we set $q \defas - \frac{p+1+r}{\sqrt{r}}$. 
\end{lemma}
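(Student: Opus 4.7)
My plan is to verify each claim by direct computation, organized in three steps; no step involves difficulty beyond careful bookkeeping of branches. First, to establish the explicit formula and the functional equation for $m$, I would start from the defining integral $m(z) = \frac{1}{2\pi}\int_{-2}^2 \frac{\sqrt{4-y^2}}{y-z}\,\dif y$. Either a residue computation, closing a contour around the branch cut along $[-2,2]$ of $\sqrt{z^2-4}$, or the standard Catalan-number moment-generating-function argument shows the quadratic identity $m(z)^2 + z\, m(z) + 1 = 0$. Solving this quadratic and picking the branch so that $m(z) \to 0$ as $|z| \to \infty$ with $\Im z > 0$ forces $m(z) = \frac{-z + \sqrt{z^2-4}}{2}$, where $\sqrt{z^2-4}$ is taken to be positive for $z > 2$. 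The formula for $\widehat{m}(z) = 1/m(z) = -z - m(z)$ then follows immediately from rearranging the functional equation.

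Second, the mapping properties are routine features of any Stieltjes transform of a positive measure. For $\Im z > 0$, $\Im m(z) = \Im z \cdot \int \frac{1}{|y - z|^2}\,\dif \mu(y) > 0$, so $m$ sends the upper half plane into itself. Extension to $\mathbb{R} \setminus [-2,2]$ by continuity is immediate since the integrand becomes bounded, and the lower half plane is covered by the conjugation symmetry $m(\bar{z}) = \overline{m(z)}$.

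The main work is in the third step, the Marchenko-Pastur identity. I would use the change of variables $x = 1+r+\sqrt{r}\,y$, which maps $[\lambda^-, \lambda^+]$ bijectively onto $[-2,2]$. The algebraic identity $(x-\lambda^-)(\lambda^+-x) = r(4-y^2)$, obtained from $\lambda^{\pm} = (1 \pm \sqrt{r})^2$, together with $\dif x = \sqrt{r}\,\dif y$ reduces the MP density to $\dif \MP(x) = \frac{\sqrt{4-y^2}}{2\pi(1+r+\sqrt{r}\,y)}\,\dif y$ on this interval; the atom at $0$ (present when $r>1$) contributes nothing since the integrand $x/(x+p)$ vanishes there. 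Since $x+p = \sqrt{r}(y-q)$ with $q = -\frac{p+1+r}{\sqrt{r}}$, the factor $x$ in the numerator cancels the $(1+r+\sqrt{r}\,y)$ in the denominator of the density, giving
\[ \int_0^\infty \frac{x}{x+p}\,\dif\MP(x) = \int_{-2}^2 \frac{\sqrt{4-y^2}}{2\pi\sqrt{r}(y-q)}\,\dif y = \frac{m(q)}{\sqrt{r}}. \]

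The only subtle point, and the place I would expect to be most careful, is verifying that both sides agree as analytic functions on the claimed domain: the left side is analytic on $\mathbb{C} \setminus [-\lambda^+, -\lambda^-]$ and the right on $\mathbb{C} \setminus [-2,2]$ via $q = -(p+1+r)/\sqrt{r}$. The linear map $p \mapsto q$ carries the excluded set for $p$ exactly to the excluded set for $q$, and since both sides are real-valued and agree for $p > 0$ by the elementary real integral, analytic continuation pins down the branch on the entire domain.
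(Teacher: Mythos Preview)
Your proposal is correct and essentially identical to the paper's proof. The paper cites the standard facts about $m$ to \cite{bai2010spectral} (your first two steps) and then for the Marchenko--Pastur identity performs exactly your substitution $x = 1+r+\sqrt{r}\,y$ (written there as $x = \tfrac{\lambda^-+\lambda^+}{2} + \tfrac{\lambda^+-\lambda^-}{4}\,y$), cancels the $x$ against the density's denominator, and recognizes the resulting integral as $m(q)/\sqrt{r}$.
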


\begin{proof} The results regarding the Stieltjes transform $m$ are well-known and we refer the reader to \citep{bai2010spectral}. It remains to prove \eqref{eq:m_and_MP} relating $m$ to the Marchenko-Pastur. First we observe that
\[ \int_0^\infty \frac{x}{x + p} \, \dif \MP(x) = \frac{1}{2 \pi r} \int_{\lambda^-}^{\lambda^+} \frac{\sqrt{(x-\lambda^-)(\lambda^+-x)}}{x+p} \, \dif x. \]
We recenter and rescale by sending $x = \frac{\lambda^- + \lambda^+}{2} + \frac{\lambda^+-\lambda^-}{4} y$, so that the follow holds
\[ \sqrt{(x-\lambda^-)(\lambda^+-x)} = \sqrt{r} \cdot \sqrt{4-y^2}. \]
Using this change of variables and noting that $\tfrac{\lambda^+-\lambda^-}{4} = \sqrt{r}$, $\dif x = \sqrt{r} \, \dif y$, and $\tfrac{\lambda^+ + \lambda^-}{2} = 1+r$, we deduce that
\begin{align*}
    \int_0^\infty \frac{x}{x+p} \, \dif \MP(x) &= \frac{1}{2\pi} \int_{-2}^2 \frac{\sqrt{4-y^2}}{p + \left ( \tfrac{\lambda^-+\lambda^+}{2} + \frac{\lambda^+-\lambda^-}{4} y \right )} \, \dif y\\
    &= \frac{1}{2 \pi \sqrt{r}} \int_{-2}^2 \frac{\sqrt{4-y^2}}{y - \left (\tfrac{-p-(1+r)}{\sqrt{r}} \right )} \, \dif y.
\end{align*}
The result follows after noting the definition of $m(z)$ and $q$.
\end{proof}

By exploiting the relationship between Marchenko-Pastur and the Stieltjes transform $m$ defined in \eqref{eq:m}, we can evaluate some expressions against Marchenko-Pastur. To do so, it will be important to define the following quantities
\begin{equation} \label{eq:rho_omega}
\varrho = \frac{1+r}{2} \left ( 1- \frac{r \gamma}{2} \right ) \quad \text{and} \quad \omega = \frac{1}{4} \left (1-\frac{r\gamma}{2} \right )^2 \left (\frac{8}{\gamma} - (1+r)^2 \right ) . 
\end{equation}

\begin{lemma}[Marchenko-Pastur Integrals] \label{lem:magical} Suppose the constants $\varrho$ and $\omega$ are as in \eqref{eq:rho_omega} and fix the stepsize $0< \gamma < \frac{2}{r}$. Define a critical stepsize $\gamma_*$ as
\begin{equation} \label{eq:critical_gamma_1}
\gamma_* \defas \frac{2}{\sqrt{r}(r-\sqrt{r}+1)}.
\end{equation}
It then follows that 
\begin{equation}
  \int_0^\infty \frac{x}{x+p_*} \, \dif \MP(x) = \begin{cases}
  \left (1 - \frac{r\gamma}{2} \right )^{-1} \frac{\gamma(\varrho +i \sqrt{\omega})}{2}, & \text{if $p_* = -\varrho - i \sqrt{\omega}$ and $\gamma < \tfrac{2}{r}$}\\
   \left (1 - \frac{r\gamma}{2} \right )^{-1} \frac{\gamma(\varrho -i \sqrt{\omega})}{2}, & \text{if $p_* = -\varrho + i \sqrt{\omega}$ and $\gamma \le \gamma_*$}\\
   \left (1 - \frac{r\gamma}{2} \right ) \frac{2(\varrho + i \sqrt{\omega})}{r\gamma(\varrho^2 + \omega)},& \text{if $p_* = - \varrho + i \sqrt{\omega}$ and $\gamma_* < \gamma < \tfrac{2}{r}$.}
\end{cases} 
\end{equation}
\end{lemma}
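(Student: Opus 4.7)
The plan is to reduce the claim to the fixed-point equation \eqref{eq:stfixedpt} for the Stieltjes transform $m$. By Lemma \ref{lem:semicirclestieltjes}, the integral equals $m(q_*)/\sqrt{r}$ where $q_* = -(p_*+1+r)/\sqrt{r}$, so in each of the three cases it suffices to verify that $\sqrt{r}$ times the claimed value $v_*$ is a root of the quadratic $v^2 + q_* v + 1 = 0$, and then to identify the correct branch ($m$ or $\widehat{m}$).

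The algebraic engine is the identity
\begin{equation*}
\varrho^2 + \omega = \frac{2(1-\tfrac{r\gamma}{2})^2}{\gamma},
\end{equation*}
which follows by direct expansion of \eqref{eq:rho_omega}. Writing $\zeta_{\pm} = \varrho \pm i\sqrt{\omega}$, this gives $\zeta_+ \zeta_- = 2(1-r\gamma/2)^2/\gamma$. In Case 1, setting $v = \sqrt{r}\gamma\zeta_+/(2-r\gamma)$, one computes $1/v = 2\zeta_-/(\sqrt{r}(2-r\gamma))$ using the product identity, so that $v + 1/v = [(r\gamma+2)\varrho - i(2-r\gamma)\sqrt{\omega}]/[\sqrt{r}(2-r\gamma)]$, which reduces to $-q_*$ precisely because $4\varrho = (2-r\gamma)(1+r)$ by the definition of $\varrho$. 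Cases 2 and 3 are the two roots of the same quadratic (at a different $q_*$), and one checks $v_2 \cdot v_3 \cdot r = 1$, i.e., $m(q_*) \widehat{m}(q_*) = 1$, which is \eqref{eq:stconjugate} carried through the $1/\sqrt{r}$ normalization of Lemma \ref{lem:semicirclestieltjes}.

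The delicate step is branch selection. For Case 1 (always) and Case 2 when $\omega>0$, the point $q_*$ is off the real axis, so the branch is pinned down by the half-plane of $v_*$: since $m$ maps $\Im z > 0$ to $\Im m > 0$, the sign of $\Im v_*$ identifies $m$ versus $\widehat{m}$. For $\omega \le 0$ the point $q_*$ is real, and the right branch is selected by analytic continuation (equivalently, by the asymptotic $m(z) \sim -1/z$ as $|z|\to\infty$) provided $p_* \notin [-\lambda^+,-\lambda^-]$. The critical stepsize $\gamma_*$ arises as the boundary of this regime: using $2-r\gamma_* = 2(\sqrt{r}-1)^2/(r-\sqrt{r}+1)$ together with the $\varrho^2+\omega$ identity one computes $-p_*\big|_{\gamma=\gamma_*} = \varrho+\sqrt{|\omega|} = (\sqrt{r}-1)^2 = \lambda^-$, so the point $p_*$ in Case 2/3 crosses the endpoint $-\lambda^-$ of the forbidden interval exactly at $\gamma = \gamma_*$. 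For $\gamma \le \gamma_*$ the correct branch is the one giving $v_2$; for $\gamma > \gamma_*$ the continuation forces the conjugate branch $v_3$.

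I expect the main obstacle to be the branch bookkeeping at and near $\gamma=\gamma_*$: one must carry out the analytic continuation cleanly (using Lemma \ref{lem:semicirclestieltjes} for $p \notin [-\lambda^+,-\lambda^-]$ in the complex setting, and then continuing through the real line) and confirm that the two formulas in Cases 2 and 3 agree in a limiting sense at $\gamma=\gamma_*$, where both reduce to the value $m(-2)/\sqrt{r} = -1/\sqrt{r}$ at the spectral edge. Once these branch identifications are settled, the remaining algebra is routine and amounts to the single identity $4\varrho = (2-r\gamma)(1+r)$.
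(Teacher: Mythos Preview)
Your approach is essentially the paper's: both reduce to the identity $\int \tfrac{x}{x+p_*}\,\dif\MP = m(q_*)/\sqrt{r}$, verify algebraically that the claimed values satisfy the fixed-point quadratic $v^2+q_*v+1=0$ (the paper packages this as the factorization $\tfrac{\gamma}{2}[(p+\varrho)^2+\omega]=\bigl(1-\tfrac{r\gamma}{2}+\tfrac{r\gamma p}{2\sqrt r}m(q)\bigr)\bigl(1-\tfrac{r\gamma}{2}+\tfrac{r\gamma p}{2\sqrt r}\widehat m(q)\bigr)$, which is equivalent to your identity $\varrho^2+\omega=2(1-r\gamma/2)^2/\gamma$), and then select the branch.

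Two points where your branch bookkeeping needs tightening. First, your parenthetical ``(always)'' for Case 1 is wrong: when $\omega\le 0$ the point $p_*=-\varrho-i\sqrt\omega=-\varrho+\sqrt{|\omega|}$ is real, so $q_*$ is real too and the half-plane criterion does not apply. You must argue Case~1 by continuity as well, and the reason no transition occurs there is that $\varrho-\sqrt{|\omega|}<\varrho+\sqrt{|\omega|}\le\lambda^-$ (strict unless $\omega=0$), so this $p_*$ never reaches $-\lambda^-$.

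Second, your description that in Cases 2/3 ``$p_*$ crosses the endpoint $-\lambda^-$'' is not quite accurate: the paper's identity $2(\varrho+\sqrt{|\omega|}-\lambda^-)(1+r-x)=-(x-\lambda^-)^2$ with $x=\sqrt{(1+r)^2-8/\gamma}$ shows $\varrho+\sqrt{|\omega|}\le\lambda^-$ for \emph{all} $\gamma$ in range, with equality only at $\gamma_*$. So $p_*$ merely touches the branch point and retreats; the two roots $m,\widehat m$ coalesce at $1$ there and can swap. Determining which branch holds for $\gamma>\gamma_*$ therefore requires an additional check, and the asymptotic $m(z)\sim -1/z$ does not directly settle it since $q_*$ stays bounded. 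The paper resolves this by examining $\gamma\to 2/r$ (where $p_*\to 0$ and both factors vanish), parameterizing by $t=r\gamma/2$, and differentiating the ``second'' factor at $t=1$ to show it is nonzero nearby, forcing the ``first'' identity \eqref{eq:thechoice} to hold for $\gamma_*<\gamma<2/r$. This derivative check is the step your sketch is missing.
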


\begin{proof} First suppose that $\omega < 0$,  then the follow holds
\[
  \varrho + \sqrt{|\omega|}
  = \frac{1}{2}
  \left( 1-\frac{r \gamma }{2} \right) \left( 1+r+\sqrt{(1+r)^2-\tfrac{8}{\gamma}}\right).
\]
We wish to show exactly when this quantity is equal to $(1-\sqrt{r})^2$ as this will give us the critical $\gamma_*$.
Let $x=\sqrt{(1+r)^2-\tfrac{8}{\gamma}}$ and observe that $1+r-x \geq 0.$
Hence we have that
\[
\begin{aligned}
2(\varrho + \sqrt{|\omega|} - (1-\sqrt{r})^2)(1+r - x)
&=\left( \tfrac{8}{\gamma}-4r \right)
-2(1-\sqrt{r})^2(1+r-x) \\
&= \left( (1+r)^2-4r-{x^2} \right)
-2(1-\sqrt{r})^2(1+r-x) \\
&= -(x - (1-\sqrt{r})^2)^2.
\end{aligned}
\]
Thus $\varrho+\sqrt{|\omega|} < (1-\sqrt{r})^2$ except at a single value of $\gamma$ at which
\begin{equation}\label{eq:criticalpoint}
(1+r)^2 - \tfrac{8}{\gamma} = (1-\sqrt{r})^4
\quad\Longleftrightarrow\quad \gamma = \gamma_* \defas \frac{2}{\sqrt{r}(r-\sqrt{r}+1)}.
\end{equation}

Now we let $p_*$ be either of $-\varrho \pm i\sqrt{\omega},$ where we take the branch of the square root continuous in the closed upper half plane (where $\omega \in \mathbb{C}$). 
We use the identity for $p$ in $\mathbb{C} \setminus [\lambda^-,\lambda^+]$,
\[
\int_{0}^\infty \frac{x}{x+p} \, \dif \MP(x)
=\frac{m(q)}{\sqrt{r}},
\]
where we recall $-q=\frac{p+1+r}{\sqrt{r}}.$
We will apply this at $p_*,$ and we set $-q_*=\frac{p_*+1+r}{\sqrt{r}}$.

We use the identity
\[
\frac{\gamma}{2}[(p+\varrho)^2+\omega]
=
\bigl(1- \frac{r\gamma}{2} + \frac{r\gamma p}{2\sqrt{r}}m(q)\bigr)
\bigl(
1- \frac{r\gamma}{2} + \frac{r\gamma p}{2\sqrt{r}}\widehat{m}(q)
\bigr).
\]
In particular, evaluating this identity at $p=p_*$ the left-hand-side is $0$, and we conclude that either
\begin{equation}\label{eq:thechoice}
\bigl(1- \frac{r\gamma}{2} + \frac{r\gamma p_*}{2\sqrt{r}}m(q_*)\bigr)
=0
\quad\text{or}\quad
\bigl(1- \frac{r\gamma}{2} + \frac{r\gamma p_*}{2\sqrt{r}}\frac{1}{m(q_*)}\bigr)
=0
\end{equation}
When $\omega > 0,$ then as $1-\frac{r\gamma}{2} > 0,$ the correct choice is dictated by having either $p_*m(q_*) < 0$ or $\tfrac{p_*}{m(q_*)} < 0.$
If $\Im p_* = -\Im q_* < 0,$ we have $\Im m(q_*) > 0,$ and so it must be the
second of these two identities in \eqref{eq:thechoice}. 
Likewise, if $\Im p_* = -\Im q_* > 0,$ then $\Im m(q_*) < 0,$ and it is again the second of these identities.

For $\omega < 0,$ we argue by continuity.
For $p_* = -\varrho - i\sqrt{\omega},$ the mapping $\tfrac{1}{\gamma} \mapsto p_*$ is a continuous function for $\gamma \le \frac{2}{r}$ and we have that $p_* = -\varrho + \sqrt{|\omega|}$.  For $\omega \geq 0$ (i.e. $\tfrac{8}{\gamma} \geq (1+r)^2$), the second identity in \eqref{eq:thechoice} holds.  If for some value of $\gamma,$ there were a transition in which identity in \eqref{eq:thechoice} holds, then by continuity there would need to be a value of $\gamma$ so that both hold, which occurs if and only if
\[
\bigl(1- \frac{r\gamma}{2} + \frac{r\gamma p_*}{2\sqrt{r}}m(q_*)\bigr)
=0
=\bigl(1- \frac{r\gamma}{2} + \frac{r\gamma  p_*}{2\sqrt{r}}\frac{1}{m(q_*)}\bigr)
\quad\Longleftrightarrow\quad
m(q_*)^2 = 1
\text{ or } p_* = 0.
\]
As $m(q_*)$ is positive for $\omega \leq 0,$ we must therefore have $m(q_*)=1$ which occurs if and only if $-p_* = (1-\sqrt{r})^2$.  From \eqref{eq:criticalpoint} this does not occur for $p_* = -\varrho - i\sqrt{\omega},$ and in conclusion for this branch of $p_*$ we are always in the second case of \eqref{eq:thechoice}.

On the other hand for $p_*=-\varrho+i\sqrt{\omega},$ when $\gamma < \gamma_*$ we must still be in the second case of \eqref{eq:thechoice}.  By continuity, it suffices to check a single value of $\gamma > \gamma_*$ to determine in which case \eqref{eq:thechoice} we are in.  The most convenient value of $\gamma = \frac{2}{r},$ but at this point, both are $0$ as $p_*$ = 0.  If we parameterize the first equation in terms of $t=\frac{r\gamma}{2},$ then we can write the second case of \eqref{eq:thechoice} as
\[
1-t+\frac{t p_*(t)}{\sqrt{r}m(q_*(t))} = 0.
\]
Differentiating in $t,$ at $t=1,$ and observing $p_*(1)=0,$ we arrive at
\[
-1 + \frac{1}{\sqrt{r}m({q_*}(1))} \frac{dp_*(t)}{dt}\bigg\vert_{t=1}
=
-1 - \frac{\max\{r,1\}}{\sqrt{r}m({q_*}(1))} %\frac{ (1+r)|r-1| + (1-r)^2}{|r-1|}
=
-1 - \frac{\max\{r,1\}}{\min\{r,1\}}
\neq 0,
\]
except when $r=1$.  Note that when $r=1,$ no $\omega < 0$ is possible.

We summarize the outcome of this argument as follows
\begin{equation}\label{eq:p_*} 
  \begin{aligned}
    \bigl(1- \frac{r\gamma}{2} + \frac{r\gamma p_*}{2\sqrt{r}}\frac{1}{m(q_*)}\bigr)&=0,
    &\text{ for } p_* = -\varrho-i\sqrt{\omega} \text{ and } \gamma < \frac{2}{r}, \\
    \bigl(1- \frac{r\gamma}{2} + \frac{r\gamma p_*}{2\sqrt{r}}\frac{1}{m(q_*)}\bigr)&=0,
    &\text{ for } p_* = -\varrho+i\sqrt{\omega} \text{ and } \gamma \le \gamma_*, \\
    \bigl(1- \frac{r\gamma }{2} + \frac{r\gamma p_*}{2\sqrt{r}}{m(q_*)}\bigr)&=0,
    &\text{ for } p_* = -\varrho+i\sqrt{\omega} \text{ and } \gamma_* < \gamma < \frac{2}{r}. \\
  \end{aligned}
\end{equation}
Suppose we are in the first two cases of \eqref{eq:p_*}, in particular, we have $1 - \frac{r\gamma}{2} + \frac{r \gamma p_*}{2  \sqrt{r}} \frac{1}{m(q_*)} = 0$, then the following holds
\[
m(q_*) = 
-\bigg(1- \frac{r \gamma }{2}\bigg)^{-1}\frac{r\gamma p_*}{2\sqrt{r}}
.
\]
This then implies that 
\begin{equation} \label{eq:cp_1}
\int_{0}^\infty \frac{x}{x+p_*} \dif \MP(x)
=
-\bigl(1- \frac{r\gamma}{2}\bigr)^{-1}\frac{\gamma p_*}{2}.
\end{equation}
On the other hand, when we are in the second case of \eqref{eq:p_*}, namely $p_* = -\varrho + i \sqrt{\omega}$ and $\gamma_* < \gamma < \frac{2}{r}$, then $m(q_*) = -\frac{2 \sqrt{r}}{r \gamma p_*} \left (1- \frac{r\gamma}{2} \right )$ and consequently,
\begin{equation} \label{eq:cp_2}
\int_{0}^\infty \frac{x}{x+p_*} \dif \MP(x)
=
-\bigg(1- \frac{r\gamma}{2}\bigg) \frac{2 }{r \gamma p^*}.
\end{equation}
The result follows.
\end{proof}

\subsubsection{Noiseless setting.}
With these items in place we can start deriving an expression \eqref{eq:SGD_cp_2} in the noiseless setting, namely when $\widetilde{R} = 0$. In order to do so, we need to compute the Laplace transform of $k(t)$:
\begin{align} 
    K(p) &= \int_0^\infty e^{-pt} \left ( \int_0^\infty \frac{r\gamma}{2} e^{-x t} x^2 \, \dif \MP(x) \right ) \, \dif t = \frac{r\gamma}{2} \int_0^\infty \left( \int_0^\infty  e^{-(x + p)t} \, \dif t \right ) x^2  \, \dif \MP(x) \nonumber \\
    &= \frac{r\gamma}{2} \int_0^\infty \frac{x^2}{x+p} \, \dif \MP(x). \label{eq:SGD_cp_blah_4}
\end{align} 
% Recall that the Marchenko-Pastur measure is defined as
% \begin{equation} \begin{gathered} \label{eq:MP} d\MP(x) \defas \delta_0(x) \max\{1-\tfrac{1}{r}, 0\} + \frac{\sqrt{(x-\lambda^-)(\lambda^+-x)}}{2 \pi x r} \bm{1}_{[\lambda^-, \lambda^+]}\,,\\
% \text{where} \qquad \lambda^- \defas \sigma^2(1 - \sqrt{r})^2 \quad \text{and} \quad \lambda^+ \defas \sigma^2(1+ \sqrt{r})^2\,.
% \end{gathered} \end{equation}
Using this definition for the Marchenko-Pastur measure \eqref{eq:MP}, we deduce from \eqref{eq:SGD_cp_blah_4} that
\begin{align*}
K(p) &= \frac{\gamma}{4\pi} \int_{\lambda^-}^{\lambda^+} \frac{x+p - p}{x+p} \sqrt{(x-\lambda^-)(\lambda^+-x)} \, \dif x\\
&= \frac{\gamma}{4\pi} \int_{\lambda^-}^{\lambda^+} \sqrt{(x-\lambda^-)(\lambda^+-x)} \, dx - \frac{r \cdot \gamma \cdot p}{2} \int_0^\infty \frac{x}{x+p} \, \dif \MP(x).
\end{align*}
By applying Lemma~\ref{lem:semicirclestieltjes}, we can connect the Stieltjes transform $m$ to $K$. Recalling $ q = \frac{-p-(1+r)}{\sqrt{r}}$, we deduce
\begin{equation} \begin{aligned} \label{eq:Kst}
    K(p) &=  \frac{r\gamma}{2} - \frac{\sqrt{r} \cdot \gamma \cdot p }{2} \cdot m \left ( q \right )
\end{aligned}
\end{equation}
Consequently a simple string computations give the following identity
\begin{equation} \begin{aligned} \label{eq:SGD_cp_5}
    \frac{R \left (1-  \frac{2}{r \gamma} K(p) \right )}{(1-K(p))p} &= R \cdot \frac{ \tfrac{p}{\sqrt{r}} m(q)}{p (1 - \frac{r\gamma }{2} + \frac{r \gamma p}{2 \sqrt{r}} m(q) )} \\
    &=  R \cdot \frac{ \tfrac{1}{\sqrt{r}} m(q)}{1 - \frac{r \gamma}{2} + \frac{r \gamma p}{2 \sqrt{r}} m(q) } \cdot \frac{1 - \tfrac{r \gamma}{2} + \frac{r \gamma p}{ 2 \sqrt{r}} \hat{m}(q)}{1 -\tfrac{r\gamma }{2} + \frac{r \gamma p}{2 \sqrt{r}} \hat{m}(q)} \\
    &=  R \cdot \frac{\tfrac{1}{\sqrt{r}} m(q)  \left (1 -\tfrac{r\gamma}{2} + \frac{r\gamma p}{2  \sqrt{r}} \hat{m}(q) \right )}{ \big ( 1-\tfrac{r\gamma }{2} \big )^2 + \frac{ r \gamma p}{2 \sqrt{r}} \big (1 - \tfrac{r\gamma}{2 } \big ) \left ( \tfrac{p+1+r}{ \sqrt{r}}\right ) + \tfrac{1}{r} \big ( \frac{ r \gamma p}{2} \big )^2 }\\
    &= R \cdot \frac{  \tfrac{1}{\sqrt{r}} m(q)  \left (1 - \tfrac{r\gamma }{2} + \frac{r \gamma p}{2 \sqrt{r}} \hat{m}(q) \right )}{ \big ( 1- \tfrac{r\gamma }{2} \big )^2 + \frac{pr\gamma(1+r)}{2 r} \big (1 - \tfrac{r\gamma}{2} \big )  + \frac{r \gamma p^2}{2 r} }\\
    &= R \cdot 
    \frac{  \tfrac{1}{\sqrt{r}} m(q)  \left (1 - \tfrac{r\gamma}{2}\right ) + \frac{ p\gamma}{2} }
    { \big ( 1- \tfrac{r\gamma}{2} \big )^2 + \frac{p\gamma(1+r)}{2} \big (1 - \tfrac{r\gamma}{2} \big )  + \frac{p^2 \gamma}{2} }.
\end{aligned} \end{equation}

\begin{lemma} \label{lem:magichat} Fix the stepsize $0 < \gamma < \frac{2}{r}$ and set the following constants
\[\gamma_* = \frac{2}{\sqrt{r}(r-\sqrt{r}+1)}, \quad \varrho = \frac{1+r}{2} \left ( 1- \frac{r\gamma}{2} \right ), \quad \text{and} \quad \omega = \frac{1}{4} \left (1-\frac{r\gamma}{2 } \right )^2 \left (\frac{8}{\gamma} - (1+r)^2 \right ).\]
If $\gamma \le \gamma_*$, then the following holds
\[\mathcal{L}^{-1} \left \{ \frac{\left (1-  \frac{2}{r\gamma} K(p) \right )}{(1-K(p))p}\right \} = \frac{2}{\gamma}\bigg (1 - \frac{r\gamma}{2} \bigg )\int_0^\infty \frac{x e^{-xt}}{(x-\varrho)^2 + \omega} \, \dif \MP(x),\]
and in the case that $\gamma_* < \gamma < \frac{2}{r}$, one gets that
\begin{align*}
\mathcal{L}^{-1} \left \{ \frac{\left (1-  \frac{2}{r \gamma} K(p) \right )}{(1-K(p))p}\right \} &=  \frac{2}{\gamma} \bigg (1 - \frac{r\gamma}{2} \bigg )\int_0^\infty \frac{x e^{-xt}}{(x-\varrho)^2 + \omega} \, \dif \MP(x)\\
& \qquad + \frac{2i \sqrt{\omega}}{4 \omega} \cdot \bigg [\varrho - i \sqrt{\omega} - \bigg(\frac{2}{\gamma}\bigg)^2 \left (1- \frac{r\gamma}{2} \right )^2 \frac{\varrho + i \sqrt{\omega}}{r (\varrho^2 + \omega)} \bigg ] e^{(-\varrho + i \sqrt{\omega}) t}.
\end{align*}
\end{lemma}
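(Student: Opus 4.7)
}
The starting point is equation \eqref{eq:SGD_cp_5}. First I would verify that the denominator there equals $\tfrac{\gamma}{2}[(p+\varrho)^2+\omega]$; this is a direct expansion using $\varrho^2+\omega=\tfrac{2}{\gamma}(1-\tfrac{r\gamma}{2})^2$, which follows from the definitions \eqref{eq:rho_omega}. Combining this with the identity $\tfrac{m(q)}{\sqrt{r}}=\int_0^\infty\tfrac{x}{x+p}\,\dif\MP(x)$ from Lemma~\ref{lem:semicirclestieltjes}, the target quantity can be written as
\begin{equation*}
\frac{1-\tfrac{2}{r\gamma}K(p)}{(1-K(p))p}
=\frac{\tfrac{2}{\gamma}(1-\tfrac{r\gamma}{2})\int_0^\infty \tfrac{x}{x+p}\,\dif\MP(x)+p}{(p+\varrho)^2+\omega}.
\end{equation*}

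Next I would perform a partial-fraction decomposition in $p$ inside the integral:
\begin{equation*}
\frac{1}{(x+p)[(p+\varrho)^2+\omega]}
=\frac{1}{(x-\varrho)^2+\omega}\left[\frac{1}{x+p}-\frac{(p+\varrho)-(x-\varrho)}{(p+\varrho)^2+\omega}\right],
\end{equation*}
which one checks by clearing denominators. Inverse-Laplace-transforming term by term, the $\tfrac{1}{x+p}$ piece produces $e^{-xt}$ and yields the desired integral $\tfrac{2}{\gamma}(1-\tfrac{r\gamma}{2})\int_0^\infty \tfrac{x e^{-xt}}{(x-\varrho)^2+\omega}\,\dif\MP(x)$. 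The remaining pieces, together with $\tfrac{p}{(p+\varrho)^2+\omega}$ from the second term of the numerator, have inverse Laplace transforms that are linear combinations of $e^{(-\varrho\pm i\sqrt{\omega})t}$, computed from $\mathcal{L}^{-1}\{1/((p+\varrho)^2+\omega)\}=\tfrac{1}{2i\sqrt{\omega}}(e^{(-\varrho+i\sqrt{\omega})t}-e^{(-\varrho-i\sqrt{\omega})t})$ and the analogous formula for $\mathcal{L}^{-1}\{(p+\varrho)/((p+\varrho)^2+\omega)\}$.

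The next step is to collect the coefficients of $e^{(-\varrho\pm i\sqrt{\omega})t}$. The key observation is that, after using the identity $\tfrac{x(x-\varrho\mp i\sqrt{\omega})}{(x-\varrho)^2+\omega}=\tfrac{x}{x-\varrho\pm i\sqrt{\omega}}=\tfrac{x}{x+p_\pm}$ where $p_\pm=-\varrho\pm i\sqrt{\omega}$, the coefficient of $e^{(-\varrho\pm i\sqrt{\omega})t}$ can be expressed as a simple linear combination of $\tfrac{2}{\gamma}(1-\tfrac{r\gamma}{2})\int_0^\infty \tfrac{x}{x+p_\mp}\,\dif\MP(x)$ with constants coming from $\varrho,\sqrt{\omega}$. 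At this point Lemma~\ref{lem:magical} is applied directly: the identity for $p_-=-\varrho-i\sqrt{\omega}$ holds for every $\gamma<2/r$, while the identity for $p_+=-\varrho+i\sqrt{\omega}$ changes form across $\gamma_*$. Substituting the first case of Lemma~\ref{lem:magical} into both coefficients (valid for $\gamma\le\gamma_*$) makes both residue coefficients vanish exactly, leaving only the integral term. For $\gamma_*<\gamma<2/r$, the coefficient attached to $p_-$ still vanishes, but the coefficient attached to $p_+$ now uses the third case of Lemma~\ref{lem:magical}, which, after rearrangement, reproduces precisely the extra $e^{(-\varrho+i\sqrt{\omega})t}$ term stated in the lemma.

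The main obstacle is purely bookkeeping: one must be meticulous in matching signs, branches of $\sqrt{\omega}$, and factors of $(1-\tfrac{r\gamma}{2})$ when verifying that the two residue coefficients coming out of the partial-fraction calculation align with the two cases of Lemma~\ref{lem:magical}. The cancellation in the first case is what dictates that $\gamma_*$ is precisely the transition threshold, and the non-cancellation in the second case gives the explicit constant $c(\gamma,r)$ in Theorem~\ref{thm:main_noiseless_SGD_dynamics}. No deep analytic input beyond the Laplace-transform identities and Lemma~\ref{lem:magical} is required.
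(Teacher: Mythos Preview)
Your plan is correct and leads to the same conclusion via essentially the same inputs (the rewriting \eqref{eq:SGD_cp_5} of the target and Lemma~\ref{lem:magical}), but the route differs from the paper's. The paper works in the \emph{forward} direction: it posits the candidate $y(t)=\tfrac{2}{\gamma}(1-\tfrac{r\gamma}{2})\int \tfrac{xe^{-xt}}{(x-\varrho)^2+\omega}\,\dif\MP(x)$, computes $Y(p)=\mathcal{L}\{y\}(p)$, and then does a partial-fraction decomposition in the \emph{integration variable} $x$ (poles at $-p,\ \varrho\pm i\sqrt\omega$), applying Lemma~\ref{lem:magical} to evaluate $\int \tfrac{x}{x-\varrho\pm i\sqrt\omega}\,\dif\MP$ and recover the closed form $Y(p)=\big(\tfrac{2}{\gamma}(1-\tfrac{r\gamma}{2})\tfrac{m(q)}{\sqrt r}+p\big)/\big((p+\varrho)^2+\omega\big)$. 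For $\gamma>\gamma_*$ the paper adds an undetermined combination $A_1e^{(-\varrho+i\sqrt\omega)t}+A_2e^{(-\varrho-i\sqrt\omega)t}$ and fixes $A_1,A_2$ so that the Laplace transform still matches. You instead work \emph{backward}: decompose in the Laplace variable $p$ (poles at $-x,\ -\varrho\pm i\sqrt\omega$), invert term-by-term, and then use Lemma~\ref{lem:magical} to show the two exponential residues cancel (or survive). Your approach is arguably more direct since it does not require guessing the answer; the paper's approach avoids having to track inverse Laplace transforms of complex exponentials.

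Two minor slips to fix when you write it out: the residue at $e^{(-\varrho\pm i\sqrt\omega)t}$ involves $\int \tfrac{x}{x+p_\pm}\,\dif\MP$, not $p_\mp$ (check your partial-fraction coefficient at the pole $p=p_+$); and for $\gamma\le\gamma_*$ you need the first \emph{two} cases of Lemma~\ref{lem:magical} (one for $p_-$, one for $p_+$), not just the first. Neither affects the argument.
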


\begin{proof} We first suppose that $\gamma \le \gamma_*$ and define the function 
\[ y(t) \defas \frac{2}{\gamma} \bigg ( 1- \frac{r\gamma}{2} \bigg ) \int_0^\infty \frac{x e^{-x t} }{(x-\varrho)^2 + \omega} \, \dif \MP(x).\]
Then the Laplace transform of this function is given by the equation
\[ Y(p) \defas \mathcal{L}\{y(t)\}(p) = \frac{2}{\gamma} \bigg (1-\frac{r\gamma}{2} \bigg ) \int_0^\infty \frac{x}{(x+p)((x-\varrho)^2 + \omega)} \, \dif \MP(x). \]
The roots of $(x-\varrho)^2 + \omega$ are precisely $\varrho \pm i \sqrt{\omega}$ so by partial fractions, we have that 
\begin{equation} \begin{aligned} \label{eq:partial_fractions}
    \frac{x}{(x+p)((x-\varrho)^2 + \omega)} &= \frac{1}{(p+ \varrho)^2 + \omega} \cdot \frac{x}{x+p} - \frac{2 i \sqrt{\omega}}{4 \omega (p+\varrho + i \sqrt{\omega})} \cdot \frac{x}{x-\varrho - i \sqrt{\omega}}\\
    &+ \frac{2i \sqrt{\omega} }{4\omega ( p + \varrho- i \sqrt{\omega})} \cdot \frac{x}{x-\varrho + i \sqrt{\omega}}
\end{aligned} \end{equation}
provided that $\sqrt{\omega} \neq 0$. Using Lemmas~\ref{lem:semicirclestieltjes} and \ref{lem:magical}, we have that 
    \begin{gather} 
    \frac{2}{\gamma} \bigg ( 1-\frac{r\gamma}{2} \bigg ) \int_0^\infty \frac{x}{x-\varrho - i \sqrt{\omega}} \, \dif \MP(x) = \varrho + i \sqrt{\omega}, \label{eq:magic_formula_gamma_*} \\
    \frac{2}{\gamma} \bigg ( 1-\frac{r\gamma}{2} \bigg ) \int_0^\infty \frac{x}{x-\varrho + i \sqrt{\omega}} \, \dif \MP(x) = \varrho - i \sqrt{\omega}, \quad \text{and} \quad
    \int_0^\infty \frac{x}{x+p} \, \dif \MP(x) = \frac{m(q)}{\sqrt{r}}, \nonumber
    \end{gather}
where the function $m(q)$ and the point $q = -\frac{p+1+r}{\sqrt{r}}$ are defined in Lemma~\ref{lem:semicirclestieltjes}. A simple calculation using \eqref{eq:magic_formula_gamma_*} shows that 
\begin{align}
    \frac{2}{\gamma} \bigg (1 - \frac{r\gamma}{2} \bigg ) \int_0^{\infty} &\left ( \frac{2i \sqrt{\omega} }{4\omega ( p + \varrho- i \sqrt{\omega})} \cdot \frac{x}{x-\varrho + i \sqrt{\omega}} -\frac{2 i \sqrt{\omega}}{4 \omega (p+\varrho + i \sqrt{\omega})} \cdot \frac{x}{x-\varrho - i \sqrt{\omega}} \right ) \, \dif \MP(x) \nonumber \\
    &= \frac{2i \sqrt{\omega}}{4 \omega ( p + \varrho -i \sqrt{\omega})} \cdot (\varrho - i \sqrt{\omega} ) - \frac{2i \sqrt{\omega}}{4 \omega ( p + \varrho + i\sqrt{\omega})} \cdot (\varrho + i \sqrt{\omega}) \nonumber\\
    &= \frac{p}{(p + \varrho)^2 + \omega}.  \label{eq:magic_1}
\end{align}
Combining the partial fractions decomposition of $Y(s)$ in \eqref{eq:partial_fractions} and \eqref{eq:magic_formula_gamma_*}, we deduce that 
\[ Y(s) = \frac{\tfrac{2}{\gamma} \big (1-\frac{r\gamma}{2} \big ) \frac{m(q)}{\sqrt{r}} + p}{(p+ \varrho)^2 + \omega}.\]
Since both $Y(s)$ and the RHS make sense when $\omega = 0$ by continuity this result holds when $\omega = 0$. After noting that 
\[ \frac{\gamma}{2} \big [ (p + \varrho)^2 + \omega \big ] = \bigg ( 1- \frac{r\gamma}{2} \bigg )^2 + \frac{p\gamma(1+r)}{2} \bigg (1 - \frac{r\gamma}{2} \bigg )  + \frac{\gamma p^2}{2},\]
the result follows for $\gamma \le \gamma_*$ from comparing with \eqref{eq:SGD_cp_5}. 

Next, we consider the setting where $\gamma > \gamma_*$. In this case, we have that $\omega < 0$. Let $A_1$ and $A_2$ be indeterminates and define
  \begin{align*}
  w(t) &\defas \frac{2}{\gamma} \bigg (1 - \frac{r\gamma}{2} \bigg )\int_0^\infty \frac{x e^{-xt}}{(x-\varrho)^2 + \omega} \, \dif \MP(x)
  +A_1 \frac{2i \sqrt{\omega}}{4 \omega} e^{(-\varrho + i\sqrt{\omega})t}+ A_2 \frac{2i \sqrt{\omega}}{4 \omega} e^{(-\varrho - i\sqrt{\omega})t}\\
  & = y(t) + A_1 \frac{2i \sqrt{\omega}}{4 \omega} e^{(-\varrho + i\sqrt{\omega})t}+ A_2 \frac{2i \sqrt{\omega}}{4 \omega} e^{(-\varrho - i\sqrt{\omega})t}.
  \end{align*}
  Particularly the Laplace transform of $w(t)$ is
  \begin{equation} \label{eq:W}
  W(p) \defas \mathcal{L}\{w(t)\} = Y(p) + \frac{2i \sqrt{\omega}}{4 \omega} \frac{A_1}{p + \varrho - i \sqrt{\omega}} + \frac{2i \sqrt{\omega}}{4 \omega}  \frac{A_2}{p + \varrho + i \sqrt{\omega}}.
  \end{equation}
  As in the previous case, we have the partial fraction decomposition in \eqref{eq:partial_fractions} for $Y(p) = \mathcal{L}\{y(t)\}$. However in this case, using Lemmas~\ref{lem:semicirclestieltjes} and \ref{lem:magical}, we get different formulas for \eqref{eq:magic_formula_gamma_*}:
\begin{equation}
    \begin{gathered} \label{eq:magic_formula_gamma_*_1}
      \frac{2}{\gamma} \bigg ( 1-\frac{r\gamma}{2} \bigg ) \int_0^\infty \frac{x}{x-\varrho + i \sqrt{\omega}} \, \dif \MP(x) = \bigg (\frac{2}{\gamma} \bigg)^2 \left (1- \frac{r\gamma}{2} \right )^2 \frac{\varrho + i \sqrt{\omega}}{r (\varrho^2 + \omega)},\\
    \frac{2}{\gamma} \bigg ( 1-\frac{r\gamma}{2} \bigg ) \int_0^\infty \frac{x}{x-\varrho - i \sqrt{\omega}} \, \dif \MP(x) = \varrho + i \sqrt{\omega}, \, \, \text{and} \, \,
    \int_0^\infty \frac{x}{x+p} \, \dif \MP(x) = \frac{m(q)}{\sqrt{r}}.
    \end{gathered}
\end{equation}
With these equations, we have that 
\begin{equation}
\begin{aligned} \label{eq:magic_2}
    \frac{2}{\gamma} \bigg (1 - \frac{r\gamma}{2} \bigg ) \int_0^{\infty} &\bigg ( \frac{2i \sqrt{\omega} }{4\omega ( p + \varrho- i \sqrt{\omega})} \cdot \frac{x}{x-\varrho + i \sqrt{\omega}}  \\
    & \qquad \qquad \qquad - \frac{2 i \sqrt{\omega}}{4 \omega (p+\varrho + i \sqrt{\omega})} \cdot \frac{x}{x-\varrho - i \sqrt{\omega}} \bigg ) \, \dif \MP(x)\\
    &= \frac{2i \sqrt{\omega}}{4 \omega ( p + \varrho -i \sqrt{\omega})} \bigg(\frac{2}{\gamma}\bigg)^2 \left (1- \frac{r\gamma}{2} \right )^2 \frac{\varrho + i \sqrt{\omega}}{r (\varrho^2 + \omega)}\\
    & \qquad \qquad \qquad - \frac{2i \sqrt{\omega}}{4 \omega ( p + \varrho + i\sqrt{\omega})} \cdot (\varrho + i \sqrt{\omega}).
    % &= \frac{p}{(p + \varrho)^2 + \omega}.
\end{aligned}
\end{equation}
We observe that if $\big(\tfrac{2}{\gamma}\big)^2 \left (1- \frac{r\gamma}{2} \right )^2 \frac{\varrho + i \sqrt{\omega}}{r (\varrho^2 + \omega)} = \varrho-i\sqrt{\omega}$, then by \eqref{eq:magic_1} we would be done. Therefore we can use $A_1$ to make this term equal to $\varrho-i\sqrt{\omega}$. Hence, we should choose $A_1$ such that
\begin{gather*}
    A_1 + \bigg(\frac{2}{\gamma}\bigg)^2 \left (1- \frac{r\gamma}{2} \right )^2 \frac{\varrho + i \sqrt{\omega}}{r (\varrho^2 + \omega)} = \varrho - i \sqrt{\omega}\\
    \Rightarrow \quad A_1 = \varrho - i \sqrt{\omega} - \bigg(\frac{2}{\gamma}\bigg)^2 \left (1- \frac{r\gamma}{2} \right )^2 \frac{\varrho + i \sqrt{\omega}}{r (\varrho^2 + \omega)} \quad \text{and} \quad A_2 = 0. 
\end{gather*}
If we define
\begin{align*} 
Z(p) \defas \frac{2}{\gamma} &\bigg (1 - \frac{r\gamma}{2} \bigg ) \int_0^{\infty} \bigg ( \frac{2i \sqrt{\omega} }{4\omega ( p + \varrho- i \sqrt{\omega})} \cdot \frac{x}{x-\varrho + i \sqrt{\omega}}\\
& \qquad \qquad \qquad \qquad \qquad -\frac{2 i \sqrt{\omega}}{4 \omega (p+\varrho + i \sqrt{\omega})} \cdot \frac{x}{x-\varrho - i \sqrt{\omega}} \bigg ) \, \dif \MP(x),
\end{align*}
then by the construction of $A_1$ and $A_2$, we deduce that
\begin{align*}
    Z(p) + \frac{2i \sqrt{\omega}}{4 \omega} \frac{A_1}{p + \varrho - i \sqrt{\omega}} + \frac{2i \sqrt{\omega}}{4 \omega} \frac{A_1}{p + \varrho + i \sqrt{\omega}} = \frac{p}{(p + \varrho)^2 + \omega}.
\end{align*}
from \eqref{eq:magic_1}. Putting together this with \eqref{eq:magic_formula_gamma_*_1} and the definition of $W(p)$ in \eqref{eq:W}, we get that
\[ W(p) =  \frac{\frac{2}{\gamma} \big (1-\frac{r\gamma}{2} \big ) \frac{m(q)}{\sqrt{r}} + p}{(p+ \varrho)^2 + \omega}.\]
The result then follows. 
\end{proof}

\begin{theorem}[Dynamics of SGD, noiseless setting] Suppose $\widetilde{R} = 0$ and the batchsize satisfies $\beta(n) \leq n^{1/5-\delta}$ for some $\delta >0$, and the stepsize is $0 < \gamma < \frac{2}{r}$. Define the critical stepsize $\gamma_* \in \mathbb{R}$ and constants $\varrho > 0$ and $\omega \in \mathbb{C}$, 
\begin{equation*}
\gamma_* = \frac{2}{\sqrt{r}(r-\sqrt{r}+1)}, \, \, \varrho = \frac{1+r}{2} \left ( 1- \frac{r\gamma}{2} \right ), \, \, \text{and} \, \, \omega = \frac{1}{4} \left (1-\frac{r\gamma}{2} \right )^2 \left (\frac{8}{\gamma} - (1+r)^2 \right ).
\end{equation*}
The iterates of SGD satsify if $\gamma \le \gamma_*$ 
\begin{align*}
f(\xx_{ \lfloor \tfrac{n}{\beta} t \rfloor }) \Prto[n] \quad  &R \cdot \frac{1}{\gamma} \bigg (1 - \frac{r\gamma}{2} \bigg )\int_0^\infty \frac{x e^{-2\gamma xt} }{(x-\varrho)^2 + \omega} \, \dif \MP(x)
\end{align*}
and if $\gamma > \gamma_*$, the iterates of SGD satisfy
\begin{align*} f(\xx_{ \lfloor \tfrac{n}{\beta} t \rfloor }) \Prto[n] \quad &R \cdot \frac{1}{\gamma} \bigg (1 - \frac{r \gamma}{2} \bigg )\int_0^\infty \frac{x e^{-2\gamma xt}}{(x-\varrho)^2 + \omega} \, \dif \MP(x)\\
& \qquad + R \cdot \frac{1}{4 \sqrt{|\omega|}} \cdot \bigg [\varrho +  \sqrt{|\omega|} - \bigg(\frac{2}{\gamma}\bigg)^2 \left (1- \frac{r\gamma}{2} \right )^2 \frac{\varrho - \sqrt{|\omega|}}{r (\varrho^2 - |\omega|)} \bigg ] e^{-2\gamma (\varrho + \sqrt{|\omega|}) t}.
 \end{align*}
 Here the convergence is locally uniformly. 
\end{theorem}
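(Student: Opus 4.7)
The plan is to combine Theorem~\ref{thm: concentration_main} with an explicit Laplace-transform computation. By Theorem~\ref{thm: concentration_main}, under the stepsize assumption $\gamma < 2/r$ (which coincides with $\gamma < \tfrac{2}{r}(\int x \, \dif \MP(x))^{-1}$ since the first moment of $\MP$ is $1$), the function values $f(\xx_{\lfloor nt/\beta \rfloor})$ converge in probability, locally uniformly on compacts of $t$, to the solution $\psi_0(t)$ of the Volterra equation \eqref{eq:Volterra_eq_main}. Setting $\widetilde{R}=0$ reduces this to $\psi_0(t) = \tfrac{R}{2}h_1(t) + \gamma^2 r\int_0^t h_2(t-s)\psi_0(s)\,\dif s$, so it suffices to identify $\psi_0(t)$ explicitly.

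To do so, I would rescale time via $\widehat{\psi}_0(t) \defas 2\psi_0(t/(2\gamma))$ as in \eqref{eq:Old_Volterra_Equation}, and pass to Laplace transforms. Using the computation of $\Psi(p) = \mathcal{L}\{\widehat\psi_0\}$ in \eqref{eq:SGD_cp_2} together with the identity \eqref{eq:Kst} expressing $K(p)$ via the semicircle Stieltjes transform $m(q)$, one gets the clean rational-in-$m(q)$ expression \eqref{eq:SGD_cp_5} for the Laplace transform of the solution. Inverting this Laplace transform is exactly the content of Lemma~\ref{lem:magichat}: for $\gamma \le \gamma_*$ the inversion is a pure Marchenko-Pastur integral, while for $\gamma_* < \gamma < 2/r$ an additional exponential term $e^{(-\varrho+i\sqrt{\omega})t}$ appears whose coefficient is fixed by the partial-fraction residue computation in \eqref{eq:magic_2} together with the two different evaluations of the Marchenko-Pastur integrals in \eqref{eq:magic_formula_gamma_*_1}. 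Rescaling back (so that $\mathcal{L}^{-1}$-output at argument $2\gamma t$ with an overall factor $R/2$ returns $\psi_0(t)$) yields the stated formulas, where when $\gamma > \gamma_*$ the quantity $\omega < 0$ so $\sqrt{\omega} = i\sqrt{|\omega|}$ and the imaginary parts reorganize into the real exponential $e^{-2\gamma(\varrho + \sqrt{|\omega|})t}$ with the stated real coefficient.

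The main obstacle is the phase-transition case analysis: which of the two factors in the factorization
\[
\tfrac{\gamma}{2}[(p+\varrho)^2+\omega] = \bigl(1-\tfrac{r\gamma}{2} + \tfrac{r\gamma p}{2\sqrt{r}} m(q)\bigr)\bigl(1-\tfrac{r\gamma}{2} + \tfrac{r\gamma p}{2\sqrt{r}} \widehat{m}(q)\bigr)
\]
vanishes at the pole $p_* = -\varrho + i\sqrt{\omega}$ changes as $\gamma$ crosses $\gamma_*$. This is precisely the dichotomy proved in Lemma~\ref{lem:magical} by tracking signs, continuity in $\gamma$, and ruling out a transition except at the single point where $m(q_*)^2 = 1$ (equivalent to $-p_* = (1-\sqrt{r})^2$, which by \eqref{eq:criticalpoint} occurs exactly at $\gamma = \gamma_*$). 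Once this dichotomy is in hand, the Stieltjes evaluation for $p_* = -\varrho \pm i\sqrt{\omega}$ differs between the two regimes, and it is this difference that produces the extra exponential term for $\gamma > \gamma_*$.

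Finally, the locally uniform convergence claim follows directly from the locally uniform convergence statement of Theorem~\ref{thm: concentration_main}, since the right-hand side is exactly $\psi_0(t)$ for the corresponding case. The Laplace inversion is valid because $\Psi(p)$ is rational in $m(q)$ with poles controlled by the factorization above, and the stability/uniqueness of the Volterra solution (Proposition~\ref{prop:volterrastability} combined with Theorem~\ref{thm:VolterraTrivialities}) ensures the inverted function is the unique $\psi_0$.
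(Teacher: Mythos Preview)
Your proposal is correct and follows essentially the same approach as the paper: invoke Theorem~\ref{thm: concentration_main} for the convergence to $\psi_0$, then identify $\psi_0$ explicitly via the Laplace-transform computation culminating in Lemma~\ref{lem:magichat}, with the phase transition handled by the dichotomy of Lemma~\ref{lem:magical}. The paper's own proof is terser but logically identical, simply citing Lemma~\ref{lem:magichat} and the rescaling $\psi_0(t)=\tfrac{1}{2}\widehat{\psi}_0(2\gamma t)$.
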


\begin{proof} The result follows immediately from Lemma~\ref{lem:magichat} after noting that when $\gamma_* < \gamma$ we have $\omega < 0$ and that $f(\xx_{ \lfloor \tfrac{n}{\beta} t \rfloor }) \Prto[n] \psi_0(t) = \frac{1}{2}\widehat{\psi}_0(2 \gamma t)$.
\end{proof}

\subsubsection{Noisy term} We now turn to solving the noisy term in \eqref{eq:SGD_cp_2} (\textit{i.e.} the term with $\widetilde{R}$), namely
\begin{align*}
    \widetilde{R} \cdot \frac{r \mathcal{L}\{T(t)\}(p) + \frac{1-r}{p}}{1-K(p)}.
\end{align*}
First we rewrite the Laplace transform of $T$ in terms of the point $q = \tfrac{-p - 1-r}{\sqrt{r}}$. Recall the function $m(z)$ in \eqref{eq:m} as in Lemma~\ref{lem:semicirclestieltjes} by $m(z) = \frac{-z + \sqrt{z^2 - 4}}{2}$ so that the Laplace transform of $T$ becomes
\[ r \mathcal{L}\{T(t)\}(p) = \frac{\sqrt{r}}{p } \cdot \frac{q - \sqrt{q^2 -4}}{2} + \frac{r}{p} = \frac{r}{p}-\frac{\sqrt{r}}{p} m(q). \]
We note again from Lemma~\ref{lem:semicirclestieltjes} that $m(q) \hat{m}(q) = 1$ and $\hat{m}(q) = - q - m(q)$. Using the definition of $K(p)$ from \eqref{eq:Kst}, we get the following equality for the noisy term
\begin{equation}
\begin{aligned} \label{eq:noisy_term_1}
    \widetilde{R} \cdot \frac{r \mathcal{L}\{T(t)\}(p) + \frac{1-r}{p}}{1-K(p)} &= \widetilde{R} \cdot \frac{-\sqrt{r} \cdot m(q) + 1}{p (1-K(p))} \cdot \frac{1 - \frac{r\gamma}{2} + \frac{r \gamma p}{2 \sqrt{r}} \hat{m}(q)}{1 - \frac{r\gamma}{2 } + \frac{r\gamma p}{2 \sqrt{r}} \hat{m}(q) }\\
    &= \widetilde{R} \cdot \frac{ -\sqrt{r} \big (1 - \frac{r\gamma}{2}  \big ) m(q) - \frac{r\gamma p}{2 } + 1- \frac{r \gamma}{2} + \frac{r \gamma p}{2  \sqrt{r}} \hat{m}(q) }{ p \big [ \big ( 1 - \frac{r\gamma}{2} \big )^2 + \frac{rp\gamma(1+r)}{2 r} \big (1- \frac{r\gamma}{2} \big ) + \frac{r \gamma p^2 }{2r} \big ] }\\
    &= \widetilde{R} \cdot \frac{- \big [\frac{2}{\gamma} \sqrt{r} \big ( 1-\frac{r\gamma}{2} \big ) + \sqrt{r}p \big ] m(q) + p^2 + p + \frac{2}{\gamma} \big (1- \frac{r\gamma}{2} \big )}{ p ( (p + \varrho)^2 + \omega )},
\end{aligned} \end{equation}
where $\varrho$ and $\omega$ are defined in \eqref{eq:rho_omega}. With this, we able to conclude derive an expression for the noisy term in $\widehat{\psi}_0(t)$ of \eqref{eq:Old_Volterra_Equation}.

\begin{lemma} \label{lem: noisy_inverse_Laplace} Fix the stepsize $0 < \gamma < \tfrac{2}{r}$ and set the following constants
\[ \gamma_* = \frac{2}{\sqrt{r} (r - \sqrt{r} +1)}, \, \, \varrho = \frac{1+r}{2} \left (1 - \frac{r\gamma}{2} \right ), \, \, \text{and} \quad \omega = \frac{1}{4} \left (1-\frac{r\gamma}{2} \right )^2 \bigg (\frac{8}{\gamma}  - (1+r)^2 \bigg ).\]
If $\gamma \le \gamma_*$, then the following holds
\begin{align*}
    \mathcal{L}^{-1} \left \{ \frac{r \mathcal{L}\{T(t)\} + \frac{1-r}{p}}{1-K(p)} \right \} = \frac{\frac{2}{\gamma}  (1-r) \big (1-\frac{r\gamma}{2} \big )}{\varrho^2 + \omega} + \int_0^\infty \frac{-rx + \frac{2}{\gamma} r \big (1-\frac{r\gamma}{2} \big )}{(x-\varrho)^2 + \omega} e^{-xt} \, \dif \MP(x)
\end{align*}
and in the case that $\gamma_* < \gamma < \frac{2}{r}$, one gets that
\begin{align*}
  \mathcal{L}^{-1} &\left \{ \frac{r \mathcal{L}\{T(t)\} + \frac{1-r}{p}}{1-K(p)} \right \} = \frac{\frac{2}{\gamma}  (1-r) \big (1-\frac{r\gamma}{2} \big )}{\varrho^2 + \omega} + \int_0^\infty \frac{-rx + \frac{2}{\gamma} r \big (1-\frac{r\gamma}{2} \big )}{(x-\varrho)^2 + \omega} e^{-xt} \, \dif \MP(x)\\
  & + \frac{(2i \sqrt{\omega}) r \big [ \frac{2}{\gamma}   \big (1-\frac{r\gamma}{2} \big ) - (\varrho - i \sqrt{\omega}) \big ] }{4 \omega (\varrho-i\sqrt{\omega})  } \cdot 
  \bigg [ \frac{\varrho-i\sqrt{\omega}}{\frac{2}{\gamma} \left ( 1- \frac{r\gamma}{2} \right )} - \frac{\frac{2}{\gamma}(\varrho + i\sqrt{\omega}) \big (1-\frac{r\gamma}{2} \big )}{r (\varrho^2 + \omega)} \bigg ] e^{(-\varrho + i \sqrt{\omega}) t}.
\end{align*}
\end{lemma}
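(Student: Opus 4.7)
The plan is to mimic the structure of Lemma \ref{lem:magichat}: write down a candidate time-domain function $v(t)$ equal to the claimed right-hand side, compute its Laplace transform $V(p)$ termwise, and verify $V(p)$ coincides with the rational-in-$p$, $m(q)$-linear expression already obtained in \eqref{eq:noisy_term_1}. Since Laplace inversion is unique on a suitable class, this matching proves the claim.

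First I would compute $V(p)$ by handling the three types of terms separately. The constant $C \defas \frac{2(1-r)(1-\tfrac{r\gamma}{2})}{\gamma(\varrho^2+\omega)}$ contributes $C/p$. For the $\MP$-integral I would apply the partial-fraction identity
\[
\frac{1}{(x+p)((x-\varrho)^2+\omega)}
= \frac{1}{(p+\varrho)^2+\omega}\cdot\frac{1}{x+p}
- \frac{2i\sqrt{\omega}/(4\omega)}{p+\varrho+i\sqrt{\omega}}\cdot\frac{1}{x-\varrho-i\sqrt{\omega}}
+ \frac{2i\sqrt{\omega}/(4\omega)}{p+\varrho-i\sqrt{\omega}}\cdot\frac{1}{x-\varrho+i\sqrt{\omega}},
\]
together with \eqref{eq:partial_fractions}, to split $\int \frac{-rx + \tfrac{2r}{\gamma}(1-\tfrac{r\gamma}{2})}{(x+p)((x-\varrho)^2+\omega)}\,\dif\MP(x)$ into (a) a multiple of $\int \frac{x}{x+p}\,\dif\MP(x) = m(q)/\sqrt{r}$, (b) a multiple of $\int \frac{1}{x+p}\,\dif\MP(x)$, which after using the functional identity \eqref{eq:stfixedpt} and the relation $q=-(p+1+r)/\sqrt{r}$ reduces to a rational function of $p$ plus a multiple of $m(q)$, and (c) evaluations of $\int \frac{x}{x-\varrho\pm i\sqrt{\omega}}\,\dif\MP(x)$ and $\int \frac{1}{x-\varrho\pm i\sqrt{\omega}}\,\dif\MP(x)$, for which Lemma \ref{lem:magical} supplies closed-form values in the case $\gamma\le\gamma_*$.

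In the regime $\gamma\le\gamma_*$ both endpoints $-\varrho\pm i\sqrt{\omega}$ fall in the first two cases of \eqref{eq:p_*}, so the residue contributions at those points combine (as in the derivation of \eqref{eq:magic_1}) into a simple rational function of $p$ whose only $m(q)$-dependence comes from (a). Collecting everything over the common denominator $p((p+\varrho)^2+\omega)$ and invoking the identity $\tfrac{\gamma}{2}((p+\varrho)^2+\omega) = (1-\tfrac{r\gamma}{2})^2 + \tfrac{p\gamma(1+r)}{2}(1-\tfrac{r\gamma}{2}) + \tfrac{\gamma p^2}{2}$ already used in \eqref{eq:SGD_cp_5} should yield exactly the expression in \eqref{eq:noisy_term_1}, namely
\[
\frac{-\bigl[\tfrac{2\sqrt{r}}{\gamma}(1-\tfrac{r\gamma}{2}) + \sqrt{r}\,p\bigr] m(q) + p^2 + p + \tfrac{2}{\gamma}(1-\tfrac{r\gamma}{2})}{p((p+\varrho)^2+\omega)}.
\]
For $\gamma>\gamma_*$ (so $\omega<0$) the third case of \eqref{eq:p_*} activates at $p_*=-\varrho+i\sqrt{\omega}$: Lemma \ref{lem:magical} now returns $(\tfrac{2}{\gamma})^2(1-\tfrac{r\gamma}{2})^2 \tfrac{\varrho+i\sqrt{\omega}}{r(\varrho^2+\omega)}$ rather than $\varrho-i\sqrt{\omega}$. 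The discrepancy between these two values, weighted by the residue coefficient, is precisely what the explicit exponential term with factor $\tfrac{2i\sqrt{\omega}}{4\omega(\varrho-i\sqrt{\omega})}\cdot r\bigl[\tfrac{2}{\gamma}(1-\tfrac{r\gamma}{2}) - (\varrho-i\sqrt{\omega})\bigr]\bigl[\tfrac{\varrho-i\sqrt{\omega}}{\tfrac{2}{\gamma}(1-\tfrac{r\gamma}{2})} - \tfrac{\tfrac{2}{\gamma}(\varrho+i\sqrt{\omega})(1-\tfrac{r\gamma}{2})}{r(\varrho^2+\omega)}\bigr]$ is engineered to repair. Adding it pulls $V(p)$ back onto the same target rational function, exactly as in the second half of Lemma \ref{lem:magichat}.

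The main obstacle will be the final algebraic bookkeeping: combining the $\MP$-integral residues, the constant term, and (for $\gamma>\gamma_*$) the exponential correction into a single rational function of $p$ with numerator linear in $m(q)$ and matching \eqref{eq:noisy_term_1} coefficient by coefficient. A minor additional point is the degenerate case $\omega=0$, where the partial-fraction expansion becomes singular; this I would handle as in Lemma \ref{lem:magichat}, by observing that both sides of the identity are continuous in $(\varrho,\omega)$ and invoking continuity to extend the result to $\omega=0$.
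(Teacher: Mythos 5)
Your plan mirrors the paper's own proof almost exactly: both compute the Laplace transform of the candidate time-domain function via partial fractions, identify the residue pieces through Lemma~\ref{lem:magical}, and show the result collapses onto the rational-in-$p$, $m(q)$-linear target from \eqref{eq:noisy_term_1}, with the $\gamma>\gamma_*$ correction term engineered to absorb the discrepancy caused by the third branch of \eqref{eq:p_*}. The only cosmetic difference is that the paper keeps the factor of $x$ and decomposes $\tfrac{-rx+c}{x(x+p)[(x-\varrho)^2+\omega]}\cdot x$ over the four roots $\{0,-p,\varrho\pm i\sqrt\omega\}$, so every $\MP$-integral is of the form $\int\tfrac{x}{x-p_*}\,\dif\MP$ and the Dirac atom at $0$ never enters, whereas your split into $\tfrac{x}{(x+p)[\cdots]}$ and $\tfrac{1}{(x+p)[\cdots]}$ produces $\int\tfrac{1}{x-p_*}\,\dif\MP$ terms that carry the atom and require the extra reduction $\int\tfrac{1}{x+p}\,\dif\MP=\tfrac1p(1-\int\tfrac{x}{x+p}\,\dif\MP)$; this is slightly more bookkeeping but not a gap.
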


\begin{proof} We first consider the setting where $\gamma \le \gamma_*$ and we define the functions
\[ y(t) \defas \int_0^\infty \frac{-rx + \frac{2r}{\gamma} \big (1-\frac{r\gamma}{2} \big )}{(x-\varrho)^2 + \omega} e^{-xt} \, \dif \MP(x) \quad \text{and} \quad  j(t) \defas \frac{\frac{2}{\gamma} (1-r) \big (1-\frac{r\gamma}{2} \big )}{\varrho^2 + \omega}. \]
Then the Laplace transform of this function is given by the equation
\begin{equation} \label{eq:noisy_stuff_1}
   Y(p) \defas \mathcal{L}\{y(t)\} = \int_0^\infty \frac{-rx + \frac{2r}{\gamma}\big ( 1-\frac{r\gamma}{2} \big )}{x (x+p) [(x-\varrho)^2 + \omega]} \, x \, \dif \MP(x).
\end{equation}
The roots of $(x-\varrho)^2 + \omega$ are precisely $\varrho \pm i \sqrt{\omega}$ so by partial fractions, we have that
\begin{equation} \begin{aligned} \label{eq:noisy_term_2}
   \frac{-rx + \frac{2 r}{\gamma}\big ( 1-\frac{r\gamma}{2} \big )}{x (x+p) [(x-\varrho)^2 + \omega]} \cdot x &= \frac{\frac{2 r}{\gamma} \big (1- \frac{r\gamma}{2} \big )}{p (\varrho^2 + \omega)}  - \frac{rp + \frac{2r}{\gamma} \big (1-\frac{r\gamma}{2} \big )}{\big [ (p + \varrho)^2 + \omega \big ] p} \cdot \frac{x}{x+p}\\
   &+ \frac{ \frac{2 r}{\gamma}\big (1-\frac{r\gamma}{2} \big ) - r ( \varrho + i \sqrt{\omega}) }{ (p + \varrho + i \sqrt{\omega}) (\varrho + i \sqrt{\omega}) (2i \sqrt{\omega}) } \cdot \frac{x}{x - \varrho - i \sqrt{\omega}}\\
   &+ \frac{ \frac{2r}{\gamma} \big (1-\frac{r\gamma}{2} \big ) - r(\varrho - i \sqrt{\omega})}{(p + \varrho -i\sqrt{\omega})(\varrho - i\sqrt{\omega})(-2i \sqrt{\omega}) } \cdot \frac{x}{x - \varrho + i \sqrt{\omega}}.
\end{aligned} \end{equation}
We will consider each term individual. By using Lemma~\ref{lem:semicirclestieltjes}, we deduce that 
\begin{equation} \begin{aligned} \label{eq:m_q_noisy}
\int_0^\infty \frac{rp + \frac{2r}{\gamma} \big (1-\frac{r\gamma}{2 } \big )}{\big [ (p + \varrho)^2 + \omega \big ] p} \cdot \frac{x}{x+p} \, \dif \MP(x) = \frac{m(q)}{\sqrt{r}} \cdot \frac{rp + \frac{2 r}{\gamma} \big (1 - \frac{r\gamma}{2} \big )}{p ( (p + \varrho)^2 + \omega )}.
\end{aligned} \end{equation}
This matches the term in front of the $m(q)$ in \eqref{eq:noisy_term_1}. For the last two terms, we apply Lemma~\ref{lem:magical} and some simple computations to conclude that 
\begin{equation} \begin{aligned} \label{eq:noisy_stuff_2}
    L(p) \defas \int_0^\infty &\frac{ \frac{2 r}{\gamma}\big (1-\frac{r\gamma}{2 } \big ) - r ( \varrho + i \sqrt{\omega}) }{ (p + \varrho + i \sqrt{\omega}) (\varrho + i \sqrt{\omega}) (2i \sqrt{\omega}) } \cdot \frac{x}{x - \varrho - i \sqrt{\omega}} \, \dif \MP(x)
    \\ 
    & \quad \qquad + \int_0^\infty \frac{ \frac{2 r}{\gamma} \big (1-\frac{r\gamma}{2} \big ) - r(\varrho - i \sqrt{\omega})}{(p + \varrho -i\sqrt{\omega})(\varrho - i\sqrt{\omega})(-2i \sqrt{\omega}) } \cdot \frac{x}{x - \varrho + i \sqrt{\omega}} \, \dif \MP(x)\\
    &= \frac{ \frac{2 r}{\gamma}\big (1-\frac{r\gamma}{2} \big ) - r ( \varrho + i \sqrt{\omega}) }{ (p + \varrho + i \sqrt{\omega}) (2i \sqrt{\omega}) \frac{2}{\gamma} \big (1-\frac{r\gamma}{2} \big ) } + \frac{ \frac{2 r}{\gamma} \big (1-\frac{r\gamma}{2} \big ) - r(\varrho - i \sqrt{\omega})}{(p + \varrho -i\sqrt{\omega})(-2i \sqrt{\omega}) \frac{2}{\gamma} \big (1-\frac{r\gamma}{2} \big ) }\\
    &= \frac{-r \gamma( \frac{2}{\gamma} + p -r )}{ 2( (p + \varrho)^2 + \omega)  \big ( 1-\frac{r\gamma}{2} \big ) }.
\end{aligned} \end{equation}
We next observe that $\mathcal{L}\{1\} = \tfrac{1}{p}$. This observation applied to the function $j(t)$ together with the terms not associated with $m(q)$ in \eqref{eq:noisy_term_2} gives the following result
\begin{align*}
    \frac{2 (1-r) \big (1-\frac{r\gamma}{2} \big )}{p \gamma(\varrho^2 + \omega) } + L(p) + \frac{2 r \big (1- \frac{r\gamma}{2} \big )}{p \gamma (\varrho^2 + \omega)} = \frac{p^2 + p + \frac{2}{\gamma} \big (1-\frac{r\gamma}{2} \big )}{p ((p + \varrho)^2 + \omega)}.
\end{align*}
This combined with \eqref{eq:m_q_noisy} and \eqref{eq:noisy_term_1} shows that result holds when $\gamma \le \gamma_*$. 

Next, we consider the setting where $\gamma > \gamma_*$. In this case, we always have that $\omega < 0$. Let $A_1$ be an indeterminate and define 
\[w(t) \defas y(t) + j(t) + \frac{\frac{2r}{\gamma} \left (1-\frac{r\gamma}{2} \right ) - r(\varrho - i \sqrt{\omega})}{(\varrho - i \sqrt{\omega})(-2i \sqrt{\omega}) } A_1 e^{(-\varrho + i\sqrt{\omega})t}. \]
The Laplace transform of $w(t)$ is 
\[W(p) \defas \mathcal{L}\{w(t)\} = Y(p) + \frac{2 (1-r) \big (1-\frac{r\gamma}{2} \big )}{\gamma(\varrho^2 + \omega)} \cdot \frac{1}{p} + \frac{\frac{2r}{\gamma} \left (1-\frac{r\gamma}{2} \right ) - r(\varrho - i \sqrt{\omega})}{(\varrho - i \sqrt{\omega})(-2i \sqrt{\omega}) } A_1 \cdot \frac{1}{p + \varrho - i \sqrt{\omega}}.\]
As in the previous case, we have that \eqref{eq:noisy_stuff_1}, \eqref{eq:noisy_term_2}, and \eqref{eq:m_q_noisy} all still hold. The only difference occurs in the second term in the function $L(p)$ in \eqref{eq:noisy_stuff_2}. In this case using Lemma~\ref{lem:magical}, we get that
\begin{equation} \begin{aligned}
    \widehat{L}(p) \defas \int_0^\infty &\frac{ \frac{2r}{\gamma} \big (1-\frac{r\gamma}{2} \big ) - r ( \varrho + i \sqrt{\omega}) }{ (p + \varrho + i \sqrt{\omega}) (\varrho + i \sqrt{\omega}) (2i \sqrt{\omega}) } \cdot \frac{x}{x - \varrho - i \sqrt{\omega}} \, \dif \MP(x)
    \\ 
    & \quad \qquad + \int_0^\infty \frac{ \frac{2 r}{\gamma} \big (1-\frac{r\gamma}{2} \big ) - r(\varrho - i \sqrt{\omega})}{(p + \varrho -i\sqrt{\omega})(\varrho - i\sqrt{\omega})(-2i \sqrt{\omega}) } \cdot \frac{x}{x - \varrho + i \sqrt{\omega}} \, \dif \MP(x)\\
    &= \frac{ \frac{2r}{\gamma} \big (1-\frac{r\gamma}{2} \big ) - r ( \varrho + i \sqrt{\omega}) }{ (p + \varrho + i \sqrt{\omega}) (2i \sqrt{\omega}) \frac{2}{\gamma} \big (1-\frac{r\gamma}{2} \big ) }\\
    &\qquad \qquad + \frac{ \frac{2r}{\gamma} \big (1-\frac{r\gamma}{2} \big ) - r(\varrho - i \sqrt{\omega})}{(p + \varrho -i\sqrt{\omega}) (\varrho-i \sqrt{\omega} )(-2i \sqrt{\omega}) } \frac{2 (\varrho + i \sqrt{\omega}) \big (1-\frac{r\gamma}{2})}{r \gamma (\varrho^2 + \omega)}.
\end{aligned} \end{equation}
Using the term $A_1$, we are able to make $\widehat{L}(p)$ exactly equal to the RHS of $L(p)$. In particular, we choose the constant $A_1$ as
\begin{equation}
    A_1 = \frac{\varrho-i\sqrt{\omega}}{\frac{2}{\gamma} \big (1-\frac{r\gamma}{2} \big )} - \frac{2(\varrho + i\sqrt{\omega}) \big ( 1-\frac{r\gamma}{2} \big )}{r\gamma(\varrho^2 + \omega)}.
\end{equation}
By this choice of $A_1$, we guarantee that the sum of $\widehat{L}(p)$ and the $A_1$ term equals the RHS of $L(p)$:
\begin{align*}
    \widehat{L}(p) + \frac{\frac{2r}{\gamma} \left (1-\frac{r\gamma}{2} \right ) - r(\varrho - i \sqrt{\omega})}{(\varrho - i \sqrt{\omega})(-2i \sqrt{\omega}) } A_1 \cdot \frac{1}{p + \varrho - i \sqrt{\omega}} =  \frac{-r \gamma ( \frac{2}{\gamma} + p -r )}{ 2( (p + \varrho)^2 + \omega)  \big ( 1-\frac{r\gamma}{2} \big ) }.
\end{align*}
The result then immediately follows from the previous case when $\gamma > \gamma_*$.
\end{proof}
From this lemma, we can now derive the main result which shows that the function values concentrate.

\begin{theorem}[Dynamics of SGD, noisy setting] \label{thm:isotropic_noisy} Suppose the batchsize satisfies $\beta(n) \leq n^{1/5-\delta}$ for some $\delta >0$ and the stepsize is $0< \gamma < \frac{2}{r}$. Define the critical stepsize $\gamma_*$ and constants $\varrho > 0$ and $\omega \in \mathbb{C}$ by
\begin{equation*} \label{eq:important_constants}
\gamma_* = \frac{2}{\sqrt{r}(r-\sqrt{r}+1)}, \, \, \varrho = \frac{1+r}{2} \left ( 1- \frac{r\gamma}{2} \right ), \, \, \text{and} \, \, \omega = \frac{1}{4} \left (1-\frac{r\gamma}{2} \right )^2 \left (\frac{8}{\gamma} - (1+r)^2 \right ).
\end{equation*}
The iterates of SGD satisfy if $\gamma \le \gamma_*$ 
\begin{align*}
f(\xx_{ \lfloor \tfrac{n}{\beta} t \rfloor }) \Prto[d] \quad  &R \cdot \frac{1}{\gamma} \bigg (1 - \frac{r\gamma}{2} \bigg )\int_0^\infty \frac{x e^{-2\gamma xt} }{(x-\varrho)^2 + \omega} \, \dif \MP(x)\\
& \qquad + \widetilde{R} \cdot \frac{1}{2} \cdot \bigg [ \frac{\frac{2}{\gamma} (1-r) \big (1-\frac{r\gamma}{2} \big )}{\varrho^2 + \omega} + \int_0^\infty \frac{-rx + \frac{2r}{\gamma} \big (1-\frac{r\gamma}{2} \big )}{(x-\varrho)^2 + \omega} e^{-2\gamma xt} \, \dif \MP(x) \bigg ].
\end{align*}
and if $\gamma > \gamma_*$, the iterates of SGD satisfy
\begin{align*} &f(\xx_{ \lfloor \tfrac{n}{\beta} t \rfloor }) \Prto[d] \quad R \cdot \frac{1}{\gamma} \bigg (1 - \frac{r\gamma}{2} \bigg )\int_0^\infty \frac{x e^{-2\gamma xt}}{(x-\varrho)^2 + \omega} \, \dif \MP(x)\\
& + R  \cdot \frac{1}{4 \sqrt{|\omega|}} \cdot \bigg [\varrho +  \sqrt{|\omega|} - \frac{4}{\gamma^2} \left (1- \frac{r\gamma}{2} \right )^2 \frac{\varrho - \sqrt{|\omega|}}{r (\varrho^2 - |\omega|)} \bigg ] e^{-2\gamma (\varrho + \sqrt{|\omega|}) t}\\
& + \widetilde{R} \cdot \frac{1}{2} \bigg [ \frac{\frac{2}{\gamma} (1-r) \big (1-\frac{r\gamma}{2} \big )}{\varrho^2 + \omega} + \int_0^\infty \frac{-rx + \frac{2 r}{\gamma} \big (1-\frac{r\gamma}{2} \big )}{(x-\varrho)^2 + \omega} e^{-2\gamma xt} \, \dif \MP(x)\\
  &+ \frac{(-2\sqrt{|\omega|}) r \big [ \frac{2}{\gamma}  \big (1-\frac{r\gamma}{2} \big ) - (\varrho + \sqrt{|\omega|}) \big ] }{4 \omega (\varrho+\sqrt{|\omega|})  } \cdot 
  \bigg ( \frac{\varrho+\sqrt{|\omega|}}{\frac{2}{\gamma} \left ( 1- \frac{r\gamma}{2} \right )} - \frac{\frac{2}{\gamma}  (\varrho - \sqrt{|\omega|})  (1-\frac{r\gamma}{2} )}{r (\varrho^2 + \omega)} \bigg ) e^{-2\gamma (\varrho + \sqrt{|\omega|}) t} \bigg ].
 \end{align*}
 Here the convergence is locally uniformly. 
\end{theorem}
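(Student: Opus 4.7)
The plan is to deduce Theorem~\ref{thm:isotropic_noisy} as essentially a corollary of Theorem~\ref{thm: concentration_main} (which says $f(\xx_{\lfloor n t/\beta \rfloor})$ concentrates to the solution $\psi_0(t)$ of the Volterra equation \eqref{eq:Volterra_eq_main}) combined with the two explicit inverse Laplace transform computations already assembled in Lemma~\ref{lem:magichat} and Lemma~\ref{lem: noisy_inverse_Laplace}. The Marchenko--Pastur limiting measure verifies Assumption~\ref{assumption: spectral_density} by Lemma~\ref{lem:bai_Spectral}, so Theorem~\ref{thm: concentration_main} applies and it suffices to produce a closed-form expression for $\psi_0(t)$.

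First I would apply the change of variables $\widehat{\psi}_0(t) \defas 2\psi_0(t/(2\gamma))$, which, as shown in \eqref{eq:Old_Volterra_Equation}, turns the Volterra equation into a convolution equation with kernel $k(t) = \tfrac{r\gamma}{2} h_2(t/(2\gamma))$ and forcing function $R\,\widehat{h}_1(t) + \widetilde R\,(r\widehat{h}_0(t)+(1-r))$. Taking Laplace transforms and using linearity, the resolvent formula \eqref{eq:SGD_cp_2} splits $\Psi(p) = \mathcal{L}\{\widehat{\psi}_0\}(p)$ cleanly into a noiseless piece proportional to $R$ and a noisy piece proportional to $\widetilde R$. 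Lemma~\ref{lem:magichat} inverts the noiseless piece (yielding the $R$-terms in the theorem) and Lemma~\ref{lem: noisy_inverse_Laplace} inverts the noisy piece (yielding the $\widetilde R$-terms), so adding them gives $\widehat{\psi}_0(t)$, and then $\psi_0(t) = \tfrac{1}{2}\widehat{\psi}_0(2\gamma t)$ rescales the exponents from $e^{-xt}$ to $e^{-2\gamma xt}$ as stated.

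The last step is to handle the two regimes of the stepsize. For $\gamma \le \gamma_*$ one has $\omega \ge 0$, so all quantities appearing in the lemmas are real and the statement follows directly. For $\gamma_* < \gamma < 2/r$ one has $\omega<0$, and the complex conjugate pair $-\varrho \pm i\sqrt{\omega}$ collapses to the real pair $-\varrho \pm \sqrt{|\omega|}$. I would substitute $i\sqrt{\omega} \mapsto -\sqrt{|\omega|}$ (the branch chosen in the proof of Lemma~\ref{lem:magical}, cf.\ \eqref{eq:p_*}) in the extra exponential terms from both lemmas and simplify the algebraic prefactors: the factor $\tfrac{2i\sqrt{\omega}}{4\omega}$ becomes $\tfrac{1}{2i\sqrt{\omega}} = -\tfrac{1}{2\sqrt{|\omega|}}$ for one branch and $+\tfrac{1}{2\sqrt{|\omega|}}$ for the other, and the denominators $(\varrho - i\sqrt{\omega})$, $\varrho^2+\omega = \varrho^2 - |\omega|$ become real. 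Matching signs carefully reproduces the prefactors $\tfrac{1}{4\sqrt{|\omega|}}[\varrho+\sqrt{|\omega|} - \tfrac{4}{\gamma^2}(1-r\gamma/2)^2 \tfrac{\varrho-\sqrt{|\omega|}}{r(\varrho^2-|\omega|)}]$ for the $R$-part and the analogous longer prefactor for the $\widetilde R$-part in the theorem.

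The main obstacle is purely bookkeeping: keeping the sign of $\sqrt{|\omega|}$ consistent with the branch selected in \eqref{eq:p_*}, and verifying that the quadratic polynomial $(p+\varrho)^2+\omega$ in the denominator of $\Psi(p)$ has its two roots with negative real part (so that their contributions are decaying exponentials and the inverse Laplace transform is unambiguous). The only subtlety with Lemma~\ref{lem:magichat} and Lemma~\ref{lem: noisy_inverse_Laplace} is that they state the complex form of the answer; the theorem states the real form, so I would simply write $\omega = -|\omega|$ and verify (using that $\varrho^2+\omega = \varrho^2-|\omega|$ and that $\varrho>\sqrt{|\omega|}$ for $\gamma<2/r$, which follows from the discussion at \eqref{eq:criticalpoint}) that the imaginary parts cancel when the two conjugate contributions are added. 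The uniform convergence on compact time intervals is inherited directly from Theorem~\ref{thm: concentration_main}, which completes the proof.
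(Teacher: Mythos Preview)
Your proposal is correct and follows essentially the same approach as the paper: invoke Theorem~\ref{thm: concentration_main} to reduce to computing $\psi_0$, then combine Lemma~\ref{lem:magichat} (noiseless piece) and Lemma~\ref{lem: noisy_inverse_Laplace} (noisy piece) with the rescaling $\psi_0(t)=\tfrac12\widehat\psi_0(2\gamma t)$, observing that $\omega<0$ precisely when $\gamma>\gamma_*$. The paper's own proof is just this one sentence; your added remark about cancelling imaginary parts from ``two conjugate contributions'' is unnecessary, since in both lemmas only a single exponential term survives (the other coefficient $A_2$ was set to zero), and under the branch $i\sqrt{\omega}\mapsto -\sqrt{|\omega|}$ that term is already real.
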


\begin{proof} The result follows immediately from Lemma~\ref{lem: noisy_inverse_Laplace} after noting that when $\gamma_* < \gamma$ we have $\omega < 0$ and that $f(\xx_{ \lfloor \tfrac{n}{\beta} t \rfloor }) \Prto[d] \psi_0(t) = \frac{1}{2}\widehat{\psi}_0(2 \gamma t)$.
\end{proof}

\subsubsection{Computing average-complexity} With our explicit expressions for $\psi_0$, we can now derive the complexity results.

\begin{theorem}[Asymptotic convergence rates isotropic features] Suppose Assumptions~\ref{assumption: Vector} and \ref{assumption: spectral_density} hold with $d \mu(x) = d\MP(x)$. Fix the stepsize $0 < \gamma < \tfrac{2}{r}$ and let the batchsize satisfies $\beta(n) \leq n^{1/5-\delta}$ for some $\delta >0$. Define the constants $\varrho > 0, \omega \in \mathbb{C}$, and critical stepsize $\gamma_* \in \mathbb{R}$ as in \eqref{eq:important_constants}. If the ratio $r = 1$, the iterates of SGD satisfy
\begin{equation} 
\begin{aligned} \label{eq:convex_asymptotic_MP}
\lim_{n \to \infty} f(\xx_{ \lfloor \tfrac{n}{\beta} t \rfloor } ) \overset{\text{\rm Pr}}{\sim} \frac{1}{\gamma^{3/2}} \cdot \left (1-\frac{r\gamma}{2} \right ) \cdot \frac{\sqrt{\lambda^+}}{2 \sqrt{2 \pi}} \cdot \frac{1}{\varrho^2 + \omega} \bigg [ R \cdot \frac{1}{4 \gamma} \cdot \frac{1}{t^{3/2}} + \widetilde{R} \cdot r \cdot \frac{1}{t^{1/2}}  \bigg].
% &f(x_t) \Prto[d] \quad  R \cdot \frac{1}{\gamma} \bigg (1 - \frac{r\gamma}{2} \bigg )\int_0^\infty \frac{x e^{-2\gamma xt} }{(x-\varrho)^2 + \omega} \, d\MP(x)\\
% & \qquad \qquad \qquad \quad + \widetilde{R} \cdot \frac{1}{2} \bigg [ \frac{\frac{2}{\gamma}(1-r) \big (1-\frac{\gamma}{2} \big )}{\varrho^2 + \omega} + \int_0^\infty \frac{-rx + \frac{2 r}{\gamma} \big (1-\frac{r\gamma}{2} \big )}{(x-\varrho)^2 + \omega} e^{-2\gamma xt} \, d\MP(x) \bigg ]\\
% & \quad  \sim \frac{1}{\gamma} \left ( 1 - \frac{\gamma}{2} \right ) \left ( R \cdot \frac{1}{2^{7/2} \gamma^{3/2} \sqrt{\pi}} \cdot \frac{\sqrt{\lambda^+}}{\varrho^2 + \omega} \cdot \frac{1}{t^{3/2}} + \tilde{R} \cdot  \frac{\sqrt{\lambda^+}}{\varrho^2 + \omega} \cdot \frac{1}{2^{3/2} \gamma^{1/2} \sqrt{\pi}} \cdot \frac{1}{t^{1/2}} \right ).
\end{aligned} \end{equation}
If the ratio $r \neq 1$ and $\gamma < \gamma_*$, the iterates of SGD satisfy
\begin{align*}
&\lim_{n \to \infty} f(\xx_{ \lfloor \tfrac{n}{\beta} t \rfloor } ) \, \,  \overset{\rm{\text{Pr}}}{\sim} \, \, 
% &\Prto[n] 
%     \quad  R \cdot \frac{1}{\gamma} \bigg (1 - \frac{r\gamma}{2} \bigg )\int_0^\infty \frac{x e^{-2\gamma xt} }{(x-\varrho)^2 + \omega} \, d\MP(x)\\
% & \qquad + \widetilde{R} \cdot \frac{1}{2} \bigg [ \frac{\frac{2}{\gamma} (1-r) \big (1-\frac{r\gamma}{2} \big )}{\varrho^2 + \omega} + \int_0^\infty \frac{-rx + \frac{2 r}{\gamma} \big (1-\frac{r\gamma}{2} \big )}{(x-\varrho)^2 + \omega} e^{-2 \gamma xt} \, d\MP(x) \bigg ]\\
\widetilde{R} \cdot  \frac{\tfrac{1}{\gamma} \big (1-\frac{r\gamma}{2} \big )}{(\varrho^2 + \omega)} \max \{0, 1-r\}\\
&+ \frac{1}{8 \sqrt{2\pi} r} \cdot \frac{(\lambda^+-\lambda^-)^{1/2}}{ \gamma^{3/2} \big [ (\lambda^--\varrho)^2 + \omega \big ] } \bigg [R \cdot \frac{1}{\gamma} \bigg (1 - \frac{r\gamma}{2} \bigg ) +  \frac{\widetilde{R}}{2} \left (  \frac{2r}{\gamma \lambda^-} \bigg (1-\frac{r\gamma}{2} \bigg ) - r \right ) \bigg ]\cdot e^{-2\gamma \lambda^- t} \cdot \frac{1}{t^{3/2}}.
\end{align*}
If the ratio $r \neq 1$ and $\gamma = \gamma_*$, the iterates of SGD satisfy
\begin{align*}
&\lim_{n \to \infty} f(\xx_{ \lfloor \tfrac{n}{\beta} t \rfloor } )  \, \, \overset{\rm \text{Pr}}{\sim} \, \, \widetilde{R} \cdot \frac{\tfrac{1}{\gamma} \big (1-\frac{r\gamma}{2} \big )}{(\varrho^2 + \omega)} \max \{0, 1-r\}\\
&+ \frac{1}{2 \sqrt{2\pi} r} \cdot \frac{1}{\gamma^{1/2} r_2 (\lambda^+-\lambda^-)^{1/2}} \bigg [ R \cdot \frac{1}{\gamma} \bigg (1 - \frac{r\gamma}{2} \bigg ) +  \widetilde{R} \cdot \bigg ( \frac{2r}{\gamma \lambda^-} \bigg (1-\frac{r\gamma}{2} \bigg ) - r \bigg ) \bigg ] \cdot e^{-2 \gamma \lambda^- t} \cdot \frac{1}{t^{1/2}}.
\end{align*}
If the ratio $r \neq 1$ and $\gamma > \gamma_*$, the iterates of SGD satisfy
\begin{align*} 
&\lim_{n \to \infty} f(\xx_{ \lfloor \tfrac{n}{\beta} t \rfloor } ) \, \, \overset{\rm \text{Pr}}{\sim} \, \, \widetilde{R} \cdot \frac{\tfrac{1}{\gamma} \big (1-\frac{r\gamma}{2} \big )}{(\varrho^2 + \omega)} \max \{0, 1-r\}\\
% \quad R \cdot \frac{1}{\gamma} \left (1 - \frac{r\gamma}{2} \right )\int_0^\infty \frac{x e^{-2\gamma xt}}{(x-\varrho)^2 + \omega} \, d\MP(x)\\
% & + R \cdot \frac{1}{4 \sqrt{|\omega|}} \cdot \bigg [\varrho +  \sqrt{|\omega|} - \frac{4}{\gamma^2} \left (1- \frac{r\gamma}{2} \right )^2 \frac{\varrho - \sqrt{|\omega|}}{r (\varrho^2 - |\omega|)} \bigg ] e^{-2\gamma (\varrho + \sqrt{|\omega|}) t}\\
% & + \widetilde{R} \cdot \frac{1}{2} \bigg [ \frac{\frac{2}{\gamma} (1-r) \big (1-\frac{r\gamma}{2} \big )}{\varrho^2 + \omega} + \int_0^\infty \frac{-rx + \frac{2r}{\gamma} \big (1-\frac{r\gamma}{2} \big )}{(x-\varrho)^2 + \omega} e^{-2\gamma xt} \, d\MP(x)\\
%   &+ \frac{(-2\sqrt{|\omega|}) r \big [ \frac{2}{\gamma}  \big (1-\frac{r\gamma}{2} \big ) - (\varrho + \sqrt{|\omega|}) \big ] }{4 \omega (\varrho+\sqrt{|\omega|})  } \cdot 
%   \bigg ( \frac{\varrho+\sqrt{|\omega|}}{\frac{2}{\gamma}  \left ( 1- \frac{r\gamma}{2} \right )} - \frac{\frac{2}{\gamma} (\varrho - \sqrt{|\omega|})  (1-\frac{r\gamma}{2} )}{r (\varrho^2 + \omega)} \bigg ) e^{-2 \gamma (\varrho + \sqrt{|\omega|}) t} \bigg ]\\
  & + R \cdot \frac{1}{4 \sqrt{|\omega|}} \cdot \bigg [\varrho +  \sqrt{|\omega|} - \frac{4}{\gamma^2} \left (1- \frac{r\gamma}{2} \right )^2 \frac{\varrho - \sqrt{|\omega|}}{r (\varrho^2 - |\omega|)} \bigg ] e^{-2\gamma (\varrho + \sqrt{|\omega|}) t}\\
  & + \widetilde{R} \cdot \frac{(-2\sqrt{|\omega|}) r \big [ \frac{2}{\gamma}  \big (1-\frac{r\gamma}{2} \big ) - (\varrho + \sqrt{|\omega|}) \big ] }{8 \omega (\varrho+\sqrt{|\omega|})  } \cdot 
  \bigg ( \frac{\varrho+\sqrt{|\omega|}}{\frac{2}{\gamma}  \left ( 1- \frac{r\gamma}{2} \right )} - \frac{\frac{2}{\gamma} (\varrho - \sqrt{|\omega|})  (1-\frac{r\gamma}{2} )}{r (\varrho^2 + \omega)} \bigg ) e^{-2 \gamma (\varrho + \sqrt{|\omega|}) t}.
%   &\sim \widetilde{R} \cdot \frac{1}{2} \bigg ( 1- \tfrac{r\gamma}{2} \bigg )^{-1} \big [ 1 -  \max \{0, 1-r\} r\big ]\\
%   &+ R \cdot \frac{1}{4 \sqrt{|\omega|}} \cdot \bigg [\varrho +  \sqrt{|\omega|} - \frac{4}{\gamma^2} \left (1- \frac{r\gamma}{2} \right )^2 \frac{\varrho - \sqrt{|\omega|}}{r (\varrho^2 - |\omega|)} \bigg ] e^{-2\gamma (\varrho + \sqrt{|\omega|}) t}\\
%   &+ \frac{(-2\sqrt{|\omega|}) r \big [ \frac{2}{\gamma}   \big (1-\frac{r\gamma}{2} \big ) - (\varrho + \sqrt{|\omega|}) \big ] }{4 \omega (\varrho+\sqrt{|\omega|})  } \cdot 
%   \bigg ( \frac{\varrho+\sqrt{|\omega|}}{ \frac{2}{\gamma} \left ( 1- \frac{r\gamma}{2} \right )} - \frac{\frac{2}{\gamma} (\varrho - \sqrt{|\omega|})  (1-\frac{r\gamma}{2} )}{r (\varrho^2 + \omega)} \bigg ) e^{-2\gamma (\varrho + \sqrt{|\omega|}) t} \bigg ].
  \end{align*}
  Here $t=1$ corresponds to computing $n$ stochastic gradients, the convergence is locally uniformly, and the notation $\overset{\text{Pr}}{\sim}$ means that you take the limit in probability and then compute the asymptotic.
\end{theorem}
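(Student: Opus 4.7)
The plan is to read off each of the four asymptotic rates directly from the explicit closed-form expressions for $\psi_0(t)$ furnished by Theorem~\ref{thm:isotropic_noisy} by a Laplace / Watson's-lemma analysis of the transient integrals as $t \to \infty$. First, I would peel off the stationary part: for $r \neq 1$ with $\widetilde{R} > 0$, the constant $\widetilde{R} \cdot \tfrac{(2/\gamma)(1-r\gamma/2)(1-r)}{2(\varrho^2+\omega)}$ appearing in Theorem~\ref{thm:isotropic_noisy} simplifies (using $\varrho^2+\omega = (1-r\gamma/2)/(\gamma/2)$ when $r\neq 1$) to the stationary value $\psi_0(\infty) = \tfrac{\widetilde{R}}{2}(1-\tfrac{r\gamma}{2})^{-1}(1-r)$ predicted by Theorem~\ref{thm:VolterraTrivialities}. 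Subtracting this off, what remains to analyse are integrals of the form $I_g(t) = \int_{\lambda^-}^{\lambda^+} \frac{g(x)}{(x-\varrho)^2+\omega}\, e^{-2\gamma x t}\,\dif \MP(x)$ plus, in the $\gamma > \gamma_*$ regime, an explicit exponential contribution already isolated by the theorem.

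The key local fact driving everything is the soft-edge behaviour of $\MP$: if $r \neq 1,$ then $\lambda^- = (1-\sqrt{r})^2 > 0$ and
\[
\dif \MP(x) \sim \frac{\sqrt{\lambda^+-\lambda^-}}{2\pi \lambda^- r}\, \sqrt{x-\lambda^-}\,\dif x \quad \text{as } x \downarrow \lambda^-,
\]
while if $r=1$ then $\lambda^- = 0$ and $\dif \MP(x) \sim \tfrac{\sqrt{\lambda^+}}{2\pi}\, x^{-1/2}\, \dif x$ near $0$. Applying Watson's lemma to $I_g$ then gives $I_g(t) \sim g(\lambda^-)\cdot e^{-2\gamma\lambda^- t} \cdot \tfrac{\sqrt{\lambda^+-\lambda^-}}{2\pi \lambda^- r}\cdot \tfrac{\Gamma(3/2)}{(2\gamma t)^{3/2}}$ whenever $g$ is regular and nonzero at $\lambda^-$. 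For $r \neq 1,$ $\gamma < \gamma_*,$ the factor $((\lambda^- - \varrho)^2 + \omega)^{-1}$ is finite and nonzero, so this immediately delivers the $e^{-2\gamma\lambda^- t}/t^{3/2}$ asymptotic; the explicit coefficients in the theorem statement follow by collecting $g(\lambda^-)$ from each of the $R$-contribution and the $\widetilde{R}$-contribution of Theorem~\ref{thm:isotropic_noisy}. For $r=1$, the same idea applies with the endpoint at $0$: the $R$-term carries an extra $x$, so the density's $x^{-1/2}$ singularity is integrated against $x^{1/2}$, producing the $t^{-3/2}$ rate, while the $\widetilde{R}$-term has no such vanishing and produces $t^{-1/2}$, again with the stated constants.

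The critical case $\gamma = \gamma_*$ is handled by the same machinery but with a merging of zero and pole. Here $\omega < 0$ and by the definition of $\gamma_*$ one of the roots of $(x-\varrho)^2+\omega$ coincides with $\lambda^-,$ so one writes
\[
\frac{g(x)}{(x-\varrho)^2+\omega} = \frac{g(x)}{(x-\lambda^-)(x-(\varrho-\sqrt{|\omega|}))},
\]
and the surviving singularity at $\lambda^-$ cancels one power of $\sqrt{x-\lambda^-}$ in the density, converting Watson's lemma from $\int \sqrt{u}\,e^{-2\gamma tu}\,du$ to $\int u^{-1/2}\,e^{-2\gamma t u}\,du = \sqrt{\pi/(2\gamma t)}$. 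Evaluating the residue at $\lambda^-$ via $\lim_{x\to\lambda^-}(x-\lambda^-)/((x-\varrho)^2+\omega) = 1/(2\sqrt{|\omega|})$ and multiplying by the numerator values $g(\lambda^-)$ produces the $e^{-2\gamma\lambda^- t}/t^{1/2}$ rate with the claimed coefficient. Finally, for $\gamma > \gamma_*$ one simply notes that the explicit exponential $e^{-2\gamma(\varrho+\sqrt{|\omega|})t}$ already present in Theorem~\ref{thm:isotropic_noisy} has rate $\varrho+\sqrt{|\omega|} < \lambda^-$ (the strict inequality being exactly the content of \eqref{eq:criticalpoint}), so it dominates the $e^{-2\gamma\lambda^- t}t^{-3/2}$ contribution of the integrals, and one keeps only the coefficients of the explicit exponentials from the noisy and noiseless parts of Theorem~\ref{thm:isotropic_noisy}.

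The main obstacle is bookkeeping rather than analysis: matching the constants produced by Watson's lemma with those in the theorem requires careful simplification of expressions like $(\lambda^- - \varrho)^2 + \omega,$ $\varrho^2+\omega$, and $(2r/\gamma)(1-r\gamma/2) - r\lambda^-$ using the identities $\varrho = \tfrac{1+r}{2}(1-\tfrac{r\gamma}{2})$, $\lambda^{\pm} = (1\pm\sqrt{r})^2$, and the implicit definition of $\gamma_*$ via $\varrho+\sqrt{|\omega|} = \lambda^-$. A secondary technical point is ensuring the Watson expansions are uniform enough that the $o(\,\cdot\,)$ terms do not swamp the leading contribution when $\gamma$ approaches $\gamma_*$ from either side, so that the three regimes in the strongly convex case join up continuously at $\gamma=\gamma_*$; this is guaranteed once one tracks the coefficient $((\lambda^-\!-\varrho)^2+\omega)^{-1}$ through the boundary.
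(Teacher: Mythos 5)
Your plan is essentially the same as the paper's: start from the closed forms in Theorem~\ref{thm:isotropic_noisy}, subtract the stationary value, and extract the $t\to\infty$ asymptotics of the remaining Marchenko--Pastur integrals by a Laplace/Watson argument at the soft left edge, with the $\gamma=\gamma_*$ case handled by factoring the pole that merges with the edge and thereby shifting $t^{-3/2}$ to $t^{-1/2}$, and the $\gamma>\gamma_*$ case dominated by the explicit exponential term. The paper carries out the Laplace analysis by hand (split at $\log^2(t)/t$, change of variables $u=v/t$, dominated convergence) rather than invoking Watson's lemma by name, but the substance is identical; the only slip in your write-up is the identity $\varrho^2+\omega=(1-r\gamma/2)/(\gamma/2)$, which should read $\varrho^2+\omega=\tfrac{2}{\gamma}(1-\tfrac{r\gamma}{2})^2$, though your downstream conclusion $\psi_0(\infty)=\tfrac{\widetilde{R}}{2}(1-\tfrac{r\gamma}{2})^{-1}\max\{0,1-r\}$ is still correct.
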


\begin{proof} Throughout the proof we use $2\gamma t \mapsto t$ and we define $\varrho$ and $\omega$ as in Theorem~\ref{thm:isotropic_noisy}. Suppose we have that $r = 1$. Because $1-\frac{\gamma}{2} > 0$, we know that $\gamma < \gamma_*$. By construction of $\gamma_*$ in \eqref{eq:criticalpoint}, we have that 
\[ (1+r)^2 - 8\gamma^{-1}  < (1+r)^2 - 8\gamma_* = (1- \sqrt{r})^4 = 0.\]
In particular, this means that $\omega > 0$ and the roots of $(x-\varrho)^2 + \omega$ are precisely $x = \varrho \pm \sqrt{-\omega}$. As $\omega > 0$, these roots are complex with positive imaginary part and thus, $(x-\varrho)^2 + \omega \neq 0$ for any $x \in [0, \lambda^+]$. So there exists a constant $C > 0$ such that $(x-\varrho)^2 + \omega > C$ for all $x \in [0, \lambda^+]$ and consequently, 
\begin{equation} \label{eq:avg_value_1}
    \int_0^\infty \frac{x}{(x-\varrho)^2 + \omega} \, \dif \MP(x) \quad \text{is bounded.}
\end{equation}
We begin by computing $\int_0^\infty \frac{x e^{-xt}
}{(x-\varrho)^2 + \omega} \, \dif \MP$. If $x \ge \log^2(t)/t$, then it is clear that $e^{-tx}$ decays faster than any polynomial in $t$. Combining this with \eqref{eq:avg_value_1}, we get that 
\[ \int_{\log^2(t)/t}^\infty \frac{xe^{-xt}}{(x-\varrho)^2 + \omega} \, \dif \MP(x) \quad \text{decays faster than any polynomial in $t$.} \]
Now we suppose that $0 \le x < \log^2(t)/t$. Using a simple change of variables, we deduce that
\begin{align*}
    \int_0^{\log^2(t)/t} \frac{xe^{-tx}}{(x-\varrho)^2 + \omega} \, \dif \MP(x) = \frac{1}{2\pi} \cdot \frac{1}{t^{3/2}}  \int_0^{\min\{\log^2(t), t\lambda^+\}} \frac{\sqrt{u} \cdot e^{-u}}{ (\tfrac{1}{t} u - \varrho)^2 + \omega} \sqrt{\lambda^+- \tfrac{u}{t}} \, \dif u.
\end{align*}
We have that $\sqrt{\lambda^+- \tfrac{u}{t}} \le \sqrt{\lambda^+}$ and $0< \omega \le (x-\varrho)^2 + \omega$ so dominated convergence theorem holds
\begin{align*} \lim_{t \to \infty} \int_0^{\min\{ \log^2(t), t\lambda^+\}} \frac{\sqrt{u} \cdot e^{-u}}{ (\tfrac{1}{t} u - \varrho)^2 + \omega} \sqrt{\lambda^+- \tfrac{u}{t}} \, \dif u = \int_0^{\infty} \frac{\sqrt{u} \cdot e^{-u}}{ \varrho^2 + \omega} \sqrt{\lambda^+} \, \dif u = \frac{\sqrt{\lambda^+ \pi}}{2 (\varrho^2 + \omega)}.
\end{align*}
Consequently, we have that 
\begin{equation} \label{eq:r_1_first}
\int_0^{\infty} \frac{xe^{-tx}}{(x-\varrho)^2 + \omega} \, \dif \MP(x) \sim \frac{1}{4 \sqrt{\pi}} \cdot \frac{\sqrt{\lambda^+}}{\varrho^2 + \omega} \cdot \frac{1}{t^{3/2}}.
\end{equation}
Now we turn to $\int_0^\infty \frac{e^{-xt}}{(x-\varrho)^2 + \omega} \, \dif \MP(x)$. As before, we know that 
\[\int_{\log^2(t)/t}^\infty \frac{e^{-xt}}{(x-\varrho)^2 + \omega} \, \dif \MP(x) \quad \text{decays faster than any polynomial in $t$.}\] 
Again using a simple change of variables, we deduce that 
\begin{align*}
    \int_0^{\log^2(t)/t} \frac{e^{-tx}}{(x-\varrho)^2 + \omega} \, \dif \MP(x) &= \frac{1}{2\pi} \cdot \frac{1}{t^{1/2}} \int_0^{\min\{ \log^2(t), t \lambda^+\}} \frac{e^{-u} \sqrt{\lambda^+- \tfrac{u}{t}}}{\sqrt{u} \big ( (\tfrac{u}{t}-\varrho)^2 + \omega \big )} \, \dif u.
\end{align*}
By using dominated convergence theorem, we know that
\[ \lim_{t\to \infty} \int_0^{\min\{ \log^2(t), \lambda^+ t \}} \frac{e^{-u} \sqrt{\lambda^+-\tfrac{u}{t}}}{\sqrt{u} \big ( (\tfrac{u}{t}-\varrho)^2 + \omega \big )} \, \dif u = \int_0^\infty \frac{e^{-u} \sqrt{\lambda^+}}{\sqrt{u} (\varrho^2 + \omega)} \, \dif u = \frac{\sqrt{\pi \lambda^+}}{\varrho^2 + \omega}.\]
Consequently, we have that 
\begin{equation} \label{eq:r_1_second}
    \int_0^\infty \frac{e^{-xt}}{(x-\varrho)^2 + \omega} \, \dif \MP(x) \sim \frac{1}{2 \sqrt{\pi}} \cdot \frac{\sqrt{\lambda^+}}{\varrho^2 + \omega} \cdot \frac{1}{t^{1/2}}.
\end{equation}
After noting that $t = 2 \gamma t$ and Theorem~\ref{thm:isotropic_noisy}, the result immediately follows for the case when $r =1$ in \eqref{eq:convex_asymptotic_MP}.

Next we consider the setting where $r \neq 1$. Suppose $\ell \in \{0,1\}$. By a simple change of variables $x = \lambda^- + (\lambda^+-\lambda^-)u$, we have
\begin{align} 
    &
    \frac{1}{2\pi r} \int_{\lambda^-}^{\lambda^+} \frac{e^{-xt}}{x^{1-\ell} ( (x-\varrho)^2 + \omega )} \sqrt{(x-\lambda^-)(\lambda^+-x)} \, \dif x \label{eq:bound_integral_1} \\
    &= \frac{1}{2 \pi r} \left ( \int_0^{\log^2(t)/t} + \int_{\log^2(t)/t}^1 \right )  \frac{ (\lambda^+-\lambda^-)^2 e^{-t\lambda^-} e^{-(\lambda^+-\lambda^-)ut} \sqrt{u(1-u)} }{(\lambda^- + (\lambda^+-\lambda^-) u)^{1-\ell} \big [ (\lambda^- + (\lambda^+-\lambda^-) u - \varrho)^2 + \omega \big ]} \, \dif u. \nonumber
\end{align} 
Let's first consider where $u \ge \log^2(t)/t$. As $r \neq 1$, we have that $\lambda^- > 0$ and therefore, $(\lambda^- + (\lambda^+-\lambda^-) u)^{1-\ell} \ge (\lambda^-)^{1-\ell}$. If $\omega > 0$, then $0 < \omega < (\lambda^-+(\lambda^+-\lambda^-)u - \varrho)^2 + \omega$ so that 
\begin{equation}\label{eq:bound_integral} \int_{\log^2(t)/t}^1 \frac{1}{(\lambda^- + (\lambda^+-\lambda^-)u)^{1-\ell} \big [ (\lambda^- + (\lambda^+-\lambda^-) u - \varrho)^2 + \omega\big ] } \sqrt{u(1-u)} \, \dif u \le C, 
\end{equation}
or equivalently this integral is bounded by some $C > 0$. Now we suppose that $\omega \le 0$. The roots of $(\lambda^- + (\lambda^+-\lambda^-)u - \varrho)^2 + \omega$ are precisely given by $u = \frac{\varrho -\lambda^- \pm \sqrt{-\omega}}{\lambda^+-\lambda^-}$. By \eqref{eq:criticalpoint} if $\gamma \neq \gamma_*$, we have that $\varrho + \sqrt{-\omega} < (1-\sqrt{r})^2 = \lambda^-$. Hence there exists a constant $\widehat{C} > 0$ such that $\widehat{C} < (\lambda^- + (\lambda^+-\lambda^-)u - \varrho)^2 +\omega$ for all $u \in [0,1]$. It immediately follows that \eqref{eq:bound_integral} holds. Now suppose that $\gamma = \gamma_*$. Then $\varrho + \sqrt{|\omega|} = (1-\sqrt{r})^2 = \lambda^-$ so the polynomial $(\lambda^- + (\lambda^+-\lambda^-)u - \varrho)^2 + \omega$ is $0$ when $u = 0$. We observe that $u = 0$ is not a double root since there does not exist an $r$ with $\gamma = \gamma_*, \omega = 0,$ and $\gamma_* < \tfrac{2}{r}$ (here $\gamma = \gamma_*$ and $\omega =0$ imply that $r = 1$, but then $\gamma_* = 2$ which violates $\gamma_* < \tfrac{2}{r}$). Consequently, we can write 
\[ (\lambda^- + (\lambda^+-\lambda^-)u - \varrho)^2 + \omega = (\lambda^+-\lambda^-)^2u(u+r_2), \]
where $r_2 > 0$ as the second root is negative. Since the Marchenko-Pastur measure has $\sqrt{u}$-behavior near $0$, it immediately follows that
\begin{align*}
    \int_{\log^2(t)/t}^1 &\frac{1}{(\lambda^- + (\lambda^+-\lambda^-)u)^{1-\ell} \big [ (\lambda^- + (\lambda^+-\lambda^-) u - \varrho)^2 + \omega \big ] } \sqrt{u(1-u)} \, \dif u\\
    &\le \frac{1}{r_2(\lambda^-)^{\ell-1}(\lambda^+-\lambda^-)^2} \int_0^1 \frac{\sqrt{1-u}}{\sqrt{u}} \, \dif u < C.
\end{align*}
Hence in all cases we have that 
\begin{align*}
    \frac{(\lambda^+-\lambda^-)^2}{2 \pi r} \int_{\log^2(t)/t}^1& \frac{e^{-(\lambda^+-\lambda^-)ut}}{\big (\lambda^- + (\lambda^+-\lambda^-)u)^{1-\ell}  \big [(\lambda^- + (\lambda^+-\lambda^-)u - \varrho)^2 + \omega\big ]} \sqrt{u(1-u)} \, \dif u \\
    &\le e^{-(\lambda^+-\lambda^-)\log^2(t)} C
\end{align*}
where $C > 0$ is some constant. Since $e^{-(\lambda^+-\lambda^-) \log^2(t)}$ decays faster than polynomial, this part of the integral does not have the interesting asymptotic. Now returning to \eqref{eq:bound_integral_1} the interesting asymptotic occurs near $u = 0$. First we consider the setting where $\gamma \neq \gamma_*$. As we saw for $u \in [\log^2(t)/t, 1]$, we also know that there exists a constant $\widehat{C} > 0$ such that
\[ \widehat{C} < (\lambda^- + (\lambda^+-\lambda^-)u)^{1-\ell}  \big [ (\lambda^- + (\lambda^+-\lambda^-) u - \varrho)^2 + \omega \big ] \quad \text{for all $u \in [0, 1]$.}  \]
Using a change of variables $u = v/t$, we deduce that
\begin{align*}
    \int_0^{\log^2(t)/t} &\frac{e^{-(\lambda^+-\lambda^-) ut}}{ (\lambda^- + (\lambda^+-\lambda^-)u)^{1-\ell} \big [ (\lambda^- + (\lambda^+-\lambda^-) u - \varrho)^2 + \omega \big ] } \sqrt{u(1-u)} \, \dif u 
    \\ 
    &\le \frac{1}{\widehat{C}} \cdot \frac{1}{t^{3/2}} \int_0^{\log^2(t)} e^{-(\lambda^+-\lambda^-) v} \sqrt{v (1- \tfrac{v}{t})} \, \dif v\\
    &\le \frac{1}{\widehat{C}} \cdot \frac{1}{t^{3/2}} \int_0^\infty e^{-(\lambda^+-\lambda^-)v} \sqrt{v} \, \dif v
\end{align*}
Since the last integral is bounded, we can apply dominated convergence theorem. Using the change of variables, $u = \tfrac{v}{t}$, we have that
\begin{equation} \begin{aligned} \label{eq:asymptotic_1}
    &\int_0^{\log^2(t)/t} \frac{e^{-(\lambda^+-\lambda^-) ut}}{(\lambda^- + (\lambda^+-\lambda^-)u)^{1-\ell} \big [ (\lambda^- + (\lambda^+-\lambda^-) u - \varrho)^2 + \omega \big ]} \sqrt{u(1-u)} \, \dif u\\
    &= \frac{1}{t^{3/2}} \cdot \int_0^{\log^2(t)} \frac{e^{-(\lambda^+-\lambda^-) v}}{(\lambda^- + (\lambda^+-\lambda^-)\tfrac{v}{t})^{1-\ell}  \big [ (\lambda^- + (\lambda^+-\lambda^-) \tfrac{v}{t} - \varrho)^2 + \omega \big ] } \sqrt{v(1-\tfrac{v}{t})} \, \dif v\\
    &\sim \frac{1}{t^{3/2}} \int_0^\infty \frac{e^{-(\lambda^+-\lambda^-)v}}{(\lambda^{-})^{1-\ell} \big [(\lambda^- - \varrho)^2 + \omega\big ]} \sqrt{v} \, \dif v\\
    &\sim \frac{1}{t^{3/2}} \cdot \frac{\sqrt{\pi}}{2 (\lambda^+-\lambda^-)^{3/2} (\lambda^-)^{1-\ell} ((\lambda^- - \varrho)^2 + \omega)}.
\end{aligned} \end{equation}
Consequently, we get for any $\ell \in \{0,1\}$
\begin{align*}
\frac{1}{2\pi r} \int_{\lambda^-}^{\lambda^+} \frac{e^{-xt}}{{x^{1-\ell}\big ( (x-\varrho)^2 + \omega \big )}} &\sqrt{(x-\lambda^-)(\lambda^+-x)} \, \dif x\\
&\sim \frac{1}{4 \sqrt{\pi} r} \cdot \frac{(\lambda^+-\lambda^-)^{1/2}}{(\lambda^-)^{1-\ell} \big [ (\lambda^--\varrho)^2 + \omega \big ] } \cdot e^{-\lambda^- t} \cdot \frac{1}{t^{3/2}}. 
\end{align*}
Now consider the setting where $\gamma = \gamma_*$. As we saw for $u \in [\log^2(t)/t, 1]$, we know that the polynomial has a root at $u = 0$ (not a double root). In particular, we get that
\begin{align*} (\lambda^- + (\lambda^+-\lambda^-)u)^{1-\ell} & \big [ (\lambda^- + (\lambda^+-\lambda^-)u - \varrho)^2 + \omega \big ]\\
&= (\lambda^+-\lambda^-)^2 (\lambda^- + (\lambda^+-\lambda^-)u)^{1-\ell}  u (u + r_2), 
\end{align*}
where $-r_2$ is the second root of the quadratic $(\lambda^- + (\lambda^+-\lambda^-)u -\varrho)^2 + \omega$. We know this root is negative (i.e. $r_2 > 0$). Using a change of variables $u = v/t$ and a simple lower bound on $(\lambda^- + (\lambda^+-\lambda^-)u)^{1-\ell}$, we get that 
\begin{align*}
    \int_0^{\log^2(t)/t} &\frac{e^{-(\lambda^+-\lambda^-) ut}}{ (\lambda^- + (\lambda^+-\lambda^-)u)^{1-\ell} \big [ (\lambda^- + (\lambda^+-\lambda^-) u - \varrho)^2 + \omega \big ] } \sqrt{u(1-u)} \, \dif u 
    \\ 
    &\le \frac{1}{(\lambda^-)^{1-\ell} (\lambda^+-\lambda^-)^2 r_2} \cdot \frac{1}{t^{1/2}} \int_0^{\log^2(t)} \frac{e^{-(\lambda^+-\lambda^-) v}}{v} \sqrt{v (1- \tfrac{v}{t})} \, \dif v\\
    &\le \frac{1}{(\lambda^-)^{1-\ell} (\lambda^+-\lambda^-)^2 r_2} \cdot \frac{1}{t^{1/2}} \int_0^\infty \frac{e^{-(\lambda^+-\lambda^-)v}}{\sqrt{v}} \, \dif v
    \end{align*}
Since the last integral is bounded, we can apply dominated convergence theorem. Using the change of variables, $u = \tfrac{v}{t}$, we have that 
\begin{equation} \begin{aligned} \label{eq:asymptotic_2}
    &\int_0^{\log^2(t)/t} \frac{e^{-(\lambda^+-\lambda^-) ut}}{(\lambda^- + (\lambda^+-\lambda^-)u)^{1-\ell} \big [ (\lambda^+-\lambda^-)^2u(u+r_2) \big ]} \sqrt{u(1-u)} \, \dif u\\
    &= \frac{1}{t^{1/2}} \cdot \int_0^{\log^2(t)} \frac{e^{-(\lambda^+-\lambda^-) v}}{(\lambda^- + (\lambda^+-\lambda^-)\tfrac{v}{t})^{1-\ell}  \big [ (\lambda^+-\lambda^-)^2 v(\tfrac{v}{t}+r_2) \big ] } \sqrt{v(1-\tfrac{v}{t})} \,\dif v\\
    &\sim \frac{1}{t^{1/2}} \int_0^\infty \frac{e^{-(\lambda^+-\lambda^-)v}}{(\lambda^{-})^{1-\ell} \big [ (\lambda^+-\lambda^-)^2 r_2 \big ]} \cdot \frac{1}{\sqrt{v}} \, \dif v\\
    &\sim \frac{1}{t^{1/2}} \cdot \frac{\sqrt{\pi}}{ (\lambda^+-\lambda^-)^{5/2} (\lambda^-)^{1-\ell} r_2}.
\end{aligned} \end{equation}

Consequently, we have when $\gamma = \gamma_*$ that

\begin{align*}
    \frac{1}{2\pi r} \int_{\lambda^-}^{\lambda^+} \frac{e^{-xt}}{{x^{1-\ell}\big ( (x-\varrho)^2 + \omega \big )}} &\sqrt{(x-\lambda^-)(\lambda^+-x)} \, \dif x \\
    &\sim \frac{1}{2 \sqrt{\pi} r} \cdot \frac{1}{r_2 (\lambda^+-\lambda^-)^{1/2} (\lambda^-)^{1-\ell}} \cdot e^{-\lambda^- t} \cdot \frac{1}{t^{1/2}}.
\end{align*}
We note that the Dirac delta from the Marchenko-Pastur terms combined with the constant term yields that
\begin{align*}
    \frac{\tfrac{2}{\gamma} (1-r) \big ( 1- \frac{r \gamma}{2} \big )}{\varrho^2 + \omega} + \frac{\tfrac{2r}{\gamma} (1-r) \big ( 1- \frac{r \gamma}{2} \big )}{\varrho^2 + \omega} \max \big \{0, 1-\tfrac{1}{r} \big \} = \frac{\tfrac{2}{\gamma} \big (1-\frac{r\gamma}{2} \big )}{\varrho^2 + \omega} \max \{0, 1-r\}.
\end{align*}
The result immediately follows. 
\end{proof}

\section{Numerical simulation details and extra experiments} \label{apx:exp_details}

\paragraph{Problem setup.}
The vectors $\xx_0$, $\widetilde{\xx}$ and are sampled i.i.d. from the Gaussian $N({\boldsymbol{0}}, \tfrac{1}{d}\II)$.
The objective function in which we run SGD is in all cases the least squares objective function $f(\xx) = {\tfrac{1}{2n} \|\AA \xx -\bb \|^2}$, where $\bb$ is generated as in \eqref{eq:lsq}. The definition of $\AA$ is different depending on the data-generating process considered:
\begin{itemize}
    \item For the isotropic features model, the rows of $\AA$ are generated i.i.d. from a standard Gaussian distribution.
    \item In the one-hidden layer model these are generated following \eqref{eq:random_features_model}, with both $\WW$ and and $\YY$ are i.i.d. from a standard Gaussian and $n/m= X$, $m/d = Y$, and $g$ is a shifted hinge loss:
    \begin{equation}
        g(z) = \max(x, 0) - \frac{1}{\sqrt{2 \pi}}\,.
    \end{equation}
    The substraction of $- \frac{1}{\sqrt{2 \pi}}$ ensures that the zero Gaussian mean assumption is verified \eqref{eq: Gaussian_mean}.
\end{itemize}

Throughout the experiments $r$ is fixed to $1.5$ and $\tilde{R}=0$. We experimented with different values, and always obtained very similar qualitative results.

\paragraph{Algorithms.}
We simulate the \textbf{SME and SDE models} (see \eqref{eq:SDE} for description) using Euler-Murayama discretization with stepsize $10^{-3}$. The SDE model discretizes the same as equation as the SME model (Eq. \eqref{eq:SDE}) with a scaled identity covariance ($\boldsymbol{\Sigma} = \sigma^2 \boldsymbol{I}$). In this model $\sigma^2$ is a free parameter which for the experiments we set to 0.1, as this value was giving the closest fit to SGD across the log-space grid of parameters $10^{-i}, i=0, 1, ...$.

For the \textbf{streaming model}, we use SGD updates and regenerate $\aa_i$ (following the same model as SGD) at every step.

\paragraph{Extra experiments.} We provide extra experiments following the same setting as in Figure \ref{fig:comparision_SDE_volterra} but with different choices of the ratio $r = \frac{d}{n}$ parameter.

\begin{figure}[ht]
    \centering
    \includegraphics[width=\linewidth]{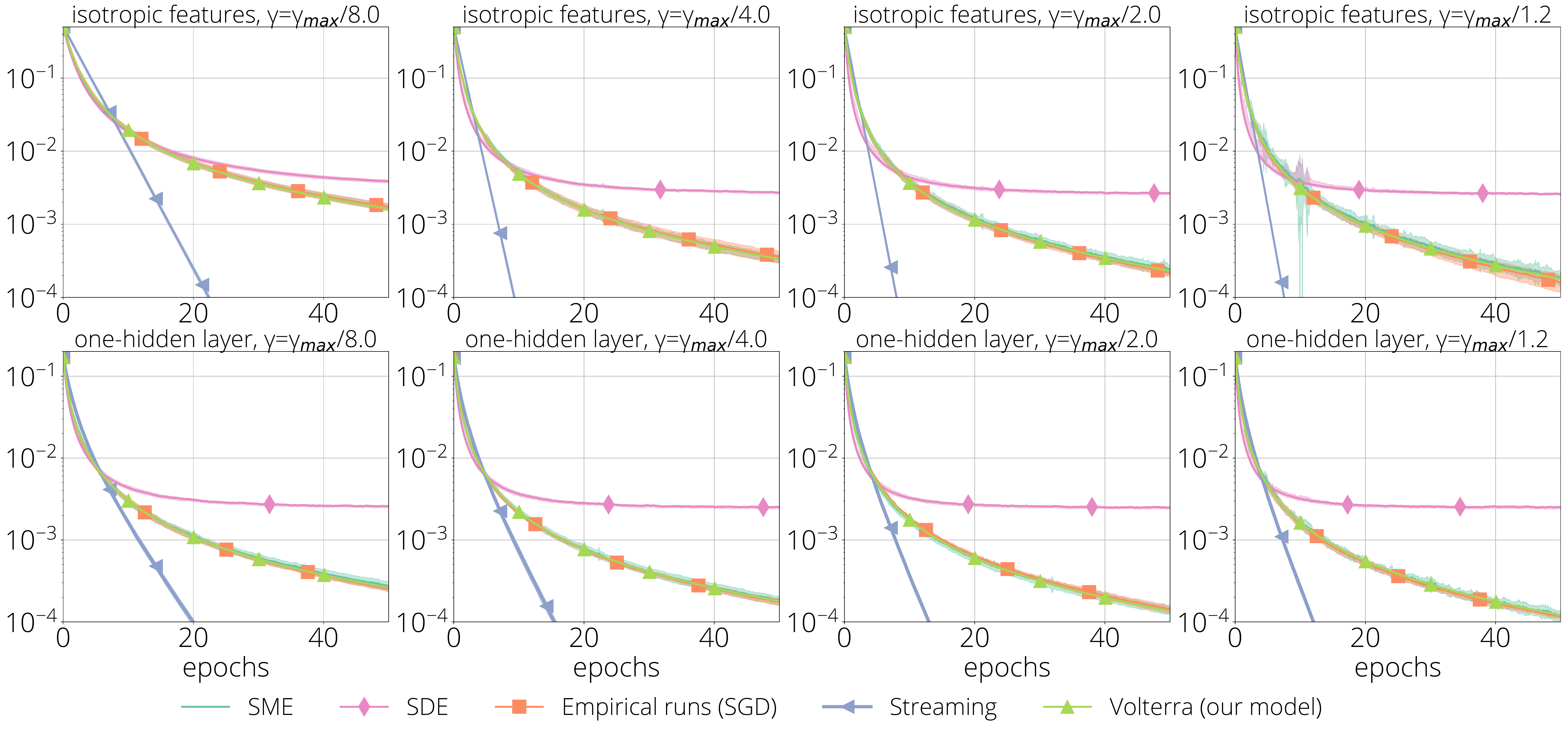}
    \includegraphics[width=\linewidth]{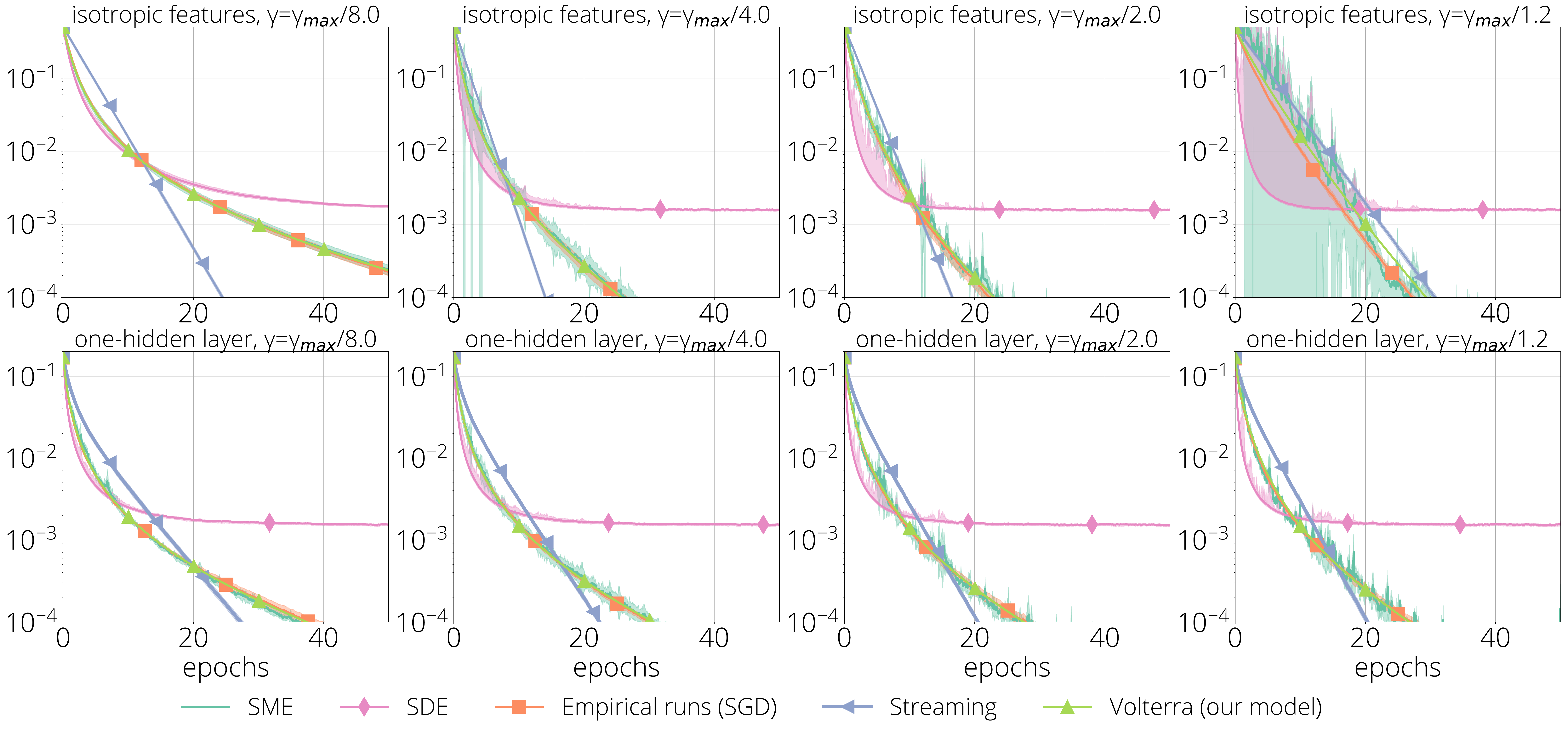}
    \vspace{-0.5cm}
    \caption{{\bfseries Comparison of different SGD models with $r=0.8$ and $r=1.6$}: isotropic features (top) and one-hidden layer network (bottom). 
}
    \label{fig:comparision_SDE_volterra2}
\end{figure}

\end{document}